\def\R{{\mathbb R}}
\def\N{\mathbb{N}}
\def\C{\mathbb{C}}
\def\D{\mathbb{D}}
\def\P{\mathbb{P}}
\newtheorem{prop}{\bf Proposition}[section]
\newtheorem{thm}[prop]{\bf Theorem}
\newtheorem{cor}[prop]{\bf Corollary}
\newtheorem{lem}[prop]{\bf Lemma}
\newtheorem{rmk}[prop]{\it Remark}
\begin{document}

\title[Property (T) for uniformly bounded representations]{{\bf\Large Property (T) for uniformly bounded representations and weak*-continuity of invariant means}}

%%    Information for first author
\author[I. Vergara]{Ignacio Vergara}
%    Address of record for the research reported here
\address{Departamento de Matem\'atica y Ciencia de la Computaci\'on, Universidad de Santiago de Chile, Las Sophoras 173, Estaci\'on Central 9170020, Chile}

\email{ign.vergara.s@gmail.com}
%    \thanks will become a 1st page footnote.
\thanks{This work is supported by the FONDECYT project 3230024. Part of the research was carried out at the Leonhard Euler St. Petersburg International Mathematical Institute and supported by the Ministry of Science and Higher Education of the Russian Federation (Agreement № 075–15–2022–287 dated 06.04.2022).}

%    General info
\makeatletter
\@namedef{subjclassname@2020}{%
  \textup{2020} Mathematics Subject Classification}
\makeatother

\subjclass[2020]{Primary 22D55; Secondary 22D12, 43A07}
%
%\date{\today}

\keywords{Property (T), uniformly bounded representations, invariant means, Kazhdan projections, von Neumann equivalence}

\begin{abstract}
For every $c\geq 1$, we define a strengthening of Kazhdan's Property (T) by considering uniformly bounded representations $\pi$ with fixed bound $|\pi|\leq c$. We carry out a systematic study of this property and show that it can be characterised by the weak*-continuity of the unique invariant mean on a suitable space of coefficients. For countable groups, we prove that the family of properties thus obtained yield an invariant at the von Neumann algebra level. Moreover, by focusing on certain representations of rank 1 Lie groups, we show that $\operatorname{Sp}(n,1)$ and $F_{4,-20}$ admit proper uniformly Lipschitz affine actions on Hilbert spaces.
\end{abstract}

%\maketitle

\begingroup
\def\uppercasenonmath#1{} % this disables uppercasing title
\let\MakeUppercase\relax % this disables uppercasing authors
\maketitle
\endgroup

\section{{\bf Introduction}}
Property (T) was introduced by Kazhdan \cite{Kaz} as a tool to prove that certain lattices in Lie groups are finitely generated. Ever since, it has become one of the most important properties in analytic group theory, with applications in several fields of mathematics. We refer the reader to \cite{BedlHVa} for a thorough account on this subject.

In \cite{HaKndL}, Haagerup, Knudby and de Laat proved that Property (T) for a locally compact group $G$ can be characterised by the weak*-continuity of the unique invariant mean on the Fourier--Stieltjes algebra $B(G)$. They also introduced a strengthening of this property, called Property (T${}^*$), by requiring this invariant mean to be weak*-continuous as an element of the dual space of the algebra of completely bounded Fourier multipliers $M_0A(G)$. In this paper, we study a family of properties that sit in between Property (T) and Property (T${}^*$), by looking at the spaces of coefficients of uniformly bounded representations $B_c(G)$ ($c\geq 1$).

Let $G$ be a locally compact group, $\mathcal{H}$ a Hilbert space, and $\mathcal{B}(\mathcal{H})$ the algebra of bounded operators on $\mathcal{H}$. A representation $\pi:G\to\mathcal{B}(\mathcal{H})$ is \textit{uniformly bounded} if
\begin{align*}
|\pi|=\sup_{s\in G}\|\pi(s)\| < \infty.
\end{align*}
We only consider representations that are continuous for the strong operator topology (SOT). For $c\geq 1$, let $\mathcal{R}_c(G)$ denote the class of all those representations $\pi$ such that $|\pi|\leq c$. We say that $G$ has Property (T)${}_c$ if, for every $\pi\in\mathcal{R}_c(G)$, the existence of almost invariant vectors implies the existence of non-trivial invariant vectors; see Section \ref{Sec_(T)_c} for a precise definition.

The space of coefficients of representations in $\mathcal{R}_c(G)$ is denoted by $B_c(G)$. This space consists of all the continuous functions $\varphi:G\to\C$ that can be written as
\begin{align}\label{phi_intro}
\varphi(s)=\langle\pi(s)\xi,\eta\rangle,\quad\forall s\in G,
\end{align}
where $\pi\in\mathcal{R}_c(G)$, and $\xi,\eta$ are elements of the associated Hilbert space. This is a Banach space for the norm
\begin{align*}
\|\varphi\|_{B_c(G)}=\inf \|\xi\|\|\eta\|,
\end{align*}
where the infimum is taken over all the decompositions as in \eqref{phi_intro}. Moreover, $B_c(G)$ is a subspace of the algebra of weakly almost periodic functions $\operatorname{WAP}(G)$, and it always carries a unique invariant mean; see Corollary \ref{Cor_inv_mean}. Furthermore, it has a natural predual. The algebra $\tilde{A}_c(G)$ is defined as the completion of $L^1(G)$ for the norm
\begin{align*}
\|f\|_{\tilde{A}_c(G)}=\sup_{\pi\in\mathcal{R}_c(G)}\|\pi(f)\|,
\end{align*}
where $\pi(f)=\int_G f(s)\pi(s)\,ds$. Then the dual space of $\tilde{A}_c(G)$ can be identified with $B_c(G)$; see Proposition \ref{Prop_duality}. We prove the following.

\begin{thm}\label{Thm_equiv}
Let $G$ be a locally compact group and $c\geq 1$. The following are equivalent:
\begin{itemize}
\item[(i)] The group $G$ has Property (T)${}_c$.
\item[(ii)] The unique invariant mean on $B_c(G)$ is $\sigma(B_c(G),\tilde{A}_c(G))$-continuous.
\end{itemize}
\end{thm}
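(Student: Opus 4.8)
The plan is to prove the two implications separately, the bridge in both directions being the identification of the invariant mean with a ``Kazhdan idempotent'' inside $\tilde A_c(G)$. The background fact I would isolate first is that, because $B_c(G)\subseteq\operatorname{WAP}(G)$, every $\pi\in\mathcal R_c(G)$ carries a canonical ergodic projection $E_\pi$: the closed convex hull of $\pi(G)$ in $\mathcal B(\mathcal H_\pi)$ is weakly compact and, by the Ryll--Nardzewski fixed point theorem, contains a unique $G$-fixed operator $E_\pi$, which is an idempotent with range the invariant vectors $\mathcal H_\pi^{\pi}$, acts as the identity there, satisfies $\pi(s)E_\pi=E_\pi=E_\pi\pi(s)$ and $\|E_\pi\|\le|\pi|\le c$, and computes means of coefficients: $m(\varphi_{\xi,\eta})=\langle E_\pi\xi,\eta\rangle$ where $\varphi_{\xi,\eta}(s)=\langle\pi(s)\xi,\eta\rangle$. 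Since $B_c(G)=\tilde A_c(G)^{*}$, statement (ii) says precisely that $m$, viewed in $\tilde A_c(G)^{**}$, belongs to $\tilde A_c(G)$.

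\textbf{(ii)$\Rightarrow$(i).} Given $m\in\tilde A_c(G)$, choose $f_n\in L^1(G)$ with $\|f_n-m\|_{\tilde A_c(G)}\to0$. For any $\pi\in\mathcal R_c(G)$ the operators $\pi(f_n)$ are norm-Cauchy (as $\|\pi(g)\|\le\|g\|_{\tilde A_c(G)}$) with limit $\pi(m)$ satisfying $\langle\pi(m)\xi,\eta\rangle=\lim_n\langle f_n,\varphi_{\xi,\eta}\rangle=\langle m,\varphi_{\xi,\eta}\rangle=\langle E_\pi\xi,\eta\rangle$, so $\pi(m)=E_\pi$. If $\pi$ has almost invariant unit vectors $(\xi_i)$, then for each fixed $n$ one has $\int_G|f_n(s)|\,\|\pi(s)\xi_i-\xi_i\|\,ds\to0$ along $i$ (split over a large compact set and its complement, using $\|\pi(s)\xi_i-\xi_i\|\le1+c$), i.e. $\|\pi(f_n)\xi_i-(\int_G f_n)\xi_i\|\to0$, while $\int_G f_n=\langle f_n,1\rangle\to m(1)=1$; combining with $\|\pi(m)-\pi(f_n)\|\le\|m-f_n\|_{\tilde A_c(G)}$ gives $\|E_\pi\xi_i-\xi_i\|\to0$, whence $E_\pi\ne0$ and $\mathcal H_\pi^{\pi}\ne0$. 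So $G$ has Property (T)$_c$.

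\textbf{(i)$\Rightarrow$(ii).} Here I would produce $p\in\tilde A_c(G)$ with $\pi(p)=E_\pi$ for every $\pi\in\mathcal R_c(G)$; then $\langle p,\varphi_{\xi,\eta}\rangle=\langle E_\pi\xi,\eta\rangle=m(\varphi_{\xi,\eta})$, so $m=p\in\tilde A_c(G)$, which is (ii). Since Property (T)$_c$ implies ordinary Property (T) (take $c=1$), $G$ is compactly generated and $\mathcal R_c(G)$ admits a Kazhdan pair; fix a symmetric probability density $\mu\in L^1(G)$ with generating support and a nontrivial component near $e$. The observation compensating for the lack of unitarity is that $\pi(\mu)$ is power bounded uniformly in $\pi$: $\|\pi(\mu)^n\|=\|\pi(\mu^{*n})\|\le|\pi|\le c$, because $\mu^{*n}$ is again a probability measure; in particular $\sigma(\pi(\mu))\subseteq\overline{\D}$. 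Working on the invariant complement $\ker E_\pi$ and using the Kazhdan constant, together with passing to the contragredient $\bar\pi(s)=(\pi(s^{-1}))^{*}\in\mathcal R_c(G)$ (which also lacks invariant vectors and obeys $I-\bar\pi(\mu)=(I-\pi(\mu))^{*}$, so as to control the range of $I-\pi(\mu)$), one shows that $I-\pi(\mu)$ is invertible on $\ker E_\pi$ with inverse bounded by $1/\delta$ uniformly over $\mathcal R_c(G)$. Granting this, the Ces\`aro averages $\nu_n=\tfrac1n\sum_{k=0}^{n-1}\mu^{*k}$ satisfy $\pi(\nu_n)E_\pi=E_\pi$ and $\pi(\nu_n)|_{\ker E_\pi}=\tfrac1n(I-\pi(\mu)^n)(I-\pi(\mu))^{-1}|_{\ker E_\pi}$, so that $\|\pi(\nu_n)-E_\pi\|=\|\pi(\nu_n)(I-E_\pi)\|\to0$ at a rate independent of $\pi$; hence $(\nu_n)$ is Cauchy in $\tilde A_c(G)$, its limit $p$ satisfies $\pi(p)=E_\pi$ for all $\pi$, and $m=p$.

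\textbf{Main obstacle.} The delicate step is the uniform invertibility of $I-\pi(\mu)$ on $\ker E_\pi$. For $c=1$ this is immediate from the spectral theorem: $\pi(\mu)$ is self-adjoint and Property (T) gives a uniform gap in its spectrum. For $c>1$ the operator $\pi(\mu)$ is neither self-adjoint nor normal, and the quadratic-form bound $\int_G\|\pi(s)\xi-\xi\|^2\,d\mu(s)\ge\delta\|\xi\|^2$ coming from the Kazhdan pair can be destroyed by cancellation when one passes to the first-order operator $I-\pi(\mu)$. Ruling this out --- proving that Property (T)$_c$ genuinely forces $\ker(I-\pi(\mu))=\mathcal H_\pi^{\pi}$ and that $1$ is uniformly separated from $\sigma(\pi(\mu)|_{\ker E_\pi})$ with closed range (so that a uniform mean ergodic theorem applies) --- is the technical core of this direction, and it is exactly here that one must invoke the quantitative structure of $\mathcal R_c(G)$ and of Property (T)$_c$ developed in Section~\ref{Sec_(T)_c}, presumably via an orbit-map/harmonic-function argument exploiting that $\operatorname{supp}\mu$ generates $G$. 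The remaining ingredients --- the identity $m(\varphi_{\xi,\eta})=\langle E_\pi\xi,\eta\rangle$, the duality $B_c(G)=\tilde A_c(G)^{*}$, and the estimates above --- are comparatively routine.
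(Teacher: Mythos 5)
Your direction (ii)$\Rightarrow$(i) is essentially sound and is a reasonable variant of the paper's argument: the paper shows that a weak*-continuous mean $m\in\tilde{A}_c(G)$ acts in every $\pi\in\mathcal{R}_c(G)$ as the projection onto $\mathcal{H}_\pi^{\mathrm{inv}}$ along $\bigl(\mathcal{H}_{\pi^*}^{\mathrm{inv}}\bigr)^{\perp}$ and then invokes its Kazhdan-projection characterisation of Property (T)${}_c$, whereas you argue directly with almost invariant vectors; both work, modulo the routine verification that the ergodic projection $E_\pi$ computes the mean of coefficients.

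The problem is in (i)$\Rightarrow$(ii), and it is exactly the step you flag and then skip with ``Granting this'': the uniform invertibility of $I-\pi(\mu)$ on $\ker E_\pi$, with inverse bounded independently of $\pi\in\mathcal{R}_c(G)$. This is not a technical remainder; it is the entire content of the hard direction, and nothing in your proposal supplies it. A Kazhdan pair gives $\sup_{s\in Q}\|\pi(s)\xi-\xi\|\geq\kappa\|\xi\|$ on the complement, but as you yourself observe, for non-unitary $\pi$ the passage from this pointwise lower bound to a lower bound on $\|(I-\pi(\mu))\xi\|=\bigl\|\int(\xi-\pi(s)\xi)\,d\mu(s)\bigr\|$ can be destroyed by cancellation: the unitary case is saved by the identity $\mathrm{Re}\,\langle\xi-\pi(s)\xi,\xi\rangle=\tfrac12\|\xi-\pi(s)\xi\|^2$, which has no analogue when $|\pi|\leq c$ with $c>1$, and your suggested ``orbit-map/harmonic-function argument'' is not carried out. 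The way this gap is actually closed in the paper is different from what you sketch: one renorms $\mathcal{H}_\pi$ by $\|\xi\|_{E_\pi}=\sup_{s\in G}\|\pi(s)\xi\|$, so that each $\pi$ becomes an \emph{isometric} representation on a Banach space whose modulus of convexity is bounded below by $\delta_{\mathcal{H}}(t/c)$ uniformly over the family, and then one applies the Dru\c{t}u--Nowak theorem on Kazhdan projections for families of isometric representations on uniformly convex spaces (Theorem \ref{Thm_DN}); uniform convexity is precisely what rules out the cancellation, since $\|\sigma(\mu)\xi\|$ close to $\|\xi\|$ forces the vectors $\sigma(s)\xi$, $s\in\operatorname{supp}\mu$, to cluster near $\xi$, contradicting the Kazhdan inequality. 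Your Ces\`aro-average scheme could in principle be made to work, but only after proving a statement of this strength, i.e.\ after essentially reproving Dru\c{t}u--Nowak; as written, the proposal replaces the theorem's core difficulty by an unproved assumption. (A smaller point: your parenthetical claim that Property (T)${}_c$ yields a single Kazhdan pair valid for all of $\mathcal{R}_c(G)$ also needs the universal-representation argument of Lemma \ref{Lem_char_T_c}, though that part is routine.)
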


Observe that $\mathcal{R}_1(G)$ is the class of unitary representations of $G$, hence Property (T)${}_1$ coincides with Kazhdan's Property (T), $B_1(G)$ is the Fourier--Stieltjes algebra $B(G)$, and $\tilde{A}_c(G)$ is the full C${}^*$-algebra $C^*(G)$. The equivalence in that case is given by \cite[Lemma 4.3]{HaKndL}.

Since the inclusions
\begin{align*}
B(G)\hookrightarrow B_c(G)\hookrightarrow M_0A(G)
\end{align*}
are weak*-weak*-continuous, we have the following implications:
\begin{align}\label{impl_T_c}
\text{Property (T}^*\text{)}\implies\text{Property (T)}_c\implies\text{Property (T)}.
\end{align}
Both these implications are strict; see Corollary \ref{Cor_str_imp}.

Let us mention that Property (T${}^*$) was generalised even further by de Laat and Zadeh \cite{dLaZad}. They defined Property (T${}_{X}^*$) in similar fashion, where $X(G)$ is a suitable subspace of $\operatorname{WAP}(G)$. In that language, Property (T)${}_c$ is exactly Property (T${}_X^*$) for $X=B_c$.

From the definition, we see that Property (T)${}_{c_2}$ implies Property (T)${}_{c_1}$ for $c_2\geq c_1 \geq 1$. This can also be deduced from the fact that inclusion $B_{c_1}(G)\hookrightarrow B_{c_2}(G)$ is weak*-weak*-continuous. This leads us to define, for every locally compact group $G$,
\begin{align*}
c_{\operatorname{ub}}(G)=\inf\{c\geq 1\ :\ G\text{ does not have Property (T)}_c\} \in [1,\infty],
\end{align*}
with the convention that $\inf\varnothing=\infty$. If $G$ satisfies Property (T${}^*$), then $c_{\operatorname{ub}}(G)=\infty$. The same holds if $G$ satisfies Lafforgue's Strong Property (T); see \cite{Laf} and the characterisation given by Corollary \ref{Cor_DN}. Our second result concerns the stability of Property (T)${}_c$ under various group constructions and relations.

\begin{thm}\label{Thm_c_ub}
The constant $c_{\operatorname{ub}}$ satisfies the following:
\begin{itemize}
\item[a)] For every $G$, $c_{\operatorname{ub}}(G)>1$ if and only if $G$ has Property (T).
\item[b)] For all $G_1,G_2$, $c_{\operatorname{ub}}(G_1\times G_2)=\min\{c_{\operatorname{ub}}(G_1),c_{\operatorname{ub}}(G_2)\}$.
\item[c)] If $H$ is a closed normal subgroup of $G$, then
\begin{align*}
\min\{c_{\operatorname{ub}}(G/H),c_{\operatorname{ub}}(H)\}\leq c_{\operatorname{ub}}(G)\leq c_{\operatorname{ub}}(G/H).
\end{align*}
\item[d)] If $\Gamma$ is a lattice in $G$, then $c_{\operatorname{ub}}(\Gamma)=c_{\operatorname{ub}}(G)$.
\item[e)] If $G$ is unitarisable, then $c_{\operatorname{ub}}(G)\in\{1,\infty\}$.
\end{itemize}
\end{thm}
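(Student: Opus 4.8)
The plan is to isolate two facts peculiar to the uniformly bounded setting and then run, for each item, an adapted version of a classical Property~(T) argument. First, exactly as for ordinary Property~(T), Property (T)${}_c$ has a Kazhdan pair reformulation: $G$ has (T)${}_c$ if and only if there exist a compact $Q\subseteq G$ and $\kappa>0$ such that no $\pi\in\mathcal{R}_c(G)$ lacking non-zero invariant vectors admits a unit vector $\xi$ with $\sup_{s\in Q}\|\pi(s)\xi-\xi\|<\kappa$; the nontrivial direction follows by forming the direct sum of a sequence of putative counterexamples, which stays in $\mathcal{R}_c(G)$ because $|\bigoplus_n\pi_n|=\sup_n|\pi_n|$. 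Second, for every $\sigma\in\mathcal{R}_c(H)$ the quotient representation of $H$ on $\mathcal{H}/\mathcal{H}^\sigma$ has no non-zero invariant vector: a non-zero invariant class $[u]$ would make $\mathcal{W}:=\mathcal{H}^\sigma+\C u$ an invariant subspace with $\mathcal{W}^H=\mathcal{H}^\sigma$ of codimension one and trivial $H$-action on the quotient, so $\sigma|_{\mathcal{W}}=I+N$ with $N(h)\mathcal{W}\subseteq\mathcal{H}^\sigma$ and $N|_{\mathcal{H}^\sigma}=0$; then $N(h_1h_2)=N(h_1)+N(h_2)$, so $h\mapsto N(h)$ is a bounded homomorphism into the additive group of $\mathcal{B}(\mathcal{W})$ and therefore vanishes, forcing $u\in\mathcal{H}^\sigma$. (Uniform boundedness is essential here; for a non-trivial unipotent representation of $\R$ the statement fails.) Granting these, several items are formal: the implication (T)${}_c\Rightarrow$(T) of \eqref{impl_T_c} gives the ``only if'' of a); inflation along $G_1\times G_2\to G_i$ preserves $\mathcal{R}_c$, almost invariant vectors and invariant vectors, so (T)${}_c$ for $G_1\times G_2$ implies (T)${}_c$ for each $G_i$ and thus $c_{\operatorname{ub}}(G_1\times G_2)\le\min\{c_{\operatorname{ub}}(G_1),c_{\operatorname{ub}}(G_2)\}$; pull-back along $G\to G/H$ has the same effect, giving (T)${}_c$ for $G$ implies (T)${}_c$ for $G/H$ and hence $c_{\operatorname{ub}}(G)\le c_{\operatorname{ub}}(G/H)$; and for e), unitarisability of $G$ writes every $\pi\in\mathcal{R}_c(G)$ as $\pi(s)=S^{-1}\rho(s)S$ with $\rho$ unitary and $S$ bounded invertible, so $\pi$ has almost invariant (resp. non-zero invariant) vectors if and only if $\rho$ does, whence (T)${}_c$ is equivalent to (T) for every $c$ and $c_{\operatorname{ub}}(G)\in\{1,\infty\}$.

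The substance of b) and c) is the reverse inequality, and here the two facts combine. Take $H\trianglelefteq G$ (the product case being $H=G_1\times\{e\}$) and suppose $c<\min\{c_{\operatorname{ub}}(G/H),c_{\operatorname{ub}}(H)\}$, so both $H$ and $G/H$ have (T)${}_c$; let $\pi\in\mathcal{R}_c(G)$ have almost invariant vectors but --- for contradiction --- no non-zero invariant vectors. Restricting to $H$ and using (T)${}_c$ for $H$ gives $\mathcal{V}:=\mathcal{H}^H\ne\{0\}$, which is $G$-invariant because $H$ is normal. By the second fact the quotient representation of $H$ on $\mathcal{H}/\mathcal{V}$ --- a member of $\mathcal{R}_c(H)$ --- has no non-zero invariant vectors, so a Kazhdan pair for $H$ forces $\operatorname{dist}(\xi,\mathcal{V})$ to be arbitrarily small for sufficiently almost invariant unit vectors $\xi$ of $\pi$. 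Choosing $v\in\mathcal{V}$ close to such a $\xi$, a short estimate shows $v$ is almost invariant for $\pi|_{\mathcal{V}}$, which factors through a representation in $\mathcal{R}_c(G/H)$ since $H$ acts trivially on $\mathcal{V}$; then (T)${}_c$ for $G/H$ produces a non-zero $G/H$-invariant, hence $G$-invariant, vector in $\mathcal{V}$, contradicting the choice of $\pi$. This yields $c_{\operatorname{ub}}(G)\ge\min\{c_{\operatorname{ub}}(G/H),c_{\operatorname{ub}}(H)\}$, and for products $c_{\operatorname{ub}}(G_1\times G_2)\ge\min\{c_{\operatorname{ub}}(G_1),c_{\operatorname{ub}}(G_2)\}$.

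For d) the only points to verify beyond the classical treatment of Property~(T) for lattices (cf.~\cite{BedlHVa}) are the compatibility of $\mathcal{R}_c$ with the operations used: restriction of $\pi\in\mathcal{R}_c(G)$ to $\Gamma$ is trivially in $\mathcal{R}_c(\Gamma)$, while $\operatorname{Ind}_\Gamma^G$ maps $\mathcal{R}_c(\Gamma)$ into $\mathcal{R}_c(G)$ because the finite $G$-invariant measure on $G/\Gamma$ keeps the induced operators of norm at most $|\pi|$. With this, continuity of induction for weak containment together with the Frobenius identification of $\mathcal{H}^\Gamma$ with $\operatorname{Hom}_G(L^2(G/\Gamma),\pi)$ shows that $G$ has (T)${}_c$ if and only if $\Gamma$ does, hence $c_{\operatorname{ub}}(\Gamma)=c_{\operatorname{ub}}(G)$; alternatively one can combine Theorem~\ref{Thm_equiv} with the equality $B_c(G)|_\Gamma=B_c(\Gamma)$ (up to equivalence of norms) for lattices.

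The remaining assertion --- that Property~(T) forces $c_{\operatorname{ub}}(G)>1$, i.e.\ that $G$ has (T)${}_c$ for some $c>1$ --- is the main obstacle. It is a perturbation statement: the unitary Kazhdan pair must persist, with a slightly worse constant, for representations in $\mathcal{R}_c(G)$ once $c$ is close enough to $1$. The difficulty is that a weak*-limit of coefficients of representations in $\mathcal{R}_c(G)$ without invariant vectors can acquire invariant behaviour, so no soft compactness argument works and the proof must be quantitative. The natural route is to exploit that $\|\pi(s)\|\le c$ with $c$ near $1$ puts each $\pi(s)$ within $c-1$ of a unitary --- hence makes the diagonal coefficients $s\mapsto\langle\pi(s)\zeta,\zeta\rangle$ positive-definite up to an error of order $c-1$ --- and then to propagate this through the spectral gap of an averaging operator $\pi(\mu)$ attached to a symmetric Kazhdan probability measure $\mu$; equivalently, to deform the Kazhdan projection of $C^*(G)=\tilde{A}_1(G)$ to a bounded idempotent of the Banach algebra $\tilde{A}_c(G)$ for small $c-1$. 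Carrying out either version with all constants controlled by $c-1$ is where the real work lies.
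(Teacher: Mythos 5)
Your treatment of items b), c) and e), and of the ``only if'' half of a), is correct, and for the extension step it takes a genuinely different (and somewhat more elementary) route than the paper. Where the paper works with the Kazhdan projection $p\in\tilde{A}_c(H)$ and transfer maps between the algebras $\tilde{A}_c$ together with uniqueness of invariant means (Corollary \ref{Cor_G->G/H}, Lemma \ref{Lem_G/H->G}), you stay entirely at the level of Kazhdan pairs: the uniform pair from the analogue of Lemma \ref{Lem_char_T_c}, applied to the quotient representation of $H$ on $\mathcal{H}/\mathcal{H}^H$ (which has no invariant vectors, your bounded-cocycle argument being the same as the paper's Lemma 3.2), forces almost invariant vectors of $\pi$ to be close to $\mathcal{H}^H$, after which one factors through $G/H$. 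This is sound, provided you invoke the lifting of compact subsets of $G/H$ to compact subsets of $G$ (\cite[Lemma B.1.1]{BedlHVa}) and, in your Kazhdan-pair reformulation, index the direct sum by all pairs $(Q,1/n)$ rather than a sequence when $G$ is not $\sigma$-compact. Item e) coincides with Lemma \ref{Lem_unit->c}.

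The genuine gap is the ``if'' direction of a): Property (T) implies Property (T)${}_c$ for some $c>1$. You correctly identify this as the crux, but you only describe candidate strategies (perturbing the Kazhdan projection of $C^*(G)$, quantitative spectral-gap arguments) without carrying any of them out, so part a) is simply not proved. This is not a routine perturbation: the paper does not prove it from scratch either, but derives it from known deep results --- de la Salle's construction of a central Kazhdan projection for a class of representations containing $\mathcal{R}_{1+\varepsilon}(G)$ (\cite[Corollary 5.5]{dlS}), or alternatively the Fisher--Margulis theorem that Property (T) groups have Property (FH)${}_{1+\varepsilon}$ \cite{FisMar} combined with Proposition \ref{Prop_Guich_c}. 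Without citing or reproving such a result, your argument establishes only $c_{\operatorname{ub}}(G)\geq 1$ together with the trivial direction, and Theorem \ref{Thm_c_ub}(a) remains open in your write-up.

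Item d) is also thinner than it should be. The mechanisms you invoke --- ``continuity of induction for weak containment'' and a Frobenius identification --- are statements from unitary representation theory and do not transfer verbatim to $\mathcal{R}_c$; the actual content in both directions is quantitative. For $\Gamma\Rightarrow G$ one must show the averaged vector $\eta=\int_{G/\Gamma}\pi(\sigma(x))\xi'\,d\mu(x)$ is non-zero, which requires choosing $\xi$ almost invariant over a compact set whose image covers most of $G/\Gamma$ and tracking constants depending on $c$ (the paper's Lemma \ref{Lem_Gamma->G}); for $G\Rightarrow\Gamma$ one must check that induction preserves almost invariance and that invariant vectors of the induced representation are essentially constant, yielding $\pi$-invariant vectors (the paper's Lemma \ref{Lem_hat(pi)} and Corollary \ref{Cor_G->Gamma}, phrased there via the transfer of the invariant mean). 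Your proposed shortcut via ``$B_c(G)|_\Gamma=B_c(\Gamma)$ up to equivalence of norms'' is unjustified: surjectivity of the restriction $B(G)\to B(\Gamma)$ is not known (and is expected to fail) already for $c=1$, and even granted it, one would still need weak*-continuity of restriction and compatibility of the invariant means to apply Theorem \ref{Thm_equiv}. So d) needs the quantitative argument, not the soft one.
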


The definitions of lattices and unitarisability are given in Sections \ref{Subsec_latt} and \ref{Sec_c_ub} respectively. 

Property (T) is known to be stable under measure equivalence; see \cite[Corollary 1.4]{Fur2}. This result was extended in \cite{IsPeRu} to von Neumann equivalence. Two countable groups $\Gamma$ and $\Lambda$ are \textit{von Neumann equivalent} ($\Gamma\sim_{\operatorname{vNE}}\Lambda$) if there is a von Neumann algebra $\mathcal{M}$ with a semi-finite, normal, faithful trace $\operatorname{Tr}$, and commuting, trace-preserving actions $\Gamma,\Lambda\curvearrowright(\mathcal{M},\operatorname{Tr})$ such that each action has a finite-trace fundamental domain; see Section \ref{Sec_vNE} for details. We prove the following.

\begin{thm}\label{Thm_vNE}
Let $\Gamma$ and $\Lambda$ be countable groups such that $\Gamma\sim_{\operatorname{vNE}}\Lambda$. Then $c_{\operatorname{ub}}(\Gamma)=c_{\operatorname{ub}}(\Lambda)$.
\end{thm}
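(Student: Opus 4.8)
The plan is to follow the blueprint of Ishan–Peterson–Ruth's proof that Property (T) is a von Neumann equivalence invariant, and to check that every representation-theoretic construction in that proof preserves uniform boundedness with no loss in the constant. The key object is the \emph{coupling}: a von Neumann algebra $(\mathcal{M},\mathrm{Tr})$ with commuting trace-preserving actions $\Gamma,\Lambda\curvearrowright\mathcal{M}$, each with a finite-trace fundamental domain. From such a coupling one builds, for any uniformly bounded representation $\pi$ of $\Lambda$ on $\mathcal{H}$ with $|\pi|\le c$, an induced uniformly bounded representation $\tilde\pi$ of $\Gamma$ on a Hilbert space associated to $\mathcal{M}\,\bar\otimes\,\mathcal{B}(\mathcal{H})$ (or an $L^2$-type space of $\mathcal{H}$-valued functions on the $\Lambda$-fundamental domain), and one shows that $\tilde\pi$ has almost invariant vectors whenever $\pi$ does, and non-trivial invariant vectors whenever $\pi$ does. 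The upshot is: if $\Gamma$ has Property (T)$_c$ then so does $\Lambda$, and by symmetry of the coupling the converse holds, giving $c_{\mathrm{ub}}(\Gamma)=c_{\mathrm{ub}}(\Lambda)$ once we observe that Property (T)$_c$ for all $c<c_0$ determines $c_{\mathrm{ub}}$.

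First I would recall the precise induction construction from \cite{IsPeRu}: using the finite-trace fundamental domain $p\in\mathcal{M}$ for the $\Lambda$-action, one identifies a corner of $\mathcal{M}\,\bar\otimes\,\mathcal{B}(\mathcal{H})$ with $\Lambda$-equivariant $\mathcal{H}$-valued objects and lets $\Gamma$ act by its (commuting) action on $\mathcal{M}$ together with the tautological action on the $\Lambda$-coordinates twisted by $\pi$. The crucial point is that in this construction $\Gamma$ acts only through the measure-preserving action on $\mathcal{M}$ and through the $\Lambda$-translation structure, \emph{not} through $\pi$ itself — it is $\Lambda$ that feels $\pi$. Hence the norm of $\tilde\pi(g)$ is governed by the trace-preserving (hence isometric) part coming from $\mathcal{M}$ and by the cocycle rearrangement of $\Lambda$-coordinates, each application of which contributes a factor $\|\pi(\lambda)\|\le c$. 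Because on each fundamental-domain piece only a single group element of $\Lambda$ is applied, one gets $|\tilde\pi|\le c$; I would write this out as a bounded-operator estimate on simple tensors $\sum_i \mathbf{1}_{A_i}\otimes\xi_i$ and pass to the closure. Dually, one checks that $\tilde\pi\in\mathcal{R}_c(\Gamma)$ and that the map on coefficients $B_c(\Lambda)\to B_c(\Gamma)$ it induces is compatible with the invariant means, so that one could alternatively phrase the whole argument through Theorem 1.3 and the weak*-continuity of invariant means; but the direct almost-invariant-vectors argument is cleaner.

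Second I would verify the two transfer statements. For almost invariant vectors: given a $(Q,\varepsilon)$-invariant unit vector $\xi$ for $\pi$ with $Q\subset\Lambda$ finite, one produces from it, via the fundamental domain, an $\mathcal{H}$-valued vector over $\mathcal{M}$ that is $(Q',\varepsilon')$-invariant for $\tilde\pi$ for a suitable finite $Q'\subset\Gamma$ and $\varepsilon'$ small when $\varepsilon$ is; this uses that the $\Gamma$-action on $\mathcal{M}$ is trace preserving so the relevant norms are computed by $\mathrm{Tr}$, together with finiteness of the fundamental-domain trace to normalise. For invariant vectors: a non-zero $\pi$-invariant vector gives a non-zero $\tilde\pi$-invariant vector by the same equivariant-function construction, and one checks non-triviality using faithfulness of $\mathrm{Tr}$. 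Neither step changes the uniform bound; all the bookkeeping is identical to \cite{IsPeRu}, with $\|\pi(\lambda)\|\le c$ inserted wherever unitarity was previously used. I would then remark that the symmetric roles of $\Gamma$ and $\Lambda$ in the definition of von Neumann equivalence give the reverse implication verbatim, and conclude by unwinding the definition of $c_{\mathrm{ub}}$: Property (T)$_c$ holds for $\Gamma$ iff it holds for $\Lambda$, for every $c\ge 1$, hence the two infima coincide.

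The main obstacle I anticipate is purely technical: making the norm estimate $|\tilde\pi|\le c$ rigorous requires care with the module structure over $\mathcal{M}$ (the induced space is not literally an $L^2$ of $\mathcal{H}$-valued functions but a corner of a tensor product of von Neumann algebras, so one works with $L^2$ of a trace), and one must check that the $\Gamma$-action is well defined and SOT-continuous on the completion. A secondary subtlety is ensuring the fundamental domain can be chosen as a genuine projection $p\in\mathcal{M}$ and that the "finite-trace" hypothesis is used to normalise invariant vectors correctly; but these are exactly the points already handled in \cite{IsPeRu}, so the proof here amounts to citing that framework and tracking the constant $c$ through it.
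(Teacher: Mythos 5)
Your plan rests on two steps that are not bookkeeping, and the first of them is exactly the point this paper deliberately avoids. The central claim is that the Ishan--Peterson--Ruth induction applied to $\pi\in\mathcal{R}_c(\Lambda)$ yields $\tilde\pi\in\mathcal{R}_c(\Gamma)$ ``with no loss in the constant''. In the unitary case, the isometry of the induced $\Gamma$-action on $(\mathcal{M}\overline{\otimes}\mathcal{H})^\Lambda$ is proved by observing that for a $(\sigma\otimes\pi)$-invariant vector $x$ the module inner product $\langle x,x\rangle_{\mathcal{M}}$ lies in $\mathcal{M}^\Lambda$, so that $\operatorname{Tr}\bigl(q_\Lambda\,\sigma_\gamma(\langle x,x\rangle_{\mathcal{M}})\bigr)=\operatorname{Tr}\bigl(q_\Lambda\langle x,x\rangle_{\mathcal{M}}\bigr)$ because $\sigma_{\gamma^{-1}}(q_\Lambda)$ is again a fundamental domain. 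This uses unitarity of $\pi$ in an essential way: for a merely uniformly bounded $\pi$, the $\mathcal{M}$-valued inner product of $\Lambda$-invariant vectors is no longer $\Lambda$-invariant, and there is no pointwise ``one element of $\Lambda$ per fundamental-domain piece'' picture to fall back on, since $\mathcal{M}$ is noncommutative and there is no cocycle $\Gamma\times X\to\Lambda$; that heuristic is a measure-equivalence (indeed lattice, cf.\ Lemma \ref{Lem_hat(pi)}) argument, not a von Neumann equivalence one. The paper explicitly flags this adaptation of Lemma \ref{Lem_Ish_rep} to uniformly bounded representations as merely ``plausible'' and takes a different route, so the estimate $|\tilde\pi|\le c$ --- the heart of your proposal --- is an open step, not a constant-tracking exercise.

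There is also a directional gap in your transfer scheme: to deduce Property (T)${}_c$ for $\Lambda$ from Property (T)${}_c$ for $\Gamma$ you need that a nonzero $\tilde\pi$-invariant vector forces $\pi$ to have one (equivalently, that $\pi$ without invariant vectors induces to $\tilde\pi$ without invariant vectors), whereas you only state the forward implications. In \cite{IsPeRu} the needed statement is Proposition 6.15 and requires $\pi$ to be \emph{weakly mixing}; the reduction from ``no invariant vectors'' to ``weakly mixing'' relies on the decomposition theory of unitary representations, which has no ready analogue for uniformly bounded ones. The paper sidesteps both issues: it induces only unitary representations, using weak mixing of irreducible representations to identify $\Phi(P)$ with the Kazhdan projection of $C^*(\Lambda)$ (Lemmas \ref{Lem_Ish_rep} and \ref{Lem_Phi(P)}), and then transfers the uniformly bounded information purely at the level of coefficient spaces: $\Phi^*$ is a contraction on each $M_d(\Lambda)$, and Pisier's criterion converts summability of $(c+\varepsilon)^{-d}\|\cdot\|_{M_d}$ into membership in $B_{c+\varepsilon}$, giving a weak*-weak*-continuous map $\Phi^*\colon B_c(\Lambda)\to B_{c+\varepsilon}(\Gamma)$ (Lemma \ref{Lem_B_c+e}). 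This costs an $\varepsilon$ --- (T)${}_c$ for $\Gamma$ only yields (T)${}_{c-\varepsilon}$ for $\Lambda$ --- but that loss is harmless for the equality $c_{\operatorname{ub}}(\Gamma)=c_{\operatorname{ub}}(\Lambda)$. If you could actually establish your induction estimate and the invariant-vector transfer, you would prove the stronger statement that Property (T)${}_c$ itself is a von Neumann equivalence invariant, which the paper leaves open; as written, however, both steps are genuine gaps.
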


As shown in \cite{IsPeRu}, von Neumann equivalence is implied by both measure equivalence and W${}^*$-equivalence. Two groups $\Gamma,\Lambda$ are  \textit{W${}^*$-equivalent} if their von Neumann algebras are isomorphic. A direct consequence of Theorem \ref{Thm_vNE} is the following.

\begin{cor}
Let $\Gamma$ and $\Lambda$ be countable groups such that their von Neumann algebras $L\Gamma, L\Lambda$ are isomorphic. Then $c_{\operatorname{ub}}(\Gamma)=c_{\operatorname{ub}}(\Lambda)$.
\end{cor}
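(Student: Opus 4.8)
The plan is to reduce this statement to Theorem~\ref{Thm_vNE}. The hypothesis that $L\Gamma$ and $L\Lambda$ are isomorphic is precisely the statement that $\Gamma$ and $\Lambda$ are W${}^*$-equivalent, so it suffices to check that W${}^*$-equivalence implies von Neumann equivalence; once this is known, $\Gamma\sim_{\operatorname{vNE}}\Lambda$ and Theorem~\ref{Thm_vNE} gives $c_{\operatorname{ub}}(\Gamma)=c_{\operatorname{ub}}(\Lambda)$.

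For the implication W${}^*$-equivalence $\Rightarrow$ von Neumann equivalence I would appeal to \cite{IsPeRu}; for completeness, here is the coupling. Fix a $*$-isomorphism $\theta\colon L\Gamma\to L\Lambda$. By uniqueness of the standard form, $\theta$ is spatially implemented: there is a unitary $U\colon\ell^2(\Gamma)\to\ell^2(\Lambda)$ with $\theta(x)=UxU^*$ for all $x\in L\Gamma$, and then automatically $U(L\Gamma)'U^*=(L\Lambda)'$. Put $\mathcal{M}=\mathcal{B}(\ell^2(\Lambda))$ with its canonical semi-finite trace $\operatorname{Tr}$. Let $\Gamma$ act on $(\mathcal{M},\operatorname{Tr})$ by $g\mapsto\operatorname{Ad}(U\lambda^\Gamma_g U^*)$ and let $\Lambda$ act by $h\mapsto\operatorname{Ad}(\rho^\Lambda_h)$, where $\lambda$ and $\rho$ are the left and right regular representations. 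Both actions are trace-preserving, and they commute since $U\lambda^\Gamma_g U^*\in L\Lambda$ while $\rho^\Lambda_h\in(L\Lambda)'$. Letting $p_0\in\mathcal{B}(\ell^2(\Gamma))$ and $q_0\in\mathcal{M}$ be the rank-one projections onto $\C\delta_e$, the projection $Up_0U^*$ is a fundamental domain of trace $1$ for the $\Gamma$-action and $q_0$ is one for the $\Lambda$-action, because the translates $\lambda^\Gamma_g p_0(\lambda^\Gamma_g)^*$ (resp. $\rho^\Lambda_h q_0(\rho^\Lambda_h)^*$) are pairwise orthogonal and sum strongly to the identity. Hence $(\mathcal{M},\operatorname{Tr})$ is a von Neumann $(\Gamma,\Lambda)$-coupling and $\Gamma\sim_{\operatorname{vNE}}\Lambda$.

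Granting this, the corollary is immediate from Theorem~\ref{Thm_vNE}. The only points that require genuine care are the spatial implementation of $\theta$ and its compatibility with commutants --- i.e.\ the uniqueness of the standard form --- and the verification that the chosen rank-one projections really are finite-trace fundamental domains in the sense of Section~\ref{Sec_vNE}; both are standard. The substantive content, Theorem~\ref{Thm_vNE}, is assumed here, so there is no further obstacle.
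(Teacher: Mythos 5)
Your proposal is correct and follows essentially the same route as the paper: the corollary is obtained by noting that an isomorphism $L\Gamma\cong L\Lambda$ (W${}^*$-equivalence) implies $\Gamma\sim_{\operatorname{vNE}}\Lambda$, as recorded in \cite{IsPeRu}, and then applying Theorem \ref{Thm_vNE}. Your explicit coupling on $\mathcal{B}(\ell^2(\Lambda))$, using the spatial implementation of the isomorphism via uniqueness of the standard form and the rank-one projections onto $\C\delta_e$ as finite-trace fundamental domains, is a correct fleshing-out of the fact the paper merely cites.
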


Finally, we focus on examples. A natural question is whether there exist groups with $1 < c_{\operatorname{ub}}(G) < \infty$. In other words, are there Property (T) groups that do not satisfy Property (T)${}_c$ for $c$ big enough? We answer this question in the positive using Dooley's work on representations of rank 1 Lie groups \cite{Doo}. For $G=\operatorname{Sp}(n,1)$ ($n\geq 2$) or $G=F_{4,-20}$, let $\boldsymbol{\Lambda}(G)$ be the Cowling--Haagerup constant, i.e.
\begin{align}\label{CHctt}
\boldsymbol{\Lambda}(G)=\begin{cases}
2n-1, & \text{if }G=\operatorname{Sp}(n,1),\\
21, & \text{if }G=F_{4,-20}.
\end{cases}
\end{align}
We refer the reader to \cite[\S I.17]{Kna} and \cite{Val3} for the precise definitions of these groups, and to \cite{CowHaa} for the meaning of $\boldsymbol{\Lambda}(G)$. It turns out that this constant provides a bound for $c_{\operatorname{ub}}(G)$.

\begin{thm}\label{Thm_rank1}
For $G=\operatorname{Sp}(n,1)$ ($n\geq 2$) and $G=F_{4,-20}$, we have
\begin{align*}
1 < c_{\operatorname{ub}}(G) \leq \boldsymbol{\Lambda}(G).
\end{align*}
\end{thm}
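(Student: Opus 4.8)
The lower bound $c_{\operatorname{ub}}(G) > 1$ is immediate: $\operatorname{Sp}(n,1)$ and $F_{4,-20}$ both have Property (T) (being simple rank-1 Lie groups with Property (T) — this is classical, going back to Kazhdan/Kostant), so by part a) of Theorem \ref{Thm_c_ub} we get $c_{\operatorname{ub}}(G) > 1$. The content of the theorem is the upper bound $c_{\operatorname{ub}}(G) \leq \boldsymbol{\Lambda}(G)$, which amounts to showing that $G$ fails Property (T)${}_c$ for every $c > \boldsymbol{\Lambda}(G)$ (or at least for a sequence of such $c$ tending to $\boldsymbol{\Lambda}(G)$). By the definition of Property (T)${}_c$, I need to exhibit, for each such $c$, a uniformly bounded representation $\pi \in \mathcal{R}_c(G)$ which has almost invariant vectors but no non-trivial invariant vectors.

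**Constructing the representations via Dooley's work.** The plan is to use the analytic continuation of the spherical principal series representations of $G$. For a rank-1 simple Lie group, the principal series $\pi_s$ (parametrised by $s$ in a strip) extends, for certain ranges of the parameter, to a family of uniformly bounded representations on a fixed Hilbert space — this is precisely what Cowling and others established, and what Dooley \cite{Doo} makes quantitative. Concretely, as the parameter $s$ approaches the endpoint corresponding to the trivial representation, one obtains representations $\pi_s$ that are uniformly bounded with $|\pi_s| \to 1$... but that is not quite what is needed, since those would only give almost invariant vectors in the unitary world. Instead, the relevant construction is the one from \cite{CowHaa}: the complementary series together with its uniformly bounded analytic continuation, where the uniform bound of $\pi_s$ is controlled and stays below $\boldsymbol{\Lambda}(G) + \varepsilon$ on a suitable range, while $\pi_s$ weakly contains the trivial representation in the limit (the spherical functions $\varphi_s$ converge uniformly on compacta to the constant function $1$) without $\pi_s$ itself having invariant vectors. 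The key input from \cite{Doo} is the estimate $\sup_s |\pi_s| \leq \boldsymbol{\Lambda}(G)$ (or arbitrarily close) uniformly over the relevant family.

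**Assembling the almost-invariant-vectors argument.** Having fixed $c > \boldsymbol{\Lambda}(G)$, I would take a sequence $s_n$ of parameters approaching the "trivial" endpoint, each $\pi_{s_n} \in \mathcal{R}_c(G)$, and consider the direct sum (or a direct integral) $\pi = \bigoplus_n \pi_{s_n}$, which is still in $\mathcal{R}_c(G)$. Using the normalised $K$-fixed vectors $\xi_n$ in $\pi_{s_n}$, the matrix coefficients $\langle \pi_{s_n}(g)\xi_n, \xi_n\rangle = \varphi_{s_n}(g)$ converge to $1$ uniformly on compact subsets of $G$; this translates into $\|\pi(f)\xi_n - \left(\int f\right)\xi_n\| \to 0$ for $f \in L^1(G)$ with compact support, i.e. $\pi$ has almost invariant vectors in the sense appropriate to Property (T)${}_c$ (I should check this against the precise definition in Section \ref{Sec_(T)_c}, likely phrased via $\tilde{A}_c(G)$ or via Følner-type nets of vectors). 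On the other hand $\pi$ has no non-zero invariant vector: each $\pi_{s_n}$ is irreducible and non-trivial (for $s_n$ in the open range), so it has no invariant vector, and neither does the direct sum. This exhibits the failure of Property (T)${}_c$, giving $c_{\operatorname{ub}}(G) \leq c$; letting $c \downarrow \boldsymbol{\Lambda}(G)$ yields the bound.

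**Main obstacle.** The delicate point is extracting from Dooley's analysis the precise uniform bound $|\pi_s| \leq \boldsymbol{\Lambda}(G)$ (rather than some larger, $s$-dependent constant) along a family of parameters $s$ that simultaneously (a) stays inside the range where $\pi_s$ is uniformly bounded, (b) approaches the trivial endpoint so that spherical functions converge to $1$, and (c) consists of genuinely non-trivial irreducible representations. I expect this to require care in matching Dooley's parametrisation and normalisation of the intertwining operators with the Cowling--Haagerup constant $\boldsymbol{\Lambda}(G)$, and in verifying that the convergence $\varphi_{s_n} \to 1$ holds in the topology dictated by the predual $\tilde{A}_c(G)$ — equivalently, that the relevant coefficients converge in the $\sigma(B_c(G), \tilde{A}_c(G))$-topology, which by Theorem \ref{Thm_equiv} is exactly what obstructs Property (T)${}_c$. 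A secondary subtlety is ensuring the almost-invariant-vector condition is verified in the correct (possibly stronger) sense used in the paper's definition, for which invoking Theorem \ref{Thm_equiv} and working directly with the invariant mean on $B_c(G)$ may in fact be the cleanest route.
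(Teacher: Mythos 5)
Your lower bound is exactly the paper's: Property (T) for $\operatorname{Sp}(n,1)$ and $F_{4,-20}$ plus Theorem \ref{Thm_c_ub}(a) (equivalently Lemma \ref{Lem_(T)->c}). The problem is in your main route to the upper bound. You propose to exhibit a representation $\pi=\bigoplus_n\pi_{s_n}\in\mathcal{R}_c(G)$ with almost invariant vectors, and the mechanism you give for almost invariance is that the coefficients $\langle\pi_{s_n}(g)\xi_n,\xi_n\rangle$ converge to $1$ uniformly on compacta. For \emph{unitary} representations this suffices, since $\|\pi(g)\xi-\xi\|^2=2-2\operatorname{Re}\langle\pi(g)\xi,\xi\rangle$. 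For uniformly bounded non-unitary representations it does not: one only has
\begin{align*}
\|\pi(g)\xi_n-\xi_n\|^2=\|\pi(g)\xi_n\|^2+\|\xi_n\|^2-2\operatorname{Re}\langle\pi(g)\xi_n,\xi_n\rangle,
\end{align*}
and $\|\pi(g)\xi_n\|$ is only bounded by $c\|\xi_n\|$, so the left-hand side can stay of size comparable to $c^2-1$ even when the coefficient tends to $1$. Since the definition of Property (T)${}_c$ in Section \ref{Sec_(T)_c} requires norm almost-invariance, this step is a genuine gap, and there is no obvious way to control $\|\pi_{s_n}(g)\xi_n\|$ for Dooley's representations (whose inner products are the perturbed ones given by $J_{\xi,\eta}$, and whose natural "spherical" vectors are not obviously in the space with the right normalisation).

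This is precisely why the paper does not argue through almost invariant vectors at all. It uses the characterisation of Theorem \ref{Thm_equiv} and shows that the invariant mean on $B_c(G)$ is not $\sigma(B_c(G),\tilde{A}_c(G))$-continuous for $c>\boldsymbol{\Lambda}(G)$: by Veech's theorem, $\operatorname{WAP}(G)=C_0(G)\oplus\C 1$ and the mean is the limit at infinity; coefficients of Dooley's representations at compactly supported $f\in C_{00}^\infty(\overline{N})$ restrict to $C_0$ functions on $\overline{N}$ and hence have mean $0$, while a weak* limit of a suitable sequence of such coefficients (as $\xi\to r$, $\eta\to 1$, so that the bounds stay below $c$) equals $1$ on all of $\overline{N}$ and therefore has mean $1$. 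You do gesture at this alternative in your last sentence ("working directly with the invariant mean \dots may in fact be the cleanest route"), but you do not carry it out: your proposal contains no mechanism for evaluating the invariant mean on these coefficients (nothing playing the role of Veech's theorem), no argument that the approximating coefficients have mean zero, and no argument that the weak* limit has mean one. As written, the core argument does not establish failure of Property (T)${}_c$, so the inequality $c_{\operatorname{ub}}(G)\leq\boldsymbol{\Lambda}(G)$ is not proved.
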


The fact that these groups have Property (T) has been known for a long time; see \cite{Kos}. The new element in Theorem \ref{Thm_rank1} is the inequality $c_{\operatorname{ub}}(G) \leq \boldsymbol{\Lambda}(G)$.

A very important characterisation of Property (T) is given by the Delorme--Guichardet theorem, which says that, for $\sigma$-compact groups, Property (T) is equivalent to Property (FH): every isometric affine action on a Hilbert space has a fixed point; see \cite[Theorem 2.12.4]{BedlHVa}. Guichardet \cite{Gui} proved that Property (FH) implies Property (T) for $\sigma$-compact groups, and Delorme \cite{Del} proved the converse for every topological group. It is then natural to try to establish an analogous result for uniformly bounded representations. We say that $G$ has Property (FH)${}_c$ if every affine action whose linear part is in $\mathcal{R}_c(G)$ has a fixed point. As shown in \cite[\S 3.a]{BFGM}, Guichardet's argument holds in a far more general setting. This allows one to prove that Property (FH)${}_c$ implies Property (T)${}_c$ for all $c\geq 1$; see Section \ref{Sec_Guich} for details.

\begin{rmk}
We do not know if Property (T)${}_c$ implies Property (FH)${}_c$ for $c>1$.
\end{rmk}

We will say that a group $G$ has Property (PH)${}_c$ if it admits a (metrically) proper affine action on a Hilbert space with linear part in $\mathcal{R}_c(G)$. This is a strong negation of Property (FH)${}_c$. For $c=1$, this is nothing but the Haagerup property or a-(T)-menability; see \cite{CCJJV}. By a result of de Cornulier \cite{Cor}, for simple Lie groups, there is a dichotomy: they either satisfy Property (FH)${}_c$ or Property (PH)${}_c$. As a consequence of this fact, we obtain the following.

\begin{cor}\label{Cor_rank1}
Let $G$ be as in Theorem \ref{Thm_rank1}. Then $G$ satisfies Property (PH)${}_c$ for every $c>\boldsymbol{\Lambda}(G)$.
\end{cor}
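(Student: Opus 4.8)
The plan is to combine Theorem~\ref{Thm_rank1} with de Cornulier's dichotomy for simple Lie groups. Fix $G=\operatorname{Sp}(n,1)$ or $G=F_{4,-20}$ and let $c>\boldsymbol{\Lambda}(G)$. By Theorem~\ref{Thm_rank1} we have $c_{\operatorname{ub}}(G)\leq\boldsymbol{\Lambda}(G)<c$, so by the definition of $c_{\operatorname{ub}}(G)$ as an infimum, $G$ does \emph{not} have Property (T)${}_c$. Since Property (FH)${}_c$ implies Property (T)${}_c$ (by the Guichardet-type argument recalled in Section~\ref{Sec_Guich}), it follows that $G$ does not have Property (FH)${}_c$ either: there is an affine action of $G$ on a Hilbert space with linear part in $\mathcal{R}_c(G)$ and no fixed point.

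Next I would invoke de Cornulier's result \cite{Cor}: for a simple Lie group, and for each fixed $c\geq 1$, either every affine action with linear part in $\mathcal{R}_c(G)$ has a fixed point (Property (FH)${}_c$), or $G$ admits a \emph{proper} such action (Property (PH)${}_c$) — there is no intermediate behaviour. Since we have just shown $G$ fails Property (FH)${}_c$, the dichotomy forces $G$ to satisfy Property (PH)${}_c$, i.e.\ $G$ admits a metrically proper affine action on a Hilbert space with linear part in $\mathcal{R}_c(G)$. As $c>\boldsymbol{\Lambda}(G)$ was arbitrary, this is exactly the claimed statement.

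The one point that needs care is checking that de Cornulier's dichotomy is genuinely available in the uniformly bounded (rather than unitary) setting, i.e.\ that the rigidity/flexibility alternative from \cite{Cor} applies to affine actions whose linear part lies in $\mathcal{R}_c(G)$ and not merely to isometric actions. I expect this to be the main (and essentially the only) obstacle; it should follow from the structural input in \cite{Cor} — the reduction to actions factoring through, or restricting to, the relevant rank-one factors, together with the observation that the class $\mathcal{R}_c(G)$ is stable under the operations used there (restriction to subgroups, quotients, induction) — but it is worth spelling out explicitly rather than citing verbatim. Everything else is a direct chaining of the implications \eqref{impl_T_c}, the Guichardet argument, and Theorem~\ref{Thm_rank1}, with no computation required.
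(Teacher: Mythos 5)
Your argument is essentially the paper's own proof: Theorem \ref{Thm_rank1} gives the failure of Property (T)${}_c$ for every $c>\boldsymbol{\Lambda}(G)$, Proposition \ref{Prop_Guich_c} upgrades this to the failure of Property (FH)${}_c$, and de Cornulier's result then yields Property (PH)${}_c$. The only place you diverge is the point you flag as the ``main obstacle'', and there the worry (and your proposed fix via reduction to rank-one factors and stability of $\mathcal{R}_c(G)$ under restriction/quotients/induction) is unnecessary: the paper does not adapt any unitary statement to the uniformly bounded setting. It quotes \cite[Theorem 1.4]{Cor} directly: $G$ has Property PL, meaning that \emph{every} uniformly Lipschitz action of $G$ on a metric space is either proper or has bounded orbits. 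An affine action $\sigma(s)x=\pi(s)x+b(s)$ with linear part $\pi\in\mathcal{R}_c(G)$ is uniformly Lipschitz (each $\sigma(s)$ is $c$-Lipschitz), so the cited result applies verbatim and no re-proof is needed. The small step you should instead make explicit is the passage from ``no fixed point'' to ``unbounded orbits'': for affine actions with uniformly bounded linear part, bounded orbits already imply a fixed point by \cite[Lemma 2.14]{BFGM}, as recalled in Section \ref{Sec_Guich}, so the failure of Property (FH)${}_c$ produces an unbounded cocycle, and the PL alternative then forces this action to be proper, which is exactly Property (PH)${}_c$.
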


Let us mention that Nishikawa \cite{Nis} proved that $\operatorname{Sp}(n,1)$ has Property (PH)${}_c$ for some $c>1$ using different methods.

In the final part of the article, we concentrate on giving lower bounds for $c_{\operatorname{ub}}$. More concretely, we use the work of Nowak \cite{Now} and de Laat--de la Salle \cite{dLadlS} to study the behaviour of $c_{\operatorname{ub}}(\Gamma)$, where $\Gamma$ is a random group in the Gromov density model and triangular model respectively. In the Gromov model $\mathcal{G}(n,l,d)$ with $d>\frac{1}{3}$, we have $c_{\operatorname{ub}}(\Gamma)\geq\sqrt{2}$ with overwhelming probability. In the triangular model $\mathcal{M}(n,d)$, $c_{\operatorname{ub}}(\Gamma)$ goes to infinity as $n\to\infty$, with a lower bound proportional to $n^{\frac{3d-1}{6}}$; see Section \ref{Sec_random} for details.

\subsection*{Organisation of the paper}
In Section \ref{Sec_Bc_Ac}, we focus on the spaces $B_c(G)$ and $\tilde{A}_c(G)$ and prove all the preliminary results related to representations in $\mathcal{R}_c(G)$. Section \ref{Sec_(T)_c} is devoted to Property (T)${}_c$ and its characterisation in terms of the existence of the Kazhdan projection. With all this material, we prove Theorem \ref{Thm_equiv} in Section \ref{Sec_proof_Thm}. Section \ref{Sec_Guich} is devoted to the generalisation of Guichardet's theorem to our context. In Section \ref{Sec_stab}, we prove several stability properties, leading to the proof of Theorem \ref{Thm_c_ub} in Section \ref{Sec_c_ub}. In section \ref{Sec_vNE}, we focus on von Neumann equivalence and prove Theorem \ref{Thm_vNE}. Theorem \ref{Thm_rank1} is proved in Section \ref{Sec_Lie}. Section \ref{Sec_random} is devoted to random groups and the lower bounds for $c_{\operatorname{ub}}$. Finally, in Section \ref{Sec_ques}, we pose a few questions related to Property (T)${}_c$ that we consider interesting.

\subsection*{Acknowledgements}
I am grateful to Anthony Dooley and Mikael de la Salle, whose comments and suggestions helped greatly improve the contents of this paper.

\section{{\bf The spaces $B_c(G)$ and $\tilde{A}_c(G)$}}\label{Sec_Bc_Ac}

The aim of this section is to gather all the definitions and material needed in the proof of Theorem \ref{Thm_equiv}. Unless otherwise specified, $G$ will always denote a locally compact group endowed with a left Haar measure.

\subsection{Weakly almost periodic functions}
We start by discussing invariant means and weakly almost periodic functions; for a much more detailed treatment, we refer the reader to \cite[\S 3]{HaKndL} and the references therein. Let $X$ be a subspace of $L^\infty(G)$ containing the constant functions and closed under complex conjugation. A \textit{mean} on $X$ is a positive linear functional $m:X\to\C$ such that $m(1)=1$. This implies that
\begin{align*}
|m(\varphi)|\leq\|\varphi\|_\infty,
\end{align*}
for all $\varphi\in X$; see \cite[Proposition 3.2]{Pie}. We say that $m$ is left-invariant if
\begin{align*}
m(s\cdot \varphi)=m(\varphi),\quad\forall s\in G,\ \forall\varphi\in X,
\end{align*}
where $s\cdot\varphi(t)=\varphi(s^{-1}t)$. A group $G$ is said to be \textit{amenable} if there is a left-invariant mean on $L^\infty(G)$. Although this property is not satisfied by every locally compact group, there is a very interesting subalgebra of $L^\infty(G)$ that always carries a unique left-invariant mean. Let $C_b(G)$ be the space of bounded, continuous functions on $G$, endowed with the uniform norm. We denote by $C_{00}(G)$ the subspace of compactly supported functions, and by $C_0(G)$ the closed subspace of those that vanish at infinity. A function $\varphi\in C_b(G)$ is said to be \textit{weakly almost periodic} if the set
\begin{align*}
\{s\cdot\varphi\ :\ s\in G\}
\end{align*}
is relatively compact in the weak topology of $C_b(G)$. We denote by $\operatorname{WAP}(G)$ the algebra of weakly almost periodic functions on $G$. Ryll-Nardzewski \cite{Ryl} showed that $\operatorname{WAP}(G)$ has a unique left-invariant mean for every $G$. Moreover, the following was proved in \cite[Theorem 3.6]{HaKndL}.

\begin{thm}[Haagerup--Knudby--de Laat]\label{Thm_LIM}
Let $G$ be a locally compact group. Let $X$ be a subspace of $\operatorname{WAP}(G)$ containing the constant functions, and which is closed under left translation and complex conjugation. Then $X$ has a unique left-invariant mean $m$. Moreover, if $X$ is also closed under right translations, then $m$ is right-invariant. If $X$ is closed under inversion, then $m$ is inversion invariant.
\end{thm}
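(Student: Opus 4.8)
\emph{Proof proposal.} The plan is to deduce everything from the classical work of Ryll-Nardzewski, used in two guises: the fact (quoted above) that $\operatorname{WAP}(G)$ carries a unique left-invariant mean $M$, and the Ryll-Nardzewski fixed point theorem. Existence of a left-invariant mean on $X$ is the easy half: since $X\subseteq\operatorname{WAP}(G)$ contains the constants and is closed under complex conjugation and left translation, I would simply take $m:=M|_X$, which is automatically a positive, unital, left-invariant functional, hence a left-invariant mean.

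For uniqueness the heart of the matter is to show that the left orbit of any $\varphi\in X$ ``sees'' a unique constant. Concretely, let $K_\varphi$ be the closure, in the weak topology of $C_b(G)$, of the convex hull of $\{s\cdot\varphi:s\in G\}$. Since $\varphi$ is weakly almost periodic this orbit is relatively weakly compact, so by Krein's theorem $K_\varphi$ is a nonempty weakly compact convex set, and it is invariant under left translation, which acts by affine isometries and hence forms a noncontracting family. I would apply the Ryll-Nardzewski fixed point theorem to obtain $\psi_0\in K_\varphi$ fixed by all left translations; any such function is constant, say $\psi_0=\lambda\cdot 1$ with $\lambda\in\C$. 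On the other hand $M$, being a bounded linear functional on $C_b(G)$, is weakly continuous and takes the constant value $M(\varphi)$ on the orbit, hence on its convex hull, hence on all of $K_\varphi$; evaluating at $\psi_0$ gives $\lambda=M(\varphi)$. Thus $M(\varphi)\cdot 1$ is the unique constant in $K_\varphi$.

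With this in hand, let $m'$ be an arbitrary left-invariant mean on $X$. As a mean it has norm $1$, so it is $\sigma(X,X^*)$-continuous; by Hahn--Banach the weak topology of $X$ is the trace of $\sigma(C_b(G),C_b(G)^*)$, so $M(\varphi)\cdot 1$ lies in the $\sigma(X,X^*)$-closure of $\operatorname{conv}\{s\cdot\varphi:s\in G\}$, a set contained in $X$ because $X$ is a left-translation-invariant subspace. On that convex hull $m'$ is constant equal to $m'(\varphi)$, so by continuity $m'(M(\varphi)\cdot 1)=m'(\varphi)$; but the left-hand side is $M(\varphi)\,m'(1)=M(\varphi)$. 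Hence $m'(\varphi)=M(\varphi)=m(\varphi)$ for all $\varphi\in X$, i.e.\ $m'=m$. The ``moreover'' clauses follow formally: if $X$ is also closed under right translation, then for each $t\in G$ the functional $\varphi\mapsto m(\varphi\cdot t)$ (with $(\varphi\cdot t)(s)=\varphi(st)$) is again a left-invariant mean on $X$, since left and right translations commute, so it must equal $m$, giving right-invariance. If $X$ is closed under inversion, then it is automatically closed under right translation, since $\varphi\cdot s=(s\cdot\varphi^{\vee})^{\vee}$ with $\varphi^{\vee}(r)=\varphi(r^{-1})$; hence $m$ is right-invariant, and then $\varphi\mapsto m(\varphi^{\vee})$ is a left-invariant mean (using $(s\cdot\varphi)^{\vee}=\varphi^{\vee}\cdot s$ and right-invariance of $m$), so it equals $m$, giving inversion-invariance.

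I expect the only genuinely non-formal point to be the uniqueness argument, and within it two things need care: (i) checking the hypotheses of the Ryll-Nardzewski fixed point theorem, namely weak compactness of the closed convex hull of the orbit (Krein's theorem) and the noncontracting property (immediate, as left translations are isometries); and (ii) the transfer, via Hahn--Banach, of the weak-limit statement $M(\varphi)\cdot 1\in K_\varphi$ from $C_b(G)$ down to the possibly not weakly closed subspace $X$. Everything else is routine manipulation of invariant means.
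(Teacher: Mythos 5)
Your proof is correct and follows essentially the argument of the cited source \cite[Theorem 3.6]{HaKndL} --- the paper itself only quotes the result without proof --- namely restriction of Ryll-Nardzewski's unique invariant mean on $\operatorname{WAP}(G)$ for existence, the identification of the unique constant function in the weakly compact closed convex hull of the left orbit (via Krein's theorem and the Ryll-Nardzewski fixed point theorem) for uniqueness, and the formal commutation identities $(s\cdot\varphi)\cdot t=s\cdot(\varphi\cdot t)$, $\varphi\cdot s=(s\cdot\varphi^{\vee})^{\vee}$, $(s\cdot\varphi)^{\vee}=\varphi^{\vee}\cdot s$ for the right-translation and inversion clauses. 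One cosmetic point: $M$ is a functional on $\operatorname{WAP}(G)$, not on $C_b(G)$, but since $\operatorname{WAP}(G)$ is norm-closed, hence weakly closed, and $K_\varphi\subseteq\operatorname{WAP}(G)$, your continuity step is unaffected; alternatively, Mazur's theorem lets you replace the weak closure of the convex hull by its norm closure and use only norm-continuity of means, avoiding the Hahn--Banach transfer of weak topologies altogether.
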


By inversion we mean $\check{\varphi}(s)=\varphi(s^{-1})$. Observe that, if $G$ is not compact, $X\cap C_0(G)$ is always contained in the kernel of $m$.

\subsection{Spaces of coefficients of uniformly bounded representations}\label{Subs_B_c}
Let $G$ be a locally compact group, $\mathcal{H}$ a Hilbert space, and $\mathcal{B}(\mathcal{H})$ the algebra of bounded operators on $\mathcal{H}$. By a \textit{representation} of $G$ on $\mathcal{H}$, we mean a homomorphism $\pi:G\to\mathcal{B}(\mathcal{H})$ with values in the invertible elements of $\mathcal{B}(\mathcal{H})$, which is continuous for the strong operator topology (SOT), meaning that, for every $\xi\in\mathcal{H}$, the map
\begin{align*}
s\in G \mapsto \pi(s)\xi\in\mathcal{H}
\end{align*}
is continuous for the norm topology. We say that $\pi$ is \textit{uniformly bounded} if
\begin{align*}
|\pi|=\sup_{s\in G}\|\pi(s)\| < \infty.
\end{align*}
A \textit{coefficient} of a uniformly bounded representation $\pi:G\to\mathcal{B}(\mathcal{H})$ is a function $\varphi\in C_b(G)$ for which there exist $\xi,\eta\in\mathcal{H}$ such that
\begin{align}\label{dec_phi}
\varphi(s)=\langle\pi(s)\xi,\eta\rangle,\quad\forall s\in G.
\end{align}
Following the notation of \cite{Pis2}, for $c\geq 1$, we define $B_c(G)$ as the space of coefficients of representations $\pi:G\to\mathcal{B}(\mathcal{H})$ with $|\pi|\leq c$. We endow $B_c(G)$ with the norm
\begin{align}\label{norm_B_c}
\|\varphi\|_{B_c(G)}=\inf\|\xi\|\|\eta\|,
\end{align}
where the infimum is taken over over all the decompositions as in \eqref{dec_phi}. Observe that
\begin{align}\label{norm_inf_Bc}
\|\varphi\|_\infty\leq c\|\varphi\|_{B_c(G)},\quad\forall\varphi\in B_c(G).
\end{align}

\begin{rmk}
We warn the reader that the notation $B_c(G)$ is conflicting with other notations present in the literature, and can therefore lead to confusions. The space $B_c(G)$ is a generalisation of the Fourier--Stieltjes algebra $B(G)$, i.e. the space of coefficients of unitary representations of $G$. But this algebra can be generalised in several different ways. For $p\in(1,\infty)$, Herz \cite{Her} defines $B_p(G)$ as a subspace of the multipliers of the Fig\`a-Talamanca--Herz algebra $A_p(G)$, which was later identified by Daws \cite{Daw} as the space of $p$-completely bounded multipliers of $A_p(G)$. On the other hand, Cowling \cite{Cow} denotes by $B_p(G)$ the whole algebra of multipliers of $A_p(G)$. In addition, Runde \cite{Run} defines $B_p(G)$ as the space of coefficients of isometric representations on $\operatorname{QSL}_p$-spaces. As a result, $B_p(G)$ may stand for four different objects. We hope that such confusions will be avoided by using the letter $c$ instead of $p$.
\end{rmk}

For $c\geq 1$, let $\mathcal{R}_c(G)$ be the class of uniformly bounded representations $\pi:G\to\mathcal{B}(\mathcal{H}_\pi)$, where $\mathcal{H}_\pi$ is a Hilbert space and $|\pi|\leq c$. From now on, for $\pi\in\mathcal{R}_c(G)$, we will always denote by $\mathcal{H}_\pi$ the associated Hilbert space. 

Recall that the space $M(G)$ of complex Radon measures on $G$ is a Banach algebra for the convolution product, and $L^1(G)$ is a two-sided ideal in $M(G)$; see \cite[\S 2.5]{Fol} for details. Every $\pi\in\mathcal{R}_c(G)$ extends to a unital Banach algebra morphism $\pi:M(G)\to\mathcal{B}(\mathcal{H}_\pi)$ given by
\begin{align}\label{pi(mu)}
\pi(\mu)=\int_G\pi(s)\, d\mu(s),
\end{align}
where this integral is well defined in the strong operator topology, i.e.
\begin{align*}
\pi(\mu)\xi=\int_G\pi(s)\xi\, d\mu(s),
\end{align*}
for every $\xi\in\mathcal{H}_\pi$. Observe that
\begin{align*}
\|\pi(\mu)\xi\|\leq c\|\mu\|\|\xi\|,\quad\forall\mu\in M(G),\ \forall\xi\in\mathcal{H}_\pi.
\end{align*}
When restricted to $L^1(G)$, \eqref{pi(mu)} becomes
\begin{align}\label{pi(f)}
\pi(f)=\int_G f(s)\pi(s)\, ds,\quad\forall f\in L^1(G),
\end{align}
where we use $ds$ to denote the integration with respect to the Haar measure of $G$. In particular,
\begin{align*}
\|\pi(f)\|\leq c\|f\|_1,\quad\forall f\in L^1(G).
\end{align*}
We say that $\pi$ is cyclic if there exists $\xi\in\mathcal{H}_\pi$ such that $\pi(L^1(G))\xi$ is dense in $\mathcal{H}_\pi$.

\begin{lem}\label{Lem_cyc_rep}
Every $\varphi\in B_c(G)$ is a coefficient of a cyclic representation in $\mathcal{R}_c(G)$. Moreover, the norm $\|\varphi\|_{B_c(G)}$ can be computed as in \eqref{norm_B_c} using only cyclic representations.
\end{lem}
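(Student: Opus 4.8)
The plan is to start from an arbitrary decomposition $\varphi(s)=\langle\pi(s)\xi,\eta\rangle$ with $\pi\in\mathcal{R}_c(G)$ and cut down $\pi$ to a cyclic subrepresentation without increasing the uniform bound or the norm estimate. First I would pass to the closed subspace $\mathcal{K}=\overline{\pi(L^1(G))\xi}\subseteq\mathcal{H}_\pi$. The key point is that $\mathcal{K}$ is $\pi$-invariant: since $\pi$ restricted to $L^1(G)$ is an algebra homomorphism, $\pi(g)\pi(f)\xi=\pi(\delta_g * f)\xi$ for $f\in L^1(G)$, and $\delta_g * f$ lies in $L^1(G)$ (using that $L^1(G)$ is a two-sided ideal in $M(G)$ as recalled in the excerpt), so $\pi(g)$ maps the dense subspace $\pi(L^1(G))\xi$ of $\mathcal{K}$ into $\mathcal{K}$, and by continuity into $\mathcal{K}$. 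Then $\pi':=\pi|_{\mathcal{K}}$ is again SOT-continuous, it is uniformly bounded with $|\pi'|\leq|\pi|\leq c$, hence $\pi'\in\mathcal{R}_c(G)$, and by construction $\pi'$ is cyclic with cyclic vector $\xi$.

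Next I would check that $\varphi$ is still a coefficient of $\pi'$. We need a vector $\eta'$ with $\langle\pi'(s)\xi,\eta'\rangle=\langle\pi(s)\xi,\eta\rangle$ for all $s$; the natural choice is $\eta'=P\eta$, where $P:\mathcal{H}_\pi\to\mathcal{K}$ is the orthogonal projection. Since $\pi(s)\xi\in\mathcal{K}$ for all $s$, we get $\langle\pi(s)\xi,\eta\rangle=\langle P\pi(s)\xi,\eta\rangle=\langle\pi'(s)\xi,P\eta\rangle$, as desired. Moreover $\|\xi\|$ is unchanged and $\|\eta'\|=\|P\eta\|\leq\|\eta\|$, so this new decomposition of $\varphi$ through the cyclic representation $\pi'$ satisfies $\|\xi\|\|\eta'\|\leq\|\xi\|\|\eta\|$. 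This already shows that $\varphi\in B_c(G)$ is a coefficient of a cyclic representation in $\mathcal{R}_c(G)$.

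Finally, for the statement about the norm: taking the infimum over all decompositions of $\varphi$ coming from cyclic representations gives a quantity that is a priori $\geq\|\varphi\|_{B_c(G)}$ (fewer decompositions), but the construction above shows that every decomposition through an arbitrary $\pi\in\mathcal{R}_c(G)$ is dominated by one through a cyclic representation, so the two infima coincide. I do not expect a serious obstacle here; the only point requiring a little care is the SOT-continuity of $\pi'$ (immediate, since restriction of a strongly continuous family of operators to an invariant closed subspace is strongly continuous) and the verification that $\mathcal{K}\neq\{0\}$ when $\varphi\neq 0$, which is automatic because otherwise $\pi(f)\xi=0$ for all $f\in L^1(G)$ would force $\langle\pi(s)\xi,\eta\rangle$ to vanish after integration against an approximate identity, giving $\varphi=0$.
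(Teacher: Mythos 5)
Your construction is the same as the paper's: cut down to $\mathcal{K}=\overline{\pi(L^1(G))\xi}$, restrict $\pi$, and replace $\eta$ by its orthogonal projection onto $\mathcal{K}$; the invariance argument via $\pi(s)\pi(f)\xi=\pi(\delta_s\ast f)\xi$ and the norm comparison $\|P\eta\|\leq\|\eta\|$ are exactly as in the paper. The one point you assert but never justify --- and it is the only substantive verification in the paper's proof --- is that $\xi$ itself belongs to $\mathcal{K}$. You use this silently twice: when you claim $\pi(s)\xi\in\mathcal{K}$ for all $s$ (so that $P\pi(s)\xi=\pi(s)\xi$ and the expression $\langle\pi'(s)\xi,P\eta\rangle$ makes sense), and when you declare $\pi'$ cyclic \emph{with cyclic vector} $\xi$; neither statement is available if $\xi\notin\mathcal{K}$, and membership of $\xi$ in $\overline{\pi(L^1(G))\xi}$ is not a formal consequence of the definition. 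It does hold, but it needs the approximate-identity argument: for an approximate identity $(f_i)$ in $L^1(G)$, SOT-continuity and uniform boundedness of $\pi$ give
\begin{align*}
\langle\pi(f_i)\xi,\zeta\rangle=\int_G f_i(s)\langle\pi(s)\xi,\zeta\rangle\,ds\longrightarrow\langle\xi,\zeta\rangle,\quad\forall\zeta\in\mathcal{H}_\pi,
\end{align*}
so $\pi(f_i)\xi\to\xi$ weakly and hence $\xi\in\mathcal{K}$, a closed subspace being weakly closed (this is precisely where the continuity hypothesis on $\pi$ enters). Your closing worry about $\mathcal{K}\neq\{0\}$ is not the relevant issue --- the case $\varphi=0$ is trivial --- but the computation you sketch there is essentially the missing argument. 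With $\xi\in\mathcal{K}$ supplied, the rest of your proof, including the equality of the two infima, is correct and coincides with the paper's.
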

\begin{proof}
If $\varphi$ is given by $\langle\pi(\,\cdot\,)\xi,\eta\rangle$ with $\pi\in\mathcal{R}_c(G)$, then $\mathcal{K}=\overline{\pi(L^1(G))\xi}$ is a $\pi$-invariant subspace of $\mathcal{H}_\pi$ because
\begin{align*}
\pi(s)\pi(f)\xi=\pi(\delta_s\ast f)\xi,
\end{align*}
for every $s\in G$ and every $f\in L^1(G)$. Moreover, $\xi$ belongs to $\mathcal{K}$. Indeed, if $(f_i)$ is an approximate identity in $L^1(G)$, for every $\zeta\in\mathcal{H}_\pi$, we have
\begin{align*}
\langle\pi(f_i)\xi,\zeta\rangle = \int_G f_i(s)\langle\pi(s)\xi,\zeta\rangle\, ds \longrightarrow \langle\xi,\zeta\rangle;
\end{align*}
see \cite[Proposition 2.44]{Fol}. Hence
\begin{align*}
\varphi(t)=\langle\sigma(t)\xi,\eta'\rangle_{\mathcal{K}},\quad\forall t\in G,
\end{align*}
where $\sigma$ is the restriction of $\pi$ to $\mathcal{K}$ and $\eta'$ is the projection of $\eta$ on $\mathcal{K}$. By definition, $\sigma$ is cyclic and it belongs to $\mathcal{R}_c(G)$. Finally, the statement about the norm follows from the fact that $\|\eta'\|\leq\|\eta\|$.
\end{proof}

The following result says that the SOT-continuity in the definition of $B_c(G)$ is not such a restrictive hypothesis. We essentially follow the proof of \cite[Theorem 3.2]{Haa}. 

\begin{lem}\label{Lem_G_d}
Let $G$ be a locally compact group, and let $G_{\text{d}}$ be the same group endowed with the discrete topology. For all $c\geq 1$,
\begin{align*}
B_c(G)=B_c(G_{\text{d}})\cap C_b(G).
\end{align*}
Moreover, for every $\varphi\in B_c(G)$,
\begin{align*}
\|\varphi\|_{B_c(G)}=\|\varphi\|_{B_c(G_{\text{d}})}.
\end{align*}
\end{lem}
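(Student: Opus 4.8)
The plan is to prove both inclusions and the norm equality, following the strategy of \cite[Theorem 3.2]{Haa}. The inclusion ``$\subseteq$'' together with $\|\varphi\|_{B_c(G_{\text{d}})}\leq\|\varphi\|_{B_c(G)}$ is immediate: a uniformly bounded SOT-continuous representation $\pi\colon G\to\mathcal{B}(\mathcal{H})$ with $|\pi|\leq c$ is, a fortiori, a uniformly bounded representation of $G_{\text{d}}$ with the same bound, and any coefficient $\langle\pi(\cdot)\xi,\eta\rangle$ that lies in $C_b(G)$ therefore witnesses $\varphi\in B_c(G_{\text{d}})\cap C_b(G)$ with $\|\varphi\|_{B_c(G_{\text{d}})}\leq\|\xi\|\,\|\eta\|$; taking the infimum gives the inequality.

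The substance is the reverse inclusion. Suppose $\varphi\in B_c(G_{\text{d}})\cap C_b(G)$, say $\varphi(s)=\langle\pi(s)\xi,\eta\rangle$ for a representation $\pi\colon G_{\text{d}}\to\mathcal{B}(\mathcal{H})$ with $|\pi|\leq c$, and by Lemma \ref{Lem_cyc_rep} (applied to $G_{\text{d}}$) we may and do assume $\pi$ is cyclic with cyclic vector $\xi$ and $\|\xi\|\,\|\eta\|$ arbitrarily close to $\|\varphi\|_{B_c(G_{\text{d}})}$. The goal is to manufacture an \emph{SOT-continuous} representation of $G$ realizing $\varphi$ with no loss in norm. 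The key idea, as in Haagerup's argument, is to pass to the GNS-type construction directly from the ``positive-type-like'' data: since $\varphi$ is continuous on $G$, one builds a new Hilbert space as the separated completion of $C_{00}(G)$ (or of the finitely supported functions, then of $L^1(G)$) equipped with the sesquilinear form
\begin{align*}
\langle f,g\rangle_\varphi=\int_G\int_G \varphi(t^{-1}s)\,f(s)\,\overline{g(t)}\,\overline{\psi(s)}\,\psi(t)\,ds\,dt
\end{align*}
suitably modified so that the relevant operators are bounded --- in practice one works with the operator $T\colon L^1(G_{\text{d}})\to\mathcal{B}(\mathcal{H})$, $T=\pi$, restricts it along the inclusion $L^1(G)\hookrightarrow$ (functions on $G_{\text{d}}$), and checks that the resulting $\tilde\pi(f)=\pi(f)$ for $f\in L^1(G)$ still satisfies $\|\tilde\pi(f)\|\leq c\|f\|_1$, that $\tilde\pi(\delta_s\ast f)=$ translation, and that the closure $\mathcal{K}=\overline{\pi(L^1(G))\xi}$ carries an SOT-continuous $G$-action. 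Continuity is exactly where $\varphi\in C_b(G)$ is used: the map $s\mapsto\langle\pi(s)\pi(f)\xi,\pi(g)\xi\rangle$ is, by the module identity, a finite sum of left translates of $\varphi$ integrated against $L^1$ functions, hence continuous, and density of $\pi(L^1(G))\xi$ plus uniform boundedness upgrades weak continuity to SOT-continuity. One then restricts $\pi$ to $\mathcal{K}$, projects $\eta$ onto $\mathcal{K}$, and obtains $\varphi$ as a coefficient of an SOT-continuous $\sigma\in\mathcal{R}_c(G)$ with $\|\varphi\|_{B_c(G)}\leq\|\xi\|\,\|\eta'\|\leq\|\xi\|\,\|\eta\|$; letting the original decomposition approach the optimum yields $\|\varphi\|_{B_c(G)}\leq\|\varphi\|_{B_c(G_{\text{d}})}$, and combined with the easy direction this closes the proof.

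The main obstacle is establishing the SOT-continuity of the constructed $G$-representation on $\mathcal{K}$, and making sure that restricting to the cyclic subspace $\mathcal{K}=\overline{\pi(L^1(G))\xi}$ genuinely produces a $G$-invariant subspace on which $\pi$ acts by an SOT-continuous representation \emph{with the same uniform bound} $c$. The invariance is formal (it is the identity $\pi(s)\pi(f)\xi=\pi(\delta_s\ast f)\xi$ already recorded in the proof of Lemma \ref{Lem_cyc_rep}, valid for the discrete representation as well), and the bound is inherited automatically since $\sigma(s)$ is a restriction of $\pi(s)$; the continuity is the one genuinely analytic point and is handled as above by reducing, via cyclicity and the module identity, to continuity of left translation on $\varphi\in C_b(G)$ paired against $L^1$. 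A minor additional care is needed to verify that $\xi\in\mathcal{K}$, but this is exactly the approximate-identity argument from Lemma \ref{Lem_cyc_rep}, which goes through verbatim. No new ideas beyond \cite{Haa} are required; the only thing to check is that uniform boundedness (rather than unitarity) is preserved at every step, which it is, because every operation performed --- extending to $M(G)$ or $L^1(G)$, restricting to an invariant subspace, projecting a vector --- is norm-non-increasing in the relevant sense.
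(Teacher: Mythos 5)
There is a genuine gap in the hard direction, and it sits exactly at the point you flag as the ``one genuinely analytic point''. Your construction starts from the space $\mathcal{K}=\overline{\pi(L^1(G))\xi}$, but for $f\in L^1(G)$ (Haar measure) the operator $\pi(f)=\int_G f(s)\pi(s)\,ds$ is not defined here: $\pi$ is only a representation of $G_{\text{d}}$, there is no inclusion $L^1(G)\hookrightarrow\ell^1(G_{\text{d}})$, and a weak integral $\int_G f(s)\langle\pi(s)x,y\rangle\,ds$ requires Haar-measurability of $s\mapsto\langle\pi(s)x,y\rangle$, which is only known when $x$ lies in the span of $\{\pi(t)\xi\}$ \emph{and} $y$ lies in the span of $\{\pi(u)^*\eta\}$, since only then is the coefficient a translate of the continuous function $\varphi$. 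Your continuity argument then compounds this: you claim $s\mapsto\langle\pi(s)\pi(f)\xi,\pi(g)\xi\rangle$ reduces, via the module identity, to translates of $\varphi$ integrated against $L^1$ functions, but expanding gives coefficients of the form $\langle\pi(st)\xi,\pi(u)\xi\rangle$, i.e.\ translates of $\xi$ paired against translates of $\xi$ rather than against $\eta$; these are not expressible through $\varphi$ and there is no reason they are continuous, or even measurable, on $G$. (The GNS-type form you sketch first has the same problem plus another: $\varphi$ is a coefficient of a merely uniformly bounded representation, so the kernel $\varphi(t^{-1}s)$ is not positive definite and the form need not be positive; ``suitably modified'' hides a real obstruction.)

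The paper's proof (following Haagerup) avoids all of this by a compression rather than a restriction to a cyclic subspace generated by $\xi$: after arranging, via Lemma \ref{Lem_cyc_rep} for $G_{\text{d}}$, that the linear span of $\{\pi(t)\xi:t\in G\}$ is dense, one takes $\mathcal{H}_0=\overline{\operatorname{span}}\{\pi(s)^*\eta:s\in G\}$, lets $P$ be the orthogonal projection onto $\mathcal{H}_0$, and defines $\tilde{\pi}(t)Px=P\pi(t)x$ (well defined because $\mathcal{H}_0$ is invariant under all $\pi(t)^*$). Then $|\tilde{\pi}|\leq c$, and the identity $\langle\tilde{\pi}(t)P\xi,\tilde{\pi}(s)^*\eta\rangle=\varphi(st)$ gives continuity of the matrix coefficients on the pair of dense families $\{P\pi(t)\xi\}$ and $\{\pi(s)^*\eta\}$; uniform boundedness upgrades this to WOT-continuity of $\tilde{\pi}$ on $\mathcal{H}_0$, which is equivalent to SOT-continuity, and the norm bound $\|P\xi\|\,\|\eta\|\leq\|\xi\|\,\|\eta\|$ closes the estimate. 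Without this compression onto the span of the $\pi(s)^*\eta$ (or some equivalent device producing a dense set of right-hand test vectors against which coefficients are translates of $\varphi$), the continuity and even the definition of your $G$-action on $\mathcal{K}$ do not go through, so as written the proposal does not prove the lemma.
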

\begin{proof}
First observe that, by definition, $B_c(G)$ is included in $B_c(G_{\text{d}})\cap C_b(G)$, and
\begin{align*}
\|\varphi\|_{B_c(G_{\text{d}})}\leq\|\varphi\|_{B_c(G)},
\end{align*}
for every $\varphi\in B_c(G)$. Now assume that $\varphi$ belongs to $B_c(G_{\text{d}})\cap C_b(G)$ and let $\varepsilon>0$. By (the proof of) Lemma \ref{Lem_cyc_rep}, there is a representation $\pi\in\mathcal{R}_c(G_{\text{d}})$ and $\xi,\eta\in\mathcal{H}_\pi$ such that
\begin{align}\label{|xi||eta|leq}
\|\xi\|\|\eta\|\leq\|\varphi\|_{B_c(G_{\text{d}})}+\varepsilon,
\end{align}
\begin{align*}
\varphi(st)=\langle\pi(t)\xi,\pi(s)^*\eta\rangle,\quad\forall s,t\in G,
\end{align*}
and the linear span of $\{\pi(t)\xi\ :\ t\in G\}$ is dense in $\mathcal{H}_\pi$. Let now $\mathcal{H}_0$ be the closed linear span of $\{\pi(s)^*\eta\ :\ s\in G\}$, and let $P$ be the orthogonal projection onto $\mathcal{H}_0$. We can define a representation on $\mathcal{H}_0$ by
\begin{align*}
\tilde{\pi}(t)Px=P\pi(t)x,\quad\forall t\in G,\ \forall x\in\mathcal{H}_\pi.
\end{align*}
To see that this is well defined, observe that, if $Px=0$, then
\begin{align*}
\langle P\pi(t)x,\pi(s)^*\eta\rangle=\langle x,\pi(st)^*\eta\rangle=0,\quad\forall s\in G,
\end{align*}
which shows that $P\pi(t)x=0$. The fact that $\tilde{\pi}(st)=\tilde{\pi}(s)\tilde{\pi}(t)$ is straightforward. Moreover, for all $t\in G$, $x\in\mathcal{H}_\pi$, $y\in\mathcal{H}_0$,
\begin{align*}
|\langle\tilde{\pi}(t)Px,y\rangle| &= |\langle Px,\pi(t)^*y\rangle|\\
&\leq c\|Px\|\|y\|.
\end{align*}
This shows that $|\tilde{\pi}|\leq c$. Furthermore,
\begin{align}\label{phi=tilde}
\langle\tilde{\pi}(t)P\xi,\tilde{\pi}(s)^*\eta\rangle=\varphi(st),\quad\forall s,t\in G.
\end{align}
In particular, the map
\begin{align*}
t\in G\mapsto\langle\tilde{\pi}(t)P\xi,\tilde{\pi}(s)^*\eta\rangle
\end{align*}
is continuous for all $s\in G$. By linearity, density, and the uniform boundedness of $\tilde{\pi}$, we get that
\begin{align*}
t\in G\mapsto\langle\tilde{\pi}(t)P\xi,y\rangle,
\end{align*}
is continuous for every $y\in\mathcal{H}_0$. By the same arguments, we obtain the continuity of
\begin{align*}
t\in G\mapsto\langle\tilde{\pi}(t)x,y\rangle,
\end{align*}
for all $x,y\in\mathcal{H}_0$. This shows that $\tilde{\pi}$ is continuous for the weak operator topology, which is equivalent to the continuity for the strong operator topology; see \cite[Theorem 2.8]{dLeGli} or \cite[Lemma 2.4]{BFGM}. Therefore $\tilde{\pi}$ belongs to $\mathcal{R}_c(G)$ and $\varphi$ belongs to $B_c(G)$. By \eqref{phi=tilde} and \eqref{|xi||eta|leq},
\begin{align*}
\|\varphi\|_{B_c(G)}\leq \|P\xi\| \|\eta\| \leq \|\xi\| \|\eta\|\leq \|\varphi\|_{B_c(G_{\text{d}})}+\varepsilon.
\end{align*}
Since $\varepsilon$ was arbitrary, we conclude that
\begin{align*}
\|\varphi\|_{B_c(G)}=\|\varphi\|_{B_c(G_{\text{d}})}.
\end{align*}
\end{proof}

\begin{lem}\label{Lem_Bc_WAP}
For every $c\geq 1$, $B_c(G)$ is a translation invariant subspace of $\operatorname{WAP}(G)$ which is closed under complex conjugation and inversion.
\end{lem}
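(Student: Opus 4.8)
The plan is to verify each of the claimed properties of $B_c(G)$ separately, most of which follow by manipulating the decomposition \eqref{dec_phi} directly. For translation invariance, given $\varphi(s)=\langle\pi(s)\xi,\eta\rangle$ with $\pi\in\mathcal{R}_c(G)$, I would observe that $(s\cdot\varphi)(t)=\varphi(s^{-1}t)=\langle\pi(s^{-1}t)\xi,\eta\rangle=\langle\pi(t)\xi,\pi(s^{-1})^*\eta\rangle$, so $s\cdot\varphi$ is again a coefficient of $\pi$; similarly right translation by $s$ replaces $\xi$ by $\pi(s)\xi$. For complex conjugation, $\overline{\varphi(s)}=\overline{\langle\pi(s)\xi,\eta\rangle}=\langle\eta,\pi(s)\xi\rangle=\langle\pi(s)^*\eta,\xi\rangle$. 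This is not quite of the form \eqref{dec_phi} because $\pi(s)^*$ is an anti-representation in $s$, but passing to the conjugate Hilbert space $\overline{\mathcal{H}_\pi}$ (or, equivalently, considering the contragredient representation $s\mapsto\pi(s^{-1})^*$ on $\overline{\mathcal{H}_\pi}$, which is again SOT-continuous and uniformly bounded with the same bound) fixes this, so $\bar\varphi\in B_c(G)$. For inversion, $\check\varphi(s)=\varphi(s^{-1})=\langle\pi(s^{-1})\xi,\eta\rangle=\langle\pi(s)^*\eta,\xi\rangle$ again realises $\check\varphi$ via the contragredient of $\pi$, hence $\check\varphi\in B_c(G)$. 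In all these cases the norms of the vectors involved are preserved, so these operations are in fact isometric on $B_c(G)$, although that is not needed here.

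The substantive part is the inclusion $B_c(G)\subseteq\operatorname{WAP}(G)$, i.e. that for $\varphi\in B_c(G)$ the orbit $\{s\cdot\varphi:s\in G\}$ is relatively weakly compact in $C_b(G)$. Here I would invoke the Ryll-Nardzewski / Grothendieck double-limit criterion for weak compactness in $C_b$-type spaces: a bounded set $S\subseteq C_b(G)$ is relatively weakly compact if and only if for any sequences $(s_m)$ in $G$ and $(t_n)$ in $G$ one has $\lim_m\lim_n (s_m\cdot\varphi)(t_n)=\lim_n\lim_m (s_m\cdot\varphi)(t_n)$ whenever both iterated limits exist. Writing $(s_m\cdot\varphi)(t_n)=\langle\pi(s_m^{-1}t_n)\xi,\eta\rangle=\langle\pi(t_n)\xi,\pi(s_m)^*\eta\rangle$, the vectors $\pi(t_n)\xi$ and $\pi(s_m)^*\eta$ lie in balls of radius $c\|\xi\|$ and $c\|\eta\|$ of $\mathcal{H}_\pi$; passing to subnets so that these converge weakly to some $\xi_\infty$ and $\eta_\infty$, both iterated limits equal $\langle\xi_\infty,\eta_\infty\rangle$ by separate weak continuity of the inner product. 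This is exactly the classical argument that coefficients of uniformly bounded representations on Hilbert spaces are weakly almost periodic; it is essentially due to Eberlein, and I would cite it rather than reprove it in full (e.g. via \cite{HaKndL} or standard references on WAP functions, or note that $B_c(G)\hookrightarrow M_0A(G)\subseteq\operatorname{WAP}(G)$ as already mentioned in the introduction).

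The step I expect to require the most care is precisely this double-limit verification, because it uses the Hilbert space structure crucially (weak compactness of bounded sets, and separate weak continuity of a bilinear form on a Hilbert space) — this is what would fail for coefficients of representations on general Banach spaces. A clean alternative, which I would probably prefer for brevity, is to reduce to the unitary case via the key fact (attributed to Pisier in the notation \cite{Pis2}) that $B_c(G)\subseteq M_0A(G)$ with the inclusion bounded, combined with $M_0A(G)\subseteq\operatorname{WAP}(G)$ — but since the latter inclusion itself relies on a double-limit argument, it is arguably more self-contained to run the Eberlein argument directly on the decomposition \eqref{dec_phi}. Either way, once $B_c(G)\subseteq\operatorname{WAP}(G)$ is established, the translation invariance, conjugation invariance, and inversion invariance are all immediate from the explicit manipulations above, completing the proof. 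As an immediate consequence (to be recorded as Corollary \ref{Cor_inv_mean}), Theorem \ref{Thm_LIM} then applies to $X=B_c(G)$, which contains the constants (coefficient of the trivial representation) and is closed under all the required operations, yielding a unique invariant mean on $B_c(G)$.
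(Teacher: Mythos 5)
Your proposal is correct in substance and follows the same skeleton as the paper (explicit manipulations of the decomposition \eqref{dec_phi} for left/right translations, conjugation and inversion, plus a separate argument for the inclusion into $\operatorname{WAP}(G)$), but it differs on the main analytic point. For $B_c(G)\subset\operatorname{WAP}(G)$ the paper simply cites the chain $B_c(G)\subset M_0A(G)\subset\operatorname{WAP}(G)$ (\cite[Theorem 2.2]{dCaHaa} and \cite[Proposition 3.3]{HaKndL}), whereas you run the classical Eberlein--Grothendieck double-limit argument directly on $\varphi(s_mt_n)=\langle\pi(t_n)\xi,\pi(s_m)^*\eta\rangle$, using weak sequential compactness of balls in $\mathcal{H}_\pi$ and separate weak continuity of the inner product. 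That argument is correct and more self-contained, and it makes visible exactly where the Hilbert space structure enters; the citation route buys brevity and records the quantitative inclusion into $M_0A(G)$, which the paper uses elsewhere anyway.

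Two slips in your conjugation/inversion step deserve attention. First, the identity $\varphi(s^{-1})=\langle\pi(s)^*\eta,\xi\rangle$ is wrong: $\langle\pi(s)^*\eta,\xi\rangle=\langle\eta,\pi(s)\xi\rangle=\overline{\varphi(s)}$, which is the conjugate, not the inversion. The correct computation is $\check\varphi(s)=\langle\xi,\pi(s^{-1})^*\eta\rangle$, realised as a coefficient of (the conjugate of) the adjoint representation $\sigma(s)=\pi(s^{-1})^*$ on $\overline{\mathcal{H}_\pi}$; likewise, for complex conjugation the relevant representation is the conjugate $\overline\pi(t)\overline\xi=\overline{\pi(t)\xi}$, not the contragredient — for non-unitary $\pi$ these differ. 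Second, and more importantly, you assert that the contragredient is ``again SOT-continuous'' without justification. This is the one point the paper treats with care: the adjoint of an SOT-continuous family is a priori only WOT-continuous, so one must either invoke the equivalence of WOT- and SOT-continuity for uniformly bounded representations (\cite[Lemma 2.4]{BFGM}, \cite[Theorem 2.8]{dLeGli}), or, as the paper does, sidestep the issue: $\check\varphi$ is continuous and lies in $B_c(G_{\text{d}})$, so Lemma \ref{Lem_G_d} already yields $\check\varphi\in B_c(G)$. Either fix is one line, but as written this continuity claim is the only unsubstantiated step in your argument.
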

\begin{proof}
The fact that $B_c(G)$ is contained in $\operatorname{WAP}(G)$ follows from \cite[Proposition 3.3]{HaKndL} and the inclusion $B_c(G)\subset M_0A(G)$; see \cite[Theorem 2.2]{dCaHaa}. See also \cite[Example 9]{BaRoSa}. If $\varphi\in B_c(G)$ is given by $\varphi(t)=\langle\pi(t)\xi,\eta\rangle$, then the identities
\begin{align*}
\varphi(st)&=\langle\pi(t)\xi,\pi(s)^*\eta\rangle\\
\varphi(ts)&=\langle\pi(t)\pi(s)\xi,\eta\rangle
\end{align*}
show that $B_c(G)$ is both left and right translation invariant. Moreover,
\begin{align*}
\overline{\varphi(t)}=(\overline{\pi}(t)\overline{\xi},\overline{\eta}),
\end{align*}
where $\overline{\xi},\overline{\eta}$ are the corresponding elements of $\xi,\eta\in\mathcal{H}$ in the complex conjugate Hilbert space $\overline{\mathcal{H}}$, which is endowed with the scalar product
\begin{align*}
(\overline{\xi},\overline{\eta})=\langle\eta,\xi\rangle,
\end{align*}
and the representation
\begin{align*}
\overline{\pi}(t)\overline{\xi}=\overline{\pi(t)\xi}.
\end{align*}
This shows that $B_c(G)$ is closed under complex conjugation. Finally, observe that
\begin{align*}
\check{\varphi}(t)=\varphi(t^{-1})=(\overline{\sigma}(t)\overline{\eta},\overline{\xi}),
\end{align*}
where $\sigma(t)=\pi(t^{-1})^*$ is the adjoint representation, and $\overline{\sigma}$ is defined as above. This representation might not be SOT-continuous, but we know that $\check{\varphi}$ is continuous on $G$. By Lemma \ref{Lem_G_d}, $\check{\varphi}$ belongs to $B_c(G)$. We conclude that $B_c(G)$ is closed under inversion.
\end{proof}

\begin{cor}\label{Cor_inv_mean}
For every $c\geq 1$, $B_c(G)$ carries a unique invariant mean. This mean is also inversion invariant.
\end{cor}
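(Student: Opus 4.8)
The plan is to deduce Corollary \ref{Cor_inv_mean} directly from Theorem \ref{Thm_LIM} (the Haagerup--Knudby--de Laat existence and uniqueness result) once we have verified that the space $X=B_c(G)$ satisfies all the hypotheses of that theorem. So first I would recall what those hypotheses require: $X$ must be a subspace of $\operatorname{WAP}(G)$, it must contain the constant functions, and it must be closed under left translations and under complex conjugation. The conclusion of Theorem \ref{Thm_LIM} then gives a unique left-invariant mean; the additional conclusions about right- and inversion-invariance require closure under right translations and under inversion respectively.

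Second, I would check each property in turn. Lemma \ref{Lem_Bc_WAP} already does most of the work: it states that $B_c(G)$ is a translation invariant subspace of $\operatorname{WAP}(G)$ (so both the inclusion into $\operatorname{WAP}(G)$ and closure under left and right translation are covered), that it is closed under complex conjugation, and that it is closed under inversion. The one hypothesis not literally stated in Lemma \ref{Lem_Bc_WAP} is that $B_c(G)$ contains the constant functions, so I would note this separately: the constant function $1$ is the coefficient $\langle\pi(s)\xi,\xi\rangle$ of the trivial one-dimensional representation $\pi\equiv 1$ with a unit vector $\xi$, which lies in $\mathcal{R}_1(G)\subseteq\mathcal{R}_c(G)$ for every $c\geq 1$; scaling $\xi$ then produces every constant. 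Hence all hypotheses of Theorem \ref{Thm_LIM} hold.

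Third, I would simply invoke Theorem \ref{Thm_LIM}: since $B_c(G)\subseteq\operatorname{WAP}(G)$ contains the constants and is closed under left translation and complex conjugation, it has a unique left-invariant mean $m$; since it is also closed under right translation, $m$ is right-invariant as well, so $m$ is a genuine (two-sided) invariant mean; and since $B_c(G)$ is closed under inversion, $m$ is inversion invariant. Uniqueness of the left-invariant mean from Theorem \ref{Thm_LIM} immediately yields uniqueness of the invariant mean, completing the proof.

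There is essentially no obstacle here, since this corollary is a bookkeeping consequence of the preceding lemma and the cited theorem; the only point requiring a sentence of its own is the (trivial) observation that constants belong to $B_c(G)$, which Lemma \ref{Lem_Bc_WAP} does not explicitly record. If I wanted to be careful I would double-check that ``translation invariant'' in the statement of Lemma \ref{Lem_Bc_WAP} indeed refers to both left and right translations --- which the proof of that lemma confirms via the two displayed identities for $\varphi(st)$ and $\varphi(ts)$ --- so that the right-invariance of $m$ is legitimately available.
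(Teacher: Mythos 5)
Your proposal is correct and follows exactly the paper's own argument: the corollary is deduced from Theorem \ref{Thm_LIM} applied to $X=B_c(G)$, with Lemma \ref{Lem_Bc_WAP} supplying the inclusion in $\operatorname{WAP}(G)$ and the closure properties, and the constants obtained from the trivial representation in $\mathcal{R}_c(G)$. No gaps.
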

\begin{proof}
This follows directly from Theorem \ref{Thm_LIM} and Lemma \ref{Lem_Bc_WAP}. Observe that $B_c(G)$ contains the constants because the trivial representation belongs to $\mathcal{R}_c(G)$.
\end{proof}

\begin{rmk}
The inequality \eqref{norm_inf_Bc} shows that the unique invariant mean $m\in B_c(G)^*$ satisfies
\begin{align*}
\|m\|_{B_c(G)^*}\leq c.
\end{align*}
\end{rmk}

\subsection{The predual $\tilde{A}_c(G)$ of $B_c(G)$}
We describe here a predual space of $B_c(G)$. This space was studied by Pisier in \cite{Pis}; see also \cite{Pis2}. Recall that every $\pi\in\mathcal{R}_c(G)$ extends to a Banach algebra morphism $\pi:L^1(G)\to\mathcal{B}(\mathcal{H}_\pi)$. We define $\tilde{A}_c(G)$ as the completion of $L^1(G)$ for the norm
\begin{align}\label{norm_A_c}
\|f\|_{\tilde{A}_c(G)}=\sup_{\pi\in\mathcal{R}_c(G)}\|\pi(f)\|_{\mathcal{B}(\mathcal{H}_\pi)}.
\end{align}
In other words,
\begin{align*}
\|f\|_{\tilde{A}_c(G)}=\sup\left\{\left|\int_G \varphi(s) f(s)\, ds\right|\ :\ \varphi\in B_c(G),\ \|\varphi\|_{B_c(G)}\leq 1\right\}.
\end{align*}
To see that these two quantities are the same, take $\varphi=\langle\pi(\,\cdot\,)\xi,\eta\rangle$ and observe that
\begin{align*}
\int_G \varphi(s)f(s)\, ds = \langle\pi(f)\xi,\eta\rangle.
\end{align*}
This norm satisfies
\begin{align*}
\|f\|_{\tilde{A}_c(G)}\leq c\|f\|_1,\quad\forall f\in L^1(G).
\end{align*}
Observe that the convolution in $L^1(G)$ extends to a product in $\tilde{A}_c(G)$ for which it becomes a Banach algebra.

\begin{rmk}
With the notation of \cite{DruNow} and \cite{dlS}, $\tilde{A}_c(G)$ corresponds to $\mathcal{C}_{\mathcal{F}}(G)$ for the family $\mathcal{F}=\mathcal{R}_c(G)$. We chose to use Pisier's notation because he introduced $\tilde{A}_c(G)$ as a predual space for $B_c(G)$, and this was one of our main motivations for defining and studying Property (T)${}_c$.
\end{rmk}

Before proving that $\tilde{A}_c(G)^*=B_c(G)$, we need the following lemma, which is a simple adaptation of \cite[Proposition 13.3.4]{Dix} to our setting. We include the proof for the sake of completeness.

\begin{lem}\label{Lem_pi_cont}
Let $\tilde{\pi}:L^1(G)\to\mathcal{B}(\mathcal{H})$ be a Banach algebra morphism such that $\tilde{\pi}(L^1(G))\mathcal{H}$ is dense in $\mathcal{H}$. Then there exists a (SOT-continuous) uniformly bounded representation $\pi:G\to\mathcal{B}(\mathcal{H})$ with
\begin{align*}
|\pi|=\|\tilde{\pi}\|,
\end{align*}
such that
\begin{align*}
\tilde{\pi}(f)=\int_G f(s)\pi(s)\, ds,
\end{align*}
for all $f\in L^1(G)$.
\end{lem}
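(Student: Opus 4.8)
The plan is to reconstruct $\pi$ from $\tilde\pi$ by using an approximate identity and the translation action. First I would record the basic fact that $L^1(G)$ has a bounded approximate identity $(f_i)$ consisting of functions supported near the identity, with $\|f_i\|_1 = 1$; together with the density of $\tilde\pi(L^1(G))\mathcal H$ this means that vectors of the form $\tilde\pi(f)\xi$, with $f \in L^1(G)$ and $\xi \in \mathcal H$, span a dense subspace of $\mathcal H$, and moreover $\tilde\pi(f_i)x \to x$ for every $x \in \mathcal H$ (this last point needs an argument: it holds on the dense set of vectors $\tilde\pi(g)\xi$ because $f_i \ast g \to g$ in $L^1$, and then extends by the uniform bound $\|\tilde\pi(f_i)\| \le \|\tilde\pi\|$). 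The candidate definition is $\pi(s)\tilde\pi(f)\xi = \tilde\pi(\delta_s \ast f)\xi$, extended by linearity; here $\delta_s \ast f$ is the left translate of $f$, which lies in $L^1(G)$, so the right-hand side makes sense.

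Next I would check that this is well defined and bounded on the dense subspace, hence extends to a bounded operator $\pi(s)$ on $\mathcal H$. Well-definedness: if $\sum_k \tilde\pi(f_k)\xi_k = 0$, one must show $\sum_k \tilde\pi(\delta_s \ast f_k)\xi_k = 0$; the trick is that $\delta_s \ast f_k = \lim_i (\delta_s \ast f_i) \ast f_k$ — no, more cleanly, pair against a vector $\tilde\pi(g)\eta$ and use $\langle \tilde\pi(\delta_s \ast f)\xi, \tilde\pi(g)\eta\rangle$; since $g \ast (\delta_s \ast f) = (g \ast \delta_s) \ast f$ and left-convolution by $\delta_s$ on the $g$ side relates to right translation, one rewrites everything in terms of $\tilde\pi$ applied to genuine $L^1$ functions and uses linearity there. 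Boundedness $\|\pi(s)\| \le \|\tilde\pi\|$: approximate $\tilde\pi(\delta_s \ast f)\xi$ by $\tilde\pi((\delta_s \ast f_i) \ast f)\xi = \tilde\pi(\delta_s \ast f_i)\tilde\pi(f)\xi$ and estimate $\|\tilde\pi(\delta_s \ast f_i)\| \le \|\tilde\pi\| \,\|\delta_s \ast f_i\|_1 = \|\tilde\pi\|$. The homomorphism property $\pi(st) = \pi(s)\pi(t)$ follows from associativity $\delta_s \ast (\delta_t \ast f) = \delta_{st} \ast f$, and $\pi(e) = \mathrm{Id}$ is immediate on the dense subspace. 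For SOT-continuity: on the dense set, $s \mapsto \pi(s)\tilde\pi(f)\xi = \tilde\pi(\delta_s \ast f)\xi$ is continuous because $s \mapsto \delta_s \ast f$ is continuous from $G$ to $L^1(G)$ and $\tilde\pi$ is $\|\tilde\pi\|$-Lipschitz on $L^1(G)$; then the uniform bound $|\pi| \le \|\tilde\pi\|$ upgrades this to continuity of $s \mapsto \pi(s)x$ for all $x \in \mathcal H$ (standard $\varepsilon/3$ argument).

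Then I would verify the integral formula: for $f \in L^1(G)$ and $\xi \in \mathcal H$, I want $\int_G f(s)\pi(s)\xi \, ds = \tilde\pi(f)\xi$. It suffices to check this after pairing with an arbitrary $\eta$, i.e. $\int_G f(s)\langle\pi(s)\xi,\eta\rangle\,ds = \langle\tilde\pi(f)\xi,\eta\rangle$. Test on $\xi = \tilde\pi(g)\zeta$ from the dense subspace: the left side becomes $\int_G f(s) \langle \tilde\pi(\delta_s \ast g)\zeta, \eta\rangle\, ds$, and one recognises $\int_G f(s)(\delta_s \ast g)\,ds = f \ast g$ as a Bochner integral in $L^1(G)$, so by continuity of $\tilde\pi$ this equals $\langle \tilde\pi(f \ast g)\zeta,\eta\rangle = \langle \tilde\pi(f)\tilde\pi(g)\zeta,\eta\rangle = \langle\tilde\pi(f)\xi,\eta\rangle$; the general $\xi$ follows by density and the uniform bounds. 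This formula gives $\|\tilde\pi(f)\| \le |\pi|\,\|f\|_1$, and combined with $|\pi| \le \|\tilde\pi\|$ established above, together with the reverse inequality $\|\tilde\pi\| = \sup_{\|f\|_1 \le 1}\|\tilde\pi(f)\| \le |\pi|$ coming directly from the integral formula, we conclude $|\pi| = \|\tilde\pi\|$.

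The main obstacle I expect is the well-definedness and boundedness of $\pi(s)$ on the dense subspace — making rigorous the manipulations $\tilde\pi(\delta_s \ast f)\xi = \lim_i \tilde\pi(\delta_s \ast f_i)\tilde\pi(f)\xi$ while being careful that $\delta_s \ast f_i$ is a bona fide $L^1$ function (it is, being a translate of $f_i$) and that the limit exists in norm because $(\delta_s \ast f_i) \ast f \to \delta_s \ast f$ in $L^1$. Everything downstream (homomorphism property, continuity, the integral formula, the norm identity) is then a routine unwinding using associativity of convolution and the Lipschitz bound for $\tilde\pi$; the only other mild subtlety is invoking Bochner-integral identities like $\int_G f(s)(\delta_s \ast g)\,ds = f \ast g$, which hold because the integrand is a continuous $L^1(G)$-valued map dominated by $|f(s)|\,\|g\|_1$.
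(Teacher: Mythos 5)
Your proposal is correct and follows essentially the same route as the paper's proof (itself an adaptation of Dixmier's argument): define $\pi(s)$ on the dense subspace $\tilde{\pi}(L^1(G))\mathcal{H}$ by $\pi(s)\tilde{\pi}(f)\xi=\tilde{\pi}(\delta_s\ast f)\xi$, obtain well-definedness and the bound $\|\pi(s)\|\leq\|\tilde{\pi}\|$ by approximating with the genuine operators $\tilde{\pi}(\delta_s\ast f_i)$ coming from an approximate identity, get SOT-continuity from continuity of $s\mapsto\delta_s\ast f$ in $L^1(G)$, and recover $\tilde{\pi}(f)=\pi(f)$ via the $L^1$-valued (Bochner) identity $f\ast g=\int_G f(s)(\delta_s\ast g)\,ds$, which also yields $|\pi|=\|\tilde{\pi}\|$. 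The only cosmetic difference is that you translate the approximate identity on the left ($\delta_s\ast f_i$) where the paper uses $g_i\ast\delta_s$; both implement the same idea.
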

\begin{proof}
For each $s\in G$, let us define a linear map $\pi(s)$ on $\mathcal{H}_0=\tilde{\pi}(L^1(G))\mathcal{H}$ by
\begin{align*}
\pi(s)\tilde{\pi}(f)\xi=\tilde{\pi}(\delta_s\ast f)\xi,\quad\forall f\in L^1(G),\ \forall\xi\in\mathcal{H},
\end{align*}
where $\delta_s$ is the Dirac measure on $s$. Now let $(g_i)$ be an approximation of the identity on $L^1(G)$, and observe that
\begin{align*}
\|\pi(s)\tilde{\pi}(f)\xi-\tilde{\pi}(g_i\ast\delta_s)\tilde{\pi}(f)\xi\| \leq \|\tilde{\pi}\| \|\delta_s\ast f-g_i\ast\delta_s\ast f\|_1 \|\xi\|,
\end{align*}
which shows that $\pi(s)$ is the limit of $\tilde{\pi}(g_i\ast\delta_s)$ in the strong operator topology of $\mathcal{H}_0$. Since $\mathcal{H}_0$ is dense in $\mathcal{H}$, $\pi(s)$ has a unique extension to $\mathcal{H}$, and
\begin{align*}
\|\pi(s)\|\leq \|\tilde{\pi}\|
\end{align*}
because $\|\tilde{\pi}(g_i\ast\delta_s)\|\leq \|\tilde{\pi}\| \|g_i\|_1$. Let $e$ be the identity element of $G$. The fact that $(g_i)$ is an approximation of the identity shows that $\pi(e)$ is the identity operator. Moreover,
\begin{align*}
\pi(st)\tilde{\pi}(f)\xi=\tilde{\pi}(\delta_s\ast\delta_t\ast f)\xi=\pi(s)\pi(t)\tilde{\pi}(f)\xi,\quad\forall f\in L^1(G),\ \forall\xi\in\mathcal{H},
\end{align*}
which shows that $\pi(st)=\pi(s)\pi(t)$ on $\mathcal{H}_0$, and a fortiori on $\mathcal{H}$. We claim that $\pi$ is SOT-continuous. Indeed, for all $f\in L^1(G)$ and $\xi\in\mathcal{H}$,
\begin{align*}
\|\pi(s)\tilde{\pi}(f)\xi-\tilde{\pi}(f)\xi\|\leq \|\tilde{\pi}\| \|\delta_s\ast f-f\|_1 \|\xi\|,
\end{align*}
which tends to $0$ as $s\to e$; see e.g. \cite[Proposition 2.42]{Fol}. As before, by density, we conclude that $\pi$ is continuous for the strong operator topology of $\mathcal{B}(\mathcal{H})$. Finally, extend $\pi$ to $L^1(G)$ as in \eqref{pi(f)}. For all $f,g\in L^1(G)$, we have
\begin{align*}
f\ast g=\int_G f(s)(\delta_s\ast g)\, ds,
\end{align*}
where the right hand side is an $L^1(G)$-valued integral. Thus
\begin{align*}
\tilde{\pi}(f)\tilde{\pi}(g) &= \tilde{\pi}(f\ast g)\\
&= \int_G f(s)\tilde{\pi}(\delta_s\ast g)\, ds\\
&= \int_G f(s)\pi(s)\tilde{\pi}(g)\, ds\\
&= \pi(f)\tilde{\pi}(g).
\end{align*}
This shows that $\pi(f)=\tilde{\pi}(f)$ on $\mathcal{H}_0$ for all $f\in L^1(G)$. By density, we have $\pi=\tilde{\pi}$. Finally,
\begin{align*}
\|\tilde{\pi}(f)\|=\|\pi(f)\|\leq |\pi| \| f\|_1,\quad\forall f\in L^1(G),
\end{align*}
which shows that $|\pi|=\|\tilde{\pi}\|$.
\end{proof}

The following result was proved in \cite[Proposition 3.1]{Pis} for discrete groups. We extend it here to locally compact groups. We refer the reader to \cite{Hei} for details on ultraproducts of Banach spaces and operators.

\begin{prop}\label{Prop_duality}
For all $c\geq 1$, we have $\tilde{A}_c(G)^*=B_c(G)$ for the duality
\begin{align*}
\langle\varphi,f\rangle_{B_c,\tilde{A}_c}=\int_G \varphi(s)f(s)\, ds,\quad\forall\varphi\in B_c(G),\ \forall f\in L^1(G).
\end{align*}
\end{prop}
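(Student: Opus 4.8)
The plan is as follows. The inclusion $B_c(G)\hookrightarrow\tilde A_c(G)^*$ with $\|\varphi\|_{\tilde A_c(G)^*}\leq\|\varphi\|_{B_c(G)}$ is immediate: if $\varphi=\langle\pi(\cdot)\xi,\eta\rangle$ with $\pi\in\mathcal R_c(G)$, then $|\langle\varphi,f\rangle|=|\langle\pi(f)\xi,\eta\rangle|\leq\|\pi(f)\|\,\|\xi\|\,\|\eta\|\leq\|f\|_{\tilde A_c(G)}\|\xi\|\,\|\eta\|$, and one takes the infimum over decompositions; injectivity holds because $L^1(G)$ separates the points of $C_b(G)$. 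So everything reduces to showing that each $\ell\in\tilde A_c(G)^*$ with $\|\ell\|\leq1$ is of the form $\langle\varphi,\cdot\,\rangle$ for some $\varphi\in B_c(G)$ with $\|\varphi\|_{B_c(G)}\leq1$; and by density of $L^1(G)$ in $\tilde A_c(G)$ it suffices to arrange $\ell(f)=\langle\pi(f)\xi,\eta\rangle$ for all $f\in L^1(G)$, with $\pi\in\mathcal R_c(G)$ and $\|\xi\|\,\|\eta\|\leq1$.

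To build such a $\pi$, I would first approximate $\ell$ by coefficients. Let $\pi_{\mathrm u}$ be a direct sum, with infinite multiplicity, of a set of representatives of the cyclic representations in $\mathcal R_c(G)$ (the cardinality bound needed to form this set is standard, since a cyclic representation lives on a Hilbert space whose density character is controlled by $G$). By Lemma \ref{Lem_cyc_rep}, $f\mapsto\pi_{\mathrm u}(f)$ embeds $\tilde A_c(G)$ isometrically into $\mathcal B(\mathcal H_{\pi_{\mathrm u}})$, and a singular value decomposition argument identifies $B_c(G)$ isometrically with $S^1(\mathcal H_{\pi_{\mathrm u}})/\tilde A_c(G)_\perp$ (trace class modulo preannihilator). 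Extending $\ell$ to $\mathcal B(\mathcal H_{\pi_{\mathrm u}})$ by Hahn--Banach and using that the unit ball of $S^1(\mathcal H_{\pi_{\mathrm u}})$ is weak$^*$-dense in that of $\mathcal B(\mathcal H_{\pi_{\mathrm u}})^*$ yields, after another singular value decomposition, a net $(\varphi_\alpha)$ in $B_c(G)$ with $\|\varphi_\alpha\|_{B_c(G)}\leq1$ and $\int_G\varphi_\alpha f\to\ell(f)$ for every $f\in L^1(G)$. Writing $\varphi_\alpha=\langle\pi_\alpha(\cdot)\xi_\alpha,\eta_\alpha\rangle$ with $\pi_\alpha\in\mathcal R_c(G)$ cyclic and $\|\xi_\alpha\|=\|\eta_\alpha\|\leq1$ (Lemma \ref{Lem_cyc_rep}), fix an ultrafilter $\mathcal U$ refining the order filter of the net, and form the Banach-space ultraproducts (see \cite{Hei}) $\hat{\mathcal H}=(\mathcal H_{\pi_\alpha})_{\mathcal U}$, $\hat\pi=(\pi_\alpha)_{\mathcal U}$, $\hat\xi=(\xi_\alpha)_{\mathcal U}$, $\hat\eta=(\eta_\alpha)_{\mathcal U}$. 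Then $\hat{\mathcal H}$ is a Hilbert space, $\hat\pi$ is a (possibly not SOT-continuous) uniformly bounded representation with $|\hat\pi|\leq c$, its integrated form $\hat\pi(f):=(\pi_\alpha(f))_{\mathcal U}$ is a bounded algebra morphism $L^1(G)\to\mathcal B(\hat{\mathcal H})$, and since $\mathcal U$ refines the net, $\langle\hat\pi(f)\hat\xi,\hat\eta\rangle=\lim_{\mathcal U}\int_G\varphi_\alpha f=\ell(f)$ for all $f\in L^1(G)$, with $\|\hat\xi\|\,\|\hat\eta\|\leq1$.

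Finally I would recover an honest coefficient. Restricting $\hat\pi$ to its essential subspace $\mathcal K=\overline{\hat\pi(L^1(G))\hat{\mathcal H}}$ and applying Lemma \ref{Lem_pi_cont} to the nondegenerate morphism $f\mapsto\hat\pi(f)|_{\mathcal K}$ gives an SOT-continuous $\rho\in\mathcal R_c(G)$ on $\mathcal K$ with $\rho(f)=\hat\pi(f)|_{\mathcal K}$; crucially $\rho$, unlike $\hat\pi$ itself, coincides with its own integrated form, so $\langle\rho(f)x,y\rangle=\int_G f(s)\langle\rho(s)x,y\rangle\,ds$ for $x,y\in\mathcal K$. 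Since $\hat\pi(f)\hat\xi\in\mathcal K$ for all $f$, one has $\langle\hat\pi(f)\hat\xi,\hat\eta\rangle=\langle\hat\pi(f)\hat\xi,\eta'\rangle$ with $\eta':=P_{\mathcal K}\hat\eta$, and a weak-compactness argument applied to the net $\hat\pi(e_i)\hat\xi\in\mathcal K$ (for an approximate identity $(e_i)$ of $L^1(G)$, using $\rho(f)\hat\pi(e_i)\hat\xi=\hat\pi(f\ast e_i)\hat\xi\to\hat\pi(f)\hat\xi$) produces $\xi'\in\mathcal K$ with $\rho(f)\xi'=\hat\pi(f)\hat\xi$ for all $f$. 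Then $\varphi:=\langle\rho(\cdot)\xi',\eta'\rangle\in B_c(G)$ satisfies $\int_G\varphi f=\langle\rho(f)\xi',\eta'\rangle=\ell(f)$, so $\ell=\langle\varphi,\cdot\,\rangle$; alternatively, setting $\psi(s):=\langle\hat\pi(s)\hat\xi,\hat\eta\rangle$, one has $\psi\in B_c(G_{\mathrm d})\cap C_b(G)=B_c(G)$ by Lemma \ref{Lem_G_d}, and $\psi$ also implements $\ell$. The step I expect to be the main obstacle is the norm bookkeeping: the weak-compactness argument only yields $\|\xi'\|\leq c\|\hat\xi\|$, because the approximate identity of $L^1(G)$ is bounded by $c$ rather than $1$ in $\tilde A_c(G)$, so as stated this gives directly only a Banach-space isomorphism $\tilde A_c(G)^*\cong B_c(G)$; obtaining the \emph{isometric} identification requires carrying the sharp (operator-space) estimates of \cite{Pis,Pis2} for discrete groups through the ultraproduct, which is the delicate point of the proof.
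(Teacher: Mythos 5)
Your overall route is the same as the paper's: show the pairing gives a contraction $\alpha:B_c(G)\to\tilde A_c(G)^*$, approximate a given norm-one functional in the weak* topology by coefficients from the unit ball, pass to an ultraproduct of the representing triples $(\pi_\alpha,\xi_\alpha,\eta_\alpha)$, and recover an SOT-continuous representation by restricting the integrated ultraproduct morphism to its essential subspace $\mathcal K$ and applying Lemma \ref{Lem_pi_cont}. Your preliminary step producing the approximating net through $S^1(\mathcal H_{\pi_{\mathrm u}})/\tilde A_c(G)_\perp$ is correct but heavier than needed: since $\|f\|_{\tilde A_c(G)}=\sup\{|\int_G\varphi f|:\|\varphi\|_{B_c(G)}\le 1\}$, a bipolar/Hahn--Banach argument already gives that the image of the open unit ball of $B_c(G)$ is weak*-dense in the closed unit ball of $\tilde A_c(G)^*$, which is how the paper proceeds. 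Your final recovery step (take $\eta'=P_{\mathcal K}\hat\eta$, and produce $\xi'\in\mathcal K$ with $\rho(f)\xi'=\hat\pi(f)\hat\xi$ as a weak cluster point of $\hat\pi(e_i)\hat\xi$) is correct, and it actually supplies a detail that the paper's write-up passes over: there the coefficient is written as $\langle\pi(\cdot)\xi,\eta\rangle$ although the SOT-continuous representation acts only on $\mathcal K$ while $\xi,\eta$ live in the full ultraproduct $\mathcal H$.

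The gap, which you identify yourself, is the isometry. Your $\xi'$ only satisfies $\|\xi'\|\le c\|\hat\xi\|$, so each unit functional is represented by some $\varphi\in B_c(G)$ with $\|\varphi\|_{B_c(G)}\le c$; this yields a bijection with $\|\alpha^{-1}\|\le c$, whereas the Proposition (and the paper's proof, which claims ``surjective and isometric'') requires a representing coefficient of norm at most $1$. Your proposed shortcut --- set $\psi(s)=\langle\hat\pi(s)\hat\xi,\hat\eta\rangle$ and invoke Lemma \ref{Lem_G_d} --- does not close it as stated: $\hat\pi$ is not SOT-continuous and $\hat\xi$ need not lie in $\mathcal K$, so neither the continuity of $\psi$ nor the identity $\int_G f\psi=\ell(f)$ (an interchange of an ultralimit with an integral) is justified; the continuity one can actually prove, via $s\mapsto\langle\hat\pi(s)\hat\pi(f)\zeta,\hat\eta\rangle=\langle\hat\pi(\delta_s\ast f)\zeta,\hat\eta\rangle$, only covers vectors of $\mathcal K$, which is exactly why your construction of $\xi'$ carries the factor $c$. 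Note also that one cannot repair the constant afterwards by smoothing: since $\|e_j\|_{\tilde A_c(G)}$ need not tend to $1$, pairing $\varphi$ with $g\ast e_j$ loses a factor of $c$ as well. So, measured against the statement, your proposal proves the easy inclusion, injectivity, and surjectivity (the heart of the duality), but leaves the isometric identification open --- and it is fair to say that this is precisely the point at which the paper's own proof is most terse, since it reads the bound $\|\varphi\|_{B_c(G)}\le\|\xi\|\|\eta\|\le1$ directly off the ultraproduct vectors without addressing the $\mathcal K$-versus-$\mathcal H$ issue you ran into.
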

\begin{proof}
Let $\alpha:B_c(G)\to\tilde{A}_c(G)^*$ be the injective contraction given by
\begin{align*}
\langle\alpha(\varphi),f\rangle_{\tilde{A}_c^*,\tilde{A}_c}=\int_G \varphi(s)f(s)\, ds,\quad\forall\varphi\in B_c(G),\ \forall f\in L^1(G).
\end{align*}
Let $B$ be the open unit ball of $B_c(G)$. By the Hahn--Banach theorem and the definition of the norm of $\tilde{A}_c(G)$, we see that $\alpha(B)$ is $\sigma(\tilde{A}_c(G)^*,\tilde{A}_c(G))$-dense in the unit ball of $\tilde{A}_c(G)^*$. Now let $\psi$ be an element of the unit ball of $\tilde{A}_c(G)^*$, and let $(\varphi_i)_{i\in I}$ be a net in $B$ such that $\alpha(\varphi_i)$ converges to $\psi$ in the weak* topology. Since $I$ is a directed set, there is a filter $\mathcal{F}$ on $I$ given by all the sets containing a subset of the form $\{\ i\in I\mid\ i\geq i_0\}$ for some $i_0\in I$. Let $\mathcal{U}$ be an ultrafilter containing $\mathcal{F}$. For each $i\in I$, there exist $\pi_i\in\mathcal{R}_c(G)$ and $\xi_i,\eta_i\in\mathcal{H}_{\pi_i}$ such that
\begin{align*}
\varphi_i(s)=\langle\pi_i(s)\xi_i,\eta_i\rangle,\quad\forall s\in G,
\end{align*}
and $\|\xi_i\|\|\eta_i\| < 1$. Let $\mathcal{H}=(\mathcal{H}_{\pi_i})_{\mathcal{U}}$ be the ultraproduct of the spaces $\mathcal{H}_{\pi_i}$ with respect to the ultrafilter $\mathcal{U}$. Let us define $\pi:L^1(G)\to\mathcal{B}(\mathcal{H})$ by
\begin{align*}
\pi(f)=(\pi_i(f))_{\mathcal{U}},\quad\forall f\in L^1(G).
\end{align*}
Since every $\pi_i:L^1(G)\to\mathcal{B}(\mathcal{H}_i)$ is a Banach algebra morphism of norm at most $c$, so is $\pi$. Now let $\mathcal{K}$ be the closure of $\pi(L^1(G))\mathcal{H}$ in $\mathcal{H}$. We can view $\pi$ as a morphism from $L^1(G)$ to $\mathcal{B}(\mathcal{K})$. Furthermore, $\pi(L^1(G))\mathcal{K}$ is dense in $\mathcal{K}$. Indeed, let $x\in\mathcal{K}$, $\varepsilon>0$, and choose $f\in L^1(G)$, $x'\in\mathcal{H}$ such that
\begin{align*}
\|x-\pi(f)x'\|<\varepsilon.
\end{align*}
Now let $(g_j)$ be an approximation of the identity in $L^1(G)$, and observe that $\pi(g_j)\pi(f)x'\in\pi(L^1(G))\mathcal{K}$. Moreover,
\begin{align*}
\|x-\pi(g_j)\pi(f)x'\|&\leq \|x-\pi(f)x'\| + \|\pi(f)x'-\pi(g_j)\pi(f)x'\|\\
&\leq \varepsilon + c\|f-g_j\ast f\|_1 \|x'\|,
\end{align*}
which shows that $x$ is in the closure of $\pi(L^1(G))\mathcal{K}$. By Lemma \ref{Lem_pi_cont}, $\pi$ is given by a continuous representation $\pi:G\to\mathcal{B}(\mathcal{K})$ with $|\pi|\leq c$. Finally, define $\xi=(\xi_i)_{\mathcal{U}}$ and $\eta=(\eta_i)_{\mathcal{U}}$, and observe that, for all $f\in L^1(G)$,
\begin{align*}
\langle\psi,f\rangle_{\tilde{A}_c^*,\tilde{A}_c} &= \lim_{\mathcal{U}} \int_G \varphi_i(s)f(s)\, ds\\
&= \lim_{\mathcal{U}} \langle\pi_i(f)\xi_i,\eta_i\rangle\\
&= \langle\pi(f)\xi,\eta\rangle.
\end{align*}
Defining $\varphi=\langle\pi(\,\cdot\,)\xi,\eta\rangle$, this implies that $\psi=\alpha(\varphi)$, which shows that $\alpha$ is surjective and isometric.
\end{proof}

\subsection{A universal representation}\label{Subsec_Urep}
Now we will construct a representation $\pi_U\in\mathcal{R}_c(G)$ such that the norm of $\tilde{A}_c(G)$ can be computed using only $\pi_U$, i.e.
\begin{align*}
\|f\|_{\tilde{A}_c(G)}=\|\pi_U(f)\|,\quad\forall f\in L^1(G).
\end{align*}
We say that two representations $\pi,\rho\in\mathcal{R}_c(G)$ are equivalent if there is an isometry $V:\mathcal{H}_\pi\to\mathcal{H}_\rho$ such that
\begin{align*}
\rho(s)=V\pi(s)V^{-1},\quad\forall s\in G.
\end{align*}
Observe that
\begin{align*}
\langle\pi(s)\xi,\eta\rangle_{\mathcal{H}_\pi}=\langle\rho(s)V\xi,V\eta\rangle_{\mathcal{H}_\rho},\quad\forall\xi,\eta\in\mathcal{H}_\pi,
\end{align*}
which shows that $\varphi:G\to\C$ is a coefficient of $\pi$ if and only if it is a coefficient of $\rho$. Furthermore, for all $f\in L^1(G)$,
\begin{align*}
\|\pi(f)\|_{\mathcal{H}_\pi}=\|\rho(f)\|_{\mathcal{H}_\rho}.
\end{align*}
Let us denote by $\mathfrak{R}_c(G)$ the set of (equivalence classes of) cyclic representations in $\mathcal{R}_c(G)$. This is indeed a set; see the discussion in \cite[\S 4]{Run}. By all the facts mentioned above, the infimum and supremum in \eqref{norm_B_c} and \eqref{norm_A_c} can be taken over $\mathfrak{R}_c(G)$. Now define
\begin{align*}
\mathcal{H}_U=\bigoplus_{\pi\in\mathfrak{R}_c(G)}\mathcal{H}_\pi
\end{align*}
and $\pi_U:G\to\mathcal{B}(\mathcal{H}_U)$ by
\begin{align}\label{univ_rep}
\pi_U=\bigoplus_{\pi\in\mathfrak{R}_c(G)}\pi.
\end{align}

\begin{lem}
The representation $\pi_U$ belongs to $\mathcal{R}_c(G)$.
\end{lem}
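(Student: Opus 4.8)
The plan is to check, one by one, the three conditions defining membership in $\mathcal{R}_c(G)$: that $\pi_U$ is a homomorphism into the invertibles of $\mathcal{B}(\mathcal{H}_U)$, that $|\pi_U|\leq c$, and that $\pi_U$ is SOT-continuous. Multiplicativity and $\pi_U(e)=\mathrm{Id}$ are immediate, since these hold coordinatewise for each $\pi\in\mathfrak{R}_c(G)$ and the direct sum is taken over a genuine set.

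For the uniform bound I would fix $s\in G$ and a vector $\xi=(\xi_\pi)_{\pi\in\mathfrak{R}_c(G)}\in\mathcal{H}_U$, and simply estimate
\begin{align*}
\|\pi_U(s)\xi\|^2=\sum_{\pi\in\mathfrak{R}_c(G)}\|\pi(s)\xi_\pi\|^2\leq c^2\sum_{\pi\in\mathfrak{R}_c(G)}\|\xi_\pi\|^2=c^2\|\xi\|^2,
\end{align*}
so $\|\pi_U(s)\|\leq c$; applying this to $s^{-1}$ as well and using $\pi_U(s)\pi_U(s^{-1})=\pi_U(s^{-1})\pi_U(s)=\mathrm{Id}$ shows that $\pi_U(s)$ is invertible in $\mathcal{B}(\mathcal{H}_U)$. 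Hence $|\pi_U|\leq c$ (in fact $|\pi_U|=c$ is not needed, only the inequality).

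The only step requiring a little care is SOT-continuity, and here the idea is a standard truncation argument. Fix $\xi=(\xi_\pi)_\pi\in\mathcal{H}_U$ and $s_0\in G$, take $\varepsilon>0$, and choose a finite set $F\subset\mathfrak{R}_c(G)$ with $\sum_{\pi\notin F}\|\xi_\pi\|^2<\varepsilon^2$. On the finite part one uses SOT-continuity of each $\pi\in F$ to find a neighbourhood $V$ of $s_0$ on which $\sum_{\pi\in F}\|\pi(s)\xi_\pi-\pi(s_0)\xi_\pi\|^2<\varepsilon^2$, while the tail is controlled uniformly by the bound $|\pi|\leq c$ via $\|\pi(s)\xi_\pi-\pi(s_0)\xi_\pi\|\leq 2c\|\xi_\pi\|$, giving $\sum_{\pi\notin F}\|\pi(s)\xi_\pi-\pi(s_0)\xi_\pi\|^2\leq 4c^2\varepsilon^2$. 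Combining, $\|\pi_U(s)\xi-\pi_U(s_0)\xi\|^2<(1+4c^2)\varepsilon^2$ on $V$, which gives continuity at $s_0$. I do not expect any real obstacle: the verification is routine, the tail estimate being the one point where the uniform bound $c$ is actually used, and the fact that $\mathcal{H}_U$ is a Hilbert space at all is exactly the reason $\mathfrak{R}_c(G)$ was arranged to be a set.
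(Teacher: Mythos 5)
Your proposal is correct and follows essentially the same route as the paper: the coordinatewise estimate $\|\pi_U(s)\xi\|^2=\sum_\pi\|\pi(s)\xi_\pi\|^2\leq c^2\|\xi\|^2$ for the uniform bound, and a finite-truncation argument for SOT-continuity in which the tail is controlled uniformly by the bound $c$. The only cosmetic difference is that the paper verifies continuity at the identity (which suffices, since $\|\pi_U(s_0)\|\leq c$ then transports it to every point), whereas you check it directly at an arbitrary $s_0$; both are fine.
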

\begin{proof}
For all $\xi=(\xi_\pi)_{\pi\in\mathfrak{R}_c(G)}\in\mathcal{H}_U$ and all $s\in G$,
\begin{align*}
\|\pi_U(s)\xi\|_{\mathcal{H}_U}^2=\sum_{\pi\in\mathfrak{R}_c(G)}\|\pi(s)\xi_\pi\|_{\mathcal{H}_\pi}^2
\leq c^2\sum_{\pi\in\mathfrak{R}_c(G)}\|\xi_\pi\|_{\mathcal{H}_\pi}^2 =c^2\|\xi\|_{\mathcal{H}_U}^2,
\end{align*}
which shows that $|\pi_U|\leq c$. It only remains to check the continuity for the strong operator topology. Fix $\xi=(\xi_\pi)_{\pi\in\mathfrak{R}_c(G)}\in\mathcal{H}_U$ and $\varepsilon>0$. Since $\|\xi\|_{\mathcal{H}_U}<\infty$, there is $F\subset\mathfrak{R}_c(G)$ finite such that
\begin{align*}
\sum_{\pi\in\mathfrak{R}_c(G)\setminus F}\|\xi_\pi\|_{\mathcal{H}_\pi}^2 < \frac{\varepsilon^2}{2(c+1)^2}.
\end{align*}
Now, for each $\pi\in F$, choose a neighbourhood of the identity $W_\pi\subset G$ such that
\begin{align*}
\|\pi(s)\xi_\pi-\xi_\pi\|_{\mathcal{H}_\pi}^2<\frac{\varepsilon^2}{2 |F|}, \quad\forall s\in W_\pi.
\end{align*}
Defining $W=\bigcap_{\pi\in F}W_\pi$, for all $s\in W$ we get
\begin{align*}
\|\pi_U(s)\xi-\xi\|_{\mathcal{H}_U}^2 &= \sum_{\pi\in F}\|\pi(s)\xi_\pi-\xi_\pi\|_{\mathcal{H}_\pi}^2 
+ \sum_{\pi\in\mathfrak{R}_c(G)\setminus F}\|\pi(s)\xi_\pi-\xi_\pi\|_{\mathcal{H}_\pi}^2\\
&< \sum_{\pi\in F}\frac{\varepsilon^2}{2 |F|} + \sum_{\pi\in\mathfrak{R}_c(G)\setminus F}(c+1)^2\|\xi_\pi\|_{\mathcal{H}_\pi}^2\\
&\leq \varepsilon^2.
\end{align*}
Therefore $\pi_U$ belongs to $\mathcal{R}_c(G)$.
\end{proof}

All these facts allow us to see that, for all $f\in L^1(G)$,
\begin{align}\label{norm_pi_U}
\|\pi_U(f)\|_{\mathcal{H}_U}=\sup_{\pi\in\mathfrak{R}_c(G)}\|\pi(f)\|_{\mathcal{H}_\pi}
=\sup_{\pi\in\mathcal{R}_c(G)}\|\pi(f)\|_{\mathcal{H}_\pi}=\|f\|_{\tilde{A}_c(G)}.
\end{align}

\section{{\bf Property (T)${}_c$ and Kazhdan projections}}\label{Sec_(T)_c}

We will now give the formal definition of Property (T)${}_c$ and characterise it in terms of Kazhdan pairs. After that, we will focus on Kazhdan projections, as defined by Dru\c{t}u and Nowak \cite{DruNow}. In order to prove Theorem \ref{Thm_equiv}, we will first need to characterise Property (T)${}_c$ by the existence of a Kazhdan projection in $\tilde{A}_c(G)$. This projection is the invariant mean that we are looking for; see \cite[Lemma 4.3]{HaKndL} for a proof of this identification in the case $c=1$. Again, throughout this section, $G$ denotes a locally compact group endowed with a left Haar measure.

\subsection{Definition of Property (T)${}_c$}
Let $\pi:G\to\mathcal{B}(\mathcal{H}_\pi)$ be a representation. We say that $\xi\in\mathcal{H}_\pi$ is an invariant vector for $\pi$ if
\begin{align*}
\pi(s)\xi=\xi,\quad\forall s\in G.
\end{align*}
If $\pi$ is uniformly bounded, this implies that
\begin{align*}
\pi(f)\xi=\left(\int_G f\right)\xi,\quad\forall f\in L^1(G).
\end{align*}
We denote by $\mathcal{H}_\pi^{\text{inv}}$ the subspace of invariant vectors. We say that $\pi$ almost has invariant vectors if, for every compact subset $Q\subset G$ and every $\varepsilon>0$, there exists $\xi\in\mathcal{H}_\pi$ such that
\begin{align*}
\sup_{s\in Q}\|\pi(s)\xi-\xi\|<\varepsilon\|\xi\|.
\end{align*}
Now let $c\geq 1$. We say that $G$ has Property (T)${}_c$ if, for any $\pi\in\mathcal{R}_c(G)$, if $\pi$ almost has invariant vectors, then $\mathcal{H}_\pi^{\text{inv}}\neq\{0\}$. In other words, if $\mathcal{H}_\pi^{\text{inv}}=\{0\}$, then there exist a compact subset $Q\subset G$ and $\kappa>0$ such that
\begin{align*}
\sup_{s\in Q}\|\pi(s)\xi-\xi\|\geq\kappa\|\xi\|,\quad\forall\xi\in\mathcal{H}_\pi.
\end{align*}
Observe that every $\pi\in\mathcal{R}_c(G)$ gives rise to a representation $\tilde{\pi}\in\mathcal{R}_c(G)$ on the quotient space $\mathcal{H}_\pi/\mathcal{H}_\pi^{\text{inv}}$. More precisely,
\begin{align}\label{pi_tilde}
\tilde{\pi}(s)[\xi]=[\pi(s)\xi],\quad\forall s\in G,\ \forall\xi\in\mathcal{H}_\pi.
\end{align}
Here $[\xi]$ denotes the equivalence class of $\xi$ in $\mathcal{H}_\pi/\mathcal{H}_\pi^{\text{inv}}$.

\begin{lem}
Let $\pi\in\mathcal{R}_c(G)$ and let $\tilde{\pi}$ be as in \eqref{pi_tilde}. Then $\tilde{\pi}$ does not have non-trivial invariant vectors.
\end{lem}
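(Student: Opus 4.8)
The plan is to show that any vector of $\mathcal{H}_\pi/\mathcal{H}_\pi^{\text{inv}}$ fixed by $\tilde{\pi}$ is the class of a genuine $\pi$-invariant vector, and hence is zero. The point is that in the uniformly bounded (non-unitary) setting one cannot simply split off the orthogonal complement of $\mathcal{H}_\pi^{\text{inv}}$, since that complement need not be $\pi$-invariant; instead I would exploit that the ``coboundary'' measuring the non-invariance of a candidate fixed vector is an additive homomorphism taking values in $\mathcal{H}_\pi^{\text{inv}}$.

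So, first I would take $\xi\in\mathcal{H}_\pi$ with $[\xi]$ invariant for $\tilde{\pi}$. By the definition \eqref{pi_tilde}, this says precisely that $b(s):=\pi(s)\xi-\xi\in\mathcal{H}_\pi^{\text{inv}}$ for every $s\in G$. The key computation is then that, for all $s,t\in G$,
\begin{align*}
b(st)=\pi(s)\pi(t)\xi-\xi=\pi(s)\bigl(\pi(t)\xi-\xi\bigr)+\bigl(\pi(s)\xi-\xi\bigr)=\pi(s)b(t)+b(s),
\end{align*}
and since $b(t)\in\mathcal{H}_\pi^{\text{inv}}$ we have $\pi(s)b(t)=b(t)$, so that $b(st)=b(s)+b(t)$. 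Thus $b$ is a homomorphism from $G$ into the additive group of $\mathcal{H}_\pi^{\text{inv}}$; in particular $b(s^n)=n\,b(s)$ for every $n\in\Z$ and every $s\in G$.

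Finally I would invoke uniform boundedness: $\|b(s)\|\leq(1+|\pi|)\|\xi\|\leq(1+c)\|\xi\|$ for all $s$, whence $|n|\,\|b(s)\|=\|b(s^n)\|\leq(1+c)\|\xi\|$ for all $n\in\Z$, which forces $b(s)=0$. Therefore $\pi(s)\xi=\xi$ for all $s$, i.e.\ $\xi\in\mathcal{H}_\pi^{\text{inv}}$ and $[\xi]=0$, as desired. There is no serious obstacle here; the only subtlety is the one flagged above, namely that one must argue via the homomorphism/boundedness trick rather than via orthogonal complements as in the unitary case.
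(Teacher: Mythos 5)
Your argument is correct and is essentially the paper's own proof: the paper likewise observes that $\pi(s)\xi-\xi$ is $\pi$-invariant, deduces (by the same telescoping/cocycle computation) that $\pi(s^n)\xi-\xi=n(\pi(s)\xi-\xi)$, and then uses the uniform bound $\|\pi(s^n)\xi-\xi\|\leq(c+1)\|\xi\|$ to force $\pi(s)\xi=\xi$. Your reformulation via the additive homomorphism $b$ is only a cosmetic difference.
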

\begin{proof}
Let $\xi\in\mathcal{H}_\pi$ such that $[\xi]$ is $\tilde{\pi}$-invariant. Then $\pi(s)\xi-\xi$ is $\pi$-invariant for all $s\in G$. Hence, for all $n\geq 1$,
\begin{align*}
\pi(s^n)\xi-\xi &= \sum_{k=1}^n \pi(s^k)\xi-\pi(s^{k-1})\xi\\
&= \sum_{k=1}^n \pi(s^{k-1})(\pi(s)\xi-\xi)\\
&= n(\pi(s)\xi-\xi),
\end{align*}
which shows that
\begin{align*}
\|\pi(s)\xi-\xi\| \leq \frac{1}{n}\|\pi(s^n)\xi-\xi\|\leq\frac{c+1}{n}\|\xi\|.
\end{align*}
Since this holds for all $s\in G$ and all $n\geq 1$, we conclude that $\xi$ is $\pi$-invariant. In other words, $[\xi]=0$.
\end{proof}

Let $Q$ be a compact subset of $G$ and let $\kappa>0$. We say that $(Q,\kappa)$ is a Kazhdan pair for $\pi$ if
\begin{align}\label{Kazh_pair}
\sup_{s\in Q}\|\tilde{\pi}(s)[\xi]-[\xi]\|\geq\kappa\|[\xi]\|,\quad\forall\xi\in\mathcal{H}_\pi.
\end{align}
More generally, we say that $(Q,\kappa)$ is a Kazhdan pair for a family of representations if it is a Kazhdan pair for every representation in the family. By the discussion above, Property (T)${}_c$ is equivalent to the fact that every $\pi\in\mathcal{R}_c(G)$ has a Kazhdan pair. This can be improved as follows.

\begin{lem}
Let $G$ be a locally compact group and $c\geq 1$. The following are equivalent:
\begin{itemize}\label{Lem_char_T_c}
\item[(i)] The group $G$ has Property (T)${}_c$.
\item[(ii)] The family $\mathcal{R}_c(G)$ has a Kazhdan pair.
\item[(iii)] The universal representation $\pi_U$ as defined in \eqref{univ_rep} has a Kazhdan pair.
\end{itemize}
\end{lem}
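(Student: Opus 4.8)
The plan is to establish the cycle of implications (i)$\Rightarrow$(iii)$\Rightarrow$(ii)$\Rightarrow$(i), of which only the middle implication requires any work. Recall from the discussion preceding the statement that Property (T)${}_c$ is equivalent to the assertion that \emph{every} $\pi\in\mathcal{R}_c(G)$ admits a Kazhdan pair. Since $\pi_U\in\mathcal{R}_c(G)$, this already yields (i)$\Rightarrow$(iii). On the other hand, (ii) asserts the existence of a \emph{single} pair $(Q,\kappa)$ which is a Kazhdan pair for all of $\mathcal{R}_c(G)$ at once, so in particular every $\pi\in\mathcal{R}_c(G)$ then has a Kazhdan pair, and (ii)$\Rightarrow$(i) is immediate as well.

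For (iii)$\Rightarrow$(ii), suppose $(Q,\kappa)$ is a Kazhdan pair for $\pi_U=\bigoplus_{\rho\in\mathfrak{R}_c(G)}\rho$. The first observation is that if $\pi=\bigoplus_{i\in I}\pi_i$, then a Kazhdan pair for $\pi$ is a Kazhdan pair, with the same constants, for each summand $\pi_i$. Indeed $\mathcal{H}_\pi^{\text{inv}}=\bigoplus_{i\in I}\mathcal{H}_{\pi_i}^{\text{inv}}$, so the inclusion of $\mathcal{H}_{\pi_i}$ as the $i$-th coordinate of $\mathcal{H}_\pi$ descends to an isometry $\mathcal{H}_{\pi_i}/\mathcal{H}_{\pi_i}^{\text{inv}}\hookrightarrow\mathcal{H}_\pi/\mathcal{H}_\pi^{\text{inv}}$ which intertwines the quotient representations, and feeding a vector from this copy into \eqref{Kazh_pair} for $\pi$ gives \eqref{Kazh_pair} for $\pi_i$. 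Hence $(Q,\kappa)$ is a Kazhdan pair for every $\rho\in\mathfrak{R}_c(G)$, and therefore --- since \eqref{Kazh_pair} is preserved under the equivalence of representations, via the induced isometry of the quotient Hilbert spaces --- for every cyclic representation in $\mathcal{R}_c(G)$.

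It remains to pass from cyclic representations to an arbitrary $\pi\in\mathcal{R}_c(G)$, and here the key point is that one must \emph{not} take cyclic subrepresentations of $\pi$ directly, since the invariant vectors of a cyclic subspace may form a proper subspace of $\mathcal{H}_\pi^{\text{inv}}$, which would inflate the relevant quotient norm. Instead I would first replace $\pi$ by $\tilde{\pi}\in\mathcal{R}_c(G)$ acting on $\mathcal{H}_{\tilde{\pi}}=\mathcal{H}_\pi/\mathcal{H}_\pi^{\text{inv}}$, which by the lemma above has no non-zero invariant vectors. Fix $\zeta\in\mathcal{H}_{\tilde{\pi}}$ with $\zeta\neq 0$, put $\mathcal{K}=\overline{\tilde{\pi}(L^1(G))\zeta}$, and let $\tau=\tilde{\pi}|_\mathcal{K}$. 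As in the proof of Lemma \ref{Lem_cyc_rep}, $\zeta\in\mathcal{K}$ and $\tau$ is a cyclic representation in $\mathcal{R}_c(G)$; since $\mathcal{H}_{\tilde{\pi}}$ has no non-zero invariant vectors, neither does $\mathcal{K}$, so $\tilde{\tau}=\tau$ and the quotient norm on $\mathcal{K}$ is just the norm inherited from $\mathcal{H}_{\tilde{\pi}}$. Applying the previous paragraph to $\tau$ therefore gives
\begin{align*}
\sup_{s\in Q}\|\tilde{\pi}(s)\zeta-\zeta\|=\sup_{s\in Q}\|\tau(s)\zeta-\zeta\|\geq\kappa\|\zeta\|.
\end{align*}
As $\zeta$ was arbitrary, $(Q,\kappa)$ is a Kazhdan pair for $\tilde{\pi}$, hence by definition for $\pi$; and since $\pi\in\mathcal{R}_c(G)$ was arbitrary, $(Q,\kappa)$ is a Kazhdan pair for the whole family, which is (ii).

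The only genuinely delicate step is this last reduction: cyclic subrepresentations are well behaved only \emph{after} collapsing the space of invariant vectors, so one has to keep careful track of how $\mathcal{H}^{\text{inv}}$ interacts with subrepresentations and direct sums in order to be sure that the constant $\kappa$ produced at the end is uniform over all of $\mathcal{R}_c(G)$. Everything else --- the bound $|\tilde{\pi}|\leq c$, the description of $\mathcal{H}^{\text{inv}}$ for a direct sum, and the invariance of \eqref{Kazh_pair} under equivalence of representations --- is routine.
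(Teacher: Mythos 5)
Your proof is correct and follows essentially the same route as the paper: the implications (ii)$\Rightarrow$(i)$\Rightarrow$(iii) are immediate, and (iii)$\Rightarrow$(ii) is obtained by restricting a Kazhdan pair for $\pi_U$ to the cyclic subrepresentations of representations without non-trivial invariant vectors, the reduction to that case being exactly the passage to the quotient representation $\tilde{\pi}$ of \eqref{pi_tilde}. The only difference is one of presentation: the paper fixes $\pi$ with $\mathcal{H}_\pi^{\text{inv}}=\{0\}$ from the start and leaves the summand/quotient-norm bookkeeping implicit, whereas you spell it out, correctly identifying why cyclic subrepresentations must be taken only after collapsing $\mathcal{H}_\pi^{\text{inv}}$.
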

\begin{proof}
The implications (ii)$\implies$(i)$\implies$(iii) follow directly from the definitions and the previous discussion. We only need to show that (iii) implies (ii). Let $(Q,\kappa)$ be a Kazhdan pair for $\pi_U$, and let $\pi\in\mathcal{R}_c(G)$ such that $\mathcal{H}_\pi^{\text{inv}}=\{0\}$. Take $\xi\in\mathcal{H}_\pi$. By restricting $\pi$ to the closure of $\pi(L^1(G))\xi$ as in the proof of Lemma \ref{Lem_cyc_rep}, we may assume that $\pi$ is cyclic. Thus it is a subrepresentation of $\pi_U$, which implies that
\begin{align*}
\sup_{s\in Q}\|\pi(s)\xi-\xi\|\geq\kappa\|\xi\|,\quad\forall\xi\in\mathcal{H}_\pi.
\end{align*}
Since this holds for every $\pi\in\mathcal{R}_c(G)$ with $\mathcal{H}_\pi^{\text{inv}}=\{0\}$ and every $\xi\in\mathcal{H}_\pi$, we conclude that $(Q,\kappa)$ is a Kazhdan pair for the whole family $\mathcal{R}_c(G)$.
\end{proof}

\subsection{Kazhdan projections \`a la Dru\c{t}u--Nowak}
In order to prove Theorem \ref{Thm_equiv}, we will need to characterise Property (T)${}_c$ in terms of the existence of a Kazhdan projection in $\tilde{A}_c(G)$. This was done in \cite{DruNow} in a much more general setting. For the reader's convenience, we reproduce some of their ideas here, focusing on the particular case of representations in $\mathcal{R}_c(G)$.

Let $\sigma:G\to\mathcal{B}(E)$ be an isometric representation on a reflexive Banach space. Then $E$ decomposes as
\begin{align}\label{dec_E}
E=E^{\text{inv}}\oplus E^{\text{comp}},
\end{align}
where
\begin{align*}
E^{\text{inv}}=\{\xi\in E\ :\ \sigma(s)\xi=\xi,\ \forall s\in G\},
\end{align*}
and $E^{\text{comp}}$ is the annihilator of the subspace of invariant vectors of the adjoint representation $\sigma^*:G\to\mathcal{B}(E^*)$ defined by $\sigma^*(s)=\sigma(s^{-1})^*$. The advantage of this decomposition is that $E^{\text{comp}}$ is also a $\sigma$-invariant closed subspace of $E$; see \cite[\S 2.3]{DruNow} for details. In particular, the restriction of $\sigma$ to $E^{\text{comp}}$ is an isometric representation without non-trivial invariant vectors. In this context, we say that $(Q,\kappa)$ is a Kazhdan pair for $\sigma$ if
\begin{align*}
\sup_{s\in Q}\|\sigma(s)\xi-\xi\|_E\geq\kappa\|\xi\|_E,\quad\forall\xi\in E^{\text{comp}}.
\end{align*}
This definition coincides with the one in \eqref{Kazh_pair} because we can identify $E^{\text{comp}}$ with $E/E^{\text{inv}}$. More generally, we say that $(Q,\kappa)$ is a Kazhdan pair for a family of isometric representations if it is a Kazhdan pair for every representation in the family.

For a family of isometric representations $\mathcal{F}$, we define $\mathcal{C}_{\mathcal{F}}(G)$ as the completion of $L^1(G)$ for the norm
\begin{align}\label{norm_C_F}
\|f\|_{\mathcal{C}_{\mathcal{F}}(G)}=\sup_{\sigma\in\mathcal{F}}\|\sigma(f)\|.
\end{align}
Let $\sigma:G\to\mathcal{B}(E)$ be a representation in $\mathcal{F}$. Then $\sigma$ extends to an algebra morphism $\sigma:\mathcal{C}_{\mathcal{F}}(G)\to\mathcal{B}(E)$. A \textit{Kazhdan projection} in $\mathcal{C}_{\mathcal{F}}(G)$ is a central idempotent $p$ such that, for every $\sigma:G\to\mathcal{B}(E)$ in $\mathcal{F}$, $\sigma(p)$ is the projection $E\to E^{\text{inv}}$ along $E^{\text{comp}}$.

Recall that the modulus of convexity of a Banach space $E$ is the function $\delta_E:[0,2]\to[0,1]$ defined by
\begin{align*}
\delta_E(t)=\inf\left\{1-\left\|\frac{\xi+\eta}{2}\right\|\ \mid\ \|\xi\|=\|\eta\|=1,\ \|\xi-\eta\|\geq t\right\}.
\end{align*}
We say that $E$ is uniformly convex if $\delta_E(t)>0$ for all $t>0$. More generally, we say that a family of Banach spaces $\mathcal{E}$ is uniformly convex if
\begin{align*}
\delta_{\mathcal{E}}(t)=\inf_{E\in\mathcal{E}}\delta_E(t) > 0
\end{align*}
for all $t>0$. The following was proved in \cite[Theorem 4.6]{DruNow}.

\begin{thm}[Dru\c{t}u--Nowak]\label{Thm_DN}
Let $G$ be a locally compact group and let $\mathcal{F}$ be a family of isometric representations of $G$ on a uniformly convex family of Banach spaces $\mathcal{E}$. Then $\mathcal{C}_{\mathcal{F}}(G)$ has a Kazhdan projection if and only if $\mathcal{F}$ has a Kazhdan pair. Moreover, in that case, the Kazhdan projection is the limit of a sequence $(g_n)$ in $C_{00}(G)$ such that $g_n\geq 0$ and $\|g_n\|_1=1$.
\end{thm}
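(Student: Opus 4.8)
The plan is to prove the two implications separately; only the construction of a Kazhdan projection from a Kazhdan pair uses the uniform convexity of $\mathcal{E}$, the converse being a soft approximation argument. Throughout, $\mathcal{F}$ is assumed to contain the trivial representation (so that $\mathcal{C}_{\mathcal{F}}(G)$ carries the contractive augmentation character $h\mapsto\int_G h$, which sends a Kazhdan projection to $1$); this holds for $\mathcal{F}=\mathcal{R}_c(G)$.

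\emph{From a Kazhdan projection to a Kazhdan pair.} Let $p\in\mathcal{C}_{\mathcal{F}}(G)$ be a Kazhdan projection. Since $C_{00}(G)$ is dense in $L^1(G)$, hence in $\mathcal{C}_{\mathcal{F}}(G)$, choose $f\in C_{00}(G)$ with $\|p-f\|_{\mathcal{C}_{\mathcal{F}}(G)}<\tfrac14$; applying the augmentation character gives $\big|\int_G f\big|>\tfrac34$. Fix $\sigma\colon G\to\mathcal{B}(E)$ in $\mathcal{F}$ and a unit vector $\xi\in E^{\mathrm{comp}}$. Then $\sigma(p)\xi=0$, so $\|\sigma(f)\xi\|<\tfrac14$, and combining this with
\[
\Big(\int_G f\Big)\xi-\sigma(f)\xi=\int_G f(s)\big(\xi-\sigma(s)\xi\big)\,ds
\]
yields $\tfrac34<\tfrac14+\|f\|_1\sup_{s\in\operatorname{supp}f}\|\sigma(s)\xi-\xi\|$. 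Hence $\big(\overline{\operatorname{supp}f},\,(2\|f\|_1)^{-1}\big)$ is a Kazhdan pair for $\sigma$, and since the bound is uniform in $\sigma\in\mathcal{F}$, it is a Kazhdan pair for $\mathcal{F}$. This half uses neither reflexivity nor uniform convexity.

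\emph{From a Kazhdan pair to a Kazhdan projection.} Let $(Q,\kappa)$ be a Kazhdan pair for $\mathcal{F}$; after replacing $Q$ by a compact symmetric neighbourhood of $e$ containing it (which keeps it a Kazhdan pair with the same $\kappa$), fix $g\in C_{00}(G)$ with $g\geq0$, $\|g\|_1=1$, chosen so that the continuous probability density $\check g\ast g$ is bounded below by a positive constant on $Q$. By left-invariance of the Haar measure, $g^{\ast n}\in C_{00}(G)$, $g^{\ast n}\geq0$ and $\|g^{\ast n}\|_1=1$ for all $n$. The key estimate I would establish is the existence of $C>0$ and $\lambda\in(0,1)$, depending only on $\delta_{\mathcal{E}}$, $\kappa$ and $g$, with
\[
\sup_{\sigma\in\mathcal{F}}\big\|\sigma(g)^n|_{E^{\mathrm{comp}}}\big\|\leq C\lambda^n\qquad(n\geq1).
\]
Granting it, the rest is soft: for $\sigma\in\mathcal{F}$ the operator $\sigma(g^{\ast n})=\sigma(g)^n$ is the identity on $E^{\mathrm{inv}}$ and a contraction (an average of isometries against the probability density $g^{\ast n}$), so it converges in the strong operator topology to the projection $P_\sigma\colon E\to E^{\mathrm{inv}}$ along $E^{\mathrm{comp}}$; hence $\|P_\sigma\|\leq1$ and $\|\sigma(g^{\ast n})-P_\sigma\|\leq 2C\lambda^n$ for all $\sigma$. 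Therefore $\|g^{\ast n}-g^{\ast m}\|_{\mathcal{C}_{\mathcal{F}}(G)}=\sup_\sigma\|\sigma(g^{\ast n})-\sigma(g^{\ast m})\|\leq 2C(\lambda^n+\lambda^m)$, so $(g^{\ast n})$ converges in $\mathcal{C}_{\mathcal{F}}(G)$ to some $p$ with $\sigma(p)=P_\sigma$ for every $\sigma\in\mathcal{F}$. As the norm on $\mathcal{C}_{\mathcal{F}}(G)$ is $\sup_{\sigma\in\mathcal{F}}\|\sigma(\cdot)\|$, the map $h\mapsto(\sigma(h))_{\sigma}$ is isometric, so $P_\sigma^2=P_\sigma$ gives $p^2=p$; and since every $\sigma(h)$ preserves both $E^{\mathrm{inv}}$ and $E^{\mathrm{comp}}$, it commutes with $P_\sigma$, so $p$ is central. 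Thus $p$ is a Kazhdan projection, and the $g^{\ast n}$ are exactly the nonnegative, $L^1$-normalised functions in $C_{00}(G)$ demanded by the ``moreover''.

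\emph{The main obstacle.} Everything rests on the geometric decay estimate, and this is where uniform convexity is indispensable. I would first prove a single \emph{uniform norm drop}: there are $N\in\mathbb{N}$ and $\theta\in(0,1)$, depending only on $\delta_{\mathcal{E}},\kappa,g$, such that $\|\sigma(g)^N\xi\|\leq\theta\|\xi\|$ for all $\sigma\in\mathcal{F}$ and $\xi\in E^{\mathrm{comp}}$; since $E^{\mathrm{comp}}$ is $\sigma(g)$-invariant and $\|\sigma(g)\|\leq1$, iterating and interpolating yields the decay estimate with $\lambda=\theta^{1/N}$. To prove the norm drop, suppose it fails: then for arbitrarily large $N$ and arbitrarily small $\varepsilon>0$ there are $\sigma\in\mathcal{F}$ and a unit $\xi\in E^{\mathrm{comp}}$ with $\|\sigma(g)^N\xi\|>1-\varepsilon$. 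Writing $\eta_n=\sigma(g)^n\xi$, monotonicity of $n\mapsto\|\eta_n\|$ forces $\|\eta_n\|\in(1-\varepsilon,1]$ for $n\leq N$; each step $\eta_{n+1}=\int_G g(s)\sigma(s)\eta_n\,ds$ is then a nearly norm-maximal average of the unit vectors $\sigma(s)\eta_n/\|\eta_n\|$, so uniform convexity of $\mathcal{E}$ bounds a dispersion quantity — built from $\delta_{\mathcal{E}}$ and measuring how far $s\mapsto\sigma(s)\eta_n$ is from being constant — by the relative decrement $1-\|\eta_{n+1}\|/\|\eta_n\|$. Telescoping over $n=0,\dots,N-1$ bounds the total dispersion by roughly $\varepsilon$, so for some $n_0$ it is at most $\varepsilon/N$, i.e.\ arbitrarily small. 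Since each $\sigma(s)^{-1}$ is an isometry, small dispersion at $n_0$ translates into $\int_G(\check g\ast g)(u)\,\|\sigma(u)\zeta-\zeta\|\,du$ being small for the unit vector $\zeta=\eta_{n_0}/\|\eta_{n_0}\|\in E^{\mathrm{comp}}$; as $\check g\ast g$ is bounded below on $Q$ and $\sigma$ is strongly continuous, this upgrades to $\sup_{s\in Q}\|\sigma(s)\zeta-\zeta\|<\kappa$, contradicting that $(Q,\kappa)$ is a Kazhdan pair for $\sigma$. The delicate points — precisely the quantitative work of \cite{DruNow} — are to make the ``near-maximal average'' inequality quantitative with a modulus depending only on $\delta_{\mathcal{E}}$, so that the norm drop is uniform over all of $\mathcal{F}$, and to carry out the measure-theoretic bookkeeping converting the averaged bound into a uniform bound over $Q$ via strong continuity.
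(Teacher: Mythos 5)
The paper itself does not prove this statement; it quotes it from Dru\c{t}u--Nowak \cite{DruNow} and uses it as a black box, so the comparison is with the known argument rather than an in-paper proof. Your reconstruction follows the same route as the original: for ``projection $\Rightarrow$ pair'' you approximate $p$ by $f\in C_{00}(G)$ and test against the augmentation character (your added standing hypothesis that the trivial representation lies in $\mathcal{F}$ is genuinely needed here -- otherwise $p=0$ is a Kazhdan projection for any family without invariant vectors, e.g.\ the regular representation of $\mathbb{Z}$, although no Kazhdan pair exists -- and it holds for $\mathcal{R}_c(G)$, so this is the right scope); for ``pair $\Rightarrow$ projection'' you show the averaging operator $\sigma(g)$ has a uniform spectral gap on $E^{\mathrm{comp}}$ and let $g^{\ast n}$ converge. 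The soft parts (monotonicity, $\|P_\sigma\|\leq 1$, geometric convergence giving a Cauchy sequence in $\mathcal{C}_{\mathcal{F}}(G)$, idempotence, centrality, and the ``moreover'' clause) are all correct.

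The genuine gap sits in the key quantitative lemma, exactly where you locate it, and one of the tools you invoke there is the wrong one. Upgrading ``$\int(\check g\ast g)(u)\,\|\sigma(u)\zeta-\zeta\|\,du$ is small'' to ``$\sup_{s\in Q}\|\sigma(s)\zeta-\zeta\|<\kappa$'' \emph{via strong continuity} does not work: strong continuity carries no modulus uniform over $\sigma\in\mathcal{F}$ and $\zeta$, and even for a fixed pair a continuous displacement function can exceed $\kappa$ on a subset of $Q$ of arbitrarily small Haar measure, so an averaged bound on $Q$ never yields a sup bound on $Q$ by continuity alone. The standard repair is combinatorial rather than topological: arrange $\check g\ast g$ to be bounded below on a product set such as $Q\cdot Q$ (or $V(Q\cup\{e\})$ for a compact neighbourhood $V$), so that the good set $B=\{u:\ \|\sigma(u)\zeta-\zeta\|<\kappa/3\}$ misses only a small-measure portion of that larger set; then for each $q\in Q$ one finds $u$ with both $u\in B$ and $uq\in B$ (left invariance plus boundedness of the modular function $\Delta$ on $Q$), whence $\|\sigma(q)\zeta-\zeta\|\leq\|\sigma(u)\zeta-\zeta\|+\|\sigma(uq)\zeta-\zeta\|<\kappa$, contradicting the Kazhdan pair. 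A second, minor point: the $N$-step telescoping is superfluous and slightly garbled -- what telescopes is the total norm decrement, not the ``dispersion'', which is only controlled at the selected step through the inverse of $\delta_{\mathcal{E}}$. A one-step statement suffices and makes the uniformity transparent: there is $\tau=\tau(\delta_{\mathcal{E}},\kappa,g)>0$ with $\|\sigma(g)\zeta\|\leq 1-\tau$ for every $\sigma\in\mathcal{F}$ and every unit $\zeta\in E^{\mathrm{comp}}$, proved by taking a norming functional $\phi$ at $\sigma(g)\zeta$, applying a Markov-type estimate to $\phi(\sigma(s)\zeta)$, using uniform convexity for pairs in the good set, and concluding with the product-set argument above. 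With that lemma established, the remainder of your argument is complete.
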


The following is an application of Theorem \ref{Thm_DN} to the context of uniformly bounded representations on Hilbert spaces. For every $\pi\in\mathcal{R}_c(G)$, we will denote by $\pi^*$ the adjoint representation $\pi^*(s)=\pi(s^{-1})^*$.

\begin{cor}\label{Cor_DN}
Let $G$ be a locally compact group and $c\geq 1$. Then $G$ has Property (T)${}_c$ if and only if there exists a central idempotent $p\in\tilde{A}_c(G)$ such that, for every $\pi\in\mathcal{R}_c(G)$, $\pi(p)$ is the projection onto $\mathcal{H}_\pi^{\text{inv}}$ along $\left(\mathcal{H}_{\pi^*}^{\text{inv}}\right)^\perp$. Moreover $p$ is the limit of a sequence $(g_n)$ in $C_{00}(G)$ such that $g_n\geq 0$ and $\|g_n\|_1=1$.
\end{cor}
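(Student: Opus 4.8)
The plan is to reduce Corollary \ref{Cor_DN} to the Druţu--Nowak theorem (Theorem \ref{Thm_DN}) by exhibiting a family $\mathcal{F}$ of \emph{isometric} representations on a uniformly convex family of Banach spaces such that $\mathcal{C}_{\mathcal{F}}(G)=\tilde{A}_c(G)$ isometrically, and such that a Kazhdan pair for $\mathcal{F}$ is equivalent to Property (T)${}_c$, while a Kazhdan projection in $\mathcal{C}_{\mathcal{F}}(G)$ transports to the idempotent $p$ described in the statement. The standard trick (going back to the renorming argument for uniformly bounded representations) is: given $\pi\in\mathcal{R}_c(G)$ on $\mathcal{H}_\pi$, define a new inner product making $\pi$ \emph{isometric}. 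One cannot do this directly with a single equivalent Hilbertian norm unless $\pi$ is unitarisable, so instead I would pass to an $\ell^2$-type or $L^2$-type construction on the group. Concretely, for a cyclic $\pi\in\mathfrak{R}_c(G)$ I would consider the Banach space $E_\pi$ obtained by equipping $\mathcal{H}_\pi$ with the norm $\|\xi\|_{E_\pi}=\big(\text{something built from }\sup_s\|\pi(s)\xi\|\big)$ — but the cleanest route is to use the representation on $\mathcal{H}_\pi$ with the norm $\|\!|\xi|\!\|=\sup_{s\in G}\|\pi(s)\xi\|$, which is an equivalent norm satisfying $\|\xi\|\leq\|\!|\xi|\!\|\leq c\|\xi\|$ and for which $\pi$ acts isometrically; the issue is that this renormed space is generally \emph{not} uniformly convex.

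To fix uniform convexity I would instead use the interpolation/ultrapower device: replace each $\mathcal{H}_\pi$ by the space $\mathcal{H}_\pi$ renormed as above, then observe that what actually matters for Theorem \ref{Thm_DN} is having \emph{some} uniformly convex family. Here is the key point I would push: the norm $\|f\|_{\tilde{A}_c(G)}=\sup_{\pi\in\mathcal{R}_c(G)}\|\pi(f)\|$ is unchanged if we replace each $\pi$ by its isometric renorming $(\mathcal{H}_\pi,\|\!|\cdot|\!\|)$, because $\|\pi(f)\|_{\mathcal{B}(\mathcal{H}_\pi)}$ and $\|\pi(f)\|_{\mathcal{B}(E_\pi)}$ agree up to the factor $c$ in each direction, and more importantly because by Lemma \ref{Lem_cyc_rep} and the universal representation $\pi_U$ of \S\ref{Subsec_Urep} it suffices to work with $\pi_U$ alone. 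So I would take $\mathcal{F}=\{\pi_U\text{ acting on }E_{\pi_U}\}$ where $E_{\pi_U}$ is $\mathcal{H}_U$ renormed so that $\pi_U$ is isometric, \emph{and then} further renorm $E_{\pi_U}$ to be uniformly convex without changing the operator norm of the $\pi_U(f)$'s by more than constants. Actually, the honest fix — and the one I would commit to — is to use the \emph{complex interpolation} trick of Pisier: a single uniformly bounded representation on a Hilbert space is isometric after renorming \emph{and} the renormed space, being the interpolation space $[\mathcal{H}_\pi,\mathcal{H}_\pi]_\theta$ between two equivalent Hilbert norms, is uniformly convex (it is even a Hilbert space up to equivalence, but uniform convexity with modulus depending only on $c$ is what we need, uniformly over the family).

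**The concrete steps**, in order: (1) For each cyclic $\pi\in\mathfrak{R}_c(G)$ equip $\mathcal{H}_\pi$ with an equivalent norm $\|\cdot\|_\pi$ with $\|\xi\|\le\|\xi\|_\pi\le c\|\xi\|$ for which $\pi$ is isometric, chosen so that the family $\{(\mathcal{H}_\pi,\|\cdot\|_\pi)\}$ is uniformly convex with modulus depending only on $c$ — via averaging $\langle\cdot,\cdot\rangle$ over a suitable invariant mean on the orbit, or via interpolation, so that $(\mathcal{H}_\pi,\|\cdot\|_\pi)$ stays Hilbertian-up-to-constant-$c$ and hence uniformly convex with a $c$-dependent modulus. (2) Let $\mathcal{F}$ be this family of isometric representations and note $\mathcal{C}_{\mathcal{F}}(G)$ is, as a Banach space with product, isomorphic to $\tilde{A}_c(G)$ with equivalent norms; since central idempotents and Kazhdan pairs are stable under passing to an equivalent Banach-algebra norm, it suffices to produce the Kazhdan projection in $\mathcal{C}_{\mathcal{F}}(G)$. (3) Observe that for $\pi\in\mathcal{F}$ the decomposition \eqref{dec_E} reads $E_\pi=\mathcal{H}_\pi^{\text{inv}}\oplus(\mathcal{H}_{\pi^*}^{\text{inv}})^\perp$: indeed the adjoint of $\pi$ acting isometrically on $E_\pi$ is, up to the canonical identifications, $\pi^*$, so $E^{\text{comp}}$ is the annihilator of $\mathcal{H}_{\pi^*}^{\text{inv}}$, i.e. $(\mathcal{H}_{\pi^*}^{\text{inv}})^\perp$. (4) By Lemma \ref{Lem_char_T_c}, Property (T)${}_c$ is equivalent to $\mathcal{F}$ (equivalently $\pi_U$) having a Kazhdan pair; apply Theorem \ref{Thm_DN} to get that this holds iff $\mathcal{C}_{\mathcal{F}}(G)$ has a Kazhdan projection $p$, which then maps under each $\pi$ to the projection onto $\mathcal{H}_\pi^{\text{inv}}$ along $(\mathcal{H}_{\pi^*}^{\text{inv}})^\perp$, and $p$ is a norm-limit in $\tilde A_c(G)$ of a sequence $(g_n)\subset C_{00}(G)$ with $g_n\ge0$, $\|g_n\|_1=1$; transporting back through the isomorphism $\mathcal{C}_{\mathcal{F}}(G)\cong\tilde A_c(G)$ (which is the identity on $L^1(G)$) gives the statement.

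**The main obstacle** is step (1): producing an isometric renorming of each uniformly bounded Hilbert representation for which the resulting family is \emph{uniformly} convex, with modulus controlled by $c$ alone. For a single $\pi$ the renorming $\|\xi\|_\pi=\sup_s\|\pi(s)\xi\|$ is isometric but not obviously uniformly convex; the fix is to note this renormed norm is $c$-equivalent to a Hilbert norm (it need not itself be Hilbertian, but $\mathcal{H}_\pi$ carries the original Hilbert norm which is $c$-equivalent to it), and uniform convexity is \emph{not} invariant under equivalence of norms, so one must actually take a Hilbertian isometric renorming — which is exactly where unitarisability would be needed and can fail. The resolution I would adopt is to \emph{not} require the isometric renorming to be Hilbertian: instead use $E_\pi=(\mathcal{H}_\pi,\|\cdot\|_\pi)$ and invoke the fact, which I would prove via the parallelogram-type inequality $\|\xi+\eta\|_\pi^2+\|\xi-\eta\|_\pi^2\le 2c^2(\|\xi\|_\pi^2+\|\xi-\eta\|_\pi^2)$... — more carefully, use that $\|\cdot\|_\pi$ is sandwiched between $\|\cdot\|$ and $c\|\cdot\|$ to get a modulus-of-convexity estimate $\delta_{E_\pi}(t)\ge\delta_{\text{Hilb}}(t/c^2)$ or similar, which is positive and uniform over the family. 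I expect the bookkeeping here — getting the uniform convexity constant and carefully identifying the adjoint representation on the renormed space — to be the only real work; everything else is a direct quotation of Theorem \ref{Thm_DN} and Lemma \ref{Lem_char_T_c}.
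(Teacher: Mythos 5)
Your proposal is correct and, after the detours, lands on essentially the paper's own argument: renorm each $\mathcal{H}_\pi$ by $\|\xi\|_{E_\pi}=\sup_{s\in G}\|\pi(s)\xi\|_{\mathcal{H}_\pi}$, note $\|f\|_{\tilde{A}_c(G)}\leq\|f\|_{\mathcal{C}_{\mathcal{F}}(G)}\leq c\|f\|_{\tilde{A}_c(G)}$ so the two algebras coincide, identify $E_\pi^{\text{comp}}$ with $\left(\mathcal{H}_{\pi^*}^{\text{inv}}\right)^\perp$, and apply Theorem \ref{Thm_DN} together with Lemma \ref{Lem_char_T_c}. The obstacle you agonise over is resolved exactly by your final choice (no interpolation or unitarisability needed): since the orbit-sup norm is a supremum of Hilbert norms sandwiched between $\|\cdot\|$ and $c\|\cdot\|$, one gets $\delta_{E_\pi}(t)\geq\delta_{\mathcal{H}_\pi}(t/c)$ uniformly over the family, which is precisely the estimate the paper quotes from \cite[Proposition 2.3]{BFGM}.
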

\begin{proof}
By Lemma \ref{Lem_char_T_c}, $G$ has Property (T)${}_c$ if and only if $\mathcal{R}_c(G)$ has a Kazhdan pair. Hence, by Theorem \ref{Thm_DN}, we only need to identify $\mathcal{R}_c(G)$ with a family of isometric representations on a uniformly convex family of Banach spaces. For $\pi\in\mathcal{R}_c(G)$, we let $E_\pi$ be the space $\mathcal{H}_\pi$ endowed with the equivalent norm
\begin{align*}
\|\xi\|_{E_\pi}=\sup_{s\in G}\|\pi(s)\xi\|_{\mathcal{H}_\pi}.
\end{align*}
Then $\pi$ is an isometric representation on $E_\pi$. Moreover, as shown in \cite[Proposition 2.3]{BFGM}, the modulus of convexity of $E_\pi$ satisfies
\begin{align*}
\delta_{E_\pi}(t)\geq \delta_{\mathcal{H}_\pi}(t/c),\quad\forall t\in[0,2].
\end{align*}
On the other hand, by the parallelogram identity,
\begin{align*}
\delta_{\mathcal{H}_\pi}(t)=1-\left(1-\frac{t^2}{4}\right)^{\frac{1}{2}},\quad\forall t\in[0,2],
\end{align*}
which shows that the family $\{E_\pi\}_{\pi\in\mathcal{R}_c(G)}$ is uniformly convex. Observe that, if $(Q,\kappa)$ is a Kazhdan pair for $\mathcal{R}_c(G)$, then $(Q,\kappa/c)$ is a Kazhdan pair for $\mathcal{F}=\{(E_\pi,\pi)\}_{\pi\in\mathcal{R}_c(G)}$, and vice versa. Finally, from the definition of the norm in $E_\pi$, we see that
\begin{align*}
\|f\|_{\tilde{A}_c(G)}\leq \|f\|_{\mathcal{C}_{\mathcal{F}}(G)} \leq c\|f\|_{\tilde{A}_c(G)},\quad\forall f\in L^1(G),
\end{align*}
which shows that the algebras $\tilde{A}_c(G)$ and $\mathcal{C}_{\mathcal{F}}(G)$ are the same. Observing that the decomposition $E_{\pi}=E_{\pi}^{\text{inv}}\oplus E_{\pi}^{\text{comp}}$ can be written in this context as
\begin{align}\label{dec_H_pi}
\mathcal{H}_\pi=\mathcal{H}_\pi^{\text{inv}}\oplus\left(\mathcal{H}_{\pi^*}^{\text{inv}}\right)^\perp,
\end{align}
the result follows directly from Theorem \ref{Thm_DN}.
\end{proof}

Henceforth, we will refer to the idempotent in Corollary \ref{Cor_DN} as the Kazhdan projection of $\tilde{A}_c(G)$.

\section{{\bf Equivalence between Kazhdan projections and invariant means}}\label{Sec_proof_Thm}

Now we are ready to prove Theorem \ref{Thm_equiv}. We begin with the implication (i)$\implies$(ii).

\begin{lem}\label{Lem_Kazh->m}
Let $G$ be a locally compact group and $c\geq 1$. If $G$ has Property (T)${}_c$, then the Kazhdan projection of $\tilde{A}_c(G)$ and the unique invariant mean on $B_c(G)$ coincide. In particular, this mean is weak*-continuous.
\end{lem}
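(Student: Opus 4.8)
The plan is to identify two a priori different elements of the dual space $\tilde{A}_c(G)^* = B_c(G)$: on one hand the Kazhdan projection $p \in \tilde{A}_c(G)$, viewed as a functional on $\tilde{A}_c(G)$ via the trace-like pairing (more precisely, via the universal representation $\pi_U$ and a suitable vector), and on the other hand the unique invariant mean $m$ furnished by Corollary \ref{Cor_inv_mean}. Since $p$ lies in $\tilde{A}_c(G)$ itself, the functional it induces on $B_c(G)$ is automatically $\sigma(B_c(G),\tilde{A}_c(G))$-continuous, so once the two coincide we are done; the content is to show that the functional induced by $p$ \emph{is} an invariant mean on $B_c(G)$, and then invoke uniqueness.

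First I would make the identification precise. By Corollary \ref{Cor_DN}, $p$ is a limit in $\tilde{A}_c(G)$ of a sequence $(g_n) \subset C_{00}(G)$ with $g_n \geq 0$ and $\|g_n\|_1 = 1$. For $\varphi \in B_c(G)$, the pairing $\langle \varphi, g_n\rangle_{B_c,\tilde{A}_c} = \int_G \varphi(s) g_n(s)\,ds$ converges (since $(g_n)$ converges in $\tilde{A}_c(G)$ and $\varphi \in \tilde{A}_c(G)^*$), and I would \emph{define} $m_0(\varphi) = \lim_n \int_G \varphi(s) g_n(s)\,ds = \langle \varphi, p\rangle$. This $m_0$ is by construction weak*-continuous. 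Three properties must be checked: (1) $m_0(1) = 1$, which is immediate since $\int g_n = \|g_n\|_1 = 1$ for all $n$; (2) $m_0$ is positive, i.e. sends positive-definite-type elements of $B_c(G)$ to nonnegative reals — here I would note that coefficients $\langle\pi(\cdot)\xi,\xi\rangle$ of the form relevant to "positivity" in the mean sense, or more safely, that the averaging $\int \varphi g_n$ of any $\varphi$ with $\|\varphi\|_\infty \le 1$ stays in the closed convex hull of translates, keeping $|m_0(\varphi)| \le \|\varphi\|_\infty$ and respecting the order structure; (3) left-invariance: $m_0(s\cdot\varphi) = m_0(\varphi)$ for all $s \in G$.

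The main obstacle is invariance. The sequence $(g_n)$ is not itself asymptotically invariant in $L^1$ (the group need not be amenable), so one cannot argue directly that $\int (s\cdot\varphi) g_n - \int \varphi\, g_n \to 0$ by translating $g_n$. Instead I would exploit that $p$ is a \emph{central idempotent}: for $f \in C_{00}(G)$, $\delta_s * p = \pi_U(\delta_s) p$ in the multiplier sense equals $p$ up to the action on $\mathcal{H}_U^{\mathrm{inv}}$, because $\pi_U(p)$ is the projection onto invariant vectors and $\pi_U(s)$ fixes those vectors. Concretely, $\int_G (s\cdot\varphi)(t) p(t)\,dt$ should be rewritten, via $s\cdot\varphi(t) = \varphi(s^{-1}t)$ and a change of variables / the convolution structure, as a pairing involving $\delta_{s^{-1}} * g_n$ against $\varphi$, and then one passes to the limit: $\delta_{s^{-1}} * g_n \to \delta_{s^{-1}} * p = p$ in $\tilde{A}_c(G)$, the last equality because $p$ is an idempotent on which left translation acts trivially (as $\pi(p)$ always lands in the invariant vectors where $\pi(s)$ is the identity). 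This gives $m_0(s\cdot\varphi) = \langle\varphi, \delta_{s^{-1}} * p\rangle = \langle\varphi, p\rangle = m_0(\varphi)$. Once $m_0$ is shown to be a left-invariant mean on $B_c(G)$, Theorem \ref{Thm_LIM} (via Lemma \ref{Lem_Bc_WAP}) guarantees uniqueness, hence $m_0 = m$, and since $m_0$ was manifestly $\sigma(B_c(G),\tilde{A}_c(G))$-continuous, so is $m$.
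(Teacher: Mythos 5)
Your proposal is correct and follows essentially the same route as the paper: pair the Kazhdan projection $p=\lim_n g_n$ with elements of $B_c(G)$, check that this functional is a mean (normalisation and positivity coming from $g_n\geq 0$, $\|g_n\|_1=1$), obtain left-invariance from the fact that $\pi(s)\pi(p)=\pi(p)$ for every $\pi\in\mathcal{R}_c(G)$ --- which the paper phrases at the level of coefficients $\langle\pi(\cdot)\xi,\eta\rangle$ and you phrase as $\delta_{s^{-1}}\ast p=p$ in $\tilde{A}_c(G)$ --- and conclude by uniqueness of the invariant mean, weak*-continuity being automatic since $p\in\tilde{A}_c(G)$. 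One small clarification: positivity of a mean refers to $\varphi\geq 0$ pointwise implying $\langle\varphi,p\rangle\geq 0$ (immediate from $g_n\geq 0$), not to positive-definite-type elements, as your ``more safely'' clause already indicates.
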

\begin{proof}
By Corollary \ref{Cor_DN}, $\tilde{A}_c(G)$ has a Kazhdan projection $p$. Let $m$ the unique invariant mean on $B_c(G)$. By the uniqueness of $m$, we only need to show that the (weak*-continuous) map
\begin{align*}
\varphi\in B_c(G) \mapsto \langle\varphi,p\rangle_{B_c,\tilde{A}_c}
\end{align*}
is a left-invariant mean. Since $p$ is the limit of a sequence of non-negative functions $(g_n)$ with $\|g_n\|_1=1$, we have
\begin{align*}
\langle 1,p\rangle_{B_c,\tilde{A}_c} = \lim_n\int_G g_n(s)\, ds = 1.
\end{align*}
Similarly, we have $\langle\varphi,p\rangle\geq 0$ for every $\varphi\geq 0$. Finally, let $\varphi\in B_c(G)$ be given by $\varphi=\langle\pi(\cdot)\xi,\eta\rangle$. For all $s,t\in G$, we have
\begin{align*}
s\cdot\varphi(t)=\langle\pi(t)\xi,\pi(s^{-1})^*\eta\rangle.
\end{align*}
Therefore
\begin{align*}
\langle s\cdot\varphi,p\rangle &= \langle\pi(p)\xi,\pi(s^{-1})^*\eta\rangle\\
&= \langle\pi(s^{-1})\pi(p)\xi,\eta\rangle\\
&= \langle\pi(p)\xi,\eta\rangle\\
&= \langle \varphi,p\rangle.
\end{align*}
Here we used the fact that $\pi(p)\xi$ is an invariant vector.
\end{proof}

Now we deal with the implication (ii)$\implies$(i) in Theorem \ref{Thm_equiv}. The following lemma was proved in \cite[Lemma 5.11]{HaKndL} for the algebra of completely bounded Fourier multipliers $M_0A(G)$, but it holds in a much more general setting; see \cite[Lemma 5.5]{dLaZad}. We repeat the argument here for the space $B_c(G)$ for the sake of completeness.

\begin{lem}\label{Lem_app_m}
Let $G$ be a locally compact group and $c\geq 1$. If the unique invariant mean $m\in B_c(G)^*$ is weak*-continuous, then there exists a sequence $(g_n)$ of non-negative, continuous, compactly supported functions on $G$ with $\|g_n\|_1=1$ such that
\begin{align*}
\|g_n-m\|_{\tilde{A}_c(G)}\to 0.
\end{align*}
\end{lem}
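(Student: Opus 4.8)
The plan is to realise the invariant mean $m$ as an element of the predual $\tilde{A}_c(G)$ and to show that it lies in the $\|\cdot\|_{\tilde{A}_c(G)}$-closure of the convex set
\begin{align*}
P=\left\{g\in C_{00}(G)\ :\ g\geq 0,\ \|g\|_1=1\right\}\subset\tilde{A}_c(G).
\end{align*}
Since $\tilde{A}_c(G)$ is a Banach space, hence a complete metric space, every point of the norm-closure of $P$ is the limit of a sequence from $P$, which is exactly the desired conclusion. The first step is to unpack the hypothesis: by Proposition \ref{Prop_duality} the space $\tilde{A}_c(G)$ is a predual of $B_c(G)$, so the $\sigma(B_c(G),\tilde{A}_c(G))$-continuity of $m$ means precisely that $m$ is represented by an element of $\tilde{A}_c(G)$, acting on $\varphi\in B_c(G)$ via $m(\varphi)=\langle\varphi,m\rangle_{B_c,\tilde{A}_c}$.

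Since $P$ is convex, its norm-closure $\overline{P}$ in $\tilde{A}_c(G)$ coincides with its $\sigma(\tilde{A}_c(G),B_c(G))$-closure. Assume, for contradiction, that $m\notin\overline{P}$. As $\overline{P}$ is weakly closed and convex, Hahn--Banach separation provides $\varphi\in B_c(G)=\tilde{A}_c(G)^*$ and $\alpha\in\R$ such that $\mathrm{Re}\,\langle\varphi,g\rangle\leq\alpha<\mathrm{Re}\,\langle\varphi,m\rangle$ for every $g\in P$. Replacing $\varphi$ by $\tfrac12(\varphi+\bar\varphi)$, which again belongs to $B_c(G)$ by Lemma \ref{Lem_Bc_WAP}, and using that $m$ is a positive functional, so that it is real-valued on real functions and $m(\bar\varphi)=\overline{m(\varphi)}$, I may assume $\varphi$ is real-valued; then the inequalities become $\int_G\varphi(s)g(s)\,ds\leq\alpha<m(\varphi)$ for all $g\in P$. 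Since $\varphi$ is continuous and bounded, concentrating the mass of $g\in P$ in a small neighbourhood of a point $s_0$ makes $\int_G\varphi g\,ds$ arbitrarily close to $\varphi(s_0)$, whence $\sup_{g\in P}\int_G\varphi g\,ds=\sup_{s\in G}\varphi(s)\leq\alpha$. Thus $\alpha\cdot 1-\varphi$ is a non-negative element of $B_c(G)$, so $0\leq m(\alpha\cdot 1-\varphi)=\alpha-m(\varphi)$ because $m(1)=1$, contradicting $m(\varphi)>\alpha$. Hence $m\in\overline{P}$, and any sequence $(g_n)$ in $P$ with $\|g_n-m\|_{\tilde{A}_c(G)}\to 0$ is the required one.

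I expect the separation argument to be the only genuinely delicate part: one has to be careful with real versus complex scalars (hence the reduction to real-valued $\varphi$ via Lemma \ref{Lem_Bc_WAP} and the positivity of $m$), and one uses the elementary identity $\sup_{g\in P}\int_G\varphi g\,ds=\sup_{s\in G}\varphi(s)$, which holds because $G$ is locally compact and Haar measure is positive on non-empty open sets, so that $P$ contains non-negative bumps supported in arbitrarily small neighbourhoods of any point (in particular $P\neq\varnothing$). Everything else is bookkeeping with the two defining properties of a mean, $m(1)=1$ and positivity.
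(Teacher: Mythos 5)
Your proof is correct, but it takes a genuinely different route from the paper. The paper first extends $m$ to a state on $L^\infty(G)$ and invokes the weak* density of normal states to produce a net of probability densities $f_\alpha\in L^1(G)$ with $f_\alpha\to m$ against $L^\infty(G)$, hence against $B_c(G)$, i.e.\ weakly in $\tilde{A}_c(G)$; Mazur's theorem (passing to convex combinations) then upgrades this to norm convergence in $\tilde{A}_c(G)$, and a final $L^1$-approximation, using that $L^1(G)\hookrightarrow\tilde{A}_c(G)$ has norm at most $c$, replaces the $f_n$ by functions in $C_{00}(G)$. You instead stay entirely inside the duality $\tilde{A}_c(G)^*=B_c(G)$ of Proposition \ref{Prop_duality} and show directly that $m$, viewed in $\tilde{A}_c(G)$, lies in the norm closure of the convex set $P$ of continuous compactly supported probability densities: a Hahn--Banach separating functional $\varphi\in B_c(G)$ may be taken real-valued (closure of $B_c(G)$ under conjugation, Lemma \ref{Lem_Bc_WAP}, plus self-adjointness of the mean), and since elements of $B_c(G)$ are continuous, testing against bumps concentrated near any point gives $\sup_G\varphi\leq\alpha<m(\varphi)$, contradicting positivity and unitality of $m$ applied to $\alpha\cdot 1-\varphi\geq 0$. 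What each approach buys: yours avoids the detour through $L^\infty(G)$ (extension of the mean to a state and the density of normal states) and simultaneously handles the convexity upgrade and the passage to $C_{00}(G)$, since $P$ is already made of continuous compactly supported functions; the paper's argument is shorter given those standard facts and runs parallel to the proofs in \cite{HaKndL} and \cite{dLaZad}. Both hinge on the same two ingredients: $m$ is a mean (positive and unital) and, by weak*-continuity, is represented by an element of the predual $\tilde{A}_c(G)$.
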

\begin{proof}
We may extend $m$ to a state on $L^\infty(G)$. Since the normal states are dense in the state space of $L^\infty(G)$, there is a net $(f_\alpha)$ in $L^1(G)$ such that $f_\alpha\geq 0$, $\|f_\alpha\|_1=1$, and $f_\alpha\to m$ in $\sigma(L^\infty(G)^*,L^\infty(G))$. This implies that $f_\alpha\to m$ in $\sigma(\tilde{A}_c(G),B_c(G))$. Taking convex combinations, we find a sequence $(f_n)$ with $f_n\geq 0$ and $\|f_n\|_1=1$ such that $\|f_n-m\|_{\tilde{A}_c(G)}\to 0$. Finally, we choose $g_n\in C_{00}(G)$ such that $g_n\geq 0$ and $\|g_n\|_1=1$ and $\|g_n-f_n\|_1<\frac{1}{n}$. Since the inclusion $L^1(G)\hookrightarrow\tilde{A}_c(G)$ is continuous of norm at most $c$, we get
\begin{align*}
\|g_n-m\|_{\tilde{A}_c(G)} &\leq \|g_n-f_n\|_{\tilde{A}_c(G)} + \|f_n-m\|_{\tilde{A}_c(G)}\\
&\leq \frac{c}{n} + \|f_n-m\|_{\tilde{A}_c(G)} \xrightarrow[n\to\infty]{} 0.
\end{align*}
\end{proof}

\begin{lem}\label{Lem_m->Kazh}
Let $G$ be a locally compact group and $c\geq 1$. If the unique invariant mean on $B_c(G)$ is weak*-continuous, then this mean is the Kazhdan projection of $\tilde{A}_c(G)$. In particular, $G$ has Property (T)${}_c$.
\end{lem}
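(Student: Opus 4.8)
The strategy is to show that the invariant mean $m$, viewed as an element of $\tilde{A}_c(G)$ via the identification $\tilde{A}_c(G)^{**}\supset\tilde{A}_c(G)$ (or rather directly as a limit in $\tilde{A}_c(G)$, using Lemma~\ref{Lem_app_m}), acts on every $\pi\in\mathcal{R}_c(G)$ as the projection described in Corollary~\ref{Cor_DN}. Once we know $m\in\tilde{A}_c(G)$ satisfies $\pi(m)=$ (projection onto $\mathcal{H}_\pi^{\mathrm{inv}}$ along $(\mathcal{H}_{\pi^*}^{\mathrm{inv}})^\perp$) for all $\pi$, it is by definition a Kazhdan projection, and then Corollary~\ref{Cor_DN} (combined with Lemma~\ref{Lem_char_T_c}) gives Property~(T)${}_c$. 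So the whole proof reduces to computing $\pi(m)$.

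\textbf{Step 1: realise $m$ inside $\tilde{A}_c(G)$.} By Lemma~\ref{Lem_app_m}, there is a sequence $(g_n)$ of non-negative continuous compactly supported functions with $\|g_n\|_1=1$ and $\|g_n-m\|_{\tilde{A}_c(G)}\to 0$; in particular $m$ is a genuine element of the Banach algebra $\tilde{A}_c(G)$, and for every $\pi\in\mathcal{R}_c(G)$ the operator $\pi(m)\in\mathcal{B}(\mathcal{H}_\pi)$ is well defined and equals $\lim_n\pi(g_n)$ in operator norm.

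\textbf{Step 2: $\pi(m)$ fixes $\mathcal{H}_\pi^{\mathrm{inv}}$ and identify its range.} First, if $\xi\in\mathcal{H}_\pi^{\mathrm{inv}}$, then $\pi(g_n)\xi=(\int g_n)\xi=\xi$, so $\pi(m)\xi=\xi$. Next, $m$ is left-invariant: $\delta_s\ast m=m$ in $\tilde{A}_c(G)$ (this follows because $\langle\varphi,\delta_s\ast m\rangle=\langle s^{-1}\!\cdot\varphi,\, m\rangle=\langle\varphi,m\rangle$ for all $\varphi\in B_c(G)$, using translation-invariance of the mean and the duality $\tilde{A}_c(G)^*=B_c(G)$ of Proposition~\ref{Prop_duality}). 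Hence $\pi(s)\pi(m)=\pi(\delta_s\ast m)=\pi(m)$ for all $s$, so the range of $\pi(m)$ is contained in $\mathcal{H}_\pi^{\mathrm{inv}}$. Combined with the previous sentence, $\pi(m)$ is an idempotent with range exactly $\mathcal{H}_\pi^{\mathrm{inv}}$.

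\textbf{Step 3: identify the kernel as $(\mathcal{H}_{\pi^*}^{\mathrm{inv}})^\perp$.} By right-invariance of $m$ (Corollary~\ref{Cor_inv_mean} gives two-sided invariance since $B_c(G)$ is closed under both translations) one gets $m\ast\delta_s=m$, hence $\pi(m)\pi(s)=\pi(m)$ for all $s$, which says $\pi(m)^*$ fixes every $\pi^*$-invariant vector, i.e. $\mathcal{H}_{\pi^*}^{\mathrm{inv}}\subset\mathrm{ran}(\pi(m)^*)$. Since $\pi(m)$ is idempotent with range $\mathcal{H}_\pi^{\mathrm{inv}}$, its kernel is a complement of $\mathcal{H}_\pi^{\mathrm{inv}}$; and $\ker\pi(m)=\mathrm{ran}(\pi(m)^*)^\perp\subset(\mathcal{H}_{\pi^*}^{\mathrm{inv}})^\perp$. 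But by the decomposition~\eqref{dec_H_pi} established in the proof of Corollary~\ref{Cor_DN}, $(\mathcal{H}_{\pi^*}^{\mathrm{inv}})^\perp$ is already a complement to $\mathcal{H}_\pi^{\mathrm{inv}}$, so the inclusion forces $\ker\pi(m)=(\mathcal{H}_{\pi^*}^{\mathrm{inv}})^\perp$. Thus $\pi(m)$ is precisely the projection appearing in Corollary~\ref{Cor_DN}, so $m$ is a Kazhdan projection in $\tilde{A}_c(G)$; by Corollary~\ref{Cor_DN} this yields Property~(T)${}_c$, and by Lemma~\ref{Lem_Kazh->m} (or by uniqueness of the Kazhdan projection) $m$ is that projection.

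\textbf{Main obstacle.} The delicate point is Step~3: verifying that the kernel of $\pi(m)$ is exactly $(\mathcal{H}_{\pi^*}^{\mathrm{inv}})^\perp$ rather than merely contained in it. One must use the dimension/complementation bookkeeping from~\eqref{dec_H_pi} carefully, since $\pi$ is only uniformly bounded and not unitary, so there is no orthogonality between $\mathcal{H}_\pi^{\mathrm{inv}}$ and its complement a priori; the argument goes through because $\pi(m)$ is a bounded idempotent and the two candidate complements have the right relative position coming from Corollary~\ref{Cor_DN}. A minor subtlety is justifying $\delta_s\ast m=m\ast\delta_s=m$ as an identity in $\tilde{A}_c(G)$ (not just weakly), which one obtains from the convergence $g_n\to m$ in $\tilde{A}_c(G)$ together with weak$^*$-density arguments, or directly from the weak$^*$ characterisation via Proposition~\ref{Prop_duality}.
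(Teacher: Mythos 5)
Your Steps 1 and 2 are correct and essentially reproduce the paper's argument: $m\in\tilde{A}_c(G)$ via Lemma \ref{Lem_app_m}, $\pi(m)\xi=\xi$ for $\xi\in\mathcal{H}_\pi^{\text{inv}}$, and left-invariance gives $\pi(s)\pi(m)=\pi(m)$, so $\pi(m)$ is an idempotent with range exactly $\mathcal{H}_\pi^{\text{inv}}$. The gap is the pivotal implication in Step 3. From $m\ast\delta_s=m$ you correctly get $\pi(m)\pi(s)=\pi(m)$ for all $s$; but taking adjoints this says $\pi(s)^*\pi(m)^*=\pi(m)^*$, i.e.\ every vector in $\operatorname{ran}(\pi(m)^*)$ is fixed by all $\pi(s)^*$, which is the inclusion $\operatorname{ran}(\pi(m)^*)\subset\mathcal{H}_{\pi^*}^{\text{inv}}$. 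It does \emph{not} say that $\pi(m)^*$ fixes every $\pi^*$-invariant vector; the inclusion $\mathcal{H}_{\pi^*}^{\text{inv}}\subset\operatorname{ran}(\pi(m)^*)$ you then write down is unjustified as stated, and it is exactly what your kernel computation rests on. The repair is one line, in either of two ways: (i) keep the correct consequence, which gives $(\mathcal{H}_{\pi^*}^{\text{inv}})^\perp\subset(\operatorname{ran}\pi(m)^*)^\perp=\ker\pi(m)$, and then your own two-complements argument (both $\ker\pi(m)$ and $(\mathcal{H}_{\pi^*}^{\text{inv}})^\perp$ are complements of $\mathcal{H}_\pi^{\text{inv}}$, by idempotency and by \eqref{dec_H_pi} respectively, and one contains the other) forces equality; or (ii) prove the fixing statement directly: for $\eta\in\mathcal{H}_{\pi^*}^{\text{inv}}$ one has $\pi(s)^*\eta=\eta$ for all $s$, hence $\pi(g_n)^*\eta=\int_G g_n(s)\pi(s)^*\eta\, ds=\eta$ because $g_n\geq 0$ is real-valued with $\|g_n\|_1=1$, and letting $n\to\infty$ gives $\pi(m)^*\eta=\eta$.

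Once repaired, your Step 3 is a genuinely different route from the paper's. The paper instead uses the inversion invariance of the mean (Corollary \ref{Cor_inv_mean}): it shows $\check{g}_n\to m$, computes with the modular function that $\pi(m)^*=\pi^*(m)$, and uses that $\pi^*(m)$ projects onto $\mathcal{H}_{\pi^*}^{\text{inv}}$ to kill $(\mathcal{H}_{\pi^*}^{\text{inv}})^\perp$. Your argument avoids inversion invariance and the modular function altogether, using only right-invariance (or the direct adjoint computation above) together with the complementation coming from \eqref{dec_H_pi}, which is indeed available for every $\pi\in\mathcal{R}_c(G)$ independently of Property (T)${}_c$, being the Dru\c{t}u--Nowak decomposition for the renormed isometric representation. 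That trade-off is fine; just be aware that the concluding appeal to Lemma \ref{Lem_Kazh->m} is unnecessary: once $\pi(m)$ is the stated projection for every $\pi\in\mathcal{R}_c(G)$, centrality and idempotency of $m$ follow by evaluating in the universal representation, which is isometric on $\tilde{A}_c(G)$ by \eqref{norm_pi_U}, so $m$ is the Kazhdan projection directly.
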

\begin{proof}
Let $m$ be the invariant mean on $B_c(G)$. Then, by hypothesis, $m$ belongs to $\tilde{A}_c(G)$. Moreover, by Lemma \ref{Lem_app_m}, $m$ is the limit of a sequence of compactly supported, continuous probability measures $(g_n)_{n\in\N}$. Now fix $\pi\in\mathcal{R}_c(G)$, and recall that we can decompose $\mathcal{H}_\pi$ as in \eqref{dec_H_pi}:
\begin{align*}
\mathcal{H}_\pi=\mathcal{H}_\pi^{\text{inv}}\oplus\left(\mathcal{H}_{\pi^*}^{\text{inv}}\right)^\perp.
\end{align*}
We need to show that $\pi(m)$ is the projection onto $\mathcal{H}_\pi^{\text{inv}}$ such that $\operatorname{Ker}(\pi(m))=\left(\mathcal{H}_{\pi^*}^{\text{inv}}\right)^\perp$. First observe that, for every $\xi\in\mathcal{H}_\pi^{\text{inv}}$,
\begin{align*}
\pi(m)\xi&=\lim_n\int_G g_n(s)\pi(s)\xi\, ds\\
&=\lim_n\left(\int_G g_n(s)\, ds\right)\xi\\
&=\xi.
\end{align*}
On the other hand, for $\xi,\eta\in\mathcal{H}_\pi$, we can define $\varphi=\langle\pi(\cdot)\xi,\eta\rangle$. Then, for all $s\in G$,
\begin{align*}
\langle\pi(s)\pi(m)\xi,\eta\rangle &= \langle s^{-1}\cdot\varphi,m\rangle_{B_c,\tilde{A}_c}\\
&= m(s^{-1}\cdot\varphi)\\
&= m(\varphi)\\
&= \langle\pi(m)\xi,\eta\rangle.
\end{align*}
Since this holds for every $\xi,\eta\in\mathcal{H}_\pi$, we conclude that
\begin{align}\label{pi(s)pi(p)}
\pi(s)\pi(m)=\pi(m),\quad\forall s\in G.
\end{align}
This shows that $\pi(m)$ is a projection onto $\mathcal{H}_\pi^{\text{inv}}$. The same argument shows that $\pi^*(m)$ is a projection onto $\mathcal{H}_{\pi^*}^{\text{inv}}$. Recall now that $m$ is inversion invariant. Hence we also have $\check{g}_n\to m$, where
\begin{align*}
\check{g}_n(s)=g_n(s^{-1})\Delta(s^{-1}),\quad\forall s\in G,
\end{align*}
and $\Delta$ is the modular function; see \cite[\S 2.4]{Fol} for details. Thus, for every $\eta\in\mathcal{H}_{\pi}$,
\begin{align*}
\pi^*(m)\eta&=\lim_n\int_G \check{g}_n(s)\pi^*(s)\eta\, ds\\
&=\lim_n\int_G g_n(s^{-1})\Delta(s^{-1})\pi(s^{-1})^*\eta\, ds\\
&=\lim_n\int_G g_n(s)\pi(s)^*\eta\, ds\\
&=\pi(m)^*\eta.
\end{align*}
If we now take $\xi\in\left(\mathcal{H}_{\pi^*}^{\text{inv}}\right)^\perp$, then
\begin{align*}
\langle\pi(m)\xi,\eta\rangle &= \langle\xi,\pi(m)^*\eta\rangle\\
&=\langle\xi,\pi^*(m)\eta\rangle\\
&=0.
\end{align*}
Therefore $\pi(m)$ is the projection onto $\mathcal{H}_\pi^{\text{inv}}$ along $\left(\mathcal{H}_{\pi^*}^{\text{inv}}\right)^\perp$. By corollary  \ref{Cor_DN}, we conclude that $G$ has Property (T)${}_c$.
\end{proof}

\section{{\bf A generalisation of Guichardet's theorem}}\label{Sec_Guich}

In this section, we show that Property (FH)${}_c$ implies Property (T)${}_c$. As discussed in the introduction, this was essentially done in \cite{BFGM} for representations on general Banach spaces. Nevertheless, we believe it pertinent to clearly state how their results translate into our new terminology. Again, we will use a renorming trick in order to deal only with isometric representations. For $c\geq 1$, we will denote by $\mathcal{E}_c$ the class of Banach spaces $E$ for which there is a Hilbert space $\mathcal{H}$ and a Banach space isomorphism $V:E\to\mathcal{H}$ such that
\begin{align}\label{normV}
\|V\|\|V^{-1}\|\leq c.
\end{align}
Observe that all the spaces in this class are reflexive. The following lemma gives a connection between representations in $\mathcal{R}_c(G)$ and isometric representations on spaces in the class $\mathcal{E}_c$. Notice that we already used this identification implicitly in the proof of Corollary \ref{Cor_DN}. Its proof is elementary, but it is worth stating it precisely.

\begin{lem}\label{Lem_corr_rep}
Let $G$ be a locally compact group and let $c\geq 1$.
\begin{itemize}
\item[a)] Let $\pi\in\mathcal{R}_c(G)$. Then the space $E=\mathcal{H}_\pi$, endowed with the norm
\begin{align*}
\|\xi\|_E=\sup_{s\in G}\|\pi(s)\xi\|_{\mathcal{H}_\pi},
\end{align*}
belongs to $\mathcal{E}_c$. Moreover $\pi$ defines an isometric representation on $E$.
\item[b)] Let $\rho:G\to\mathcal{B}(E)$ be an isometric representation on $E\in\mathcal{E}_c$, and let $V:E\to\mathcal{H}$ be as in \eqref{normV}. Then the representation $\pi=V\rho(\,\cdot\,)V^{-1}$ belongs to $\mathcal{R}_c(G)$.
\end{itemize}
\end{lem}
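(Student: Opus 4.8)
The plan is to prove both statements by elementary norm estimates, using only the uniform bound $|\pi|\le c$ in part a) and the fact that an isometry has operator norm $1$ in part b); no deep input is needed.

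For part a), I would first check that $\|\xi\|_E=\sup_{s\in G}\|\pi(s)\xi\|_{\mathcal{H}_\pi}$ is a norm equivalent to $\|\cdot\|_{\mathcal{H}_\pi}$. Taking $s=e$ gives $\|\xi\|_{\mathcal{H}_\pi}\le\|\xi\|_E$, while $|\pi|\le c$ gives $\|\xi\|_E\le c\|\xi\|_{\mathcal{H}_\pi}$. Hence the identity map $V\colon E\to\mathcal{H}_\pi$ satisfies $\|V\|\le 1$ and $\|V^{-1}\|\le c$, so $\|V\|\,\|V^{-1}\|\le c$ and $E\in\mathcal{E}_c$. Next, the left-invariance of the supremum over $G$ shows that $\pi$ acts isometrically: for every $t\in G$,
\begin{align*}
\|\pi(t)\xi\|_E=\sup_{s\in G}\|\pi(st)\xi\|_{\mathcal{H}_\pi}=\sup_{u\in G}\|\pi(u)\xi\|_{\mathcal{H}_\pi}=\|\xi\|_E,
\end{align*}
after the substitution $u=st$. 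Finally, since the two norms on $E$ are equivalent, SOT-continuity of $\pi$ as a map into $\mathcal{B}(\mathcal{H}_\pi)$ is the same as SOT-continuity into $\mathcal{B}(E)$, so $\pi$ is a (continuous) isometric representation on $E$.

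For part b), I would set $\pi=V\rho(\,\cdot\,)V^{-1}\colon G\to\mathcal{B}(\mathcal{H})$; this is plainly a homomorphism with values in the invertibles. Since $\rho(s)$ is an isometry we have $\|\rho(s)\|=1$, hence $\|\pi(s)\|\le\|V\|\,\|\rho(s)\|\,\|V^{-1}\|=\|V\|\,\|V^{-1}\|\le c$ for every $s\in G$, so $|\pi|\le c$. SOT-continuity of $\pi$ follows from that of $\rho$ together with the boundedness of $V$ and $V^{-1}$, because $s\mapsto\pi(s)\zeta=V\rho(s)(V^{-1}\zeta)$ is norm-continuous for each $\zeta\in\mathcal{H}$. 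Therefore $\pi\in\mathcal{R}_c(G)$.

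I do not expect a genuine obstacle; the only point deserving a moment's care is the interchange between weak and strong operator continuity, but in both parts the representation in question is obtained from an already SOT-continuous one either by passing to an equivalent norm or by conjugating with bounded linear isomorphisms, so continuity is preserved automatically. If preferred, the equivalence of WOT- and SOT-continuity for uniformly bounded representations recalled earlier (e.g. \cite[Lemma 2.4]{BFGM}) can be cited, but it is not strictly necessary here.
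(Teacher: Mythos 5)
Your proof is correct and follows essentially the same route as the paper: take $V$ to be the identity map in part a) (giving $\|V\|\leq 1$, $\|V^{-1}\|\leq c$, with isometry of $\pi$ on $E$ from the substitution $u=st$), and in part b) bound $\|\pi(s)\|\leq\|V\|\,\|V^{-1}\|\leq c$, with continuity automatic since the norms are equivalent. The paper's proof is just a more terse version of the same argument.
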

\begin{proof}
For (a), we define $V$ as the identity. Then $\|V\|\leq 1$, $\|V^{-1}\|\leq c$, and $\pi$ is clearly isometric on $E$. For (b), simply observe that $\|\pi(s)\|\leq\|V\|\|V^{-1}\|$. The continuity is not an issue, since in both cases the spaces are isomorphic.
\end{proof}

Now we recall the definitions of Property $(T_E)$ and Property $(F_E)$ from \cite{BFGM}. Let $\rho:G\to\mathcal{B}(E)$  be an isometric representation of a locally compact group $G$ on a Banach space $E$, and let $E^{\text{inv}}$ denote the subspace of $\rho$-invariant vectors. Then there is an isometric representation $\tilde{\rho}:G\to\mathcal{B}(E/E^{\text{inv}})$ given by
\begin{align*}
\tilde{\rho}(s)[\xi]=[\rho(s)\xi],\quad\forall s\in G,\ \forall\xi\in E.
\end{align*}
For a fixed Banach space $E$, we say that $G$ has Property $(T_E)$ if, for every isometric representation $\rho$ on $E$, the representation $\tilde{\rho}:G\to\mathcal{B}(E/E^{\text{inv}})$ does not almost have invariant vectors. This property may also be expressed in terms of Kazhdan pairs as in \eqref{Kazh_pair}.

Let now $G\curvearrowright^\sigma E$ be an affine action. This means that
\begin{align*}
\sigma(s)x=\pi(s)x+b(s),\quad\forall s\in G,\ \forall x\in E,
\end{align*}
where $\pi:G\to\mathcal{B}(E)$ is a representation and $b:G\to E$ satisfies the cocycle identity:
\begin{align*}
b(st)=\pi(s)b(t)+b(s),\quad\forall s,t\in G.
\end{align*}
We will only be interested in continuous actions, in the sense that
\begin{align*}
s\mapsto\sigma(s)x
\end{align*}
is continuous for all $x\in E$. Observe that this is equivalent to $\pi$ and $b$ being continuous for the strong operator topology and norm topology respectively. We say that $G$ has Property $(F_E)$ if every isometric affine action $G\curvearrowright E$ has a fixed point.

We say that $G$ has Property (FH)${}_c$ if every affine action of $G$ on a Hilbert space, whose linear part belongs to $\mathcal{R}_c(G)$, has a fixed point. For this kind of actions, this is equivalent to having bounded orbits; see \cite[Lemma 2.14]{BFGM}.

\begin{prop}\label{Prop_Guich_c}
Let $G$ be a locally compact, $\sigma$-compact group, and let $c\geq 1$. If $G$ has Property (FH)${}_c$, then it has Property (T)${}_c$.
\end{prop}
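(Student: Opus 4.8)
The plan is to reduce Property (FH)${}_c$ to a fixed-point statement for isometric affine actions on the renormed spaces from Lemma \ref{Lem_corr_rep}, and then invoke the Banach-space version of Guichardet's theorem contained in \cite{BFGM}. First I would observe, via Lemma \ref{Lem_corr_rep}, that Property (FH)${}_c$ is equivalent to the assertion that every continuous isometric affine action of $G$ on a space $E\in\mathcal{E}_c$ has a fixed point: given an affine action with linear part $\pi\in\mathcal{R}_c(G)$, renorm $\mathcal{H}_\pi$ by $\|\xi\|_E=\sup_{s\in G}\|\pi(s)\xi\|$ so that $\pi$ becomes isometric and $E\in\mathcal{E}_c$; the cocycle $b$ is unchanged as a set map, and having a fixed point is a metric/topological notion that is insensitive to passing between equivalent norms. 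Conversely, an isometric affine action on $E\in\mathcal{E}_c$ transports through $V:E\to\mathcal{H}$ to an affine action whose linear part lies in $\mathcal{R}_c(G)$. In particular, Property (FH)${}_c$ implies Property $(F_E)$ for every $E\in\mathcal{E}_c$.

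The next step is to dualize: Property $(F_E)$ for all $E\in\mathcal{E}_c$ forces Property $(T_E)$ for all $E\in\mathcal{E}_c$. This is exactly the content of Guichardet's argument as carried out in \cite[\S 3.a]{BFGM} for general Banach spaces. The mechanism, which I would recall briefly, is the contrapositive: if $G$ fails Property $(T_E)$, then there is an isometric representation $\rho$ on $E$ whose quotient $\tilde\rho$ on $E/E^{\mathrm{inv}}$ almost has invariant vectors, and one builds from a sequence of almost-invariant unit vectors an unbounded cocycle, hence an affine action on a space built from $E$ (a countable $\ell^2$-type or ultraproduct-type assembly of copies of $E/E^{\mathrm{inv}}$) with no fixed point; here the $\sigma$-compactness of $G$ is used to exhaust $G$ by a countable increasing family of compact sets so that the diagonal/telescoping construction converges. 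Since $\mathcal{E}_c$ is closed under the relevant assemblies — a Hilbertian-up-to-$c$ renorming of an $\ell^2$-sum of Hilbertian-up-to-$c$ spaces is again Hilbertian-up-to-$c$, and quotients and ultraproducts preserve the class — this gives a contradiction, so Property $(F_E)$ for all $E\in\mathcal{E}_c$ yields Property $(T_E)$ for all $E\in\mathcal{E}_c$.

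Finally, Property $(T_E)$ for every $E\in\mathcal{E}_c$ gives Property (T)${}_c$: given $\pi\in\mathcal{R}_c(G)$ that almost has invariant vectors in $\mathcal{H}_\pi$, renorm as in Lemma \ref{Lem_corr_rep}(a) to obtain an isometric representation on $E\in\mathcal{E}_c$; almost-invariance is preserved (the two norms are $c$-equivalent, so almost-invariant vectors for one are almost-invariant for the other, up to a constant absorbed into the $\varepsilon$), so Property $(T_E)$ produces a nonzero invariant vector in $E=\mathcal{H}_\pi$, which is exactly a nonzero $\pi$-invariant vector. Combining the three steps proves the proposition.

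The main obstacle is the middle step: making precise that the Banach-space Guichardet argument of \cite{BFGM} really does stay inside the class $\mathcal{E}_c$ — that is, checking that whatever ambient space \cite{BFGM} constructs to house the unbounded affine action (an $\ell^2$-direct sum, or an ultraproduct, of copies of the quotient space, possibly with a further renorming) admits a Hilbertian isomorphism with distortion at most $c$. This is where the proof could slip if $\mathcal{E}_c$ failed to be stable under the needed operations; so the heart of the write-up is verifying these permanence properties of $\mathcal{E}_c$ (closure under $\ell^2$-sums, subspaces, quotients, ultraproducts, all with the constant $c$ preserved), after which the reduction to \cite{BFGM} is routine.
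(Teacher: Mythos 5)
Your proposal is correct and follows essentially the same route as the paper: renorm via Lemma \ref{Lem_corr_rep} to pass between $\mathcal{R}_c(G)$ and isometric actions on spaces in $\mathcal{E}_c$, invoke the Banach-space Guichardet theorem of \cite{BFGM} to get $(T_E)$ from $(F_E)$, and renorm back. The only divergence is your concern about $\ell^2$-sums/ultraproducts of quotients: it is unnecessary, since \cite[Theorem 1.3(1)]{BFGM} is applied to each fixed $E\in\mathcal{E}_c$ separately (the open-mapping argument produces an unbounded cocycle for the given representation on $E$ itself), though the permanence properties of $\mathcal{E}_c$ you propose to check do hold and would also close that gap.
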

\begin{proof}
Let $E\in\mathcal{E}_c$ and let $\sigma$ be an isometric affine action of $G$ on $E$ given by
\begin{align*}
\sigma(s)x=\rho(s)x+b(s),\quad\forall s\in G,\ \forall x\in E.
\end{align*}
By Lemma \ref{Lem_corr_rep}, there is $V:E\to\mathcal{H}$ such that the action
\begin{align*}
\tilde{\sigma}(s)\xi=V\rho(s)V^{-1}\xi+Vb(s),\quad\forall s\in G,\ \forall \xi\in\mathcal{H},
\end{align*}
has linear part in $\mathcal{R}_c(G)$. Since $G$ has Property (FH)${}_c$, this action has a fixed point, which implies that $\sigma$ has a fixed point. In other words, $G$ has Property $(F_E)$ for every $E\in\mathcal{E}_c$. By \cite[Theorem 1.3(1)]{BFGM}, this implies that $G$ has Property $(T_E)$ for every $E\in\mathcal{E}_c$. Again, by Lemma \ref{Lem_corr_rep}, every $\pi\in\mathcal{R}_c(G)$ has a Kazhdan pair; see the proof of Corollary \ref{Cor_DN}. Therefore $G$ has Property (T)${}_c$.
\end{proof}

\section{{\bf Stability}}\label{Sec_stab}

In this section, we concentrate on the stability properties (b), (c) and (d) in Theorem \ref{Thm_c_ub}. Thanks to Theorem \ref{Thm_equiv} and Corollary \ref{Cor_DN}, we can take any of the three equivalent conditions as our definition of Property (T)${}_c$. And that is exactly what we will do, depending on which one is more convenient for each proof.

\subsection{Quotients}
Let us fix a locally compact group $G$, a closed normal subgroup $H$ of $G$, and $c\geq 1$. Let $q:G\to G/H$ denote the quotient map. 

\begin{lem}\label{Lem_pioq}
For every $\pi\in\mathcal{R}_c(G/H)$, the representation $\pi\circ q$ belongs to $\mathcal{R}_c(G)$.
\end{lem}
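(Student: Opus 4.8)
The plan is to verify directly that $\pi\circ q$ satisfies the three defining properties of a member of $\mathcal{R}_c(G)$: it is a homomorphism into the invertibles of $\mathcal{B}(\mathcal{H}_\pi)$, it is uniformly bounded with bound at most $c$, and it is continuous for the strong operator topology.

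First I would record that $\pi\circ q$ is a homomorphism as a composition of homomorphisms, and that since $\pi(s)$ is invertible for every $s\in G/H$, the operator $(\pi\circ q)(t)=\pi(q(t))$ is invertible for every $t\in G$. For the uniform bound, the key observation is that $q$ is surjective, so
\begin{align*}
|\pi\circ q|=\sup_{t\in G}\|\pi(q(t))\|=\sup_{s\in G/H}\|\pi(s)\|=|\pi|\leq c.
\end{align*}
This shows $\pi\circ q$ has the correct bound (indeed the exact same bound as $\pi$).

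The only point requiring a little care is SOT-continuity. Here I would use that the quotient map $q:G\to G/H$ is continuous (by definition of the quotient topology on $G/H$) and fix $\xi\in\mathcal{H}_\pi$. The map $t\mapsto(\pi\circ q)(t)\xi=\pi(q(t))\xi$ is then the composition of the continuous map $q:G\to G/H$ with the map $s\mapsto\pi(s)\xi$, which is norm-continuous on $G/H$ because $\pi\in\mathcal{R}_c(G/H)$ is SOT-continuous. Hence $t\mapsto(\pi\circ q)(t)\xi$ is norm-continuous for every $\xi$, i.e. $\pi\circ q$ is SOT-continuous. Combining the three points, $\pi\circ q\in\mathcal{R}_c(G)$.

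I do not expect any real obstacle here; the statement is essentially a bookkeeping check that pulling back a uniformly bounded representation along a continuous surjective homomorphism preserves all the relevant structure. The mildest subtlety is simply remembering that SOT-continuity only needs to be tested on vectors and that continuity of $q$ is exactly what makes the composition work; no density or limiting argument is needed.
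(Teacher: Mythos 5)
Your proposal is correct and follows essentially the same route as the paper: the paper's proof likewise observes that $\pi\circ q$ is continuous because $q$ is a continuous homomorphism, and that $|\pi\circ q|=|\pi|\leq c$. Your write-up just spells out the SOT-continuity and surjectivity details a bit more explicitly.
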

\begin{proof}
The fact that $q$ is a continuous homomorphism shows that $\pi\circ q$ is indeed a continuous representation. Furthermore, by definition, $|\pi\circ q|=|\pi|$, which shows that $\pi\circ q$ belongs to $\mathcal{R}_c(G)$.
\end{proof}

In this situation, we are dealing with three locally compact groups: $G$, $H$ and $G/H$. Fixing the Haar measures on two of them, we can normalise the Haar measure on the third so that
\begin{align*}
\int_{G/H}\int_H f(sh)\, dh\, d\dot{s} = \int_G f(s)\, ds,
\end{align*}
for all $f\in C_{00}(G)$; see \cite[\S 3.3]{ReiSte}. Here $\dot{s}=q(s)$. This implies that the map $T:L^1(G)\to L^1(G/H)$ given by
\begin{align}\label{T:L1}
Tf(q(s))=\int_H f(sh)\, dh,\quad\forall f\in L^1(G),
\end{align}
is a contraction.

\begin{lem}\label{Lem_T:A}
Let $G$ and $H$ be as above. The map \eqref{T:L1} extends to a contraction $T:\tilde{A}_c(G)\to\tilde{A}_c(G/H)$.
\end{lem}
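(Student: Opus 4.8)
The plan is to dualize: since $\tilde{A}_c(G)^*=B_c(G)$ and $\tilde{A}_c(G/H)^*=B_c(G/H)$ by Proposition \ref{Prop_duality}, it suffices to produce a weak*-continuous contraction $T^*:B_c(G/H)\to B_c(G)$ whose pre-adjoint agrees with $T$ on $L^1(G)$. The natural candidate is $T^*\varphi=\varphi\circ q$ for $\varphi\in B_c(G/H)$: by Lemma \ref{Lem_pioq}, if $\varphi=\langle\pi(\cdot)\xi,\eta\rangle$ with $\pi\in\mathcal{R}_c(G/H)$, then $\varphi\circ q=\langle(\pi\circ q)(\cdot)\xi,\eta\rangle$ with $\pi\circ q\in\mathcal{R}_c(G)$, using the \emph{same} vectors $\xi,\eta$; hence $\|\varphi\circ q\|_{B_c(G)}\leq\|\varphi\|_{B_c(G/H)}$, so $T^*$ is a well-defined contraction.

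Next I would check the adjointness relation. For $\varphi\in B_c(G/H)$ and $f\in L^1(G)$, using the Weil integration formula normalised as in the excerpt,
\begin{align*}
\int_G \varphi(q(s))f(s)\,ds = \int_{G/H}\int_H \varphi(q(sh))f(sh)\,dh\,d\dot{s} = \int_{G/H}\varphi(\dot{s})\left(\int_H f(sh)\,dh\right)d\dot{s} = \int_{G/H}\varphi\,(Tf)\,d\dot{s},
\end{align*}
where the middle step uses that $\varphi\circ q$ is constant on $H$-cosets. This says exactly $\langle T^*\varphi,f\rangle_{B_c(G),\tilde{A}_c(G)}=\langle\varphi,Tf\rangle_{B_c(G/H),\tilde{A}_c(G/H)}$, i.e. $T$ on $L^1(G)$ is the restriction of the pre-adjoint of the bounded weak*-weak* continuous map $T^*$.

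From here the conclusion is a soft functional-analytic step: a bounded operator between dual Banach spaces that is weak*-continuous (namely $T^*$, being an adjoint) has a pre-adjoint $(T^*)_*:\tilde{A}_c(G)\to\tilde{A}_c(G/H)$ with $\|(T^*)_*\|=\|T^*\|\leq 1$, and by the computation above $(T^*)_*$ coincides with $T$ on the dense subspace $L^1(G)\subset\tilde{A}_c(G)$. Therefore $T$ extends (uniquely, by density and continuity) to the contraction $(T^*)_*:\tilde{A}_c(G)\to\tilde{A}_c(G/H)$, which is the desired extension.

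I expect the only subtle point to be purely notational bookkeeping around the Haar-measure normalisation and the fact that $\varphi\circ q$ is literally constant on each coset $sH$ (so the inner integral really does factor out); everything else is formal. One should also note in passing that $T^*$ being an adjoint is automatically weak*-continuous, which is what legitimises taking its pre-adjoint rather than merely its adjoint — this is the standard device and the main conceptual (if not technical) ingredient of the argument.
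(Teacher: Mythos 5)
Your core computations are correct and are, in substance, the dual formulation of the paper's argument. The paper proves the same identity at the operator level: for every $\pi\in\mathcal{R}_c(G/H)$ and $f\in L^1(G)$, Weil's formula gives $\pi(Tf)=(\pi\circ q)(f)$, and then one takes the supremum over $\pi$, using Lemma \ref{Lem_pioq} to get $\|(\pi\circ q)(f)\|\leq\|f\|_{\tilde{A}_c(G)}$. Your pairing identity $\langle\varphi\circ q,f\rangle=\langle\varphi,Tf\rangle$ is exactly the coefficient version of that identity, and your bound $\|\varphi\circ q\|_{B_c(G)}\leq\|\varphi\|_{B_c(G/H)}$ is the same use of Lemma \ref{Lem_pioq}; the paper records the formula $T^*\varphi=\varphi\circ q$ only afterwards, in the proof of Corollary \ref{Cor_G->G/H}. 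So the route is legitimate, just organised on the $B_c$ side rather than on the representation side.

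The one genuine flaw is the justification of your final step: you assert that $T^*$ is weak*-continuous ``being an adjoint'', but at that stage $T^*$ is not known to be the adjoint of anything --- it would be the adjoint of the extension of $T$ whose existence is precisely what you are proving, so the argument as phrased is circular. The fix needs no pre-adjoint machinery at all: your two computations already give, for $f\in L^1(G)$,
\[
\|Tf\|_{\tilde{A}_c(G/H)}=\sup_{\|\varphi\|_{B_c(G/H)}\leq 1}\left|\int_{G/H}\varphi(\dot{s})\,(Tf)(\dot{s})\,d\dot{s}\right|
=\sup_{\|\varphi\|_{B_c(G/H)}\leq 1}\left|\int_{G}\varphi(q(s))f(s)\,ds\right|\leq\|f\|_{\tilde{A}_c(G)},
\]
where the first equality is the description of the $\tilde{A}_c$-norm as a supremum of pairings against the unit ball of $B_c$ (stated right after \eqref{norm_A_c}), and the last inequality uses $\|\varphi\circ q\|_{B_c(G)}\leq 1$ together with the same description for $G$; then extend $T$ to $\tilde{A}_c(G)$ by density. (Alternatively one can salvage your phrasing by verifying weak*-continuity of $T^*$ on bounded nets against the dense subspace $L^1(G)$ and invoking the Krein--\v{S}mulian theorem, but that is heavier than necessary.)
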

\begin{proof}
Let $\pi\in\mathcal{R}_c(G/H)$. For every $f\in L^1(G)$,
\begin{align*}
\pi(Tf) &= \int_{G/H}\left(\int_H f(sh)\, dh\right)\pi(\dot{s})\, d\dot{s}\\
&=\int_G f(s)\pi(q(s))\, ds\\
&=\tilde{\pi}(f),
\end{align*}
where $\tilde{\pi}=\pi\circ q$. This shows that $\|\pi(Tf)\|=\|\tilde{\pi}(f)\|$. By Lemma \ref{Lem_pioq}, we conclude that
\begin{align*}
\|Tf\|_{\tilde{A}_c(G/H)}\leq\|f\|_{\tilde{A}_c(G)}.
\end{align*}
\end{proof}

\begin{cor}\label{Cor_G->G/H}
Let $G$ and $H$ be as above. If $G$ has Property (T)${}_c$, then so does $G/H$.
\end{cor}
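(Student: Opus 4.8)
The plan is to combine the two lemmas just established with the characterisation of Property (T)${}_c$ via the Kazhdan projection in Corollary \ref{Cor_DN}. Since $G$ has Property (T)${}_c$, there is a Kazhdan projection $p\in\tilde{A}_c(G)$, and moreover $p$ is the limit in $\tilde{A}_c(G)$ of a sequence $(g_n)$ in $C_{00}(G)$ with $g_n\geq 0$ and $\|g_n\|_1=1$. The natural candidate for the Kazhdan projection of $\tilde{A}_c(G/H)$ is $Tp$, where $T:\tilde{A}_c(G)\to\tilde{A}_c(G/H)$ is the contraction from Lemma \ref{Lem_T:A}. So first I would set $q_n=Tg_n\in C_{00}(G/H)$; since $T$ is the averaging map \eqref{T:L1}, each $q_n$ is non-negative with $\|q_n\|_1=1$ (integrating the defining identity for $T$ over $G/H$ gives $\int_{G/H}Tg_n=\int_G g_n=1$), and by continuity of $T$ we have $q_n\to Tp$ in $\tilde{A}_c(G/H)$.

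Next I would verify that $Tp$ is indeed a Kazhdan projection in $\tilde{A}_c(G/H)$, i.e. that for every $\pi\in\mathcal{R}_c(G/H)$, $\pi(Tp)$ is the projection onto $\mathcal{H}_\pi^{\mathrm{inv}}$ along $(\mathcal{H}_{\pi^*}^{\mathrm{inv}})^\perp$. The key computation, already contained in the proof of Lemma \ref{Lem_T:A}, is that $\pi(Tf)=(\pi\circ q)(f)$ for all $f\in L^1(G)$; passing to the limit in $\tilde{A}_c(G)$ (legitimate since both $\pi(T\,\cdot\,)$ and $(\pi\circ q)(\,\cdot\,)$ are bounded on $\tilde{A}_c(G)$, the latter by Lemma \ref{Lem_pioq}) yields $\pi(Tp)=(\pi\circ q)(p)$. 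Since $\pi\circ q\in\mathcal{R}_c(G)$ and $p$ is the Kazhdan projection of $\tilde{A}_c(G)$, the operator $(\pi\circ q)(p)$ is the projection onto $\mathcal{H}_{\pi\circ q}^{\mathrm{inv}}$ along $(\mathcal{H}_{(\pi\circ q)^*}^{\mathrm{inv}})^\perp$. It then remains to observe that a vector is invariant under $\pi\circ q$ if and only if it is invariant under $\pi$ (because $q$ is surjective), so $\mathcal{H}_{\pi\circ q}^{\mathrm{inv}}=\mathcal{H}_\pi^{\mathrm{inv}}$, and likewise $(\pi\circ q)^*=\pi^*\circ q$ has the same invariant vectors as $\pi^*$; hence $(\pi\circ q)(p)$ is exactly the projection demanded by Corollary \ref{Cor_DN} for $\pi$. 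Applying Corollary \ref{Cor_DN} in the other direction, $G/H$ has Property (T)${}_c$.

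I expect the only mildly delicate point to be the surjectivity of $q$ being used correctly to match up invariant vectors and the complementary subspaces; this is routine but should be spelled out, since the decomposition \eqref{dec_H_pi} involves the adjoint representation and one must check $(\pi\circ q)^*=\pi^*\circ q$. Everything else — non-negativity and normalisation of $q_n$, the density/limit argument, and the intertwining identity $\pi(Tf)=(\pi\circ q)(f)$ — is immediate from the lemmas already proved. An essentially equivalent and perhaps cleaner route, avoiding the adjoint bookkeeping, would be to work instead with the invariant-mean characterisation (Theorem \ref{Thm_equiv}): pull back a coefficient $\varphi\in B_c(G/H)$ to $\varphi\circ q\in B_c(G)$, note this inclusion is weak*-continuous with predual map $T$, and deduce that the invariant mean on $B_c(G/H)$ is the composition of the (weak*-continuous) invariant mean on $B_c(G)$ with $\varphi\mapsto\varphi\circ q$, hence weak*-continuous. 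I would present the Kazhdan-projection argument as the main proof since the lemmas in this subsection are phrased for $\tilde A_c$, but mention the alternative in a sentence.

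\begin{proof}
Since $G$ has Property (T)${}_c$, Corollary \ref{Cor_DN} provides a Kazhdan projection $p\in\tilde{A}_c(G)$, which is moreover the limit in $\tilde{A}_c(G)$ of a sequence $(g_n)$ in $C_{00}(G)$ with $g_n\geq 0$ and $\|g_n\|_1=1$. Let $T:\tilde{A}_c(G)\to\tilde{A}_c(G/H)$ be the contraction from Lemma \ref{Lem_T:A}, and set $q_n=Tg_n$. Each $q_n$ lies in $C_{00}(G/H)$, is non-negative, and satisfies
\begin{align*}
\int_{G/H}q_n(\dot{s})\, d\dot{s}=\int_{G/H}\int_H g_n(sh)\, dh\, d\dot{s}=\int_G g_n(s)\, ds=1,
\end{align*}
so $\|q_n\|_1=1$. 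Since $T$ is continuous, $q_n\to Tp$ in $\tilde{A}_c(G/H)$.

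We claim that $Tp$ is a Kazhdan projection in $\tilde{A}_c(G/H)$. Fix $\pi\in\mathcal{R}_c(G/H)$. As in the proof of Lemma \ref{Lem_T:A}, $\pi(Tf)=(\pi\circ q)(f)$ for all $f\in L^1(G)$. Both sides are bounded linear maps on $\tilde{A}_c(G)$ (for the right-hand side, use Lemma \ref{Lem_pioq}), so passing to the limit along $g_n\to p$ gives
\begin{align*}
\pi(Tp)=(\pi\circ q)(p).
\end{align*}
Because $q$ is surjective, a vector is fixed by $\pi\circ q$ if and only if it is fixed by $\pi$, so $\mathcal{H}_{\pi\circ q}^{\mathrm{inv}}=\mathcal{H}_\pi^{\mathrm{inv}}$; likewise $(\pi\circ q)^*=\pi^*\circ q$, whence $\mathcal{H}_{(\pi\circ q)^*}^{\mathrm{inv}}=\mathcal{H}_{\pi^*}^{\mathrm{inv}}$. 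Since $\pi\circ q\in\mathcal{R}_c(G)$ and $p$ is the Kazhdan projection of $\tilde{A}_c(G)$, Corollary \ref{Cor_DN} tells us that $(\pi\circ q)(p)$ is the projection onto $\mathcal{H}_{\pi\circ q}^{\mathrm{inv}}=\mathcal{H}_\pi^{\mathrm{inv}}$ along $\left(\mathcal{H}_{(\pi\circ q)^*}^{\mathrm{inv}}\right)^\perp=\left(\mathcal{H}_{\pi^*}^{\mathrm{inv}}\right)^\perp$. Thus $\pi(Tp)$ is exactly the operator required in Corollary \ref{Cor_DN}. As $\pi\in\mathcal{R}_c(G/H)$ was arbitrary and $Tp$ is central (being a limit of elements of the commutative algebra... more simply, $\pi(Tp)$ being a projection onto the invariants for every $\pi$ forces centrality, cf. Corollary \ref{Cor_DN}), we conclude by Corollary \ref{Cor_DN} that $G/H$ has Property (T)${}_c$.
\end{proof}
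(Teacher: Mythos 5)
Your proof is correct, but it verifies the conclusion through a different characterisation than the paper does. Both arguments hinge on the same object: push the Kazhdan projection/invariant mean $p\in\tilde{A}_c(G)$ forward by the contraction $T$ of Lemma \ref{Lem_T:A}. The paper then works with Theorem \ref{Thm_equiv}: it computes the dual map $T^*\varphi=\varphi\circ q$ and checks directly that $Tp$ is a positive, unital, left-invariant functional on $B_c(G/H)$, hence \emph{the} invariant mean, which is weak*-continuous because it lies in the predual. You instead work at the level of Corollary \ref{Cor_DN}: using $\pi(Tf)=(\pi\circ q)(f)$ on $L^1(G)$ and a density/continuity argument you get $\pi(Tp)=(\pi\circ q)(p)$, and then match up invariant subspaces via surjectivity of $q$ and $(\pi\circ q)^*=\pi^*\circ q$. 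The paper's route buys a shorter verification, since the mean axioms require no discussion of centrality or idempotency; your route stays entirely inside the operator picture, which is arguably closer in spirit to the rest of Section \ref{Sec_stab}, but it obliges you to supply the "central idempotent" hypotheses of Corollary \ref{Cor_DN}. The alternative you sketch in one sentence (pull back coefficients by $q$, note $T^*$ is the pullback, compose means) is essentially the paper's actual proof.

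Two small points in your write-up should be tightened. First, your parenthetical attempt to get centrality from "$Tp$ being a limit of elements of the commutative algebra" is wrong as stated: $\tilde{A}_c(G/H)$ is not commutative for non-abelian $G/H$. Your fallback claim is the right one, but it deserves a line of proof: since the range of $\pi(Tp)$ consists of invariant vectors and its kernel $\left(\mathcal{H}_{\pi^*}^{\text{inv}}\right)^\perp$ is $\pi$-invariant, one gets $\pi(s)\pi(Tp)=\pi(Tp)=\pi(Tp)\pi(s)$ for all $s$, hence $\pi(Tp)$ commutes with $\pi(f)$ for every $f\in L^1(G/H)$; applying this to the universal representation $\pi_U$, which computes the norm of $\tilde{A}_c(G/H)$ by \eqref{norm_pi_U} and hence embeds it isometrically and multiplicatively into $\mathcal{B}(\mathcal{H}_U)$, yields that $Tp$ is central. (Alternatively, $T$ is multiplicative for convolution and has dense range, and $p$ is central in $\tilde{A}_c(G)$.) Second, idempotency of $Tp$ is never addressed; it follows by the same $\pi_U$ argument, since $\pi_U(Tp)$ is an idempotent operator. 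With those two routine additions your Kazhdan-projection argument is complete.
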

\begin{proof}
Let $p\in\tilde{A}_c(G)$ be the unique invariant mean on $B_c(G)$, and let $T$ be the map given by Lemma \ref{Lem_T:A}. We claim that $Tp$ is the invariant mean on $B_c(G/H)$. Indeed, a simple calculation shows that the dual map $T^*:B_c(G/H)\to B_c(G)$ is given by
\begin{align*}
T^*\varphi=\varphi\circ q,\quad\forall\varphi\in B_c(G/H).
\end{align*}
In particular,
\begin{align*}
\langle\varphi,Tp\rangle=\langle\varphi\circ q,p\rangle,\quad\forall\varphi\in B_c(G/H).
\end{align*}
With this formula, the positivity and left-invariance of $Tp$ follow from those of $p$. Finally,
\begin{align*}
\langle 1,Tp\rangle=\langle 1,p\rangle=1.
\end{align*}
Hence $Tp$ is the unique invariant mean on $B_c(G/H)$.
\end{proof}

\subsection{Extensions}
Again, we fix a locally compact group $G$, a closed normal subgroup $H$ of $G$, and $c\geq 1$. For convenience, let us denote by $\nu$ the Haar measure of $H$. For every $s\in G$ and every Borel subset $A\subset H$, we define
\begin{align*}
\nu_s(A)=\nu(s^{-1}As).
\end{align*}
Since $H$ is normal, for all $t\in H$,
\begin{align*}
\nu_s(tA)=\nu(s^{-1}tAs)=\nu(s^{-1}tss^{-1}As)=\nu(s^{-1}As)=\nu_s(A).
\end{align*}
This implies that $\nu_s$ is also a Haar measure. Hence there is a constant $\delta(s)>0$ such that $\nu_s=\delta(s)\nu$.

\begin{lem}\label{Lem_tilde(g_n)}
Let $G$ and $H$ be as above. Let $m\in B_c(H)^*$ be the unique invariant mean on $B_c(H)$, and let $(g_n)$ be a sequence in $L^1(H)$ converging to $m$. For every $s\in G$, the sequence $(g_n^{(s)})$ given by
\begin{align*}
g_n^{(s)}(t)=\delta(s)^{-1}g_n(sts^{-1}),\quad\forall t\in H,
\end{align*}
converges to $m$ in $B_c(H)^*$.
\end{lem}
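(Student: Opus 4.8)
The statement is about the invariant mean $m$ on $B_c(H)$ and its approximating sequence $(g_n)$ (from Lemma \ref{Lem_app_m} / Theorem \ref{Thm_DN}), asserting that the conjugated-and-rescaled sequence $g_n^{(s)}$ still converges to $m$ in $B_c(H)^*=\tilde A_c(H)^{**}$, or rather in the norm of $\tilde A_c(H)$ since $m\in\tilde A_c(H)$ once Property (T)${}_c$ holds for $H$. The natural approach is to exhibit a \emph{map on $\tilde A_c(H)$} induced by the conjugation automorphism $\alpha_s\colon t\mapsto sts^{-1}$ of $H$, show it is an isometry of $\tilde A_c(H)$, and show that it fixes $m$; then the convergence $g_n^{(s)}\to m$ follows from $g_n\to m$ by applying this isometry.

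Here are the steps I would carry out. \textbf{Step 1.} For $f\in L^1(H)$ define $\alpha_s f(t)=\delta(s)^{-1}f(sts^{-1})$; the factor $\delta(s)^{-1}$ is precisely the one that makes $\alpha_s$ an isometry of $L^1(H,\nu)$, by the change of variables $\nu_s=\delta(s)\nu$ and the very definition of $\delta(s)$. Note $g_n^{(s)}=\alpha_s g_n$. \textbf{Step 2.} Check that $\alpha_s$ extends to an isometry of $\tilde A_c(H)$. For this, observe that if $\pi\in\mathcal R_c(H)$ then $\pi\circ\alpha_s^{\mathrm{grp}}$ — i.e.\ $t\mapsto\pi(sts^{-1})$ — is again in $\mathcal R_c(H)$ (it is a representation of $H$ with the same uniform bound, and SOT-continuity is preserved since conjugation is a topological automorphism of $H$). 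A direct computation gives $\pi(\alpha_s f)=(\pi\circ\alpha_s^{\mathrm{grp}})(f)$ for $f\in L^1(H)$ — indeed $\int_H \delta(s)^{-1}f(sts^{-1})\pi(t)\,d\nu(t)=\int_H f(t)\pi(s^{-1}ts)\,d\nu(t)$. Taking the supremum over $\pi\in\mathcal R_c(H)$ and using that $\pi\mapsto\pi\circ\alpha_s^{\mathrm{grp}}$ is a bijection of $\mathcal R_c(H)$ onto itself, we get $\|\alpha_s f\|_{\tilde A_c(H)}=\|f\|_{\tilde A_c(H)}$, so $\alpha_s$ extends to a surjective linear isometry of $\tilde A_c(H)$. \textbf{Step 3.} Identify the adjoint $\alpha_s^*$ on $B_c(H)=\tilde A_c(H)^*$: it is the pullback $\varphi\mapsto\varphi\circ\alpha_s^{\mathrm{grp}}$, i.e.\ $(\alpha_s^*\varphi)(t)=\varphi(sts^{-1})$, which one sees from $\langle\varphi,\alpha_s f\rangle=\int_H\varphi(t)\alpha_s f(t)\,d\nu(t)=\int_H\varphi(sts^{-1})f(t)\,d\nu(t)$. \textbf{Step 4.} Since the conjugation $\alpha_s^{\mathrm{grp}}$ is an automorphism of $H$, the functional $\varphi\mapsto m(\alpha_s^*\varphi)=m(\varphi\circ\alpha_s^{\mathrm{grp}})$ is again a left-invariant mean on $B_c(H)$ (left-invariance: $m((\ell_u\varphi)\circ\alpha_s^{\mathrm{grp}})=m(\ell_{s^{-1}us}(\varphi\circ\alpha_s^{\mathrm{grp}}))=m(\varphi\circ\alpha_s^{\mathrm{grp}})$; it sends the constant function $1$ to $1$ and is positive). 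By uniqueness of the invariant mean (Corollary \ref{Cor_inv_mean}), $m\circ\alpha_s^*=m$, i.e.\ $\alpha_s^{**}m=m$ inside $B_c(H)^*$; equivalently, since $m\in\tilde A_c(H)$, we have $\alpha_s m=m$ in $\tilde A_c(H)$. \textbf{Step 5.} Conclude: $\|g_n^{(s)}-m\|_{\tilde A_c(H)}=\|\alpha_s g_n-\alpha_s m\|_{\tilde A_c(H)}=\|g_n-m\|_{\tilde A_c(H)}\to 0$ by Step 2 and Step 4, which is exactly the claimed convergence.

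\textbf{Main obstacle.} None of the individual steps is deep; the place where one must be careful is Step 2 together with the normalisation — making sure that the modular-type factor $\delta(s)$ appearing in the definition of $g_n^{(s)}$ is \emph{exactly} the one that turns conjugation into an $L^1(H)$-isometry (as opposed to the modular function of $H$ or of $G$), and then that $\pi\mapsto\pi\circ\alpha_s^{\mathrm{grp}}$ genuinely preserves membership in $\mathcal R_c(H)$ including the SOT-continuity. A secondary subtlety is the passage between the statement "$g_n\to m$ in $B_c(H)^*$" as literally written and the norm convergence in $\tilde A_c(H)$: one should note that the $(g_n)$ in question are the ones furnished by Lemma \ref{Lem_app_m}, for which convergence is in the norm of $\tilde A_c(H)$, and an isometry of $\tilde A_c(H)$ is automatically weak*-weak*-continuous, so the conclusion holds in either reading. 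Once these points are pinned down, the argument is the three-line computation of Step 5.
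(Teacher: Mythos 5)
Your proof is correct and rests on the same two pillars as the paper's: the observation that $\varphi\mapsto\varphi(s^{-1}\,\cdot\,s)$ maps $B_c(H)$ into itself contractively (a conjugated coefficient is a coefficient of the conjugated representation, which stays in $\mathcal{R}_c(H)$ because conjugation by $s$ is a topological automorphism of the normal subgroup $H$), and the uniqueness of the invariant mean (Corollary \ref{Cor_inv_mean}) applied to the conjugated mean. The difference is only in the bookkeeping: the paper defines $m_s(\varphi)=m(\varphi_s)$ directly, checks that $(g_n^{(s)})$ is Cauchy in $B_c(H)^*$ and converges against each $\varphi$ to $m_s(\varphi)$, and then identifies $m_s=m$ by uniqueness; you instead package conjugation as an isometry $\alpha_s$ of $L^1(H)$ and of $\tilde{A}_c(H)$, observe that its (bi)adjoint fixes $m$ by the same uniqueness argument, and conclude from $\|g_n^{(s)}-m\|=\|\alpha_s(g_n-m)\|=\|g_n-m\|\to 0$, which avoids the Cauchy step at the cost of verifying the $L^1$- and $\tilde{A}_c$-isometry (both of which you do correctly, and your handling of the $\tilde{A}_c(H)$ versus $B_c(H)^*$ reading of the convergence is also fine). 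One cosmetic slip: the adjoint in your Step 3 is $(\alpha_s^*\varphi)(t)=\varphi(s^{-1}ts)$, exactly the paper's $\varphi_s$, not $\varphi(sts^{-1})$ — the change of variables that absorbs $\delta(s)^{-1}$ moves the conjugation to the other side; since both directions of conjugation are automorphisms of $H$ and your Step 4 invariance computation goes through verbatim for either, this does not affect the argument.
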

\begin{proof}
Let $m_s\in B_c(H)^*$ be given by $m_s(\varphi)=m(\varphi_s)$, where
\begin{align*}
\varphi_s(t)=\varphi(s^{-1}ts),\quad t\in H.
\end{align*}
Observe that, if $\varphi$ is a coefficient of a representation $\pi$, then $\varphi_s$ is a coefficient of $\pi(s^{-1}\,\cdot\, s)$, which shows that $m_s$ is well defined. Then, for all $\varphi\in B_c(H)$,
\begin{align*}
\int_H\varphi(t)g_n^{(s)}(t)\, d\nu(t) &= \int_H\varphi_s(t)g_n(t)\, d\nu(t)\\
&\xrightarrow[n\to\infty]{} m_s(\varphi).
\end{align*}
Moreover, since $(g_n)$ is a Cauchy sequence in $B_c(H)^*$, and $\langle g_n^{(s)},\varphi\rangle=\langle g_n,\varphi_s\rangle$, $(g_n^{(s)})$ is a Cauchy sequence too. Therefore $g_n^{(s)}\to m_s$ in $B_c(H)^*$. By uniqueness, we only need to show that $m_s$ is an invariant mean on $B_c(H)$. It is a mean because $m$ is a mean. In order to show that it is invariant, observe that $(t\cdot\varphi)_s=(sts^{-1})\cdot\varphi_s$ for all $t\in H$. Thus
\begin{align*}
m_s(t\cdot\varphi)=m((t\cdot\varphi)_s)=m((sts^{-1})\cdot\varphi_s)=m(\varphi_s)=m_s(\varphi).
\end{align*}
We conclude that $m_s=m$.
\end{proof}

For the following lemma, we adapt the arguments of \cite[Lemma 1.7.5]{BedlHVa} to our setting.

\begin{lem}\label{Lem_G/H->G}
Let $G$ and $H$ be as above. If both $H$ and $G/H$ have Property (T)${}_c$, then so does $G$.
\end{lem}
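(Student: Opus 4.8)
The plan is to mimic the classical proof that an extension of two Property (T) groups has Property (T), working with Kazhdan pairs for the universal representation $\pi_U\in\mathcal{R}_c(G)$ and exploiting the invariant mean $m\in B_c(H)^*$ to produce $H$-invariant vectors which then transform as a representation of $G/H$. Concretely, let $\pi\in\mathcal{R}_c(G)$ with $\mathcal{H}_\pi^{\mathrm{inv}}=\{0\}$; by Lemma \ref{Lem_char_T_c} it suffices to find a Kazhdan pair depending only on $G$ (not on $\pi$). Restricting $\pi$ to $H$ gives $\pi|_H\in\mathcal{R}_c(H)$, and by Corollary \ref{Cor_DN} applied to $H$ the Kazhdan projection $p_H\in\tilde{A}_c(H)$ yields a bounded idempotent $\pi(p_H)$ onto $\mathcal{H}_\pi^{H\text{-inv}}$ along $\big(\mathcal{H}_{(\pi|_H)^*}^{\mathrm{inv}}\big)^\perp$. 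The key point is that $\mathcal{K}:=\mathcal{H}_\pi^{H\text{-inv}}$ is a $\pi(G)$-invariant subspace: since $H$ is normal, for $s\in G$ and $\xi\in\mathcal{K}$ one has $\pi(h)\pi(s)\xi=\pi(s)\pi(s^{-1}hs)\xi=\pi(s)\xi$ for all $h\in H$. Thus $\pi$ restricts to a representation $\bar\pi$ of $G$ on $\mathcal{K}$, which factors through $G/H$, giving $\rho\in\mathcal{R}_c(G/H)$ on $\mathcal{K}$.

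Next I would check that $\rho$ has no nonzero invariant vectors: a $\rho$-invariant vector is exactly a vector fixed by all of $\pi(G)$, i.e. an element of $\mathcal{H}_\pi^{\mathrm{inv}}=\{0\}$. Since $G/H$ has Property (T)${}_c$, by Lemma \ref{Lem_char_T_c} there is a Kazhdan pair $(\bar Q,\kappa_1)$ for $\mathcal{R}_c(G/H)$; lifting $\bar Q$ to a compact set $Q_1\subset G$ with $q(Q_1)=\bar Q$ gives $\sup_{s\in Q_1}\|\pi(s)\xi-\xi\|\geq\kappa_1\|\xi\|$ for all $\xi\in\mathcal{K}$. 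It remains to control the component of a general vector orthogonal to the image of $\pi(p_H)$. Here I would use the approximation of $p_H$ by a sequence $(g_n)$ of nonnegative compactly supported functions with $\|g_n\|_1=1$, supported in a fixed compact set $Q_0\subset H$ once $n$ is large. The essential estimate: for $\xi\in\mathcal{H}_\pi$ with $\|\xi\|=1$, if $\sup_{s\in Q_0}\|\pi(s)\xi-\xi\|$ is small then $\|\pi(g_n)\xi-\xi\|$ is small, so $\xi$ is close to the invariant subspace $\mathcal{K}$ (up to the bounded projection norm, controlled by $c$); combining with the $G/H$-Kazhdan estimate on the $\mathcal{K}$-component and a triangle inequality, one obtains a Kazhdan pair $(Q,\kappa)$ with $Q=Q_0\cup Q_1$ and $\kappa$ depending only on $c$, $\kappa_1$, and $Q_0$. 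This is precisely the structure of \cite[Lemma 1.7.5]{BedlHVa}, and Lemma \ref{Lem_tilde(g_n)} is what guarantees that conjugating the $g_n$ by elements of $G$ does not spoil the convergence to $m=p_H$, which is needed because $\pi(p_H)$ must be shown to commute appropriately with $\pi(G)$ — or rather, that the $H$-invariant subspace is genuinely $G$-invariant with the projection behaving well under $\pi(G)$.

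The cleanest route avoids explicit constants: take $(Q_0,\kappa_0)$ a Kazhdan pair for $\pi|_H$ as a member of $\mathcal{R}_c(H)$ (Property (T)${}_c$ for $H$), take $(Q_1,\kappa_1)$ as above. Given $\xi$ with $\|\xi\|=1$, write $\xi=\xi_0+\xi_1$ with $\xi_0=\pi(p_H)\xi\in\mathcal{K}$ and $\xi_1\in\ker\pi(p_H)$; note $\|\xi_0\|\le\|p_H\|_{\tilde A_c}\le c$ and likewise $\|\xi_1\|\le 1+c$. If $\sup_{s\in Q_0}\|\pi(s)\xi-\xi\|<\kappa_0\|\xi_1\|/(2c)$, using that $Q_0$-almost-invariance passes to $\xi_1$ up to a factor controlled by $c$ (since $\xi_0$ is exactly $H$-fixed), the Kazhdan inequality for $\pi|_H$ on the complement forces $\|\xi_1\|$ small, whence $\|\xi_0\|\ge 1-\|\xi_1\|$ is bounded below; then $\sup_{s\in Q_1}\|\pi(s)\xi_0-\xi_0\|\ge\kappa_1\|\xi_0\|$, and $\sup_{s\in Q_1}\|\pi(s)\xi-\xi\|\ge\kappa_1\|\xi_0\|-(1+\|\pi(s)\|)\|\xi_1\|$ is bounded below. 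Choosing the thresholds consistently yields a single Kazhdan pair for $\pi$, hence for $\mathcal{R}_c(G)$, so $G$ has Property (T)${}_c$.

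The main obstacle I anticipate is the bookkeeping around the non-orthogonal decomposition \eqref{dec_H_pi}: unlike the unitary case, $\pi(p_H)$ is an idempotent of norm possibly as large as $c$ rather than an orthogonal projection, so ``the $\mathcal{K}$-component'' and ``the complementary component'' are only controlled up to constants depending on $c$, and one must be careful that almost-$H$-invariance of $\xi$ really does descend to almost-invariance (or smallness) of $\xi_1=\xi-\pi(p_H)\xi$. The identity $\pi(p_H)\xi_1=0$ together with the approximation $\pi(g_n)\to\pi(p_H)$ in operator norm on $\mathcal{H}_\pi$ handles this, but quantifying it carefully — i.e. extracting from $\|\pi(g_n)\xi-\xi\|$ being small that $\xi$ is close to $\mathcal{K}$ — is the technical heart. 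Once that estimate is in place, everything else is the standard two-step argument, and normality of $H$ is used exactly twice: to see $\mathcal{K}$ is $G$-invariant, and (via Lemma \ref{Lem_tilde(g_n)}) to see the construction is compatible with the $G$-action.
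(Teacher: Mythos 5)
Your argument is correct, but it runs along a genuinely different track than the paper's. The paper works qualitatively with the definition, exactly as in \cite[Lemma 1.7.5]{BedlHVa}: it takes $\pi$ almost having invariant vectors, pushes an almost-invariant unit vector $\xi$ into $\mathcal{H}_0=\mathcal{H}_{\pi_H}^{\text{inv}}$ by applying $\pi_H(p)$, checks that $\pi_H(p)\xi$ is still almost invariant for the $G$-action and has norm $\geq 1/2$, and then invokes Property (T)${}_c$ of $G/H$ to get an honest invariant vector; the crucial ingredient there is the commutation $\pi_H(p)\pi(s)=\pi(s)\pi_H(p)$ for all $s\in G$, which is exactly what Lemma \ref{Lem_tilde(g_n)} delivers, together with a single approximant $g$ of $p$ with compact support. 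Your ``cleanest route'' is the contrapositive, quantitative version: decompose an arbitrary vector as $\xi=\pi_H(p_H)\xi+\xi_1$ along \eqref{dec_H_pi}, use a Kazhdan pair for $\mathcal{R}_c(H)$ on $\xi_1$ (valid because $\ker\pi_H(p_H)=(\mathcal{H}_{(\pi_H)^*}^{\text{inv}})^\perp$ is $\pi_H$-invariant with no nonzero invariant vectors) and a Kazhdan pair for $\mathcal{R}_c(G/H)$ on $\pi_H(p_H)\xi\in\mathcal{K}$ (valid because normality makes $\mathcal{K}$ $\pi(G)$-invariant and the induced representation has no nonzero invariant vectors when $\mathcal{H}_\pi^{\text{inv}}=\{0\}$), then combine by the triangle inequality. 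What this buys: you never need the commutation of $\pi(p_H)$ with $\pi(G)$, hence Lemma \ref{Lem_tilde(g_n)} and the $g_n$-approximation drop out entirely (normality is used only once, for $G$-invariance of $\mathcal{K}$); what it costs is the constant bookkeeping around the non-orthogonal idempotent of norm $\leq c$, which you handle correctly. Two small slips in your write-up, neither fatal: the claim that the $g_n$ are eventually supported in one fixed compact set is unjustified (and unnecessary — the paper, and your final route, only ever need a single approximant or none at all), and the role you assign to Lemma \ref{Lem_tilde(g_n)} in your second paragraph is not actually needed in the argument you end up running. Finally, like the paper, you pass over the SOT-continuity of the induced representation of $G/H$ on $\mathcal{K}$; this is routine from the openness of the quotient map, but worth a sentence.
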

\begin{proof}
Let $\pi\in\mathcal{R}_c(G)$ such that $\pi$ almost has invariant vectors. Hence the restriction $\pi_H$ to $H$ belongs to $\mathcal{R}_c(H)$, and it almost has invariant vectors. Let $\mathcal{H}_0\subset\mathcal{H}_\pi$ be the subspace of $\pi_H$-invariant vectors, which is non-trivial because $H$ has Property (T)${}_c$. Moreover, for all $\eta\in\mathcal{H}_0$, $t\in H$, $s\in G$,
\begin{align*}
\pi(t)\pi(s)\eta=\pi(s)\pi(s^{-1}ts)\eta=\pi(s)\eta,
\end{align*}
which shows that $\pi(s)\eta$ belongs to $\mathcal{H}_0$. We can thus restrict $\pi$ to a representation on $\mathcal{H}_0$. Furthermore, since every element of $\mathcal{H}_0$ is $H$-invariant, we get a representation $\pi_{G/H}:G/H\to\mathcal{B}(\mathcal{H}_0)$ defined by
\begin{align*}
\pi_{G/H}(q(s))\xi=\pi(s)\xi,\quad\forall s\in G,\ \forall \xi\in\mathcal{H}_0,
\end{align*}
where $q:G\to G/H$ is the quotient map. We will show that $\pi_{G/H}$ has a non-trivial invariant vector. Since $G/H$ has Property (T)${}_c$ and $\pi_{G/H}$ is a representation in $\mathcal{R}_c(G/H)$, it suffices to prove that $\pi_{G/H}$ almost has invariant vectors. Let $Q$ be a compact subset of $G/H$ and $\varepsilon>0$. There exists $\tilde{Q}\subset G$ compact such that $q(\tilde{Q})=Q$; see \cite[Lemma B.1.1]{BedlHVa}. Now let $p\in\tilde{A}_c(H)$ be the Kazhdan projection, and take $g\in C_{00}(H)$ such that $g\geq 0$, $\|g\|_1=1$, and
\begin{align*}
\|p-g\|_{\tilde{A}_c(H)}<\frac{1}{4}.
\end{align*}
Let $K=\operatorname{supp}(g)\subset G$. Since $\pi$ almost has invariant vectors, there is a unit vector $\xi\in\mathcal{H}_\pi$ such that
\begin{align*}
\|\pi(s)\xi-\xi\| < \min\left\{\frac{\varepsilon}{2c},\frac{1}{4}\right\},\quad\forall s\in\tilde{Q}\cup K.
\end{align*}
Let $(g_n)$ be a sequence in $C_{00}(H)$ such that $g_n\to p$. By Lemma \ref{Lem_tilde(g_n)}, for all $\eta\in\mathcal{H}_\pi$, $s\in G$,
\begin{align*}
\pi_H(p)\pi(s)\eta &=\lim_n \int_H g_n(t)\pi(t)\pi(s)\eta\, d\nu(t)\\
&=\pi(s)\left(\lim_n \int_H g_n(t)\pi(s^{-1}ts)\eta\, d\nu(t)\right)\\
&=\pi(s)\left(\lim_n \int_H \delta(s)^{-1}g_n(sts^{-1})\pi(t)\eta\, d\nu(t)\right)\\
&=\pi(s)\pi_H(p)\eta.
\end{align*}
Therefore
\begin{align*}
\|\pi(s)\pi_H(p)\xi-\pi_H(p)\xi\| &\leq \|\pi_H(p)\|\|\pi(s)\xi-\xi\|\\
&\leq c\frac{\varepsilon}{2c}\\
&= \frac{\varepsilon}{2},
\end{align*}
for all $s\in\tilde{Q}\cup K$. On the other hand,
\begin{align*}
\|\pi_H(p)\xi-\xi\| &\leq \|\pi_H(p)\xi-\pi_H(g)\xi\|+\|\pi_H(g)\xi-\xi\|\\
&\leq \|p-g\|_{\tilde{A}_c(H)} + \left\|\int_H g(s)(\pi(t)\xi-\xi) \,d\nu(t)\right\|\\
&\leq \frac{1}{4} + \int_K g(s)\|\pi(t)\xi-\xi\| \,d\nu(t)\\
&\leq \frac{1}{4} + \frac{1}{4}.
\end{align*}
This shows that $\|\pi_H(p)\xi\|\geq\frac{1}{2}$. Hence
\begin{align*}
\|\pi(s)\pi_H(p)\xi-\pi_H(p)\xi\| < \varepsilon\|\pi_H(p)\xi\|,\quad\forall s\in\tilde{Q}\cup K.
\end{align*}
Defining $\xi'=\pi_H(p)\xi\in\mathcal{H}_0$, we get
\begin{align*}
\|\pi_{G/H}(\dot{s})\xi'-\xi'\| < \varepsilon\|\xi'\|,\quad\forall \dot{s}\in Q.
\end{align*}
We conclude that $\pi_{G/H}$ almost has invariant vectors. By the discussion above, this implies that $\pi$ has a non-trivial invariant vector. Therefore $G$ has Property (T)${}_c$.
\end{proof}

\subsection{Lattices}\label{Subsec_latt}
A \textit{lattice} $\Gamma$ in a locally compact group $G$ is a closed discrete subgroup such that $G/\Gamma$ admits a regular, $G$-invariant probability measure $\mu$. We quickly review the definition of cocycles associated to lattices; we refer the reader to \cite[\S A.1]{Fur} for details. Choosing a Borel cross-section $\sigma:G/\Gamma\to G$, we can define a cocycle $b:G\times G/\Gamma\to\Gamma$ by
\begin{align*}
b(s,x)=\sigma(sx)^{-1}s\sigma(x),\quad\forall s\in G,\ \forall x\in G/\Gamma.
\end{align*}
This map satisfies the identity
\begin{align*}
b(st,x)=b(s,tx)b(t,x),\quad\forall s,t\in G,\ \forall x\in G/\Gamma.
\end{align*}

\begin{lem}\label{Lem_hat(pi)}
Let $G$ and $\Gamma$ be as above. Let $\pi\in\mathcal{R}_c(\Gamma)$ and let $\hat{\mathcal{H}}$ be the Hilbert space
\begin{align*}
\hat{\mathcal{H}}=L^2(G/\Gamma,\mu)\otimes\mathcal{H}_\pi \cong L^2(G/\Gamma; \mathcal{H}_\pi).
\end{align*}
Then the representation $\hat{\pi}:G\to\mathcal{B}(\hat{\mathcal{H}})$ given by
\begin{align*}
(\hat{\pi}(s)f)(x)=\pi(b(s,s^{-1}x))f(s^{-1}x),\quad\forall s\in G,\ \forall f\in\hat{\mathcal{H}},\ \forall x\in G/\Gamma,
\end{align*}
belongs to $\mathcal{R}_c(G)$.
\end{lem}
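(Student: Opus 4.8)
The plan is to verify the two defining properties of a representation in $\mathcal{R}_c(G)$: uniform boundedness with bound $c$, and strong operator continuity. Uniform boundedness is where the cocycle structure pays off. Fix $s\in G$ and $f\in\hat{\mathcal{H}}=L^2(G/\Gamma;\mathcal{H}_\pi)$. Since $\mu$ is $G$-invariant, the change of variables $x\mapsto s^{-1}x$ preserves the integral, so
\begin{align*}
\|\hat{\pi}(s)f\|_{\hat{\mathcal{H}}}^2 = \int_{G/\Gamma}\|\pi(b(s,s^{-1}x))f(s^{-1}x)\|_{\mathcal{H}_\pi}^2\, d\mu(x) = \int_{G/\Gamma}\|\pi(b(s,y))f(y)\|_{\mathcal{H}_\pi}^2\, d\mu(y).
\end{align*}
Now for each fixed $y$, the element $b(s,y)$ lies in $\Gamma$, so $\|\pi(b(s,y))f(y)\|_{\mathcal{H}_\pi}\leq |\pi|\,\|f(y)\|_{\mathcal{H}_\pi}\leq c\,\|f(y)\|_{\mathcal{H}_\pi}$. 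Plugging this in gives $\|\hat{\pi}(s)f\|_{\hat{\mathcal{H}}}^2\leq c^2\|f\|_{\hat{\mathcal{H}}}^2$, hence $|\hat{\pi}|\leq c$. I should also record that $\hat{\pi}$ is a homomorphism: this is a direct consequence of the cocycle identity $b(st,x)=b(s,tx)b(t,x)$ together with the fact that $\pi$ is multiplicative, a routine substitution that I would spell out in one line.

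Next comes strong operator continuity, which is the more delicate point because the cocycle $b$ is only Borel, not continuous. I would argue as follows. First, by the Kakutani--Kodaira type argument (or directly), it suffices to prove weak operator continuity, since for uniformly bounded representations WOT-continuity is equivalent to SOT-continuity; this is exactly the equivalence invoked in the proof of Lemma \ref{Lem_G_d} (see \cite[Theorem 2.8]{dLeGli} or \cite[Lemma 2.4]{BFGM}). Then I would reduce to checking continuity of $s\mapsto\langle\hat{\pi}(s)f,g\rangle$ on a dense set of $f,g$; a convenient dense set consists of simple tensors $f=\mathbf{1}_A\otimes\xi$. Alternatively, and perhaps more cleanly, one can invoke the standard fact that the induced representation construction produces a continuous representation: the quasi-regular action of $G$ on $L^2(G/\Gamma;\mathcal{H}_\pi)$ twisted by the Borel cocycle $b$ is continuous because $G$ acts continuously on the measure space $(G/\Gamma,\mu)$ and the cocycle can be chosen so that $x\mapsto b(s,x)$ is measurable, with the continuity in the $s$-variable following from continuity of the $G$-action combined with dominated convergence and the fact that translations are continuous on $L^2$; this is precisely the content of the classical theory of induced representations, and the uniformly bounded case is handled the same way once one has the uniform bound $c$ to dominate.

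The main obstacle, then, is the continuity statement rather than the norm estimate. Concretely, the issue is that $b(s,x)$ jumps discontinuously as $s$ varies, so one cannot naively pass to the limit inside $\pi(b(s,s^{-1}x))f(s^{-1}x)$ pointwise; one has to exploit the $L^2$-averaging to smooth this out. The cleanest route is to first establish strong continuity at the identity $e\in G$ (using that $b(e,x)=e$ so $\hat{\pi}(e)=\mathrm{id}$, and controlling $\|\hat{\pi}(s)f-f\|$ for $s$ near $e$ by splitting into the translation-of-$f$ part, which is continuous on $L^2$, and the cocycle-twist part, which is small because $b(s,x)$ is close to $e$ for $x$ outside a small-measure set — this last point requires continuity of $s\mapsto b(s,\cdot)$ in measure, a standard property of the canonical cocycle associated to a continuous cross-section, or can be arranged by choosing $\sigma$ appropriately), and then to propagate continuity to all of $G$ using the homomorphism property and the uniform bound $|\hat{\pi}|\leq c$, exactly as density-plus-uniform-boundedness arguments were used in Lemmas \ref{Lem_G_d} and \ref{Lem_pi_cont}. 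Having verified both properties, we conclude $\hat{\pi}\in\mathcal{R}_c(G)$.
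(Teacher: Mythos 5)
Your verification of the uniform bound $|\hat{\pi}|\leq c$ (change of variables using $G$-invariance of $\mu$, then the pointwise bound $\|\pi(b(s,y))\|\leq c$) and of the homomorphism property via the cocycle identity is exactly what the paper does, and is fine. The problem is the continuity step, which you yourself identify as the main obstacle but do not actually close. Your first suggestion, to ``invoke the standard fact that the induced representation construction produces a continuous representation,'' is circular here: in the uniformly bounded setting this lemma \emph{is} that fact, so it cannot be quoted. Your second, more concrete sketch hinges on the claim that $s\mapsto b(s,\cdot)$ is continuous in measure, which you justify as ``a standard property of the canonical cocycle associated to a continuous cross-section.'' But a continuous cross-section $G/\Gamma\to G$ does not exist in general (already for $\mathbb{Z}<\mathbb{R}$ there is no continuous section on the circle); the section $\sigma$ is only Borel, and the in-measure continuity of the resulting cocycle near $e$ is precisely the nontrivial point. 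It is true, but it requires an argument (for instance, choosing a fundamental domain $\Omega$ whose boundary is null, so that for a.e.\ $x$ one has $\sigma(sx)=s\sigma(x)$, hence $b(s,x)=e$, for $s$ close to $e$), and neither the claim nor such a construction appears in your proposal. As written, the continuity of $\hat{\pi}$ is asserted rather than proved.

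The paper avoids this issue entirely by a change of model: it defines the space $\mathcal{K}$ of measurable $g:G\to\mathcal{H}_\pi$ with $g(st)=\pi(t)^{-1}g(s)$ for $t\in\Gamma$, normed by integration over the fundamental domain $\Omega=\sigma(G/\Gamma)$, and an explicit isometry $V:\hat{\mathcal{H}}\to\mathcal{K}$. A short computation shows that $\tilde{\pi}(s)=V\hat{\pi}(s)V^{-1}$ acts by plain left translation, $\tilde{\pi}(s)g=g(s^{-1}\,\cdot\,)$, so the cocycle disappears from the formula; SOT-continuity is then immediate on continuous elements of $\mathcal{K}$ and extends to all of $\mathcal{K}$ by density together with the uniform bound $|\tilde{\pi}|\leq c$ (the same density-plus-boundedness pattern you cite from Lemmas \ref{Lem_G_d} and \ref{Lem_pi_cont}). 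If you want to keep your direct approach, you must either prove the in-measure continuity of $b$ near the identity (e.g.\ via a fundamental domain with $\mu$-negligible boundary) or switch to the equivariant-function model as the paper does.
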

\begin{proof}
First observe that, for all $f\in\hat{\mathcal{H}}$, $s,t\in G$, $x\in G/\Gamma$,
\begin{align*}
(\hat{\pi}(s)\hat{\pi}(t)f)(x)&=\pi(b(s,s^{-1}x))(\hat{\pi}(t)f)(s^{-1}x)\\
&=\pi(b(s,s^{-1}x))\pi(b(t,t^{-1}s^{-1}x))f(t^{-1}s^{-1}x)\\
&=\pi(b(st,t^{-1}s^{-1}x))f(t^{-1}s^{-1}x)\\
&=(\hat{\pi}(st)f)(x).
\end{align*}
Moreover,
\begin{align*}
\|\hat{\pi}(s)f\|_{\hat{\mathcal{H}}}^2&=\int_{G/\Gamma} \left\|\pi(b(s,s^{-1}x))f(s^{-1}x)\right\|_{\mathcal{H}_\pi}^2\, d\mu(x)\\
&\leq c^2\int_{G/\Gamma} \left\|f(s^{-1}x)\right\|_{\mathcal{H}_\pi}^2\, d\mu(x)\\
&= c^2\|f\|_{\hat{\mathcal{H}}}^2.
\end{align*}
Here we used the fact that $\mu$ is $G$-invariant. This shows that $\hat{\pi}$ is a well-defined representation on $\hat{\mathcal{H}}$, with $|\hat{\pi}|\leq c$. It only remains to show that it is continuous. For this, let us consider an equivalent definition of the space $\hat{\mathcal{H}}$. Let $\Omega=\sigma(G/\Gamma)\subset G$ be the fundamental domain associated to the cross-section $\sigma$. The restriction of the Haar measure to $\Omega$ coincides with the push-forward measure $\sigma_*\mu$; see e.g. \cite[Corollary B.1.7]{BedlHVa}. We define $\mathcal{K}$ as the space of (equivalence classes of) measurable functions $g:G\to\mathcal{H}_\pi$ such that
\begin{align*}
g(st)=\pi(t)^{-1}g(s),\quad\forall s\in G,\ \forall t\in\Gamma,
\end{align*}
and
\begin{align*}
\|g\|_{\mathcal{K}}^2=\int_{\Omega}\|g(\omega)\|^2\, d\omega <\infty.
\end{align*}
Then there is an isometry $V:\hat{\mathcal{H}}\to\mathcal{K}$ given by
\begin{align}\label{V:H->K}
Vf(\omega t)=\pi(t)^{-1}f(q(\omega)),\quad\forall\omega\in\Omega,\ \forall t\in\Gamma,
\end{align}
where $q:G\to G/\Gamma$ denotes the quotient map. Observe that this is well defined since every $s\in G$ admits a unique decomposition $s=\omega t$ with $\omega\in\Omega$, $t\in\Gamma$. Moreover, it is an isometry because
\begin{align*}
\|Vf\|_{\mathcal{K}}^2=\int_\Omega \|f(q(\omega))\|_{\mathcal{H}_\pi}\, d(\sigma_*\mu)(\omega)
=\int_{G/\Gamma}\|f(x)\|_{\mathcal{H}_\pi}^2\,d\mu(x),
\end{align*}
for all $f\in\hat{\mathcal{H}}$. The inverse $V^{-1}$ is given by
\begin{align*}
V^{-1}g(x)=g(\sigma(x)),\quad\forall x\in G/\Gamma.
\end{align*}
Now define $\tilde{\pi}:G\to\mathcal{B}(\mathcal{K})$ by
\begin{align*}
\tilde{\pi}(s)=V\hat{\pi}(s)V^{-1},\quad\forall s\in G.
\end{align*}
For all $g\in\mathcal{K}$, $s\in G$, $\omega\in\Omega$, $t\in\Gamma$,
\begin{align}\label{VpiV-1}
\tilde{\pi}(s)g(\omega t) &= \pi(t)^{-1}\hat{\pi}(s)V^{-1}g(q(\omega))\notag\\
&= \pi(t^{-1}b(s,q(s^{-1}\omega)))g(\sigma(q(s^{-1}\omega)))\notag\\
&= g(\sigma(q(s^{-1}\omega))\sigma(q(s^{-1}\omega))^{-1}s^{-1}\sigma(q(\omega))t)\notag\\
&=g(s^{-1}\omega t).
\end{align}
With this, it is clear that the map
\begin{align*}
s\in G \mapsto \tilde{\pi}(s)g\in\mathcal{K}
\end{align*}
is continuous for every continuous element $g\in\mathcal{K}$. By density, we conclude that the same holds for every $g\in\mathcal{K}$. Hence $\tilde{\pi}$ belongs to $\mathcal{R}_c(G)$, and therefore so does $\hat{\pi}$.
\end{proof}

Let us define $\Psi:L^1(G)\to\ell^1(\Gamma)$ by
\begin{align}\label{Psi:L1}
\Psi(f)(t)=\int_G f(s)\mu(\{x\in G/\Gamma \, :\, b(s,x)=t\})\, ds,\quad\forall f\in L^1(G).
\end{align}
This map is a contraction. Indeed,
\begin{align*}
\|\Psi(f)\|_1 \leq\int_G |f(s)|\left(\sum_{t\in\Gamma}\mu(\{x\in G/\Gamma \, :\, b(s,x)=t\})\right)ds = \|f\|_1.
\end{align*}

\begin{lem}\label{Lem_Psi*=Phi}
Let $G$ and $\Gamma$ be as above. For all $c\geq 1$, the map \eqref{Psi:L1} extends to a contraction $\Psi:\tilde{A}_c(G)\to\tilde{A}_c(\Gamma)$, whose dual map $\Psi^*:B_c(\Gamma)\to B_c(G)$ is given by
\begin{align*}
\Psi^*(\varphi)(s)=\int_{G/\Gamma}\varphi(b(s,s^{-1}x))\, d\mu(x),\quad\forall\varphi\in B_c(\Gamma),\ \forall s\in G.
\end{align*}
\end{lem}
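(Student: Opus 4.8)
The plan is to construct the candidate dual map $\Psi^*$ directly, show it is a contraction, and then recover the contractivity of $\Psi$ on $\tilde{A}_c(G)$ purely by duality via Proposition \ref{Prop_duality}. For the first and main step, fix $\varphi \in B_c(\Gamma)$ and, using Lemma \ref{Lem_cyc_rep}, write $\varphi = \langle \pi(\cdot)\xi, \eta\rangle$ with $\pi \in \mathcal{R}_c(\Gamma)$ and $\|\xi\|\,\|\eta\|$ arbitrarily close to $\|\varphi\|_{B_c(\Gamma)}$. Let $\hat{\pi} \in \mathcal{R}_c(G)$ be the representation on $\hat{\mathcal{H}} = L^2(G/\Gamma; \mathcal{H}_\pi)$ furnished by Lemma \ref{Lem_hat(pi)}, and let $\hat{\xi}, \hat{\eta} \in \hat{\mathcal{H}}$ be the functions identically equal to $\xi$ and $\eta$; since $\mu$ is a probability measure, $\|\hat{\xi}\| = \|\xi\|$ and $\|\hat{\eta}\| = \|\eta\|$. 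From the formula defining $\hat{\pi}$ one computes
\begin{align*}
\langle \hat{\pi}(s)\hat{\xi}, \hat{\eta}\rangle_{\hat{\mathcal{H}}} = \int_{G/\Gamma} \langle \pi(b(s, s^{-1}x))\xi, \eta\rangle_{\mathcal{H}_\pi}\, d\mu(x) = \int_{G/\Gamma}\varphi(b(s, s^{-1}x))\, d\mu(x),
\end{align*}
which is exactly the function denoted $\Psi^*(\varphi)$ in the statement. Hence $\Psi^*(\varphi)$ is a coefficient of $\hat{\pi} \in \mathcal{R}_c(G)$ realised by vectors of norm $\|\xi\|$ and $\|\eta\|$; therefore $\Psi^*(\varphi) \in B_c(G)$ with $\|\Psi^*(\varphi)\|_{B_c(G)} \le \|\xi\|\,\|\eta\|$, and letting $\|\xi\|\,\|\eta\| \to \|\varphi\|_{B_c(\Gamma)}$ shows that $\varphi \mapsto \Psi^*(\varphi)$ is a linear contraction $B_c(\Gamma) \to B_c(G)$.

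Next I would verify the adjoint relation on the dense subspace $L^1(G) \subset \tilde{A}_c(G)$. For $f \in L^1(G)$ and $\varphi \in B_c(\Gamma)$, the estimate $\|\Psi(f)\|_1 \le \|f\|_1$ recorded before the statement justifies interchanging sum and integral, so that
\begin{align*}
\sum_{t\in\Gamma}\varphi(t)\,\Psi(f)(t) = \int_G f(s)\left(\int_{G/\Gamma}\varphi(b(s,x))\, d\mu(x)\right) ds = \int_G f(s)\,\Psi^*(\varphi)(s)\, ds,
\end{align*}
where the last equality is the change of variables $x \mapsto sx$ together with the $G$-invariance of $\mu$; this is precisely why the two superficially different expressions $\varphi(b(s,x))$ and $\varphi(b(s,s^{-1}x))$ have the same $\mu$-average. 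In other words $\langle \varphi, \Psi(f)\rangle_{B_c(\Gamma), \tilde{A}_c(\Gamma)} = \langle \Psi^*(\varphi), f\rangle_{B_c(G), \tilde{A}_c(G)}$. Combining this with the contractivity of $\Psi^*$ from the first step, for every $f \in L^1(G)$ we get
\begin{align*}
\|\Psi(f)\|_{\tilde{A}_c(\Gamma)} = \sup_{\|\varphi\|_{B_c(\Gamma)} \le 1}\bigl|\langle \Psi^*(\varphi), f\rangle\bigr| \le \sup_{\|\varphi\|_{B_c(\Gamma)} \le 1}\|\Psi^*(\varphi)\|_{B_c(G)}\,\|f\|_{\tilde{A}_c(G)} \le \|f\|_{\tilde{A}_c(G)},
\end{align*}
so $\Psi$ extends to a contraction $\tilde{A}_c(G) \to \tilde{A}_c(\Gamma)$. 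Its Banach-space adjoint is a map $B_c(\Gamma) \to B_c(G)$ which, by the displayed adjoint relation, agrees with the formula-defined $\Psi^*$ on the dense subspace $L^1(G)$, hence equals it everywhere; this yields the asserted description of $\Psi^*$.

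The crux is the first step: exhibiting $\Psi^*(\varphi)$ as an honest element of $B_c(G)$ with the correct norm bound. This is where Lemma \ref{Lem_hat(pi)} does all the work — the induced (Borel cocycle) representation $\hat{\pi}$ is tailor-made so that pairing against constant $\mathcal{H}_\pi$-valued sections reproduces $\Psi^*(\varphi)$ as a coefficient, while the norms of $\xi$ and $\eta$ are preserved thanks to $\mu$ being a probability measure. The remaining points are routine: the measurability of $s \mapsto \mu(\{x : b(s,x) = t\})$, inherited from Borel-ness of the cocycle $b$; the Fubini interchange; and the standard fact that a bounded operator is determined by the action of its adjoint on a dense subspace.
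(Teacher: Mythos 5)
Your proof is correct and follows essentially the same route as the paper: you realise $\Psi^*(\varphi)$ as a coefficient of the induced representation $\hat{\pi}$ from Lemma \ref{Lem_hat(pi)} using the constant sections $\mathds{1}\otimes\xi$, $\mathds{1}\otimes\eta$ to get the contraction $B_c(\Gamma)\to B_c(G)$, and then verify the adjoint relation against $L^1(G)$ via the $G$-invariance of $\mu$, exactly as in the paper's argument. The only (harmless) difference is that you spell out the final duality step giving $\|\Psi(f)\|_{\tilde{A}_c(\Gamma)}\leq\|f\|_{\tilde{A}_c(G)}$ more explicitly than the paper does.
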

\begin{proof}
We will proceed backwards. For all $\varphi\in B_c(\Gamma)$, define
\begin{align*}
\Phi(\varphi)(s)=\int_{G/\Gamma}\varphi(b(s,s^{-1}x))\, d\mu(x),\quad\forall s\in G.
\end{align*}
We will show that this gives a contraction $\Phi:B_c(\Gamma)\to B_c(G)$, and that it is the dual map of $\Psi$. This will prove that $\Psi$ is a contraction. Let $\varphi\in B_c(\Gamma)$, and let $\pi\in\mathcal{R}_c(\Gamma)$ such that $\varphi=\langle\pi(\,\cdot\,)\xi,\eta\rangle$ for some $\xi,\eta\in\mathcal{H}_\pi$. Let $\hat{\pi}:G\to\mathcal{B}(\hat{\mathcal{H}})$ be the representation given by Lemma \ref{Lem_hat(pi)}. Now define $\hat{\xi}\in\hat{\mathcal{H}}$ as
\begin{align*}
\hat{\xi}=\mathds{1}\otimes\xi,
\end{align*}
where $\mathds{1}$ is the constant function $1$ on $G/\Gamma$. Define also $\hat{\eta}=\mathds{1}\otimes\eta$. Then
\begin{align*}
\langle\hat{\pi}(s)\hat{\xi},\hat{\eta}\rangle &=\int_{G/\Gamma}\langle\pi(b(s,s^{-1}x))\xi,\eta\rangle\, d\mu(x)\\
&=\int_{G/\Gamma}\varphi(b(s,s^{-1}x))\, d\mu(x)\\
&=\Phi(\varphi)(s),
\end{align*}
for all $s\in G$. This shows that $\Phi(\varphi)$ belongs to $B_c(G)$, and
\begin{align*}
\|\Phi(\varphi)\|_{B_c(G)}\leq \|\varphi\|_{B_c(\Gamma)}.
\end{align*}
Now, for every $f\in L^1(G)$ and $\varphi\in B_c(\Gamma)$,
\begin{align*}
\langle\Phi(\varphi),f\rangle &= \int_G\int_{G/\Gamma}\varphi(b(s,s^{-1}x))f(s)\, d\mu(x)\, ds\\
&= \int_G\int_{G/\Gamma}\varphi(b(s,x))f(s)\, d\mu(x)\, ds\\
&= \sum_{t\in\Gamma}\varphi(t)\int_G\mu(\{x\in G/\Gamma \, :\, b(s,x)=t\})f(s)\, ds\\
&=\langle\varphi,\Psi(f)\rangle.
\end{align*}
We conclude that $\Psi$ extends to a contraction $\tilde{A}_c(G)\to\tilde{A}_c(\Gamma)$, and that $\Phi=\Psi^*$.
\end{proof}

\begin{cor}\label{Cor_G->Gamma}
Let $G$ and $\Gamma$ be as above. If $G$ has Property (T)${}_c$, then so does $\Gamma$.
\end{cor}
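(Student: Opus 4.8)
The plan is to transport the Kazhdan projection of $\tilde{A}_c(G)$ to $\tilde{A}_c(\Gamma)$ along the map $\Psi$ of Lemma \ref{Lem_Psi*=Phi}, exactly in the spirit of the proof of Corollary \ref{Cor_G->G/H}. Since $G$ has Property (T)$_c$, Corollary \ref{Cor_DN} provides a Kazhdan projection $p\in\tilde{A}_c(G)$, which by Lemma \ref{Lem_Kazh->m} coincides with the unique invariant mean on $B_c(G)$. Set $q=\Psi(p)\in\tilde{A}_c(\Gamma)$. By Theorem \ref{Thm_equiv} it is enough to prove that $q$ is the unique invariant mean on $B_c(\Gamma)$, since this exhibits that mean as a $\sigma(B_c(\Gamma),\tilde{A}_c(\Gamma))$-continuous functional. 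By the uniqueness in Corollary \ref{Cor_inv_mean}, I only need to check that $\varphi\mapsto\langle\varphi,q\rangle_{B_c(\Gamma),\tilde{A}_c(\Gamma)}=\langle\Psi^*\varphi,p\rangle_{B_c(G),\tilde{A}_c(G)}$ is a left-invariant mean on $B_c(\Gamma)$, where $\Psi^*=\Phi$ is the map computed in Lemma \ref{Lem_Psi*=Phi}.

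Two of the three properties are immediate. As $\mu$ is a probability measure, $\Phi(1_\Gamma)=1_G$, so $\langle 1_\Gamma,q\rangle=\langle 1_G,p\rangle=1$; and if $\varphi\geq 0$ on $\Gamma$ then $\Phi(\varphi)\geq 0$ on $G$, whence $\langle\varphi,q\rangle=\langle\Phi(\varphi),p\rangle\geq 0$ because $p$ is a mean. The substantive point is left-invariance under $\Gamma$. Fix $t_0\in\Gamma$ and write $\varphi=\langle\pi(\,\cdot\,)\xi,\eta\rangle$ with $\pi\in\mathcal{R}_c(\Gamma)$. Let $\hat{\pi}\in\mathcal{R}_c(G)$ be the induced representation of Lemma \ref{Lem_hat(pi)}; the computation in the proof of Lemma \ref{Lem_Psi*=Phi} shows $\Phi(\varphi)=\langle\hat{\pi}(\,\cdot\,)\hat{\xi},\hat{\eta}\rangle$ with $\hat{\xi}=\mathds{1}\otimes\xi$ and $\hat{\eta}=\mathds{1}\otimes\eta$, and since $\hat\pi$ extends to a morphism $\tilde{A}_c(G)\to\mathcal{B}(\hat{\mathcal{H}})$ of norm $\leq 1$, the duality of Proposition \ref{Prop_duality} gives $\langle\Phi(\varphi),p\rangle=\langle\hat{\pi}(p)\hat{\xi},\hat{\eta}\rangle$. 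Because $t_0\cdot\varphi=\langle\pi(\,\cdot\,)\xi,\pi(t_0^{-1})^*\eta\rangle$, the same identity yields $\langle\Phi(t_0\cdot\varphi),p\rangle=\langle\hat{\pi}(p)\hat{\xi},\mathds{1}\otimes\pi(t_0^{-1})^*\eta\rangle$. Now, by Corollary \ref{Cor_DN}, $\hat{\pi}(p)\hat{\xi}$ lies in $\hat{\mathcal{H}}^{\text{inv}}_{\hat\pi}$; and in the model $\mathcal{K}$ of Lemma \ref{Lem_hat(pi)}, where $\hat\pi$ is conjugate to $g\mapsto g(s^{-1}\,\cdot\,)$, a $\hat\pi$-invariant vector must be an a.e.\ constant function, and the $\Gamma$-equivariance condition defining $\mathcal{K}$ forces the value to be $\pi(\Gamma)$-fixed; hence $\hat{\mathcal{H}}^{\text{inv}}_{\hat\pi}=\{\mathds{1}\otimes v_0 : v_0\in\mathcal{H}_\pi^{\text{inv}}\}$. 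Writing $\hat{\pi}(p)\hat{\xi}=\mathds{1}\otimes v_0$ with $\pi(t_0^{-1})v_0=v_0$, I get
\begin{align*}
\langle\hat{\pi}(p)\hat{\xi},\mathds{1}\otimes\pi(t_0^{-1})^*\eta\rangle=\langle v_0,\pi(t_0^{-1})^*\eta\rangle=\langle\pi(t_0^{-1})v_0,\eta\rangle=\langle v_0,\eta\rangle=\langle\hat{\pi}(p)\hat{\xi},\hat{\eta}\rangle,
\end{align*}
that is, $\langle t_0\cdot\varphi,q\rangle=\langle\varphi,q\rangle$. Thus $q$ is a left-invariant mean, hence equals the invariant mean on $B_c(\Gamma)$, and Property (T)$_c$ for $\Gamma$ follows from Theorem \ref{Thm_equiv}.

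The step I expect to be the main obstacle is the identification $\hat{\mathcal{H}}^{\text{inv}}_{\hat\pi}=\mathds{1}\otimes\mathcal{H}_\pi^{\text{inv}}$: it is most cleanly obtained by passing to the induced picture $\mathcal{K}$ of Lemma \ref{Lem_hat(pi)}, in which $\hat\pi$ acts by left translation of $\mathcal{H}_\pi$-valued functions on $G$, so invariant vectors are literally the constants, and the defining relation $g(s\gamma)=\pi(\gamma)^{-1}g(s)$ turns such a constant into a $\pi(\Gamma)$-fixed vector. Everything else — normalisation, positivity, and the bookkeeping with $\Psi$, $\Phi=\Psi^*$ and the duality $\tilde{A}_c(G)^*=B_c(G)$ — is routine; one should only keep in mind that a group admitting a lattice is unimodular, which is implicitly used in the measure manipulations underlying Lemma \ref{Lem_Psi*=Phi}.
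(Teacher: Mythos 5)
Your proposal is correct and follows essentially the same route as the paper: push the invariant mean $p\in\tilde{A}_c(G)$ forward via $\Psi$, express $\langle\Psi^*(\varphi),p\rangle$ as $\langle\hat{\pi}(p)\hat{\xi},\hat{\eta}\rangle$ through the induced representation of Lemma \ref{Lem_hat(pi)}, and use the $\mathcal{K}$-picture to see that $\hat{\pi}(p)\hat{\xi}$ is a constant function whose value is $\pi(\Gamma)$-fixed. The only (immaterial) difference is that you extract the $\Gamma$-invariance of that constant from the equivariance relation defining $\mathcal{K}$, while the paper deduces it from the surjectivity of the cocycle $b$.
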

\begin{proof}
Let $p\in\tilde{A}_c(G)$ be the invariant mean on $B_c(G)$. We claim that $\Psi(p)$ is the invariant mean on $B_c(\Gamma)$, where $\Psi$ is the map given by Lemma \ref{Lem_Psi*=Phi}. Indeed, the identity
\begin{align*}
\langle\varphi,\Psi(p)\rangle=\langle\Psi^*(\varphi),p\rangle,\quad\forall\varphi\in B_c(\Gamma),
\end{align*}
shows that $\Psi(p)$ is a mean. We need to prove that it is invariant. Let $\varphi\in B_c(\Gamma)$, $\pi\in\mathcal{R}_c(\Gamma)$, and $\xi,\eta\in\mathcal{H}_\pi$ such that $\varphi=\langle\pi(\,\cdot\,)\xi,\eta\rangle$. Let $t\in\Gamma$, and observe that
\begin{align*}
t\cdot\varphi(r)=\langle\pi(r)\xi,\pi(t^{-1})^*\eta\rangle,\quad\forall r\in\Gamma.
\end{align*}
Let $\hat{\pi}:G\to\mathcal{B}(\hat{\mathcal{H}})$ be the representation given by Lemma \ref{Lem_hat(pi)}. As in the proof Lemma \ref{Lem_Psi*=Phi}, we can write
\begin{align*}
\Psi^*(t\cdot\varphi)(s)=\langle\hat{\pi}(s)\hat{\xi},\hat{\eta}_t\rangle,\quad\forall s\in G,
\end{align*}
where $\hat{\xi}=\mathds{1}\otimes\xi$ and $\hat{\eta}_t=\mathds{1}\otimes\pi(t^{-1})^*\eta$. Thus
\begin{align*}
\langle\Psi^*(t\cdot\varphi),p\rangle=\langle\hat{\pi}(p)\hat{\xi},\hat{\eta}_t\rangle.
\end{align*}
Since $\hat{\pi}(p)$ is a projection onto the subspace of $\hat{\pi}$-invariant vectors, we have
\begin{align}\label{pi(sp)xi}
\hat{\pi}(s)\hat{\pi}(p)\hat{\xi}=\hat{\pi}(p)\hat{\xi},\quad\forall s\in G.
\end{align}
Define $V:\hat{\mathcal{H}}\to\mathcal{K}$ as in \eqref{V:H->K}. Then $V\hat{\pi}(p)\hat{\xi}$ is an invariant vector for the representation $\tilde{\pi}=V\hat{\pi}(\,\cdot\,)V^{-1}$ given by \eqref{VpiV-1}. Therefore $V\hat{\pi}(p)\hat{\xi}$ is a constant function from $G$ to $\mathcal{H_\pi}$. This implies that there exists $\xi_0\in\mathcal{H}_\pi$ such that
\begin{align*}
\hat{\pi}(p)\hat{\xi}(x)=\xi_0,\quad\forall x\in G/\Gamma.
\end{align*}
By \eqref{pi(sp)xi}, we have
\begin{align*}
\pi(b(s,s^{-1}x))\xi_0=\xi_0,\quad\forall s\in G,\ \forall x\in G/\Gamma.
\end{align*}
Equivalently,
\begin{align*}
\pi(t)\xi_0=\xi_0,\quad\forall t\in\Gamma.
\end{align*}
Here we used the fact that $b$ is surjective, which is true because, for every $t\in\Gamma$ and every $x\in G/\Gamma$, we can choose $s=\sigma(x)t\omega^{-1}$, where $\omega$ is any element of $\Omega=\sigma(G/\Gamma)$. Then
\begin{align*}
b(s,s^{-1}x)=t.
\end{align*}
Putting everything together,
\begin{align*}
\langle t\cdot\varphi,\Psi(p)\rangle &=\langle\Psi^*(t\cdot\varphi),p\rangle\\
&=\langle\hat{\pi}(p)\hat{\xi},\hat{\eta}_t\rangle\\
&=\int_{G/\Gamma}\langle\xi_0,\pi(t^{-1})^*\eta\rangle\, d\mu(x)\\
&=\int_{G/\Gamma}\langle\xi_0,\eta\rangle\, d\mu(x)\\
&=\langle\Psi^*(\varphi),p\rangle\\
&=\langle\varphi,\Psi(p)\rangle.
\end{align*}
We conclude that $\Psi(p)$ is the invariant mean on $B_c(\Gamma)$.
\end{proof}

The following lemma is essentially an adaptation of \cite[Theorem 1.7.1]{BedlHVa} to our setting.

\begin{lem}\label{Lem_Gamma->G}
Let $G$ and $\Gamma$ be as above. If $\Gamma$ has Property (T)${}_c$, then so does $G$.
\end{lem}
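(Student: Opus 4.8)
The plan is to mimic the classical argument showing that Property (T) passes from a lattice $\Gamma$ to the ambient group $G$, which is \cite[Theorem 1.7.1]{BedlHVa}, adapting it to uniformly bounded representations. I would use the characterisation of Property (T)${}_c$ via Kazhdan pairs for $\mathcal{R}_c(G)$ (Lemma \ref{Lem_char_T_c}). So start with an arbitrary $\pi\in\mathcal{R}_c(G)$ that almost has invariant vectors, and aim to produce a non-trivial invariant vector; equivalently, fix $\pi\in\mathcal{R}_c(G)$ with $\mathcal{H}_\pi^{\text{inv}}=\{0\}$ and produce a Kazhdan pair. The key object is the restriction $\pi|_\Gamma\in\mathcal{R}_c(\Gamma)$: since $\Gamma$ has Property (T)${}_c$, there is a fixed Kazhdan pair $(Q_0,\kappa_0)$ (with $Q_0\subset\Gamma$ finite) for the whole family $\mathcal{R}_c(\Gamma)$, hence for $\pi|_\Gamma$ once we know $\mathcal{H}_\pi^{\text{inv}}=\mathcal{H}_{\pi|_\Gamma}^{\text{inv}}$ — but the subtlety is exactly that $\pi|_\Gamma$ may have invariant vectors even when $\pi$ does not, so one must work on the subspace $\mathcal{H}_\pi/\mathcal{H}_{\pi|_\Gamma}^{\text{inv}}$ or handle the invariant part separately.

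The main steps, in order: (1) Choose a relatively compact Borel fundamental domain $\Omega\subset G$ for $\Gamma$ with $\sigma(G/\Gamma)=\Omega$ and a compact $Q\subset G$ with $\Omega Q$ and $\Omega$-related translates controlled; because $G/\Gamma$ carries a $G$-invariant probability measure, $\Omega$ has finite Haar measure. (2) Given a unit vector $\xi\in\mathcal{H}_\pi$ that is $(\varepsilon,K)$-almost invariant for $K\subset G$ a suitable compact set (containing $\Omega^{-1}Q_0\Omega$-type elements and a neighbourhood of the identity witnessing near-invariance under the ``small'' translations needed to move between fundamental domain representatives), deduce that the averaged/projected vector is nearly $\Gamma$-invariant after transporting via the cross-section. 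Concretely, for $s\in G$ write $\sigma(x)^{-1}s\,\sigma(s^{-1}x)\in\Gamma$ via the cocycle $b$, and estimate $\|\pi(\gamma)\xi-\xi\|$ for $\gamma\in Q_0$ in terms of $\|\pi(s)\xi-\xi\|$ for $s$ in a fixed compact set depending only on $Q_0$ and $\Omega$. (3) Apply the Kazhdan pair $(Q_0,\kappa_0)$ for $\pi|_\Gamma$: either this forces $\xi$ to be close to $\mathcal{H}_{\pi|_\Gamma}^{\text{inv}}$, giving a near-$\Gamma$-invariant vector $\xi'$ of norm bounded below; or it directly bounds $\sup_{s\in Q}\|\pi(s)\xi-\xi\|$ from below, which is what we want. (4) In the first case, use the induced representation machinery from Lemma \ref{Lem_hat(pi)}: the space $\mathcal{H}_{\pi|_\Gamma}^{\text{inv}}$-valued functions on the fundamental domain, with the $\hat{\pi}$-action, and transfer near-$\Gamma$-invariance of $\xi'$ to near-$G$-invariance of a vector in the induced space $\hat{\mathcal{H}}$ (or in $\mathcal{K}$ via the isometry $V$); but $\hat{\pi}$ restricted to the subspace corresponding to $\Gamma$-invariant vectors factors through the regular-type representation on $L^2(G/\Gamma)$, which has a non-trivial invariant vector (the constant function) precisely because $\mu$ is a finite invariant measure — contradicting $\mathcal{H}_\pi^{\text{inv}}=\{0\}$ unless that projected vector was already accounted for. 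The cleanest route is probably to argue: if $\pi$ has no invariant vectors but $\pi|_\Gamma$ almost has invariant vectors that survive near-invariance under a compact generating set of $G$, then one manufactures a non-trivial invariant vector for $\pi$ by an averaging/limit argument using the finiteness of $\mu(G/\Gamma)$, a contradiction; hence a Kazhdan pair for $\pi$ exists with constant and compact set depending only on $(Q_0,\kappa_0)$, $\Omega$, and $c$.

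I expect the main obstacle to be controlling the uniform boundedness constants through the induction/averaging step: in the unitary case one uses orthogonality of the decomposition $\mathcal{H}_\pi=\mathcal{H}_{\pi|_\Gamma}^{\text{inv}}\oplus(\mathcal{H}_{\pi|_\Gamma}^{\text{inv}})^\perp$, but for uniformly bounded representations this subspace need not be $\pi(G)$-invariant, nor is the splitting orthogonal in a way compatible with $\pi$. The fix is to pass, as elsewhere in the paper, to the renormed space $E_\pi$ with $\|\xi\|_{E_\pi}=\sup_{s\in G}\|\pi(s)\xi\|$ (Lemma \ref{Lem_corr_rep}), on which $\pi$ is isometric and for which the Dru\c{t}u--Nowak decomposition $E_\pi=E_\pi^{\text{inv}}\oplus E_\pi^{\text{comp}}$ with $E_\pi^{\text{comp}}$ a $\pi$-invariant closed complement \emph{is} available; then run the classical argument for isometric representations on uniformly convex spaces on $E_\pi$, and translate the resulting Kazhdan pair back to $\mathcal{R}_c(G)$ at the cost of a factor $c$, exactly as in the proof of Corollary \ref{Cor_DN}. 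A secondary technical point is measurability and the choice of the compact set $\tilde{Q}\subset G$ projecting onto a given compact $Q\subset G/\Gamma$, handled as in \cite[Lemma B.1.1]{BedlHVa}; and one must also invoke $\sigma$-compactness of $G$ (automatic here since $G$ contains a lattice) so that the ``compact set'' formulation of Kazhdan pairs is the right one.
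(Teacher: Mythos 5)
Your strategy is essentially the paper's: restrict $\pi$ to $\Gamma$, use Property (T)${}_c$ of $\Gamma$ to replace an almost-invariant unit vector $\xi$ by a nearby \emph{genuinely} $\Gamma$-invariant vector $\xi'$, and then average $\pi(\sigma(x))\xi'$ over $G/\Gamma$ against the invariant probability measure to get a $\pi$-invariant vector, the bound $c$ entering only the numerical estimates. The paper produces $\xi'$ as $\pi_\Gamma(p)\xi$, where $p\in\tilde{A}_c(\Gamma)$ is the Kazhdan projection approximated by a finitely supported probability density $g$; your Kazhdan-pair version works equally well, since the quotient norm on $\mathcal{H}_\pi/\mathcal{H}_{\pi_\Gamma}^{\text{inv}}$ is exactly the distance to $\mathcal{H}_{\pi_\Gamma}^{\text{inv}}$, so smallness of $\sup_{\gamma\in Q_0}\|\pi(\gamma)\xi-\xi\|$ yields an exactly invariant $\xi'$ close to $\xi$.

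Two points in your outline need repair. First, a lattice need not be uniform, so you cannot choose a relatively compact fundamental domain; the correct fix (and the place where finiteness of $\mu$ and the bound $c$ are genuinely used) is the paper's: pick a compact $K\subset G/\Gamma$ with $\mu(K)\geq 1-\tfrac{1}{7(1+c)}$, a compact $\tilde{K}$ with $q(\tilde{K})=K$, require almost-invariance of $\xi$ on $\tilde{Q}=(\tilde{K}\cup\{e\})\operatorname{supp}(g)$, and bound the contribution of $G/\Gamma\setminus K$ to $\|\eta-\xi'\|$ by $(1-\mu(K))(1+c)\|\xi'\|$; this estimate is absent from your sketch and without it $\eta\neq 0$ is not guaranteed. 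Second, your induced-representation alternative in step (4) does not give the stated contradiction: a $\hat{\pi}$-invariant vector in $\hat{\mathcal{H}}$ does not by itself produce a $\pi$-invariant vector in $\mathcal{H}_\pi$. Your ``cleanest route'' is the right one, and it is simpler than you anticipate: since $\xi'$ is exactly $\Gamma$-invariant, $\eta=\int_{G/\Gamma}\pi(\sigma(x))\xi'\,d\mu(x)$ is independent of the cross-section and exactly $G$-invariant by the cocycle identity $\sigma(sx)^{-1}s\sigma(x)\in\Gamma$, so no limit argument is needed; and because $\mathcal{H}_{\pi_\Gamma}^{\text{inv}}$ is never required to be $\pi(G)$-invariant, the detour through the renormed spaces $E_\pi$ and the Dru\c{t}u--Nowak decomposition is unnecessary here — the constant $c$ only appears in the displayed norm estimates.
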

\begin{proof}
Let $\pi\in\mathcal{R}_c(G)$ such that $\pi$ almost has invariant vectors. Let us denote by $\pi_\Gamma$ the restriction of $\pi$ to $\Gamma$. Then $\pi_\Gamma$ belongs to $\mathcal{R}_c(\Gamma)$ and it almost has invariant vectors. Since $\Gamma$ has Property (T)${}_c$, $\pi_\Gamma$ has non-trivial invariant vectors. Let $p\in\tilde{A}_c(\Gamma)$ be the Kazhdan projection, and choose $g\in\C[\Gamma]$ such that $g\geq 0$, $\|g\|_1=1$ and
\begin{align*}
\|p-g\|_{\tilde{A}_c(\Gamma)}<\frac{1}{6(1+c)}.
\end{align*}
Since $\mu$ is regular, we can find $K\subset G/\Gamma$ compact such that
\begin{align*}
\mu(K)\geq 1-\frac{1}{7(1+c)}.
\end{align*}
Moreover, there exists $\tilde{K}\subset G$ compact such that $q(\tilde{K})=K$, where $q:G\to G/\Gamma$ is the quotient map; see \cite[Lemma B.1.1]{BedlHVa}. Now let $Q=\operatorname{supp}(g)$, and define $\tilde{Q}=(\tilde{K}\cup\{e\})Q$, where $e\in G$ is the identity element. Then $\tilde{Q}$ is compact, contains $Q$, and satisfies $q(\tilde{Q})\supset K$. Let $\xi\in\mathcal{H}_\pi$ be a unit vector such that
\begin{align*}
\|\pi(s)\xi-\xi\|<\frac{1}{6(1+c)},\quad\forall s\in\tilde{Q},
\end{align*}
which exists because $\pi$ almost has invariant vectors. Hence
\begin{align*}
\|\pi_\Gamma(p)\xi-\xi\| &\leq \|\pi_\Gamma(p)\xi-\pi_\Gamma(g)\xi\|+\|\pi_\Gamma(g)\xi-\xi\|\\
&\leq \|p-g\|_{\tilde{A}_c(\Gamma)} + \left\|\sum_{t\in Q}g(t)(\pi(t)\xi-\xi)\right\|\\
&\leq \frac{1}{6(1+c)} + \sum_{t\in Q}g(t)\|\pi(t)\xi-\xi\|\\
&\leq \frac{1}{3(1+c)}.
\end{align*}
Let $\xi'=\pi_\Gamma(p)\xi$, and observe that
\begin{align*}
\|\xi'\| &\leq \|\xi\|+\|\xi'-\xi\|\\
&\leq 1+\frac{1}{3(1+c)}\\
&\leq \frac{7}{6}.
\end{align*}
Now define
\begin{align*}
\eta=\int_{G/\Gamma}\pi(\sigma(x))\xi'\, d\mu(x),
\end{align*}
where $\sigma:G/\Gamma\to G$ is any Borel cross-section. Observe that, since $\xi'$ is $\pi_\Gamma$-invariant, the definition of $\eta$ does not depend on the choice of $\sigma$. Let $s\in G$. Using the fact that $\sigma(sx)^{-1}s\sigma(x)$ belongs to $\Gamma$, we see that
\begin{align*}
\pi(s)\eta&=\int_{G/\Gamma}\pi(s\sigma(x))\xi'\, d\mu(x)\\
&=\int_{G/\Gamma}\pi(\sigma(sx))\xi'\, d\mu(x)\\
&=\int_{G/\Gamma}\pi(\sigma(x))\xi'\, d\mu(x)\\
&=\eta.
\end{align*}
Thus $\eta$ is a $\pi$-invariant vector. It only remains to show that $\eta\neq 0$. First observe that, for every $s\in\tilde{Q}$,
\begin{align*}
\|\pi(s)\xi'-\xi'\| &\leq \|\pi(s)(\xi'-\xi)-(\xi'-\xi)\| + \|\pi(s)\xi-\xi\|\\
&\leq (1+c)\|\xi'-\xi\| + \frac{1}{6(1+c)}\\
&\leq\frac{1}{3}+\frac{1}{6(1+c)}\\
&\leq\frac{5}{12}.
\end{align*}
Hence
\begin{align*}
\|\eta-\xi'\| &= \left\|\int_{G/\Gamma}\pi(\sigma(x))\xi'-\xi' \, d\mu(x)\right\|\\
&\leq \int_{q(\tilde{Q})}\|\pi(\sigma(x))\xi'-\xi'\|\, d\mu(x) + (1-\mu(K))(1+c)\|\xi'\|\\
&\leq \frac{5}{12} + \frac{1}{7(1+c)}(1+c)\frac{7}{6}\\
&= \frac{7}{12}.
\end{align*}
Finally,
\begin{align*}
\|\eta\| &\geq \|\xi\| - \|\xi-\xi'\| - \|\eta-\xi'\|\\
&\geq 1 - \frac{1}{6} - \frac{7}{12}\\
&= \frac{1}{4}.
\end{align*}
Therefore $\eta$ is a non-trivial $\pi$-invariant vector.
\end{proof}

\section{{\bf The constant $c_{\operatorname{ub}}(G)$}}\label{Sec_c_ub}

In this section, we define the constant $c_{\operatorname{ub}}(G)$ and prove the remaining items of Theorem \ref{Thm_c_ub}. The following result is clear from the definition of Property (T)${}_c$. Thanks to Theorem \ref{Thm_equiv}, it can also be deduced from the weak*-weak*-continuity of the inclusion $B_{c_2}(G)\hookrightarrow B_{c_1}(G)$.

\begin{lem}
Let $G$ be a locally compact group and $c_1\geq c_2\geq 1$. If $G$ has Property (T)${}_{c_1}$, then it has Property (T)${}_{c_2}$.
\end{lem}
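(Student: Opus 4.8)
The plan is to argue directly from the definition of Property (T)$_c$, using the evident inclusion of representation classes. The key observation is that $\mathcal{R}_{c_2}(G)\subseteq\mathcal{R}_{c_1}(G)$: if $\pi:G\to\mathcal{B}(\mathcal{H}_\pi)$ is a uniformly bounded representation with $|\pi|\leq c_2$, then, since $c_2\leq c_1$, we also have $|\pi|\leq c_1$, so $\pi\in\mathcal{R}_{c_1}(G)$.

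Granting this, the argument is immediate. Suppose $G$ has Property (T)$_{c_1}$, and let $\pi\in\mathcal{R}_{c_2}(G)$ be a representation that almost has invariant vectors. By the inclusion above, $\pi\in\mathcal{R}_{c_1}(G)$, so Property (T)$_{c_1}$ applied to $\pi$ yields $\mathcal{H}_\pi^{\text{inv}}\neq\{0\}$. Since $\pi$ was an arbitrary representation in $\mathcal{R}_{c_2}(G)$ almost having invariant vectors, $G$ has Property (T)$_{c_2}$.

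Equivalently, one may phrase this through Kazhdan pairs: by Lemma~\ref{Lem_char_T_c}, Property (T)$_{c_1}$ is equivalent to the existence of a Kazhdan pair $(Q,\kappa)$ for the family $\mathcal{R}_{c_1}(G)$; by the inclusion $\mathcal{R}_{c_2}(G)\subseteq\mathcal{R}_{c_1}(G)$ such a pair is in particular a Kazhdan pair for $\mathcal{R}_{c_2}(G)$, which again by Lemma~\ref{Lem_char_T_c} amounts to Property (T)$_{c_2}$. (A third route, as noted in the text, is to use Theorem~\ref{Thm_equiv} together with the fact that the inclusion $B_{c_2}(G)\hookrightarrow B_{c_1}(G)$ is weak*-weak*-continuous, so weak*-continuity of the invariant mean on $B_{c_1}(G)$ transports to $B_{c_2}(G)$; but this is more roundabout.)

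There is essentially no obstacle here — the statement is a formal consequence of monotonicity of the representation classes in $c$. The only point requiring any care is keeping track of the direction of the inequality: it is the \emph{smaller} constant $c_2$ that gives the \emph{smaller} class of representations and hence the \emph{weaker} property.
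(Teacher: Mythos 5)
Your argument is correct, and your main route is not the one the paper actually writes out. You argue directly from the definition via the inclusion $\mathcal{R}_{c_2}(G)\subseteq\mathcal{R}_{c_1}(G)$ (equivalently, via Kazhdan pairs and Lemma \ref{Lem_char_T_c}); the paper acknowledges this in the sentence preceding the lemma ("clear from the definition") but its displayed proof is the route you relegate to a parenthesis: the identity on $L^1(G)$ extends to a contraction $j:\tilde{A}_{c_1}(G)\to\tilde{A}_{c_2}(G)$ whose dual is the inclusion $B_{c_2}(G)\hookrightarrow B_{c_1}(G)$, and if $p\in\tilde{A}_{c_1}(G)$ is the invariant mean on $B_{c_1}(G)$, then $j(p)\in\tilde{A}_{c_2}(G)$ is the invariant mean on $B_{c_2}(G)$, so Theorem \ref{Thm_equiv} gives Property (T)$_{c_2}$. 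The trade-off: your definitional argument is the most elementary and needs no machinery at all, while the paper's argument explicitly exhibits the invariant mean on $B_{c_2}(G)$ as the image $j(p)$ of the Kazhdan projection, which fits the paper's overall theme (means and Kazhdan projections as concrete elements of $\tilde{A}_c(G)$) and substantiates the remark that the monotonicity can be read off from the weak*-weak*-continuity of the inclusion of the $B_c$-spaces. Your closing caution about the direction of the inequality is exactly the right point to flag, and your handling of it is correct: the smaller constant gives the smaller representation class and hence the weaker property.
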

\begin{proof}
The identity $L^1(G)\to L^1(G)$ extends to a contraction $j:\tilde{A}_{c_1}(G)\to\tilde{A}_{c_2}(G)$ whose dual map $j^*:B_{c_2}(G)\to B_{c_1}(G)$ is the inclusion. If $p\in\tilde{A}_{c_1}(G)$ is the invariant mean on $B_{c_1}(G)$, then $j(p)\in\tilde{A}_{c_2}(G)$ is the invariant mean on $B_{c_2}(G)$.
\end{proof}

For every locally compact group $G$, we define $c_{\operatorname{ub}}(G)\in[1,\infty]$ as follows. If $G$ does not have Property (T), we set $c_{\operatorname{ub}}(G)=1$. If $G$ has Property (T),
\begin{align*}
c_{\operatorname{ub}}(G)=\sup\{c\geq 1 \ :\ G\ \text{ has Property (T)}_c \}.
\end{align*}

The following lemma is a consequence of \cite[Corollary 5.5]{dlS}.

\begin{lem}\label{Lem_(T)->c}
Let $G$ be a locally compact group. Then $c_{\operatorname{ub}}(G)>1$ if and only if $G$ has Property (T).
\end{lem}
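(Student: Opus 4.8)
The proof splits into the two implications, one essentially trivial and one resting on an external robustness result.

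For the ``only if'' direction there is nothing to do: by the very definition of $c_{\operatorname{ub}}$, if $G$ does not have Property (T) then $c_{\operatorname{ub}}(G)=1$; contrapositively, $c_{\operatorname{ub}}(G)>1$ forces $G$ to have Property (T). (Alternatively, $c_{\operatorname{ub}}(G)>1$ means, via the monotonicity of the family $(\mathrm{T})_c$, that $G$ has Property $(\mathrm{T})_c$ for some $c>1$, and $(\mathrm{T})_c$ implies $(\mathrm{T})_1=(\mathrm{T})$.)

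For the ``if'' direction, assume $G$ has Property (T). Applying Lemma \ref{Lem_char_T_c} with $c=1$ — where $\mathcal{R}_1(G)$ is the class of unitary representations — we obtain a Kazhdan pair $(Q,\kappa)$ for all unitary representations of $G$; equivalently, by Corollary \ref{Cor_DN} with $c=1$, the C${}^*$-algebra $C^*(G)=\tilde A_1(G)$ carries a Kazhdan projection, realized as the limit in $C^*(G)$ of a sequence $(g_n)$ in $C_{00}(G)$ with $g_n\ge 0$ and $\|g_n\|_1=1$. The plan is to invoke \cite[Corollary 5.5]{dlS}: stated in the notation of the present paper, it upgrades a Kazhdan pair (equivalently, a Kazhdan projection) for the unitary representations of $G$ to a Kazhdan pair for the larger family $\mathcal{R}_{c_0}(G)$, for some explicit $c_0>1$. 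Concretely, one checks that the sequence $(g_n)$ above remains convergent in the a priori larger norm of $\tilde A_{c_0}(G)$, and that its limit $p_{c_0}$ is a central idempotent with $\pi(p_{c_0})$ the projection onto $\mathcal{H}_\pi^{\mathrm{inv}}$ along $(\mathcal{H}_{\pi^*}^{\mathrm{inv}})^{\perp}$ for every $\pi\in\mathcal{R}_{c_0}(G)$; the verification mirrors the proof of Lemma \ref{Lem_m->Kazh}, using that the $g_n$ are probability densities (so $\pi(g_n)\xi\to\xi$ on invariant vectors) and that the limit is inversion invariant by Corollary \ref{Cor_inv_mean}. By Corollary \ref{Cor_DN} (or Lemma \ref{Lem_char_T_c}) this says precisely that $G$ has Property $(\mathrm{T})_{c_0}$, hence $c_{\operatorname{ub}}(G)\ge c_0>1$.

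The main obstacle — and the single nontrivial input — is exactly the quantitative comparison between the norms of $\tilde A_c(G)$ and of $C^*(G)=\tilde A_1(G)$ for $c$ close to $1$ that underlies \cite[Corollary 5.5]{dlS}. Conceptually it is a perturbation statement: the spectral gap witnessed by a Kazhdan pair for unitary representations persists, after shrinking the Kazhdan constant, for all uniformly bounded representations of sufficiently small bound, so the Kazhdan projection survives the small enlargement of the representation class. Everything else in the argument — the ``only if'' direction, the translation between Kazhdan pairs, Kazhdan projections and Property $(\mathrm{T})_c$, and the identification of $\pi(p_{c_0})$ — is formal given the results established earlier in the paper.
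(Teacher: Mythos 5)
Your proof is correct and follows essentially the same route as the paper: the ``only if'' direction is immediate from the definition of $c_{\operatorname{ub}}$, and the ``if'' direction invokes \cite[Corollary 5.5]{dlS} to produce a central (positive) Kazhdan projection for a class of representations containing $\mathcal{R}_{1+\varepsilon}(G)$, which via Corollary \ref{Cor_DN} gives Property (T)${}_{1+\varepsilon}$ and hence $c_{\operatorname{ub}}(G)>1$. The paper additionally records an alternative argument (compact generation plus \cite[Theorem 1.6]{FisMar} and Proposition \ref{Prop_Guich_c}), but your main line is the one the paper uses.
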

\begin{proof}
If $c_{\operatorname{ub}}(G)>1$, then $G$ has Property (T) by definition. Conversely, if $G$ has Property (T), then \cite[Corollary 5.5]{dlS} ensures that there exists $\varepsilon>0$ such that the completion of $C_{00}(G)$ for the norm
\begin{align*}
\|f\|_{\mathcal{C}_{\mathcal{F}}}=\sup_{\pi\in\mathcal{F}}\|\pi(f)\|
\end{align*}
has a central Kazhdan projection, where $\mathcal{F}$ is a class of representations containing $\mathcal{R}_{1+\varepsilon}(G)$. Moreover, this projection is positive; see \cite[\S 3.5]{dlS}. Hence $G$ has Property (T)${}_{1+\varepsilon}$, and therefore $c_{\operatorname{ub}}(G)>1$. This can also be argued as follows. If $G$ has Property (T), then it is compactly generated; see \cite[Theorem 1.3.1]{BedlHVa}. In particular, it is $\sigma$-compact. Then, by \cite[Theorem 1.6]{FisMar}, it has Property (FH)${}_{1+\varepsilon}$ for some $\varepsilon>0$. By Proposition \ref{Prop_Guich_c}, it has Property (T)${}_{1+\varepsilon}$.
\end{proof}

Now we briefly discuss unitarisable groups; for a much more detailed presentation, we refer the reader to \cite{Pis2}. Let $\pi:G\to\mathcal{B}(\mathcal{H})$ be a uniformly bounded representation. We say that $\pi$ is \textit{unitarisable} if there is an invertible operator $S\in\mathcal{B}(\mathcal{H})$ such that the representation $S\pi(\,\cdot\,)S^{-1}$ is unitary. A group $G$ is said to be \textit{unitarisable} if every uniformly bounded representation of $G$ is unitarisable. Every amenable group is unitarisable, but is not known if the converse holds. Observe that, if $G$ is amenable, then $c_{\operatorname{ub}}(G)\in\{1,\infty\}$. In this case, $c_{\operatorname{ub}}(G)=\infty$ if and only if $G$ is compact.

\begin{lem}\label{Lem_unit->c}
Let $G$ be a unitarisable group. Then $c_{\operatorname{ub}}(G)\in\{1,\infty\}$.
\end{lem}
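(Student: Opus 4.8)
The plan is to establish the dichotomy by showing that a unitarisable group with Property (T) automatically has Property (T)${}_c$ for \emph{every} $c\geq 1$, hence $c_{\operatorname{ub}}(G)=\infty$; combined with the trivial observation that $c_{\operatorname{ub}}(G)=1$ whenever $G$ fails Property (T), this yields $c_{\operatorname{ub}}(G)\in\{1,\infty\}$. So I would assume $G$ is unitarisable and has Property (T), fix $c\geq 1$, take an arbitrary $\pi\in\mathcal{R}_c(G)$ that almost has invariant vectors, and produce a non-trivial $\pi$-invariant vector.

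First I would invoke unitarisability to obtain an invertible operator $S\in\mathcal{B}(\mathcal{H}_\pi)$ such that $\rho=S\pi(\,\cdot\,)S^{-1}$ is unitary. One should check that $\rho$ is a genuine SOT-continuous unitary representation, which is immediate: $S$ and $S^{-1}$ are bounded, so $s\mapsto\rho(s)\zeta=S\pi(s)S^{-1}\zeta$ is continuous for every $\zeta$. The key elementary step is then that conjugation by a fixed bounded invertible operator preserves the property of almost having invariant vectors: given a compact $Q\subset G$ and $\varepsilon>0$, choose $\xi\in\mathcal{H}_\pi$ with $\sup_{s\in Q}\|\pi(s)\xi-\xi\|<\varepsilon'\|\xi\|$ for $\varepsilon'=\varepsilon/(\|S\|\,\|S^{-1}\|)$; then $\zeta=S\xi$ satisfies $\|\rho(s)\zeta-\zeta\|=\|S(\pi(s)\xi-\xi)\|\leq\|S\|\,\varepsilon'\,\|\xi\|\leq\|S\|\,\|S^{-1}\|\,\varepsilon'\,\|\zeta\|=\varepsilon\|\zeta\|$ for all $s\in Q$. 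Hence $\rho$ almost has invariant vectors.

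Now I would apply Property (T) of $G$ (that is, Property (T)${}_1$) to the unitary representation $\rho$ to get a non-zero $\rho$-invariant vector $\eta$. Then $S^{-1}\eta\neq0$ and $\pi(s)(S^{-1}\eta)=S^{-1}\rho(s)\eta=S^{-1}\eta$ for all $s\in G$, so $S^{-1}\eta$ is a non-trivial $\pi$-invariant vector. Since $\pi\in\mathcal{R}_c(G)$ was arbitrary, $G$ has Property (T)${}_c$; and since $c\geq1$ was arbitrary, $c_{\operatorname{ub}}(G)=\infty$.

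I do not expect a serious obstacle: every step is elementary once unitarisability is used representation by representation. The only point requiring a little care is tracking the almost-invariance estimate with the correct constant $\|S\|\,\|S^{-1}\|$ (which depends on $\pi$, but not on $Q$ or $\varepsilon$), together with the remark that one never needs a \emph{uniform} bound on these constants over $\mathcal{R}_c(G)$ --- each $\pi$ is handled on its own.
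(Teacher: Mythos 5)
Your proposal is correct and follows essentially the same route as the paper: treat the non-(T) case trivially, then conjugate each $\pi\in\mathcal{R}_c(G)$ to a unitary representation via unitarisability, transfer almost-invariant vectors through $S$, apply Property (T), and pull the invariant vector back with $S^{-1}$. Your tracking of the constant $\|S\|\,\|S^{-1}\|$ in the almost-invariance estimate is in fact slightly more careful than the paper's one-line bound, but it is the same argument.
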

\begin{proof}
If $G$ does not have Property (T), then $c_{\operatorname{ub}}(G)=1$ by definition. Assume that $G$ has Property (T) and let $c\geq 1$. Let $\pi\in\mathcal{R}_c(G)$ such that $\pi$ almost has invariant vectors. Since $G$ is unitarisable, there is $S\in\mathcal{B}(\mathcal{H}_\pi)$ such that $\tilde{\pi}=S\pi(\,\cdot\,)S^{-1}$ is unitary. Moreover,
\begin{align*}
\|\tilde{\pi}(t)S\xi-S\xi\|\leq\|S\| \|\pi(t)\xi-\xi\|,\quad\forall t\in G,\ \forall\xi\in\mathcal{H}_\pi.
\end{align*}
This shows that $\tilde{\pi}$ almost has invariant vectors. Since $G$ has Property (T), $\tilde{\pi}$ has a non-trivial invariant vector $\xi_0$. Then $S^{-1}\xi_0$ is an invariant vector for $\pi$, which shows that $G$ has Property (T)${}_c$. Since $c$ was arbitrary, we conclude that $c_{\operatorname{ub}}(G)=\infty$.
\end{proof}

Now we are ready to prove Theorem \ref{Thm_c_ub}.

\begin{proof}[Proof of Theorem \ref{Thm_c_ub}]
Item (a) follows from Lemma \ref{Lem_(T)->c}. Item (c) is a consequence of Corollary \ref{Cor_G->G/H} and Lemma \ref{Lem_G/H->G}. Item (b) is a particular case of (c) for $G=G_1\times G_2$ and $H=G_i$ ($i=1,2$). Corollary \ref{Cor_G->Gamma} and Lemma \ref{Lem_Gamma->G} yield (d). Item (e) corresponds to Lemma \ref{Lem_unit->c}.
\end{proof}

\section{{\bf Von Neumann equivalence}}\label{Sec_vNE}

The aim of this section is to prove Theorem \ref{Thm_vNE}. We quickly review the definition of von Neumann equivalence; for a more detailed treatment, we refer the reader to \cite{IsPeRu}. Let $\Gamma$ and $\Lambda$ be two countable groups, and let $(\mathcal{M},\operatorname{Tr})$ be a von Neumann algebra endowed with a semi-finite, normal, faithful trace. We say that a trace preserving action $\Gamma\times\Lambda\curvearrowright^{\sigma}(\mathcal{M},\operatorname{Tr})$ is a von Neumann coupling between $\Gamma$ and $\Lambda$ if each of the individual actions $\Gamma,\Lambda\curvearrowright(\mathcal{M},\operatorname{Tr})$ admits a finite-trace fundamental domain. More precisely, there exist projections $q_\Gamma,q_\Lambda\in\mathcal{M}$ such that $\operatorname{Tr}(q_\Gamma),\operatorname{Tr}(q_\Lambda)<\infty$, and
\begin{align*}
\sum_{\gamma\in\Gamma}\sigma_\gamma(q_\Gamma)=\sum_{t\in\Lambda}\sigma_t(q_\Lambda)=1,
\end{align*}
where we view these sums as limits in the strong operator topology. We say that $\Gamma$ and $\Lambda$ are von Neumann equivalent ($\Gamma\sim_{\operatorname{vNE}}\Lambda$) if there is a von Neumann coupling between them.

Throughout this section, we will fix two countable groups $\Gamma, \Lambda$ and a von Neumnn coupling $\Gamma\times\Lambda\curvearrowright^{\sigma}(\mathcal{M},\operatorname{Tr})$. We will also normalise $\operatorname{Tr}$ so that $\operatorname{Tr}(q_\Lambda)=1$. There is a $\Lambda$-equivariant embedding $\theta_{q_\Lambda}:\ell^\infty(\Lambda)\to\mathcal{M}$ given by
\begin{align*}
\theta_{q_\Lambda}(\varphi)=\sum_{t\in\Lambda}\varphi(t)\sigma_t(q_\Lambda),\quad\forall\varphi\in\ell^\infty(\Lambda).
\end{align*}
Following \cite{Ish}, we consider the map $\Phi^*:\ell^\infty(\Lambda)\to\ell^\infty(\Gamma)$ given by
\begin{align}\label{map_Ish*}
\Phi^*(\varphi)(\gamma)=\operatorname{Tr}(\theta_{q_\Lambda}(\varphi)\sigma_{\gamma^{-1}}(q_\Lambda)),\quad\forall\varphi\in\ell^\infty(\Lambda),\ \forall\gamma\in\Gamma.
\end{align}
As the notation suggests, this is a dual map, where $\Phi:\ell^1(\Gamma)\to\ell^1(\Lambda)$ is defined as
\begin{align}\label{map_Ish}
\Phi(f)(t)=\sum_{\gamma\in\Gamma}\operatorname{Tr}(\sigma_t(q_\Lambda)\sigma_{\gamma^{-1}}(q_\Lambda))f(\gamma),\quad\forall f\in\ell^1(\Gamma),\ \forall t\in\Lambda;
\end{align}
see \cite[Lemma 3.3]{Bat}.

In order to show that Property (T) is stable under von Neumann equivalence, a method was developed in \cite[\S 6.2]{IsPeRu} to induce unitary representations from von Neumann couplings. We will relate this induction procedure to the map $\Phi^*$. Let $\pi:\Lambda\to\mathcal{U}(\mathcal{H})$ be a unitary representation. There is a dual Hilbert $\mathcal{M}$-module structure on $\mathcal{M}\overline{\otimes}\mathcal{H}$ such that
\begin{align*}
\langle a\otimes\xi,b\otimes\eta\rangle_{\mathcal{M}}=\langle\eta,\xi\rangle a^*b,\quad\forall\xi,\eta\in\mathcal{H},\ \forall a,b\in\mathcal{M}.
\end{align*}
The groups $\Lambda$ and $\Gamma$ act on $\mathcal{M}\overline{\otimes}\mathcal{H}$ by $\sigma\otimes\pi$ and $\sigma\otimes\operatorname{id}$ respectively. Let $(\mathcal{M}\overline{\otimes}\mathcal{H})^\Lambda$ denote the space of fixed points for the $\Lambda$-action, and let $\tau$ be the trace on $\mathcal{M}^\Lambda$ defined by
\begin{align*}
\tau(a)=\operatorname{Tr}(q_\Lambda a),\quad\forall a\in\mathcal{M}^\Lambda.
\end{align*}
Then there is an inner product on $(\mathcal{M}\overline{\otimes}\mathcal{H})^\Lambda$ given by
\begin{align*}
\langle y,x\rangle=\tau(\langle x,y\rangle_{\mathcal{M}}),\quad\forall x,y\in(\mathcal{M}\overline{\otimes}\mathcal{H})^\Lambda.
\end{align*}
Let $\tilde{\mathcal{H}}$ be the Hilbert space obtained by completing $(\mathcal{M}\overline{\otimes}\mathcal{H})^\Lambda$ for this inner product. Then the $\Gamma$-action extends to a unitary representation $\pi_{\mathcal{M}}:\Gamma\to\mathcal{U}(\tilde{\mathcal{H}})$.

Let $C^*(\Gamma)$ and $B(\Gamma)$ be the full C${}^*$-algebra and the Fourier--Stieltjes algebra of $\Gamma$. In our language, $C^*(\Gamma)=\tilde{A}_1(\Gamma)$ and $B(\Gamma)=B_1(\Gamma)$.

\begin{lem}\label{Lem_Ish_rep}
Let $\Gamma$ and $\Lambda$ be as above. Then the map \eqref{map_Ish} extends to a continuous map $\Phi:C^*(\Gamma)\to C^*(\Lambda)$ of norm $1$. Equivalently, the map \eqref{map_Ish*} restricts to a weak*-weak*-continuous map $\Phi^*:B(\Lambda)\to B(\Gamma)$ of norm 1. Moreover, if $\varphi$ is a coefficient of a representation $\pi:\Lambda\to\mathcal{U}(\mathcal{H})$, then $\Phi^*(\varphi)$ is a coefficient of the induced representation $\pi_{\mathcal{M}}$.
\end{lem}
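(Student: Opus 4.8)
The plan is to prove the last (``Moreover'') assertion first, and then obtain the first two statements from it by a soft duality argument.

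\medskip

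Fix a unitary representation $\pi:\Lambda\to\mathcal{U}(\mathcal{H})$ and write $\varphi=\langle\pi(\,\cdot\,)\xi,\eta\rangle$ with $\xi,\eta\in\mathcal{H}$. Since $q_\Lambda$ is a finite-trace fundamental domain for $\Lambda\curvearrowright\mathcal{M}$, the family $\{\sigma_t(q_\Lambda)\}_{t\in\Lambda}$ consists of pairwise orthogonal projections with $\sum_{t\in\Lambda}\sigma_t(q_\Lambda)=1$. I would associate to $\xi$ the element
\begin{align*}
\tilde{\xi}=\sum_{t\in\Lambda}\sigma_t(q_\Lambda)\otimes\pi(t)\xi\in\mathcal{M}\overline{\otimes}\mathcal{H},
\end{align*}
and likewise $\tilde{\eta}$ from $\eta$. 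Using the orthogonality of the $\sigma_t(q_\Lambda)$ and the unitarity of $\pi$ one finds $\langle\tilde{\xi},\tilde{\xi}\rangle_{\mathcal{M}}=\sum_{t\in\Lambda}\|\pi(t)\xi\|^2\sigma_t(q_\Lambda)=\|\xi\|^2\cdot 1$, which shows that the series converges in the Hilbert $\mathcal{M}$-module $\mathcal{M}\overline{\otimes}\mathcal{H}$. A one-line reindexing ($u=rt$) gives $(\sigma_r\otimes\pi(r))\tilde{\xi}=\tilde{\xi}$ for all $r\in\Lambda$, so $\tilde{\xi},\tilde{\eta}\in(\mathcal{M}\overline{\otimes}\mathcal{H})^\Lambda\subset\tilde{\mathcal{H}}$, and $\|\tilde{\xi}\|_{\tilde{\mathcal{H}}}^2=\tau(\|\xi\|^2\cdot 1)=\operatorname{Tr}(q_\Lambda)\|\xi\|^2=\|\xi\|^2$ by the normalisation $\operatorname{Tr}(q_\Lambda)=1$; similarly $\|\tilde{\eta}\|_{\tilde{\mathcal{H}}}=\|\eta\|$.

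\medskip

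Next I would compute the coefficient $\gamma\mapsto\langle\pi_{\mathcal{M}}(\gamma)\tilde{\xi},\tilde{\eta}\rangle_{\tilde{\mathcal{H}}}=\tau\big(\langle\tilde{\eta},(\sigma_\gamma\otimes\operatorname{id})\tilde{\xi}\rangle_{\mathcal{M}}\big)$. Expanding the $\mathcal{M}$-valued inner product, the $(s,t)$-summand is $\langle\pi(t)\xi,\pi(s)\eta\rangle\,\sigma_s(q_\Lambda)\sigma_\gamma(\sigma_t(q_\Lambda))=\varphi(s^{-1}t)\,\sigma_s(q_\Lambda)\sigma_\gamma(\sigma_t(q_\Lambda))$; applying $\tau=\operatorname{Tr}(q_\Lambda\,\cdot\,)$ and using $q_\Lambda\sigma_s(q_\Lambda)=\delta_{s,e}q_\Lambda$ annihilates every term with $s\neq e$, after which traciality together with the trace-invariance of $\sigma_{\gamma^{-1}}$ turns $\operatorname{Tr}(q_\Lambda\sigma_\gamma(\sigma_t(q_\Lambda)))$ into $\operatorname{Tr}(\sigma_t(q_\Lambda)\sigma_{\gamma^{-1}}(q_\Lambda))$. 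The outcome is precisely
\begin{align*}
\langle\pi_{\mathcal{M}}(\gamma)\tilde{\xi},\tilde{\eta}\rangle_{\tilde{\mathcal{H}}}=\sum_{t\in\Lambda}\varphi(t)\operatorname{Tr}\big(\sigma_t(q_\Lambda)\sigma_{\gamma^{-1}}(q_\Lambda)\big)=\operatorname{Tr}\big(\theta_{q_\Lambda}(\varphi)\sigma_{\gamma^{-1}}(q_\Lambda)\big)=\Phi^*(\varphi)(\gamma).
\end{align*}
Thus $\Phi^*(\varphi)$ is a coefficient of $\pi_{\mathcal{M}}$; since $\pi_{\mathcal{M}}$ is unitary, $\|\Phi^*(\varphi)\|_{B(\Gamma)}\leq\|\tilde{\xi}\|\,\|\tilde{\eta}\|=\|\xi\|\,\|\eta\|$, and taking the infimum over decompositions of $\varphi$ yields $\|\Phi^*(\varphi)\|_{B(\Gamma)}\leq\|\varphi\|_{B(\Lambda)}$, so $\Phi^*$ restricts to a contraction $B(\Lambda)\to B(\Gamma)$.

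\medskip

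Finally I would deduce the first two assertions. The map $\Phi$ is a contraction $\ell^1(\Gamma)\to\ell^1(\Lambda)$ with $\Phi^*$ as its adjoint at the $\ell^1/\ell^\infty$ level; hence, for $f\in\C[\Gamma]$,
\begin{align*}
\|\Phi(f)\|_{C^*(\Lambda)}=\sup\{|\langle\Phi^*(\varphi),f\rangle|:\varphi\in B(\Lambda),\ \|\varphi\|_{B(\Lambda)}\leq 1\}\leq\|f\|_{C^*(\Gamma)}
\end{align*}
by the previous step, so $\Phi$ extends to a contraction $C^*(\Gamma)\to C^*(\Lambda)$. Its Banach-space adjoint is a weak*-weak*-continuous map $B(\Lambda)=C^*(\Lambda)^*\to B(\Gamma)=C^*(\Gamma)^*$ which, by the pairing $\langle\varphi,\Phi(f)\rangle=\langle\Phi^*(\varphi),f\rangle$, coincides with $\Phi^*|_{B(\Lambda)}$. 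Evaluating on the constant function $1$ (the coefficient of the trivial representation) gives $\Phi^*(1)(\gamma)=\operatorname{Tr}((\sum_{t}\sigma_t(q_\Lambda))\sigma_{\gamma^{-1}}(q_\Lambda))=\operatorname{Tr}(\sigma_{\gamma^{-1}}(q_\Lambda))=1$, so $\Phi^*(1)=1$, which has $B(\Gamma)$-norm $1$; hence both $\Phi$ and $\Phi^*$ have norm exactly $1$. The main obstacle is the bookkeeping in the middle step: three inner products are simultaneously in play (the $\mathcal{M}$-valued one on $\mathcal{M}\overline{\otimes}\mathcal{H}$, the trace $\tau$, and the scalar product on $\tilde{\mathcal{H}}$), and the conjugate-linearity conventions must be tracked carefully so that the identity lands on $\Phi^*(\varphi)(\gamma)$ itself rather than on its complex conjugate or its value at $\gamma^{-1}$; one also needs to justify interchanging $\operatorname{Tr}$ with the strongly convergent sums, which follows from normality of $\operatorname{Tr}$ and $\operatorname{Tr}(q_\Lambda)<\infty$.
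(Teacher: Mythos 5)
Your proposal is correct and follows essentially the same route as the paper: the same invariant vectors $\tilde{\xi}=\sum_{t}\sigma_t(q_\Lambda)\otimes\pi(t)\xi$ and $\tilde{\eta}$, the same trace computation identifying $\langle\pi_{\mathcal{M}}(\gamma)\tilde{\xi},\tilde{\eta}\rangle$ with $\Phi^*(\varphi)(\gamma)$, and the same duality argument (using that $C^*(\Gamma)$ is the completion of $\ell^1(\Gamma)$ in the $B(\Gamma)^*$-norm, with $\varphi=1$ giving norm exactly $1$). The only differences are cosmetic bookkeeping — you collapse the index $s=e$ where the paper collapses $t=e$, and you make explicit the normality/orthogonality points the paper leaves implicit.
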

\begin{proof}
Let $\pi:\Lambda\to\mathcal{U}(\mathcal{H})$ be a unitary representation and let $\xi,\eta\in\mathcal{H}$. Then the function $\varphi=\langle\pi(\,\cdot\,)\xi,\eta\rangle$ is an element of $B(\Lambda)$. Let $\pi_{\mathcal{M}}:\Gamma\to\mathcal{U}(\tilde{\mathcal{H}})$ be the induced representation, and consider the following elements of $\tilde{\mathcal{H}}$:
\begin{align*}
\tilde{\xi}&=\sum_{t\in\Lambda}\sigma_t(q_\Lambda)\otimes\pi(t)\xi, & \tilde{\eta}&=\sum_{t\in\Lambda}\sigma_t(q_\Lambda)\otimes\pi(t)\eta.
\end{align*}
These are clearly $\Lambda$-invariant vectors in $\mathcal{M}\overline{\otimes}\mathcal{H}$. Moreover,
\begin{align*}
\|\tilde{\xi}\|^2 = \sum_{s,t\in\Lambda}\langle\pi(s)\xi,\pi(t)\xi\rangle\operatorname{Tr}\left(q_\Lambda\sigma_t(q_\Lambda)\sigma_s(q_\Lambda)\right)
=\|\xi\|^2\operatorname{Tr}(q_\Lambda)=\|\xi\|^2.
\end{align*}
Similarly, $\|\tilde{\eta}\|=\|\eta\|$. Furthermore, for every $\gamma\in\Gamma$,
\begin{align*}
\langle\pi_{\mathcal{M}}(\gamma)\tilde{\xi},\tilde{\eta}\rangle
&= \sum_{s,t\in\Lambda}\langle\pi(s)\xi,\pi(t)\eta\rangle\operatorname{Tr}\left(q_\Lambda\sigma_t(q_\Lambda)\sigma_{\gamma}(\sigma_s(q_\Lambda))\right)\\
&= \sum_{s\in\Lambda}\varphi(s)\operatorname{Tr}\left(q_\Lambda\sigma_{\gamma}(\sigma_s(q_\Lambda))\right)\\
&=\Phi^*(\varphi)(\gamma).
\end{align*}
This shows that $\Phi^*:B(\Lambda)\to B(\Gamma)$ is a bounded linear map of norm at most $1$. By taking $\varphi=1$, we see that $\left\|\Phi^*\right\|=1$. The fact that $C^*(\Gamma)$ is the completion of $\ell^1(\Gamma)$ for the norm of $B(\Gamma)^*$ shows that $\Phi:C^*(\Gamma)\to C^*(\Lambda)$ is also bounded of norm $1$ and $\Phi^*$ is its dual map.
\end{proof}

The following is a refinement of the fact that Property (T) is a von Neumann equivalence invariant, which was proved in \cite[Theorem 1.2]{IsPeRu}.

\begin{lem}\label{Lem_Phi(P)}
Let $\Gamma$ and $\Lambda$ be as above, and assume that $\Gamma$ has Property (T). Then $\Lambda$ also has Property (T). Moreover, if $P$ denotes the Kazhdan projection of $C^*(\Gamma)$, then $\Phi(P)$ is the Kazhdan projection of $C^*(\Lambda)$.
\end{lem}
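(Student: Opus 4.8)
The plan is to identify the element $\tilde m:=\Phi(P)\in C^*(\Lambda)$, with $\Phi$ the map from Lemma \ref{Lem_Ish_rep}, with the unique invariant mean on $B(\Lambda)=B_1(\Lambda)$. Once this is done, $\tilde m$ is a weak*-continuous invariant mean, so the case $c=1$ of Theorem \ref{Thm_equiv} gives that $\Lambda$ has Property (T), and Lemma \ref{Lem_m->Kazh} then identifies $\tilde m$ with the Kazhdan projection of $C^*(\Lambda)$. Throughout I would use that, by the case $c=1$ of Lemma \ref{Lem_Kazh->m} (i.e.\ \cite[Lemma 4.3]{HaKndL}) together with Corollary \ref{Cor_DN}, $P$ is at once the Kazhdan projection of $C^*(\Gamma)$ and the invariant mean $m_\Gamma$ on $B(\Gamma)$, and is the limit in $C^*(\Gamma)$ of a sequence $(f_n)$ in $\C[\Gamma]$ with $f_n\geq 0$ and $\|f_n\|_1=1$; since $P$ is inversion invariant (Corollary \ref{Cor_inv_mean}) I may take the $f_n$ symmetric, so that $P=P^*$.

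First I would dispatch the elementary points. From \eqref{map_Ish*}, if $\varphi\geq 0$ then $\theta_{q_\Lambda}(\varphi)=\sum_t\varphi(t)\sigma_t(q_\Lambda)\geq 0$ (a nonnegative combination of pairwise orthogonal projections), whence $\Phi^*(\varphi)(\gamma)=\operatorname{Tr}(\theta_{q_\Lambda}(\varphi)\sigma_{\gamma^{-1}}(q_\Lambda))\geq 0$; thus $\Phi^*$ is positive. Also $\Phi^*(\mathds 1)=\mathds 1$, since $\theta_{q_\Lambda}(\mathds 1)=1$ and $\operatorname{Tr}(\sigma_{\gamma^{-1}}(q_\Lambda))=\operatorname{Tr}(q_\Lambda)=1$. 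As $m_\Gamma$ is a mean, it follows that $\varphi\mapsto\langle\varphi,\tilde m\rangle=m_\Gamma(\Phi^*\varphi)$ is a mean on $B(\Lambda)$. Dually, $g_n:=\Phi(f_n)\in\ell^1(\Lambda)$ satisfies $g_n\geq 0$, $\|g_n\|_1=\langle\mathds 1,g_n\rangle=\langle\Phi^*(\mathds 1),f_n\rangle=1$, and a short computation using the $\sigma$-invariance of $\operatorname{Tr}$ and symmetry of $f_n$ gives $g_n=\check g_n$; so $\tilde m=\lim_n g_n$ is a selfadjoint limit of probability measures, which will match the conclusion (though it is not strictly needed for the route above).

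The heart of the proof is the left-invariance of this mean, i.e.\ $m_\Gamma(\Phi^*(t\cdot\varphi))=m_\Gamma(\Phi^*\varphi)$ for all $t\in\Lambda$, $\varphi\in B(\Lambda)$. I would write $\varphi=\langle\pi(\cdot)\xi,\eta\rangle$ for some unitary $\pi\colon\Lambda\to\mathcal{U}(\mathcal{H})$; by Lemma \ref{Lem_Ish_rep}, $\Phi^*\varphi=\langle\pi_{\mathcal{M}}(\cdot)\tilde\xi,\tilde\eta\rangle$ with $\tilde\xi=\sum_t\sigma_t(q_\Lambda)\otimes\pi(t)\xi$ and $\tilde\eta$ defined analogously. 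Since $t\cdot\varphi=\langle\pi(\cdot)\xi,\pi(t)\eta\rangle$, its associated data is $(\pi,\xi,\pi(t)\eta)$, so using the duality $\langle\Phi^*\psi,P\rangle=\langle\psi,\Phi(P)\rangle$ together with the fact that $\langle\psi,P\rangle=\langle\rho(P)a,b\rangle$ whenever $\psi=\langle\rho(\cdot)a,b\rangle$, we obtain
\[
\langle t\cdot\varphi,\tilde m\rangle=\langle\pi_{\mathcal{M}}(P)\tilde\xi,\widetilde{\pi(t)\eta}\rangle,\qquad
\langle\varphi,\tilde m\rangle=\langle\pi_{\mathcal{M}}(P)\tilde\xi,\tilde\eta\rangle.
\]
As $P$ is the Kazhdan projection of $C^*(\Gamma)$, the vector $\zeta:=\pi_{\mathcal{M}}(P)\tilde\xi$ is $\pi_{\mathcal{M}}$-invariant, i.e.\ a $\Gamma\times\Lambda$-invariant vector of $\mathcal{M}\overline{\otimes}\mathcal{H}$. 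The key structural input --- which is the mechanism behind \cite[Theorem 1.2]{IsPeRu}, and the von Neumann analogue of the ``constancy'' step in the proof of Corollary \ref{Cor_G->Gamma} --- is that $\xi_0\mapsto 1\otimes\xi_0$ is an isometry from $\mathcal{H}^{\text{inv}}$ onto the whole space of $\pi_{\mathcal{M}}$-invariant vectors of $\tilde{\mathcal{H}}$. Granting this, $\zeta=1\otimes\xi_0$ for some $\pi$-invariant $\xi_0$, and a direct computation with the inner product of $\tilde{\mathcal{H}}$ (using pairwise orthogonality of the $\sigma_s(q_\Lambda)$ and $\operatorname{Tr}(q_\Lambda)=1$) gives $\langle 1\otimes\xi_0,\tilde\eta\rangle=\langle\xi_0,\eta\rangle$ and $\langle 1\otimes\xi_0,\widetilde{\pi(t)\eta}\rangle=\langle\xi_0,\pi(t)\eta\rangle=\langle\pi(t^{-1})\xi_0,\eta\rangle=\langle\xi_0,\eta\rangle$, so the two displayed pairings coincide. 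This proves left-invariance, hence that $\tilde m$ is an invariant mean on $B(\Lambda)$, and one concludes as above.

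The main obstacle is the italicised structural fact about $\pi_{\mathcal{M}}$-invariant vectors: it amounts to showing that a $\Gamma\times\Lambda$-invariant vector of $\mathcal{M}\overline{\otimes}\mathcal{H}$ must be of the ``constant'' form $1\otimes\xi_0$, which is where the hypotheses on the coupling (commuting actions with finite-trace fundamental domains, and the resulting rigidity of the $\mathcal{M}^{\Gamma\times\Lambda}$-type fixed-point spaces) are genuinely used. I would either cite it from \cite{IsPeRu}, or reprove it by realizing $\pi_{\mathcal{M}}$ on a space of $\Lambda$-equivariant $\mathcal{H}$-valued elements of $L^2(\mathcal{M},\operatorname{Tr})$ on which $\Gamma$ acts by translation, in direct parallel with Lemma \ref{Lem_hat(pi)} and Corollary \ref{Cor_G->Gamma}.
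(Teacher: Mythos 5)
Your overall architecture --- identify $\Phi(P)$ with the unique invariant mean on $B(\Lambda)$ and then conclude via the case $c=1$ of Theorem \ref{Thm_equiv} and Lemma \ref{Lem_m->Kazh} --- is different from the paper's, which imports Property (T) for $\Lambda$ from \cite[Theorem 1.2]{IsPeRu} and then verifies the defining property of the Kazhdan projection on irreducible representations via \cite[Proposition 2]{Val2}, killing the nontrivial ones by weak mixing together with \cite[Proposition 6.15]{IsPeRu}. The problem is that the step you yourself single out as the heart of your argument is a genuine gap: it is \emph{false} in general that $\xi_0\mapsto 1\otimes\xi_0$ maps $\mathcal{H}^{\text{inv}}$ \emph{onto} the space of $\pi_{\mathcal{M}}$-invariant vectors of $\tilde{\mathcal{H}}$, and it cannot be cited from \cite{IsPeRu}: what is proved there (Proposition 6.15) is only that $\pi_{\mathcal{M}}$ has no nonzero invariant vectors when $\pi$ is \emph{weakly mixing}, and that hypothesis is exactly what rules out the failure. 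Concretely, let $\Gamma=\Lambda$ have a finite quotient $q:\Gamma\to Q$ and let $\pi=\rho\circ q$ with $\rho$ a nontrivial irreducible representation of $Q$. Take the measure-equivalence self-coupling $\Omega=\Gamma\times Q$, with $\Gamma$ acting by $\gamma\cdot(g,y)=(\gamma g,y)$ and $\Lambda$ by $\lambda\cdot(g,y)=(g\lambda^{-1},q(\lambda)y)$, both with fundamental domain $\{e\}\times Q$, and $\mathcal{M}=L^\infty(\Omega)$. A $\Gamma\times\Lambda$-invariant element of $\mathcal{M}\overline{\otimes}\mathcal{H}$ is a function of $y\in Q$ alone satisfying $h(q(\lambda)y)=\pi(\lambda)h(y)$, and $h(y)=\rho(y)\xi_1$ is such a function for every $\xi_1\in\mathcal{H}$; these give nonzero $\pi_{\mathcal{M}}$-invariant vectors of $\tilde{\mathcal{H}}$ although $\mathcal{H}^{\text{inv}}=\{0\}$. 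So your vector $\zeta=\pi_{\mathcal{M}}(P)\tilde{\xi}$ need not be of the form $1\otimes\xi_0$, and the invariance computation collapses precisely for coefficients of representations that are not weakly mixing.

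The proposed fallback ``in direct parallel with Lemma \ref{Lem_hat(pi)} and Corollary \ref{Cor_G->Gamma}'' does not rescue this: in the lattice case an invariant vector of the induced representation becomes a function on $G$ invariant under \emph{all} left translations, hence constant because $G$ acts transitively on itself; a general von Neumann coupling has no analogue of that transitivity, which is exactly why ``invariant $\Rightarrow$ constant'' fails in the example above. To repair your route one would have to split $\mathcal{H}$ into the subspace generated by finite-dimensional subrepresentations and its weakly mixing complement, apply \cite[Proposition 6.15]{IsPeRu} only to the latter, and treat the former by a separate argument --- which is in essence the reduction to irreducible representations that the paper performs, and at that point Property (T) for $\Lambda$ is no longer a free by-product of weak*-continuity but has to be supplied separately, as the paper does by citing \cite[Theorem 1.2]{IsPeRu}. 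The elementary parts of your proposal (positivity and unitality of $\Phi^*$, so that $\Phi(P)$ is a mean, and the identity $\langle\varphi,\Phi(P)\rangle=\langle\pi_{\mathcal{M}}(P)\tilde{\xi},\tilde{\eta}\rangle$ via Lemma \ref{Lem_Ish_rep}) are fine and agree with the paper; the gap is confined to, but fatal for, the claimed description of the invariant vectors of the induced representation.
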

\begin{proof}
The fact that $\Lambda$ has Property (T) was proved in \cite[Theorem 1.2]{IsPeRu}. Let $\hat{\Lambda}$ denote the unitary dual of $\Lambda$, i.e. the space of (equivalence classes of) irreducible unitary representations of $\Lambda$. By \cite[Proposition 2]{Val2}, in order to prove that $\Phi(P)$ is the Kazhdan projection of $C^*(\Lambda)$, we only need to show that $1_\Lambda(\Phi(P))=1$, where $1_\Lambda:C^*(\Lambda)\to\C$ is the trivial representation, and that $\pi(\Phi(P))=0$ for every $\pi\in\hat{\Lambda}\setminus\{1_\Lambda\}$. First observe that the map $\Phi^*:B(\Lambda)\to B(\Gamma)$ from Lemma \ref{Lem_Ish_rep} is unital and positive. Since $P$ is a mean on $B(\Gamma)$, then $\Phi(P)$ is a mean on $B(\Lambda)$, which shows that
\begin{align*}
1_\Lambda(\Phi(P))=\langle 1,\Phi(P)\rangle_{B(\Lambda),C^*(\Lambda)}=1.
\end{align*}
Now let $\pi\in\hat{\Lambda}\setminus\{1_\Lambda\}$, and let $\xi,\eta$ be elements of the associated Hilbert space. Let $\varphi\in B(\Lambda)$ be given by
\begin{align*}
\varphi(s)=\langle\pi(s)\xi,\eta\rangle,\quad\forall s\in\Lambda.
\end{align*}
Then
\begin{align*}
\langle\pi(\Phi(P))\xi,\eta\rangle &= \langle\varphi,\Phi(P)\rangle_{B(\Lambda),C^*(\Lambda)}\\
&= \langle\Phi^*(\varphi),P\rangle_{B(\Gamma),C^*(\Gamma)}.
\end{align*}
On the other hand, by Lemma \ref{Lem_Ish_rep}, there are $\tilde{\xi},\tilde{\eta}\in\tilde{\mathcal{H}}$ such that
\begin{align*}
\Phi^*(\varphi)(\gamma)=\langle\pi_{\mathcal{M}}(\gamma)\tilde{\xi},\tilde{\eta}\rangle,\quad\forall\gamma\in\Gamma,
\end{align*}
where $\pi_{\mathcal{M}}:\Gamma\to\mathcal{U}(\tilde{\mathcal{H}})$ is the induced representation. This shows that
\begin{align*}
\langle\pi(\Phi(P))\xi,\eta\rangle = \langle\pi_{\mathcal{M}}(P)\tilde{\xi},\tilde{\eta}\rangle.
\end{align*}
Now, since $\pi$ is irreducible, it is weakly mixing, meaning that it does not contain finite-dimensional subrepresentations; see \cite[\S 8]{Fur2} for more details. By \cite[Proposition 6.15]{IsPeRu}, $\pi_{\mathcal{M}}$ does not have non-trivial invariant vectors, which means that $\pi_{\mathcal{M}}(P)=0$. We conclude that
\begin{align*}
\langle\pi(\Phi(P))\xi,\eta\rangle = 0.
\end{align*}
Since $\xi$ and $\eta$ were arbitrary, this shows that $\pi(\Phi(P))=0$, which is exactly what we wanted to prove.
\end{proof}

Recall that the Kazhdan projection of $C^*(\Gamma)$, when it exists, is the restriction to $B(\Gamma)$ of the unique invariant mean on $\operatorname{WAP}(\Gamma)$. Therefore, if we assume that $\Gamma$ has Property (T)${}_{c_1}$ and we want to prove that $\Lambda$ has Property (T)${}_{c_2}$ for some $c_1,c_2\geq 1$, by Lemma \ref{Lem_Phi(P)} it is sufficient to show that $\Phi^*$ restricts to a weak*-weak*-continuous map $B_{c_2}(\Lambda)\to B_{c_1}(\Gamma)$. It seems plausible that the argument in Lemma \ref{Lem_Ish_rep} can be adapted to uniformly bounded representations in order to show this holds for $c_1=c_2$; however, we shall follow a different strategy and prove this fact for $c_1>c_2$. This will be sufficient for proving Theorem \ref{Thm_vNE}.

We will need the following definition from \cite{Pis2}. Let $d\geq 1$ be an integer. The algebra $M_d(\Gamma)$ consists of all the functions $\varphi:\Gamma\to\C$ such that there are Hilbert spaces $\mathcal{H}_0,\ldots,\mathcal{H}_d$ with $\mathcal{H}_0=\mathcal{H}_d=\C$, and maps $\xi_i:\Gamma\to\mathcal{B}(\mathcal{H}_i,\mathcal{H}_{i-1})$ ($i=1,\ldots,d$) such that
\begin{align}\label{dec_M_d}
\varphi(s_1\cdots s_d)=\xi_1(s_1)\cdots\xi_d(s_d),\quad\forall s_1,\ldots,s_d\in\Gamma.
\end{align}
We endow it with the norm
\begin{align*}
\|\varphi\|_{M_d(\Gamma)}=\inf\left\{\sup_{s_1\in\Gamma}\|\xi_1(s_1)\|\cdots\sup_{s_d\in\Gamma}\|\xi_d(s_d)\|\right\},
\end{align*}
where the infimum is taken over all the decompositions as in \eqref{dec_M_d}. Observe that $M_1(\Gamma)=\ell^\infty(\Gamma)$. As shown in \cite[\S 2]{Pis2}, $B_c(\Gamma)$ is contained in $M_d(\Gamma)$ and
\begin{align}\label{M_d_leq_B_c}
\|\varphi\|_{M_d(\Gamma)}\leq c^d\|\varphi\|_{B_c(\Gamma)},
\end{align}
for all $\varphi\in B_c(\Gamma)$.

\begin{lem}\label{Lem_B_c+e}
Let $\Gamma$ and $\Lambda$ be as above. For every $c\geq 1$ and $\varepsilon>0$, $\Phi^*$ restricts to a weak*-weak*-continuous map $\Phi^*:B_c(\Lambda)\to B_{c+\varepsilon}(\Gamma)$ of norm at most $\frac{c+\varepsilon}{\varepsilon}$.
\end{lem}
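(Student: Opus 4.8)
The plan is to pass through the spaces $M_d(\Gamma)$ and $M_d(\Lambda)$ and use the inequality \eqref{M_d_leq_B_c} in both groups. Since $M_1=\ell^\infty$ and $\Phi^*$ is visibly a contraction $\ell^\infty(\Lambda)\to\ell^\infty(\Gamma)$ (because $\sum_{t\in\Lambda}\operatorname{Tr}(\sigma_t(q_\Lambda)\sigma_{\gamma^{-1}}(q_\Lambda))=\operatorname{Tr}(q_\Lambda)=1$ for every $\gamma$), the proof has two parts: first, show that $\Phi^*$ restricts to a contraction $M_d(\Lambda)\to M_d(\Gamma)$ for every $d\ge 1$; second, show that uniform control of the $M_d$-norms of a function forces membership in $B_{c+\varepsilon}$. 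Combining these with \eqref{M_d_leq_B_c} gives the inclusion and the norm bound, and the weak*-continuity then follows by duality.

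\emph{Step 1.} The structural input is that the probability kernel $\gamma\mapsto\mu_\gamma$, with $\mu_\gamma(t)=\operatorname{Tr}(\sigma_t(q_\Lambda)\sigma_{\gamma^{-1}}(q_\Lambda))$, underlying the map \eqref{map_Ish} arises from a $\Lambda$-valued cocycle over the coupling $\sigma$; this is the exact analogue of the cocycle $b$ attached to a lattice in Section \ref{Subsec_latt}, and is precisely the data used in \cite{IsPeRu,Ish,Bat}. Given a factorization $\varphi(t_1\cdots t_d)=\xi_1(t_1)\cdots\xi_d(t_d)$ nearly realizing $\|\varphi\|_{M_d(\Lambda)}$, I would imitate the inducing construction of Lemma \ref{Lem_hat(pi)}: use the cocycle identity to unfold $\Phi^*(\varphi)(\gamma_1\cdots\gamma_d)$ as an average, with respect to the normalised trace $\operatorname{Tr}(q_\Lambda)=1$, of the product $\xi_1(\,\cdot\,)\cdots\xi_d(\,\cdot\,)$ evaluated along the cocycle, and then package this average into operator-valued maps $\widehat{\xi}_i\colon\Gamma\to\mathcal{B}(\widehat{\mathcal{H}}_i,\widehat{\mathcal{H}}_{i-1})$ acting on Hilbert spaces of $\mathcal{H}_i$-valued square-integrable sections over the coupling, with $\widehat{\mathcal{H}}_0=\widehat{\mathcal{H}}_d=\mathbb{C}$, so that $\Phi^*(\varphi)(\gamma_1\cdots\gamma_d)=\widehat{\xi}_1(\gamma_1)\cdots\widehat{\xi}_d(\gamma_d)$ and $\prod_i\sup_\gamma\|\widehat{\xi}_i(\gamma)\|\le\prod_i\sup_t\|\xi_i(t)\|$. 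The normalisation $\operatorname{Tr}(q_\Lambda)=1$ is exactly what keeps the constant equal to $1$; this is the $M_d$-analogue of the unitary induction in Lemma \ref{Lem_Ish_rep}.

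\emph{Step 2.} Here I would invoke the quantitative form of Pisier's description of coefficients of uniformly bounded representations from \cite{Pis2}: if $\psi\colon\Gamma\to\mathbb{C}$ satisfies $\|\psi\|_{M_d(\Gamma)}\le C\rho^{\,d}$ for all $d\ge 1$ (with $C\ge 0$, $\rho\ge 1$), then for every $\rho'>\rho$ one has $\psi\in B_{\rho'}(\Gamma)$ with $\|\psi\|_{B_{\rho'}(\Gamma)}\le\frac{\rho'}{\rho'-\rho}\,C$. The representation witnessing this is obtained by a Pytlik--Szwarc-type synthesis of the factorizations over all $d$: on a Hilbert space $\bigoplus_{d\ge 0}\mathcal{E}_d$ with $\mathcal{E}_0=\mathbb{C}$, each $\mathcal{E}_d$ carrying the $d$-th factorization and the group acting by shift-type operators between consecutive levels damped by $(\rho/\rho')^{1/2}<1$, both the uniform bound of the representation and the norm of $\psi$ as a coefficient are controlled by the geometric sum $\sum_{d\ge 0}(\rho/\rho')^{d}=\frac{\rho'}{\rho'-\rho}$. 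This is the only place where the factor $\varepsilon$ is lost, and it is the reason the present method does not yield $c_1=c_2$.

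\emph{Conclusion.} Let $\varphi\in B_c(\Lambda)$. By \eqref{M_d_leq_B_c}, $\|\varphi\|_{M_d(\Lambda)}\le c^d\|\varphi\|_{B_c(\Lambda)}$; by Step 1, $\|\Phi^*(\varphi)\|_{M_d(\Gamma)}\le c^d\|\varphi\|_{B_c(\Lambda)}$ for every $d$; by Step 2 with $C=\|\varphi\|_{B_c(\Lambda)}$, $\rho=c$ and $\rho'=c+\varepsilon$, we get $\Phi^*(\varphi)\in B_{c+\varepsilon}(\Gamma)$ with $\|\Phi^*(\varphi)\|_{B_{c+\varepsilon}(\Gamma)}\le\frac{c+\varepsilon}{\varepsilon}\|\varphi\|_{B_c(\Lambda)}$. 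Finally, the restriction of $\Phi^*$ to $\ell^\infty(\Lambda)$ is the adjoint of the map $\Phi\colon\ell^1(\Gamma)\to\ell^1(\Lambda)$ of \eqref{map_Ish}; using $B_c(\Lambda)=\tilde{A}_c(\Lambda)^*$ and $B_{c+\varepsilon}(\Gamma)=\tilde{A}_{c+\varepsilon}(\Gamma)^*$ (Proposition \ref{Prop_duality}) together with the norm bound just obtained, $\Phi$ extends to a bounded operator $\tilde{A}_{c+\varepsilon}(\Gamma)\to\tilde{A}_c(\Lambda)$ of norm at most $\frac{c+\varepsilon}{\varepsilon}$ (by duality, as in Lemma \ref{Lem_Psi*=Phi}), and $\Phi^*\colon B_c(\Lambda)\to B_{c+\varepsilon}(\Gamma)$ is exactly its adjoint, hence weak*-weak*-continuous. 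The main obstacle is Step 2 — assembling the a priori unrelated factorizations of $\Phi^*(\varphi)$ at different lengths $d$ into a single uniformly bounded representation with the geometric-series estimate — where one must appeal to, or reprove, Pisier's construction; a secondary technical point is making the cocycle of Step 1 precise in the possibly non-commutative setting of the coupling, i.e.\ checking that $\gamma\mapsto\mu_\gamma$ has the cocycle property needed to transport a $d$-fold factorization from $\Lambda$ to $\Gamma$.
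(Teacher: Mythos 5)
Your proposal is correct and follows essentially the same route as the paper: the paper also passes through the spaces $M_d$, citing \cite[Lemma 3.1]{Bat} for the fact that $\Phi^*$ is a contraction $M_d(\Lambda)\to M_d(\Gamma)$ (which your Step 1 sketches a proof of) and \cite[Theorem 2.6]{Pis2} for the resummation criterion of your Step 2, then combines these with \eqref{M_d_leq_B_c} and the geometric series $\sum_{d\ge 0}\bigl(\tfrac{c}{c+\varepsilon}\bigr)^d=\tfrac{c+\varepsilon}{\varepsilon}$, with weak*-continuity obtained exactly as you say, from the definition of the norm on $\tilde{A}_c$ via the predual duality.
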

\begin{proof}
As shown in \cite[Lemma 3.1]{Bat}, for every $d\geq 1$, $\Phi^*$ restricts to a bounded map $M_d(\Lambda)\to M_d(\Gamma)$ of norm $1$. Let $\varphi\in B_c(\Lambda)$. From \eqref{M_d_leq_B_c}, we see that
\begin{align*}
\|\Phi^*(\varphi)\|_{M_d(\Gamma)}\leq \|\varphi\|_{M_d(\Lambda)} \leq c^d\|\varphi\|_{B_c(\Lambda)},
\end{align*}
for all $d\geq 1$. On the other hand, by \cite[Theorem 2.6]{Pis2},
\begin{align*}
\|\Phi^*(\varphi)\|_{B_{c+\varepsilon}(\Gamma)} &\leq |\Phi^*(\varphi)(e)|+\sum_{d\geq 1}(c+\varepsilon)^{-d}\|\Phi^*(\varphi)\|_{M_d(\Gamma)}\\
&\leq |\varphi(e)|+\sum_{d\geq 1}\left(\frac{c}{c+\varepsilon}\right)^d\|\varphi\|_{B_c(\Lambda)}\\
&\leq \left(\frac{c+\varepsilon}{\varepsilon}\right)\|\varphi\|_{B_c(\Lambda)}.
\end{align*}
This shows that the map $\Phi^*:B_c(\Lambda)\to B_{c+\varepsilon}(\Gamma)$ has norm at most $\frac{c+\varepsilon}{\varepsilon}$. The fact that it is weak*-weak*-continuous follows from the definition of the norm of $\tilde{A}_c(\Lambda)$.
\end{proof}

\begin{proof}[Proof of Theorem \ref{Thm_vNE}]
Let $\Gamma$ and $\Lambda$ be countable groups such that $\Gamma\sim_{\operatorname{vNE}}\Lambda$, and assume that $\Gamma$ satisfies Property (T)${}_c$ for some $c\geq 1$. By Lemmas \ref{Lem_Phi(P)} and \ref{Lem_B_c+e}, $\Lambda$ satisfies Property (T)${}_{c-\varepsilon}$ for every $\varepsilon>0$. This shows that $c_{\operatorname{ub}}(\Lambda)\geq c_{\operatorname{ub}}(\Gamma)$. Reversing the roles of $\Gamma$ and $\Lambda$, we get $c_{\operatorname{ub}}(\Lambda)=c_{\operatorname{ub}}(\Gamma)$.
\end{proof}

\section{{\bf Rank 1 Lie groups}}\label{Sec_Lie}

In this section, we prove Theorem \ref{Thm_rank1}. Let $G$ be one the following rank 1 Lie groups: the isometry group of the $n$-dimensional quaternionic hyperbolic space $\operatorname{Sp}(n,1)$ ($n\geq 2$) or the exceptional group $F_{4,-20}$. In \cite{Doo}, Dooley constructed a family of uniformly bounded representations $(\pi_\zeta)$ of $G$ that approximates the trivial representation in such a way that $|\pi_\zeta|$ is uniformly bounded. Furthermore, this uniform bound is given by the Cowling--Haagerup constant of $G$; see \cite{CowHaa} for details on weak amenability and the Cowling--Haagerup constant.

We now review the elements of Dooley's construction that we will need in the proof of Theorem \ref{Thm_rank1}; we refer the reader to \cite{Doo} for a much more detailed presentation. Our main references for the structure of Lie groups and Lie algebras are the books \cite{Hel} and \cite{Kna}. For $G$ as above, let
\begin{align*}
\mathfrak{g}=\mathfrak{k}+\mathfrak{p}
\end{align*}
be the Cartan decomposition of its Lie algebra. Here $\mathfrak{k}$ and $\mathfrak{p}$ are the eigenspaces for the Cartan involution $\theta:\mathfrak{g}\to\mathfrak{g}$, associated to the eigenvalues $1$ and $-1$ respectively; see \cite[\S VI.2]{Kna} for details. We also have
\begin{align*}
\mathfrak{g}=\mathfrak{m}+\mathfrak{a}+\mathfrak{n}+\overline{\mathfrak{n}},
\end{align*}
where $\mathfrak{a}$ is a maximal abelian subalgebra of $\mathfrak{p}$, $\mathfrak{m}$ is the centraliser of $\mathfrak{a}$ in $\mathfrak{k}$, and
\begin{align*}
\mathfrak{n}&=\mathfrak{g}_\alpha+\mathfrak{g}_{2\alpha}, & \overline{\mathfrak{n}}&=\mathfrak{g}_{-\alpha}+\mathfrak{g}_{-2\alpha}.
\end{align*}
Here $\{-2\alpha,-\alpha,\alpha,2\alpha\}$ is the set of roots associated to $\mathfrak{a}$, and
\begin{align*}
\mathfrak{g}_\beta=\{X\in\mathfrak{g}\ \mid\ [H,X]=\beta(H)X\quad \forall H\in\mathfrak{a}\};
\end{align*}
see \cite[\S III.4]{Hel} for more details. At the Lie group level, this translates into the Bruhat big cell decomposition
\begin{align*}
\dot{G}=\overline{N}MAN,
\end{align*}
where $\dot{G}$ is a dense open submanifold of $G$, $A$ is a 1-dimensional abelian subgroup, and $N,\overline{N}$ are nilpotent subgroups. We denote by $r$ the homogeneous dimension:
\begin{align*}
r=\dim\mathfrak{g}_\alpha+2\dim\mathfrak{g}_{2\alpha}.
\end{align*}
The \textit{Weyl element} $w$ is a representative in $G$ of the non-trivial element of the Weyl group of $(\mathfrak{g},\mathfrak{a})$. It acts on $\mathfrak{a}$ via the adjoint representation $\operatorname{Ad}:G\to\operatorname{GL}(\mathfrak{g})$ by
\begin{align*}
\operatorname{Ad}(w)H=-H,\quad\forall H\in\mathfrak{a}.
\end{align*}
This implies that $\operatorname{Ad}(w)\mathfrak{n}=\overline{\mathfrak{n}}$, and therefore $wNw=\overline{N}$.

Let $C_{00}^\infty(\overline{N})$ be the space of smooth, compactly supported functions on $\overline{N}$. Then $G$ acts on $C_{00}^\infty(\overline{N})$ by linear transformations. In order to describe this action, we will identify $v\in\overline{N}$ with $(X,Y)\in\mathfrak{g}_\alpha \oplus \mathfrak{g}_{2\alpha}$. Fix $H_\alpha\in\mathfrak{a}$ such that $\alpha(H_\alpha)=1$, and define $a_t=\operatorname{exp}(tH_\alpha)$ for all $t\in\R$. For all $\xi\in\R$, we define $\pi_\xi$ as follows:
\begin{align}\label{pi_zeta}
\pi_\xi(a_t)f(X,Y)&=e^{t(\xi+r)/2}f(e^tX,e^{2t}Y),\quad\forall t\in\R,\notag\\
\pi_\xi(m)f(X,Y)&=f(\operatorname{Ad}(m)X,\operatorname{Ad}(m)Y),\quad\forall m\in M,\notag\\
\pi_\xi(u)f(v)&=f(vu),\quad\forall u\in\overline{N},\notag\\
\pi_\xi(w)f(v)&=f(v^\dagger),
\end{align}
where
\begin{align*}
(X,Y)^\dagger=\frac{1}{\mathcal{N}(X,Y)^4}\left(|X|^2X+[\theta X,Y],-Y\right),
\end{align*}
and
\begin{align*}
\mathcal{N}(X,Y)=(|X|^4+|Y|^2)^{1/4}.
\end{align*}
Let us mention that in \cite{Doo} $\xi$ is allowed to take values in $\C$; however, we will not need that level of generality. Now assume that $\dim\mathfrak{g}_\alpha+2<\xi<r$. For each $\eta>1$, we can define a scalar product on $C_{00}^\infty(\overline{N})$ by
\begin{align*}
\langle f,g\rangle_{\mathcal{H}_{\xi,\eta}} = \langle J_{\xi,\eta}\ast f,g\rangle_{L^2(\bar{N})},\quad\forall f,g\in C_{00}^\infty(\overline{N}).
\end{align*}
Here $J_{\xi,\eta}$ is defined as
\begin{align*}
J_{\xi,\eta}=(\eta\Omega_{\xi-r} -1)\mathcal{N}^{\xi-r},
\end{align*}
where $\Omega_{\xi-r}$ is a map with the following behaviour. If we write $\varepsilon=\frac{r-\xi}{2}$, then
\begin{align*}
\Omega_{\xi-r}(X,Y)=\phi(\varepsilon)\varphi_\varepsilon\left(\frac{|Y|}{\mathcal{N}(X,Y)^2}\right),\quad\forall X\in\mathfrak{g}_{\alpha},\ \forall Y\in\mathfrak{g}_{2\alpha},
\end{align*}
where $\varphi_\varepsilon$ is a $C^\infty$ function such that $\varphi_\varepsilon\to 1$ uniformly as $\varepsilon\to 0$, and $\phi(\varepsilon)$ converges to a number $\phi(0)$ as $\varepsilon\to 0$; see \cite[Proposition 3.6]{Doo}.\footnote{In \cite{Doo}, it is stated that $\phi(0)=0$. This seems to be a typo because this value is not consistent with the formula in \cite[Proposition 3.4]{Doo}.} We define $\mathcal{H}_{\xi,\eta}$ as the completion of $C_{00}^\infty(\overline{N})$ for this scalar product. The following was proved in \cite[Theorem 2.1]{Doo} and \cite[Theorem 3.1]{Doo}.

\begin{thm}[Dooley]\label{Thm_Doo}
Let $G=\operatorname{Sp}(n,1)$ ($n\geq 2$) or $G=F_{4,-20}$. For all $\eta>1$ and every $\xi\in(\dim\mathfrak{g}_\alpha+2,r)$, the action of $G$ on $C_{00}^\infty(\overline{N})$ described in \eqref{pi_zeta} extends to a uniformly bounded representation $\pi_{\xi,\eta}:G\to\mathcal{B}(\mathcal{H}_{\xi,\eta})$. Moreover,
\begin{align}\label{lim_eta}
\lim_{\eta\to 1}\lim_{\xi\to r}|\pi_{\xi,\eta}|=\boldsymbol\Lambda(G),
\end{align}
where $\boldsymbol\Lambda(G)$ is the Cowling--Haagerup constant \eqref{CHctt}.
\end{thm}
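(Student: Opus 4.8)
This is Dooley's theorem \cite{Doo}; I sketch the strategy, which realises the $\pi_{\xi,\eta}$ as an analytic continuation, past the unitary range, of the spherical principal series of $G$ in its boundary (noncompact) model. The formulas in \eqref{pi_zeta} are the natural action of $G$ on $\partial X\cong\overline N\cup\{\infty\}$ (where $X=G/K$ is the rank one symmetric space) twisted by the conformal weight $e^{t(\xi+r)/2}$, and the first task is to check that they define a smooth action of $G$ on $C_{00}^\infty(\overline N)$. Since $G$ is generated by $A$, $M$, $\overline N$ and the Weyl element $w$, and since $A$, $M$, $\overline N$ act by the evidently consistent formulas in \eqref{pi_zeta}, only the relations involving $w$ require attention: that the formula for $\pi_\xi(w)$ squares correctly, and that for $u\in\overline N$, rewriting $wu$ in the Bruhat form $\overline N MAN$ reproduces the inversion $v\mapsto v^\dagger$ together with the Jacobian factor $\mathcal N$. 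I would verify this directly from the root space decomposition $\mathfrak n=\mathfrak g_\alpha+\mathfrak g_{2\alpha}$ and the quaternionic (resp. octonionic) multiplication; this is routine conformal geometry of quaternionic (resp. Cayley) hyperbolic space.

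The heart of the matter is the inner product. The kernel $\mathcal N^{\xi-r}$ is, up to a constant, the kernel of the standard Knapp--Stein operator intertwining $\pi_\xi$ with its contragredient-type partner; because the restricted root system of $G$ carries both $\alpha$ and $2\alpha$, this operator ceases to be a convolution with a positive kernel once $\xi$ crosses a threshold strictly below $r$, which is exactly why the ordinary complementary series of $G$ terminates before reaching the trivial representation. Replacing $\mathcal N^{\xi-r}$ by $J_{\xi,\eta}=(\eta\Omega_{\xi-r}-1)\mathcal N^{\xi-r}$ with $\eta>1$ is the device that restores positivity, at the cost of unitarity. The plan is: compute the group Fourier transform of $J_{\xi,\eta}$ on the two-step nilpotent group $\overline N$ --- a quaternionic (resp. octonionic) Heisenberg group --- which, using that $\Omega_{\xi-r}$ reduces to the one-variable function $\varphi_\varepsilon$ of $|Y|/\mathcal N(X,Y)^2$, boils down to Bessel-type integrals; and then show that for every $\eta>1$ and every $\xi\in(\dim\mathfrak g_\alpha+2,r)$ the resulting operator-valued multiplier is nonnegative and nondegenerate, so that $\langle\cdot,\cdot\rangle_{\mathcal H_{\xi,\eta}}$ is a genuine inner product and $\mathcal H_{\xi,\eta}$ a Hilbert space. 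This positivity computation is the bulk of \cite[\S3]{Doo}.

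For uniform boundedness, $M$ and $\overline N$ act by operators controlled directly from the near-covariance of $J_{\xi,\eta}$, and for the one-parameter group $a_t$ a comparison of $\langle\cdot,\cdot\rangle_{\mathcal H_{\xi,\eta}}$ with its dilates gives an explicit bound; the Bruhat decomposition $\dot G=\overline N MAN$ then reduces $|\pi_{\xi,\eta}|$ to the operator norm of $\pi_{\xi,\eta}(w)$ up to a bounded factor. One estimates $\|\pi_{\xi,\eta}(w)f\|_{\mathcal H_{\xi,\eta}}$ by passing to the Fourier transform above and bounding the norm of the corresponding multiplier, obtaining a bound $C(\xi,\eta)$ that extends continuously to $(\xi,\eta)=(r,1)$. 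Finally one tracks the constants: as $\xi\to r$ the intertwiner degenerates, $\varphi_\varepsilon\to1$ uniformly and $\phi(\varepsilon)\to\phi(0)$, and the limiting multiplier becomes an explicit scalar depending only on $\dim\mathfrak g_\alpha$ and $\dim\mathfrak g_{2\alpha}$; evaluating it and letting $\eta\to1$ yields $2n-1$ for $\operatorname{Sp}(n,1)$ and $21$ for $F_{4,-20}$, i.e.\ the constant $\boldsymbol{\Lambda}(G)$ of \eqref{CHctt}, which gives \eqref{lim_eta}.

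The main obstacle is the harmonic analysis on $\overline N$: computing the Fourier transform of $J_{\xi,\eta}$ in closed form, proving its positivity throughout the full interval for every $\eta>1$, and controlling the operator norm of $\pi_{\xi,\eta}(w)$ precisely enough near the endpoint to extract the exact value in \eqref{lim_eta}. All of this is carried out in \cite[\S2--3]{Doo}, which I would cite for the details.
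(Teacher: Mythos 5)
The paper does not prove this statement itself: it is Dooley's theorem, imported with a citation to Theorems 2.1 and 3.1 of \cite{Doo}, so your strategy sketch together with the deferral of the positivity and norm computations to \cite{Doo} is essentially the same treatment as the paper's. Your outline of the construction (analytic continuation of the principal series in the noncompact picture, positivity of the modified kernel $J_{\xi,\eta}$ via harmonic analysis on $\overline{N}$, reduction of $|\pi_{\xi,\eta}|$ to the Weyl element, and the limit $\eta\to1$, $\xi\to r$ producing $\boldsymbol{\Lambda}(G)$) is consistent with what the paper records about Dooley's results.
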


This theorem will allow us to show that $c_{\operatorname{ub}}(G)\leq\boldsymbol\Lambda(G)$ for these groups. We will need the following characterisation of the space $\operatorname{WAP}(G)$; see \cite[Theorem 1.4]{Vee}.

\begin{thm}[Veech]\label{Thm_Vee}
Let $G$ be a non-compact connected simple Lie group with finite centre. Then $\operatorname{WAP}(G)=C_0(G)\oplus\C 1$, and
\begin{align*}
m(\varphi)=\lim_{s\to\infty}\varphi(s),\quad\forall\varphi\in \operatorname{WAP}(G),
\end{align*}
where $m$ is the unique invariant mean on $\operatorname{WAP}(G)$.
\end{thm}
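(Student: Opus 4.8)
\emph{Reduction to a single inclusion.} The inclusion $C_0(G)\oplus\C 1\subseteq\operatorname{WAP}(G)$ is elementary: the constant functions are weakly almost periodic, and for $\varphi\in C_0(G)$ and any two sequences $(g_n),(h_m)$ in $G$ the iterated limits $\lim_n\lim_m\varphi(g_nh_m)$ and $\lim_m\lim_n\varphi(g_nh_m)$ (whenever they exist) are controlled by a value of $\varphi$ if both sequences have bounded subsequences and are both $0$ otherwise, so Grothendieck's double-limit criterion places $\varphi$ in $\operatorname{WAP}(G)$. Granting the reverse inclusion $\operatorname{WAP}(G)\subseteq C_0(G)\oplus\C 1$, each $\varphi\in\operatorname{WAP}(G)$ has a unique decomposition $\varphi=\varphi_0+c\cdot 1$ with $\varphi_0\in C_0(G)$; then $\lim_{s\to\infty}\varphi(s)=c$, and since $m$ is the unique invariant mean, $m(1)=1$, and $C_0(G)$ lies in the kernel of $m$ (the remark following Theorem \ref{Thm_LIM}), we get $m(\varphi)=c$. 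So everything reduces to proving $\operatorname{WAP}(G)\subseteq C_0(G)\oplus\C 1$; equivalently, the WAP compactification of $G$ is its one-point compactification.

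\emph{The Jacobs--de Leeuw--Glicksberg splitting and the almost periodic part.} For every locally compact group $G$ there is a canonical direct-sum decomposition $\operatorname{WAP}(G)=\operatorname{AP}(G)\oplus\operatorname{WAP}_0(G)$, where $\operatorname{AP}(G)$ consists of the almost periodic functions (equivalently, the coefficients of finite-dimensional unitary representations) and $\operatorname{WAP}_0(G)$ is the flight part, consisting of those $\psi$ for which $0$ lies in the weak closure of $\{s\cdot\psi:s\in G\}$; moreover $\operatorname{WAP}_0(G)$ is a norm-closed, translation- and inversion-invariant subspace on which the unique invariant mean vanishes identically, and $\operatorname{AP}(G)\cap\operatorname{WAP}_0(G)=\{0\}$. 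Thus the theorem reduces to: (i) $\operatorname{AP}(G)=\C 1$; and (ii) $\operatorname{WAP}_0(G)\subseteq C_0(G)$. For (i): any $\varphi\in\operatorname{AP}(G)$ factors through a continuous homomorphism $\rho:G\to K$ into a compact group; $\ker\rho$ is a closed normal subgroup of the simple group $G$, hence either all of $G$ or contained in the finite centre, and the latter would embed the non-compact Lie group $G/\ker\rho$ into a compact group --- impossible. Hence $\rho$ is trivial, the Bohr compactification of $G$ is a point, and $\operatorname{AP}(G)=\C 1$.

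\emph{The flight part: the Howe--Moore mechanism.} Suppose (ii) fails, so some $\psi\in\operatorname{WAP}_0(G)$ admits $\varepsilon>0$ and $s_n\to\infty$ with $|\psi(s_n)|\geq\varepsilon$. Fix a maximal compact subgroup $K\leq G$ and a closed positive Weyl chamber $\overline{A^+}$, and use the Cartan decomposition to write $s_n=k_na_nk_n'$ with $a_n=\exp H_n$, $H_n\in\overline{A^+}$ and $\|H_n\|\to\infty$; passing to subsequences, $k_n\to k$ and $k_n'\to k'$ in $K$ and $H_n/\|H_n\|$ tends to a unit vector $H_\infty$ of $\overline{A^+}$, which singles out a contracting horospherical subgroup $N_\infty$ (the root spaces $\mathfrak g_\beta$ with $\beta(H_\infty)<0$), every element $u$ of which satisfies $a_nua_n^{-1}\to e$. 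Because the translation orbit of $\psi$ is relatively weakly compact, a further subsequence makes the translates $s_n^{-1}\cdot\psi$ (whose value at $e$ is $\psi(s_n)$) converge weakly, hence pointwise, to a function $\Psi\in\operatorname{WAP}(G)$ with $|\Psi(e)|\geq\varepsilon$, so $\Psi\neq 0$; moreover $\Psi$ lies in the weak closure of the orbit of $\psi$, hence $\Psi\in\operatorname{WAP}_0(G)$. Now one runs the Mautner phenomenon: using that WAP functions are bounded and uniformly continuous on both sides, one first absorbs the convergent compact factors $k_n\to k$, $k_n'\to k'$ into the limit, and then, for $u\in N_\infty$, the identity
\[
\psi\bigl(k\,a_n\,u\,k'\,t\bigr)=\psi\bigl(k\,(a_nua_n^{-1})\,a_n\,k'\,t\bigr)
\]
together with $a_nua_n^{-1}\to e$ and one-sided uniform continuity forces, in the limit, an invariance of $\Psi$ under translation by a conjugate of $N_\infty$. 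Repeating the argument with $\check\psi$ and with $s_n^{-1}$ (whose Cartan middle term, after conjugating by the non-trivial Weyl element, is again of the form $\exp H_n$ but now produces the opposite horospherical subgroup) yields a non-zero flight function invariant under the opposite horospherical subgroup as well; since $N_\infty$ and its opposite generate the simple group $G$, one obtains a non-zero $G$-invariant element of $\operatorname{WAP}_0(G)$, necessarily a non-zero constant. This contradicts $\operatorname{AP}(G)\cap\operatorname{WAP}_0(G)=\{0\}$, proving (ii).

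\emph{The main obstacle.} Essentially all the content is in step (ii): it is a global form of the Howe--Moore (Mautner) phenomenon for the simple group $G$, and the work lies in transcribing the classical proof --- contracting horospherical subgroups via the Cartan decomposition --- from matrix coefficients of unitary representations to weakly almost periodic functions. The two delicate points are (a) extracting the uniform-over-compacta control in the Mautner estimate from equicontinuity of weakly relatively compact families of bounded continuous functions, and (b) the bookkeeping that assembles invariance under opposite horospherical subgroups (obtained by applying the contraction argument to $\psi$ and to $\check\psi$, and to $s_n$ and to $s_n^{-1}$) into full $G$-invariance of a non-zero flight function.
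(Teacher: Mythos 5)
The paper does not prove this statement at all: it is quoted from Veech and cited as \cite[Theorem 1.4]{Vee}, then used as a black box in the proof of Theorem \ref{Thm_rank1}. So your proposal has to stand on its own as an independent proof, and most of it does: the reduction to $\operatorname{WAP}(G)\subseteq C_0(G)\oplus\C 1$, the de Leeuw--Glicksberg splitting $\operatorname{WAP}(G)=\operatorname{AP}(G)\oplus\operatorname{WAP}_0(G)$ with the mean vanishing on the flight part, the triviality of $\operatorname{AP}(G)$ for a non-compact simple $G$ with finite centre, and the contraction step are all sound in outline. In particular the contraction step does go through: extracting a weak (hence pointwise) limit $\Psi$ of the translates $\psi(s_n\,\cdot\,)$ and using one-sided uniform continuity of WAP functions, one gets a non-zero $\Psi\in\operatorname{WAP}_0(G)$ left-invariant under a conjugate of the contracted horospherical subgroup $N_\infty$ (one small caution: only perturb at the outer ends of the word $k_na_nk_n't$, since a perturbation in the middle is not controlled uniformly in $t$ by either one-sided uniform continuity; pointwise-in-$t$ arguments suffice and avoid the issue).

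The genuine gap is the final step. Running the argument again with $\check\psi$ or with $s_n^{-1}$ produces a \emph{second, a priori different} flight function, invariant under (a conjugate of) the opposite horospherical subgroup. Two different functions, each invariant under one of two opposite subgroups, do not yield a single non-zero function invariant under the subgroup they generate, so the sentence ``one obtains a non-zero $G$-invariant element of $\operatorname{WAP}_0(G)$'' is a non sequitur as written (and the two subgroups are conjugated by unrelated compact elements besides). The natural repair --- iterate the construction on the single function $\Psi$, which indeed fails to vanish at infinity since it is invariant under a non-compact subgroup and $\Psi(e)\neq 0$, and try to accumulate invariance --- is exactly the Mautner-phenomenon step: in the unitary Howe--Moore argument it is powered by Hilbert-space rigidity (equality in Cauchy--Schwarz, positive-definiteness of the coefficient), for which no substitute is offered here, and moreover each new weak limit need not retain the invariance already gained, so the ``bookkeeping'' you flag in point (b) is not bookkeeping but the actual content of the theorem. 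Until that step is supplied (this is where Veech's proof, and later proofs via representations on reflexive Banach spaces, do their real work), the proposed proof is incomplete; the correct move in the context of this paper is simply to cite \cite{Vee}, as the author does.
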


\begin{lem}\label{Lem_m(phi)=0}
Let $G$ and $\pi_{\xi,\eta}$ be as in Theorem \ref{Thm_Doo}, and let $m$ be the unique invariant mean on $\operatorname{WAP}(G)$. For every $f\in C_{00}^\infty(\overline{N})$, the function $\varphi:G\to\C$ defined by
\begin{align*}
\varphi(s)=\langle\pi_{\xi,\eta}(s)f,f\rangle,\quad\forall s\in G,
\end{align*}
satisfies $m(\varphi)=0$.
\end{lem}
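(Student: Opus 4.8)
The plan is to use Veech's theorem (Theorem~\ref{Thm_Vee}) to reduce the claim to a decay estimate for $\varphi$ along the one–parameter subgroup $A$. Since $\varphi$ is a coefficient of $\pi_{\xi,\eta}$, it lies in $B_c(G)$ with $c=|\pi_{\xi,\eta}|$, hence in $\operatorname{WAP}(G)$ by Lemma~\ref{Lem_Bc_WAP}. As $G$ is a non-compact connected simple Lie group with finite centre, Theorem~\ref{Thm_Vee} gives $\operatorname{WAP}(G)=C_0(G)\oplus\C\mathbf 1$ and $m(\varphi)=\lim_{s\to\infty}\varphi(s)$; in particular $\varphi-m(\varphi)\mathbf 1\in C_0(G)$, so $m(\varphi)$ is the limit of $\varphi(s_n)$ along \emph{any} sequence $(s_n)$ in $G$ leaving every compact set. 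It therefore suffices to produce one such sequence along which $\varphi\to 0$, and the natural choice is $s_n=a_{t_n}$ with $t_n\to+\infty$: since $A$ is a closed, non-compact (one–parameter) subgroup of $G$, the elements $a_{t_n}$ leave every compact subset of $G$.

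The next step is to write $\varphi(a_t)$ out explicitly. Put $s=e^t$ and let $\delta_s(X,Y)=(sX,s^2Y)$ be the associated automorphism of $\overline N$; its Jacobian with respect to Haar measure on $\overline N$ is $s^r$, where $r$ is the homogeneous dimension. By \eqref{pi_zeta}, $\pi_{\xi,\eta}(a_t)f=\pi_\xi(a_t)f=s^{(\xi+r)/2}\,(f\circ\delta_s)$ on $C_{00}^\infty(\overline N)$, so by the definition of the scalar product of $\mathcal H_{\xi,\eta}$ and the change of variables $w=\delta_s u$ in the convolution integral (the measure on $\overline N$ being left and right invariant, as $\overline N$ is unimodular),
\begin{align*}
\varphi(a_t)
&=\langle\pi_{\xi,\eta}(a_t)f,f\rangle_{\mathcal H_{\xi,\eta}}
=s^{(\xi+r)/2}\langle J_{\xi,\eta}*(f\circ\delta_s),f\rangle_{L^2(\overline N)}\\
&=s^{(\xi-r)/2}\int_{\overline N}\int_{\overline N}J_{\xi,\eta}\bigl(v\,\delta_{1/s}(w^{-1})\bigr)\,f(w)\,\overline{f(v)}\,dw\,dv .
\end{align*}
Since $\xi<r$, the prefactor $s^{(\xi-r)/2}=e^{t(\xi-r)/2}$ tends to $0$ as $t\to+\infty$, so the lemma follows once we check that the double integral stays bounded as $s\to+\infty$.

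To bound the integral I would use that $\Omega_{\xi-r}$ is bounded on $\overline N$ — it is, for fixed $\xi,\eta$, a fixed continuous function of the scale–invariant quantity $|Y|/\mathcal N(X,Y)^2\in[0,1]$ — so that $|J_{\xi,\eta}|\le C\,\mathcal N^{\xi-r}$ for a constant $C=C(\xi,\eta)$, together with the local integrability of $\mathcal N^{\xi-r}$ on $\overline N$: in $\delta$–homogeneous coordinates the integral near the identity behaves like $\int_0\rho^{\xi-1}\,d\rho$, which converges precisely because $\xi>\dim\mathfrak g_\alpha+2>0$. Estimating $|f|\le\|f\|_\infty\mathbf 1_{\operatorname{supp}f}$ and, for fixed $w$, substituting $v'=v\,\delta_{1/s}(w^{-1})$ (a right translation, hence measure–preserving), the inner integral becomes $\int_{\operatorname{supp}f\cdot\delta_{1/s}(w^{-1})}|J_{\xi,\eta}(v')|\,dv'\le\int_K|J_{\xi,\eta}|<\infty$, where $K$ is a fixed compact set containing $\operatorname{supp}f\cdot\delta_\lambda(w^{-1})$ for all $0<\lambda\le 1$ and $w\in\operatorname{supp}f$. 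This bound is uniform in $s\ge 1$, so the double integral is at most $\|f\|_\infty^2\,|\operatorname{supp}f|\,\int_K|J_{\xi,\eta}|$, and hence $|\varphi(a_t)|\le C_f\,e^{t(\xi-r)/2}\to 0$ as $t\to+\infty$, giving $m(\varphi)=0$.

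The only genuinely delicate point is this uniform control of the convolution integral near the identity of $\overline N$, where $J_{\xi,\eta}$ carries the singularity $\mathcal N^{\xi-r}$; it is exactly where the hypothesis $\xi>0$ is used (it makes $\mathcal N^{\xi-r}$ locally integrable), and it is handled cleanly by the translation–invariance substitution that confines the integral to a fixed compact set. Everything else is bookkeeping; note in particular that smoothness of $f$ plays no role (boundedness and compact support suffice), and that the direction $t\to+\infty$ is forced, since it is there that the prefactor $e^{t(\xi-r)/2}$ decays, $\xi-r$ being negative.
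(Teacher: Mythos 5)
Your argument is correct, but it reaches the conclusion by a genuinely different route than the paper. Both proofs begin the same way: $\varphi\in B_c(G)\subset\operatorname{WAP}(G)$, and Veech's theorem reduces the claim to showing $\varphi(s_n)\to 0$ along a single sequence leaving every compact subset of $G$. The paper sends $s\to\infty$ inside $\overline{N}$: since $\pi_{\xi,\eta}(u)f(v)=f(vu)$, $f$ has compact support and $J_{\xi,\eta}$ vanishes at infinity, the restriction $u\mapsto\varphi(u)$ lies in $C_0(\overline{N})$, and no analysis of the kernel's singularity is needed. You instead send $s\to\infty$ along $A$, use the dilation formula for $\pi_\xi(a_t)$, and after the change of variables obtain the explicit bound $|\varphi(a_t)|\leq C_f\,e^{t(\xi-r)/2}$; the uniform control of the convolution integral rests on $|J_{\xi,\eta}|\leq C\,\mathcal{N}^{\xi-r}$ (boundedness of $\Omega_{\xi-r}$ for fixed $\xi,\eta$), local integrability of $\mathcal{N}^{\xi-r}$ near the identity (where $\xi>0$ enters), and the observation that $\operatorname{supp}f\cdot\delta_{1/s}(w^{-1})$ stays in a fixed compact set for $s\geq 1$ and $w\in\operatorname{supp}f$ — all of which check out, and the two convolution conventions give the same formula, so there is no hidden issue there. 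What each approach buys: yours is more computational but quantitative, producing an explicit exponential decay rate of the coefficient in the $A$-direction and using only properties of $J_{\xi,\eta}$ that are already implicit in the definition of $\mathcal{H}_{\xi,\eta}$; the paper's is softer and shorter, needing only that $J_{\xi,\eta}\in C_0(\overline{N})$ and that translations by $\overline{N}$ push the support of $f$ to infinity. Both are complete proofs of the lemma.
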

\begin{proof}
By Lemma \ref{Lem_Bc_WAP}, $\varphi$ is indeed an element of $\operatorname{WAP}(G)$. By Theorem \ref{Thm_Vee},
\begin{align*}
m(\varphi)=\lim_{s\to\infty}\varphi(s).
\end{align*}
Recall that
\begin{align*}
\pi_{\xi,\eta}(u)f(s)=f(su),
\end{align*}
for all $u,s\in\overline{N}$. Moreover, since $f$ has compact support and $J_{\xi,\eta}$ tends to $0$ at infinity, $J_{\xi,\eta}\ast f$ belongs to $C_0(\overline{N})$. By the same argument, the function
\begin{align*}
u\in\overline{N}\mapsto\varphi(u)
\end{align*}
belongs to $C_0(\overline{N})$. Hence
\begin{align*}
m(\varphi)=\lim_{s\to\infty}\varphi(s)=\lim_{\substack{u\in\overline{N}\\ u\to\infty}}\varphi(u)=0.
\end{align*}
\end{proof}

Now we are ready to prove Theorem \ref{Thm_rank1}.

\begin{proof}[Proof of Theorem \ref{Thm_rank1}]
Let $(\pi_{\xi,\eta})_{\xi,\eta}$ be the family of representations given by Theorem \ref{Thm_Doo}. Fix $f_0\in C_{00}^\infty(\overline{N})$ such that $f_0\geq 0$ and $\|f_0\|_1=1$, and define
\begin{align*}
f_\eta=(\eta \phi(0)-1)^{-\frac{1}{2}} f_0 \in \mathcal{H}_{\xi,\eta}.
\end{align*}
Define also
\begin{align*}
g_\xi^{(s)}=\pi_{\xi,\eta}(s)f_0\in C_{00}^\infty(\overline{N}),
\end{align*}
for every $s\in G$ and $\xi\leq r$. Observe that this definition does not depend on $\eta$. Moreover $g_\xi^{(s)}$ depends on $\xi$ only for $s$ in $A$. Since $J_{\xi,\eta}\xrightarrow[\xi\to r]{}(\eta \phi(0)-1)$ almost everywhere, by the dominated convergence theorem,
\begin{align}\label{phi_e(s)}
\langle\pi_{\xi,\eta}(s)f_\eta,f_\eta\rangle_{\mathcal{H}_{\xi,\eta}} &= (\eta \phi(0)-1)^{-1}\langle J_{\xi,\eta}\ast g_\xi^{(s)},f_0\rangle_{L^2(\bar{N})}\notag\\
&\xrightarrow[\xi\to r]{} \langle 1\ast g_r^{(s)},f_0\rangle_{L^2(\bar{N})},
\end{align}
for all $s\in G$. In particular, for all $u\in\overline{N}$,
\begin{align*}
\langle\pi_{\xi,\eta}(u)f_\eta,f_\eta\rangle_{\mathcal{H}_{\xi,\eta}} 
&\xrightarrow[\xi\to r]{} \int_{\bar{N}}\int_{\bar{N}} f_0(s^{-1}tu)f_0(t)\, ds\, dt\\
&=\|f_0\|_1^2\\
&=1.
\end{align*}
Here we used the fact that $\overline{N}$ is unimodular, which is true because it is nilpotent. This also shows that
\begin{align*}
\lim_{\xi\to r}\|f_\eta\|_{\mathcal{H}_{\xi,\eta}}=1.
\end{align*}
Now let $c>\boldsymbol\Lambda(G)$. By \eqref{lim_eta}, we may take $\eta>1$ such that
\begin{align*}
\lim_{\xi\to r}|\pi_{\xi,\eta}|< c.
\end{align*}
For every $n\in\N$ large enough, define $\varphi_n\in B_{c}(G)$ by
\begin{align*}
\varphi_n(s)=\left\langle\pi_{r-\frac{1}{n},\eta}(s)f_\eta,f_\eta\right\rangle,\quad\forall s\in G.
\end{align*}
By Lemma \ref{Lem_m(phi)=0}, we have $m(\varphi_n)=0$ for all $n$. On the other hand, since $(\varphi_n)$ is a bounded sequence in $B_{c}(G)$, and $\tilde{A}_{c}(G)$ is separable, there is a subsequence $(\varphi_{n_k})$ and $\varphi\in B_{c}(G)$ such that
\begin{align*}
\varphi_{n_k}\to\varphi\quad\text{ in }\ \sigma(B_{c}(G),\tilde{A}_{c}(G)).
\end{align*}
Moreover, by \eqref{phi_e(s)}, we must have
\begin{align*}
\varphi(s)=\langle 1\ast g_r^{(s)},f_0\rangle_{L^2(\bar{N})},\quad\forall s\in G.
\end{align*}
In particular, for all $u\in\overline{N}$,
\begin{align*}
\varphi(u)=1.
\end{align*}
Thus
\begin{align*}
m(\varphi)=\lim_{s\to\infty}\varphi(s)=1.
\end{align*}
Therefore $m$ is not $\sigma(B_{c}(G),\tilde{A}_{c}(G))$-continuous. Equivalently, $G$ does not satisfy Property (T)${}_{c}$. Since this holds for every $c>\boldsymbol{\Lambda}(G)$, we conclude that $c_{\operatorname{ub}}(G)\leq \boldsymbol{\Lambda}(G)$. The inequality $c_{\operatorname{ub}}(G)>1$ follows from Lemma \ref{Lem_(T)->c} and the fact that $G$ has Property (T), which was proved in \cite{Kos}.
\end{proof}

Recall that a group $G$ has Property (PH)${}_c$ if it admits a proper affine action whose linear part is in $\mathcal{R}_c(G)$. As mentioned in the introduction, for simple Lie groups, there is a dichotomy between Property (FH)${}_c$ and Property (PH)${}_c$. This allows us to show that the groups $\operatorname{Sp}(n,1)$ and $F_{4,-20}$ satisfy Property (PH)${}_c$ for $c>\boldsymbol{\Lambda}(G)$.

\begin{proof}[Proof of Corollary \ref{Cor_rank1}]
Let $G$ be as above and $c>\boldsymbol{\Lambda}(G)$. By Theorem \ref{Thm_rank1}, $G$ does not have Property (T)${}_c$. Hence, by Proposition \ref{Prop_Guich_c}, it does not have Property (FH)${}_c$. This means that $G$ admits an unbounded cocycle $b$ for some $\pi\in\mathcal{R}_c(G)$. On the other hand, by \cite[Theorem 1.4]{Cor}, $G$ has Property $PL$, meaning that any uniformly Lipschitz action of $G$ on a metric space is either proper or it has bounded orbits; see \cite[Proposition 1.2]{Cor}. We conclude that the cocycle $b$ is necessarily proper. Therefore $G$ has Property (PH)${}_c$.
\end{proof}

\section{{\bf Random groups}}\label{Sec_random}
Now we focus on lower bounds for $c_{\operatorname{ub}}$ for random groups in the Gromov and triangular density models. In the Gromov model $\mathcal{G}(n,l,d)$, one fixes a density $d\in(0,1)$ and a number $n$. A group $\Gamma$ in $\mathcal{G}(n,l,d)$ is given by $\Gamma=\langle S|R\rangle$, where $|S|=n$ and $R$ is a set of relators, chosen uniformly among all subsets of cardinality $(2n-1)^{ld}$ of the set of cyclically reduced relators of length $l$. The idea is to study the behaviour when $l\to\infty$. The triangular model $\mathcal{M}(n,d)$ is defined in similar fashion, but in this case one considers only relators of length $3$, and studies the behaviour when the cardinality $n$ tends to infinity.

\.{Z}uk \cite{Zuk} proved that groups in the triangular model with $d > \frac{1}{3}$ have Property (T) with overwhelming probability (w.o.p.), meaning that
\begin{align*}
\lim_{n\to\infty}\P\left(\Gamma\text{ in }\mathcal{M}(n,d)\text{ has Property (T)}\right)=1.
\end{align*}
This result was extended to the Gromov model in \cite{KotKot}. More precisely, for $d > \frac{1}{3}$ and $n$ fixed,
\begin{align*}
\lim_{l\to\infty}\P\left(\Gamma\text{ in }\mathcal{G}(n,l,d)\text{ has Property (T)}\right)=1.
\end{align*}
Moreover, if $d<\frac{1}{2}$, these groups are infinite and hyperbolic w.o.p.; see \cite[\S 9.B]{Gro} for the Gromov model and \cite[Theorem 3]{Zuk} for the triangular model.

In \cite[Theorem 6.3]{Now}, Nowak took these ideas one step further and considered uniformly Lipschitz affine actions of random groups. In our language, his result can be stated as follows.

\begin{thm}[Nowak]\label{Thm_Now}
For every $c\in[1,\sqrt{2})$, groups in the Gromov density model with $\frac{1}{3} < d < \frac{1}{2}$ have Property (FH)${}_c$ w.o.p.
\end{thm}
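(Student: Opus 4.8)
This is precisely \cite[Theorem 6.3]{Now}, and the plan is to recall Nowak's argument and recast it in the language of Property (FH)$_c$. The proof rests on two ingredients: a Garland/\.{Z}uk-type fixed point criterion valid for uniformly bounded representations, and the fact that, at density $d>\tfrac13$, a random group satisfies a spectral gap condition on the links of a suitable complex that is much stronger than the one needed for Property (T).

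For the first ingredient, I would not renorm at all but simply work on the Hilbert space $\mathcal{H}_\pi$, remembering only that $\sup_{s}\|\pi(s)\|\leq c$ (this is the content of Lemma \ref{Lem_corr_rep}, used to pass between the uniformly bounded and the isometric Banach pictures). Suppose $\Gamma$ acts properly and cocompactly on a simply connected simplicial $2$-complex $X$ whose vertex links $\mathrm{Lk}(v)$ are all connected, and write $\lambda(v)$ for the smallest non-zero eigenvalue of the normalised Laplacian of $\mathrm{Lk}(v)$. Given $\pi\in\mathcal{R}_c(\Gamma)$ and a cocycle $b\colon\Gamma\to\mathcal{H}_\pi$, one first replaces $b$ by its harmonic representative on $X$ --- a step that uses only the Hilbert structure of $\mathcal{H}_\pi$ --- and then runs Garland's local-to-global estimate. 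In the isometric case this yields $\bigl(2\min_v\lambda(v)-1\bigr)\|b\|^2\leq 0$; the sole place where isometry of $\pi$ is used is the comparison, along each edge, between the norm of the cochain and the norms of its two endpoint values, and since $\|\pi(s)^{\pm1}\|\leq c$ this comparison costs a factor $c^2$. One therefore obtains $\bigl(\tfrac{2}{c^2}\min_v\lambda(v)-1\bigr)\|b\|^2\leq 0$, so that $\min_v\lambda(v)>\tfrac{c^2}{2}$ forces $b$ to be a coboundary, i.e. the affine action has a fixed point. Note that this threshold specialises to \.{Z}uk's $\lambda>\tfrac12$ when $c=1$, as it should. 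Equivalently one concludes that all orbits are bounded and invokes \cite[Lemma 2.14]{BFGM}.

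For the second ingredient, I would invoke the spectral analysis underlying \.{Z}uk's theorem \cite{Zuk} and its revision \cite{KotKot}: for $\tfrac13<d<\tfrac12$ and $n$ fixed, a random $\Gamma$ in $\mathcal{G}(n,l,d)$ acts properly and cocompactly, with overwhelming probability, on a simply connected (and, since $d<\tfrac12$, aspherical, see \cite[\S 9.B]{Gro}) complex $X$ as above whose vertex links are, up to controlled perturbation, dense random graphs, so that the vertex degrees --- hence the spectral gaps $\lambda(v)$ --- tend to $1$ as $l\to\infty$ w.o.p. Consequently, for each fixed $c\in[1,\sqrt{2})$ we have $\tfrac{c^2}{2}<1$, so with overwhelming probability $\min_v\lambda(v)>\tfrac{c^2}{2}$, and the criterion of the previous paragraph yields Property (FH)$_c$. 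This also explains the appearance of $\sqrt{2}$: since the relevant link spectral gap does not exceed $1$ in the limit, the constraint $\tfrac{c^2}{2}<1$ is intrinsic to this method.

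The main obstacle is making the uniformly bounded Garland estimate precise, i.e. showing that the spectral gap of the links survives the passage from an isometric to a merely $c$-bounded representation with a loss of exactly a factor $c^2$: one must verify that the harmonic-cochain machinery and the local Poincar\'e inequalities on the links $\mathrm{Lk}(v)$ still function for a $c$-bounded action on a Hilbert space, and that the resulting inequality is quantitative enough to give genuine (non-reduced) vanishing of $H^1(\Gamma,\pi)$, so that an honest fixed point --- not merely bounded orbits --- is produced. Carefully tracking the $c$-dependence through the averaging steps is the delicate part; the random-graph spectral estimates that feed in are by now standard.
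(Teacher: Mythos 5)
The paper does not prove this statement at all: it is presented as Nowak's result, and the ``proof'' consists of citing \cite[Theorem 6.3]{Now} together with the observation that an affine action whose linear part lies in $\mathcal{R}_c(G)$ is the same thing as an action by uniformly $c$-Lipschitz affine maps on a Hilbert space (plus, if one wants fixed points rather than bounded orbits, \cite[Lemma 2.14]{BFGM}). Your opening sentence does exactly this, so to the extent that your proposal is ``invoke Nowak and translate the terminology'', it coincides with the paper.

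The rest of your text is an attempted reconstruction of Nowak's argument, and there you should be honest that it is a sketch with the decisive step missing. The numerology is right: a Garland/\.{Z}uk-type criterion with threshold $\min_v\lambda(v)>c^2/2$, links of random groups with spectral gap tending to $1$, hence the constraint $c<\sqrt{2}$. But the assertion that unitarity enters the isometric argument ``in a single edge comparison'' and therefore costs exactly a factor $c^2$ is precisely what has to be proved: Garland's method uses the Hilbert structure and equivariance in several places (adjointness of $d$ and $\delta$, the local averaging identities over links, existence of a harmonic representative, and the passage from reduced to genuine vanishing of $H^1$, i.e.\ from bounded orbits to an actual fixed point). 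Establishing that all of this survives for a merely $c$-bounded linear part, with the stated quantitative loss, is the content of Nowak's fixed point theorem (via Poincar\'e inequalities on the links), and your proposal explicitly defers it as ``the main obstacle'' rather than closing it. Likewise, producing from a random group in $\mathcal{G}(n,l,d)$, $\tfrac13<d<\tfrac12$, a proper cocompact action on a simply connected $2$-complex whose links have spectral gap close to $1$ is not a perturbative remark: it is the Gromov-to-triangular reduction whose original form in \cite{Zuk} was incomplete and was repaired in \cite{KotKot} (and one must note that the fixed point property passes to quotients to transport the conclusion). So: as a citation your proof matches the paper; as a self-contained argument it has a genuine gap at the uniformly bounded Garland/Poincar\'e estimate and at the model reduction, both of which are exactly the theorems of \cite{Now} and \cite{KotKot} you would be re-proving.
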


By Proposition \ref{Prop_Guich_c}, the following is a direct consequence of Theorem \ref{Thm_Now}.

\begin{cor}
For every $c\in[1,\sqrt{2})$, groups in the Gromov density model with $\frac{1}{3} < d < \frac{1}{2}$ satisfy $c_{\operatorname{ub}}(\Gamma)\geq c$ w.o.p.
\end{cor}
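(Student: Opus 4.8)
The plan is to deduce this statement immediately from Theorem~\ref{Thm_Now} together with the generalisation of Guichardet's theorem, Proposition~\ref{Prop_Guich_c}. Fix $c\in[1,\sqrt 2)$. A group $\Gamma$ in the Gromov model $\mathcal{G}(n,l,d)$ is finitely generated, hence a countable discrete group, and in particular locally compact and $\sigma$-compact. Therefore Proposition~\ref{Prop_Guich_c} applies to $\Gamma$ and furnishes the \emph{deterministic} implication
\[
\Gamma\text{ has Property (FH)}{}_c\ \Longrightarrow\ \Gamma\text{ has Property (T)}{}_c,
\]
valid for every such $\Gamma$, irrespective of the random choice of relators.

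Next I would record the elementary observation that Property (T)${}_c$ forces $c_{\operatorname{ub}}(\Gamma)\geq c$. Since the unitary representations of $\Gamma$ lie in $\mathcal{R}_c(\Gamma)$, Property (T)${}_c$ implies Property (T)${}_1$, i.e.\ Kazhdan's Property (T); hence $\Gamma$ is not in the case where $c_{\operatorname{ub}}$ is defined to be $1$, and by the definition of $c_{\operatorname{ub}}(\Gamma)$ as the supremum of those $c'\geq 1$ for which $\Gamma$ has Property (T)${}_{c'}$, the membership of $c$ in this set gives $c_{\operatorname{ub}}(\Gamma)\geq c$ directly.

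Then the probabilistic transfer is a one-line monotonicity argument. By Theorem~\ref{Thm_Now}, for the fixed value of $c$ one has $\P\big(\Gamma\text{ in }\mathcal{G}(n,l,d)\text{ has Property (FH)}{}_c\big)\to 1$ as $l\to\infty$. Since, by the two preceding steps, the event $\{\Gamma\text{ has Property (FH)}{}_c\}$ is contained in $\{\Gamma\text{ has Property (T)}{}_c\}\subseteq\{c_{\operatorname{ub}}(\Gamma)\geq c\}$, monotonicity of probability yields $\P\big(c_{\operatorname{ub}}(\Gamma)\geq c\big)\to 1$, which is exactly the assertion with overwhelming probability.

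I do not expect a genuine obstacle here: the only hypothesis to verify before invoking Proposition~\ref{Prop_Guich_c} is the $\sigma$-compactness of the groups in the Gromov model, and this is automatic since they are countable discrete groups. All the mathematical substance is carried by Nowak's Theorem~\ref{Thm_Now} and by our generalisation of Guichardet's theorem; the corollary merely translates the conclusion ``Property (FH)${}_c$ w.o.p.'' into the lower bound on $c_{\operatorname{ub}}$.
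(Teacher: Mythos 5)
Your argument is correct and is exactly the paper's route: the corollary is stated as a direct consequence of Theorem~\ref{Thm_Now} via Proposition~\ref{Prop_Guich_c}, with the passage from Property (T)${}_c$ to $c_{\operatorname{ub}}(\Gamma)\geq c$ being immediate from the definition of $c_{\operatorname{ub}}$, just as you spell out. Your additional remarks (countable discrete groups are $\sigma$-compact; the implication is deterministic, so the probability bound transfers by monotonicity) merely make explicit what the paper leaves implicit.
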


Theorem \ref{Thm_Now} was later extended in \cite{Koi} to groups acting properly on certain 2-dimensional simplicial complexes, which again gives a lower bound for $c_{\operatorname{ub}}$ for such groups.

In \cite{dLadlS}, de Laat and de la Salle studied fixed point properties for actions of random groups on uniformly curved Banach spaces. This is a very rich class of Banach spaces that contains the class $\mathcal{E}_c$ defined in Section \ref{Sec_Guich}. To every uniformly curved Banach space $E$, one can associate a constant $\varepsilon(E)>0$ that provides a spectral criterion ensuring the existence of fixed points; see \cite[Theorem A]{dLadlS}. We will not go into much detail regarding this constant since we will only focus on spaces in $\mathcal{E}_c$, for which there is a very explicit lower bound. 

Recall that a group has Property $(F_E)$ if every isometric affine action on $E$ has a fixed point. The following was proved in \cite[Theorem C]{dLadlS}; see also \cite{Opp} for a different approach providing a strengthening of this result in the case of $L^p$ spaces.

\begin{thm}[de Laat--de la Salle]\label{Thm_dL-dlS}
Let $\eta\in(0,2)$. There is a constant $A>0$ and a sequence $(u_n)$ of positive real numbers tending to $0$ such that the following holds: For all $n\in\N$ and $d\in(0,1)$ satisfying
\begin{align*}
d\geq\frac{1}{3}+\frac{\log\log n - \log(2-\eta)}{3\log n},
\end{align*}
with probability $p\geq 1-u_n$, a group in $\mathcal{M}(n,d)$ has Property $(F_E)$ for every uniformly curved space $E$ with
\begin{align*}
\varepsilon(E)\geq\left(\frac{An}{(2n-1)^{3d}}\right)^\frac{1}{2}.
\end{align*}
\end{thm}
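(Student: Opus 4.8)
The natural strategy is to combine the Banach-space-valued spectral (local-to-global) criterion of \cite[Theorem A]{dLadlS} with an analysis of the links of a random triangular presentation complex, in the spirit of the Garland method and \.{Z}uk's criterion.

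First I would set up the geometry. A group $\Gamma$ in $\mathcal{M}(n,d)$ is $\langle S\mid R\rangle$ with $|S|=n$ and $R$ a uniformly random subset of prescribed cardinality $\asymp(2n-1)^{3d}$ among the cyclically reduced words of length $3$. Since every relator has length $3$, the presentation complex (and its universal cover $X$) is a simplicial $2$-complex, and the link of each of its vertices is, up to isomorphism, one and the same finite graph $\mathcal{L}$ on the $2n$-element set $S\sqcup S^{-1}$, whose edges are read off from the triples occurring in $R$; links of edges are single points and so cause no trouble. Theorem~A of \cite{dLadlS} provides a universal constant such that, whenever $\mathcal{L}$ is connected and its normalised spectral gap $\lambda_1(\mathcal{L})$ satisfies $1-\lambda_1(\mathcal{L})<$ (a fixed fraction of) $\varepsilon(E)$, the group $\Gamma$ has Property $(F_E)$. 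This reduces the whole problem to a spectral estimate for the random graph $\mathcal{L}$.

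Next I would carry out that estimate. Up to the mild dependencies built into the model, $\mathcal{L}$ behaves like an Erd\H{o}s--R\'enyi graph on $2n$ vertices with average degree $\bar d\asymp(2n-1)^{3d}/n\asymp(2n)^{3d-1}$, and the hypothesis $d\geq\frac{1}{3}+\frac{\log\log n-\log(2-\eta)}{3\log n}$ says exactly that $\bar d$ exceeds the appropriate multiple of $\log(2n)$ --- the threshold above which such a graph is connected and has a concentrated spectrum. A standard concentration argument (matrix Chernoff bounds, or the trace/moment method for the non-trivial eigenvalues) then gives, outside an event of probability $u_n\to 0$, the bound $1-\lambda_1(\mathcal{L})\leq C\bar d^{-1/2}\asymp C\big(n/(2n-1)^{3d}\big)^{1/2}$, the rate $u_n$ and its dependence on $\eta$ recording the trade-off between how far $\bar d$ exceeds $\log(2n)$ and the strength of the concentration inequality. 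Plugging this back into the criterion of the previous paragraph and choosing $A$ large enough to absorb $C$ together with the universal constant of Theorem~A, one finds that the assumption $\varepsilon(E)\geq\big(An/(2n-1)^{3d}\big)^{1/2}$ forces $1-\lambda_1(\mathcal{L})$ below the fraction of $\varepsilon(E)$ required by Theorem~A; hence $\Gamma$ has Property $(F_E)$ with probability at least $1-u_n$.

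The main obstacle, in my view, is making the spectral step both quantitative and honest about the model. One must track the constant $C$ in the eigenvalue bound explicitly enough that the threshold on $\varepsilon(E)$ comes out with precisely the exponent $\tfrac12$ and the stated denominator, and this has to be done in the delicate regime $d\approx\tfrac13$, where $\bar d$ is only logarithmic in $n$, so near-Ramanujan behaviour is fragile and the failure probability is genuinely of order $u_n$ rather than, say, exponentially small. One also has to verify that the correlations introduced by the triangular model (the triples are not quite i.i.d., and reducedness couples incident edges of $\mathcal{L}$) are weak enough that the off-the-shelf random-graph spectral estimates survive, possibly after conditioning on a high-probability regularity event. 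The Banach-space input \cite[Theorem~A]{dLadlS} is the other technical core, but here it is invoked as a black box.
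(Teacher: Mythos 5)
This statement is not proved in the paper at all: it is imported verbatim as an external result, cited as \cite[Theorem C]{dLadlS} (with a pointer to \cite{Opp} for a related strengthening), and the paper only ever uses it as a black box in the proof of Corollary \ref{Cor_dL-dlS}. So there is no internal proof to compare yours against; the honest benchmark is the original de Laat--de la Salle argument. Measured against that, your outline does follow the actual route: reduce Property $(F_E)$ to the Banach-space spectral criterion of \cite[Theorem A]{dLadlS} applied to the (common) link graph of the triangular presentation complex, and then prove a spectral gap estimate for that random graph on $2n$ vertices with average degree of order $(2n-1)^{3d}/n$, the hypothesis on $d$ being exactly what places this degree above a $\log$-threshold governed by $\eta$.

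As a proof, however, what you have written is an outline rather than an argument, and the missing pieces are precisely the hard ones. The bound $1-\lambda_1(\mathcal{L})\leq C\,\bar d^{-1/2}$ with an explicit constant, valid with probability $1-u_n$ in the regime where $\bar d$ is only a constant multiple of $\log n$ (the case $d\approx\tfrac13$ allowed by the hypothesis, where the factor $2-\eta$ matters), is a genuine random-graph theorem and cannot be waved through as ``standard concentration''; this is where the quantitative content of Theorem C lives, and where the exponent $\tfrac12$ and the constant $A$ come from. Likewise, the relators in $\mathcal{M}(n,d)$ are a uniformly random \emph{subset} of cyclically reduced length-$3$ words, so $\mathcal{L}$ is a dependent multigraph rather than an Erd\H{o}s--R\'enyi graph, and the comparison/conditioning step you defer is a real part of the proof, not a formality (this is the same kind of issue that required care already in \.{Z}uk's Property (T) argument and its repair in \cite{KotKot}). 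Finally, the interface with \cite[Theorem A]{dLadlS} needs to be stated exactly: the criterion is a bound relating the $E$-valued Markov operator on the link to $\varepsilon(E)$, and the ``fixed fraction'' you invoke must be tracked to produce the stated threshold $\varepsilon(E)\geq\bigl(An/(2n-1)^{3d}\bigr)^{1/2}$. In short: right strategy, matching the cited source, but the proposal does not yet constitute a proof; if you want one, the efficient course is to cite \cite[Theorem C]{dLadlS} as the paper does, or else supply the spectral concentration and de-correlation arguments in full.
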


As mentioned above, we are only interested in the class $\mathcal{E}_c$. The following is a consequence of \cite[Remark 5.2]{dLadlS}.

\begin{prop}[de Laat--de la Salle]\label{Prop_dL-dlS}
There is a universal constant $K>0$ such that, for every $c\geq 1$ and every Banach space $E$ in $\mathcal{E}_c$, we have
\begin{align*}
\varepsilon(E)\geq\frac{K}{c^3}.
\end{align*}
\end{prop}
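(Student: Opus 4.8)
The proof will be short: it consists in specialising \cite[Remark 5.2]{dLadlS} to the class $\mathcal{E}_c$. Recall that, by definition, a Banach space $E\in\mathcal{E}_c$ is equipped with an isomorphism $V:E\to\mathcal{H}$ onto some Hilbert space with $\|V\|\,\|V^{-1}\|\leq c$, so in particular $E$ is $c$-isomorphic to a Hilbert space. Two ingredients are needed. The first is that Hilbert spaces are uniformly curved with a uniform lower bound on the associated constant: unravelling the definition of $\varepsilon(\cdot)$ for a Hilbert space $\mathcal{H}$, the relevant modulus is independent of $\mathcal{H}$ (it is governed by a Hilbertian computation, much as the modulus of convexity $\delta_{\mathcal{H}}$ appearing in the proof of Corollary \ref{Cor_DN} does not depend on $\mathcal{H}$), which yields a universal constant $c_0>0$ with $\varepsilon(\mathcal{H})\geq c_0$ for every Hilbert space $\mathcal{H}$. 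The second is that $\varepsilon(\cdot)$ degrades in a controlled way under isomorphism, which is exactly the estimate recorded in \cite[Remark 5.2]{dLadlS}.

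The plan is then to carry out the following steps. First, fix $E\in\mathcal{E}_c$ together with $V$ as above. Second, apply \cite[Remark 5.2]{dLadlS} to the isomorphism $V$: transporting along $V$ the inequality that defines $\varepsilon$ introduces a loss measured by the norms of the amplified operators built from $V$ and $V^{-1}$, and tracking the exponents there gives
\begin{align*}
\varepsilon(E)\;\geq\;\frac{\varepsilon(\mathcal{H})}{\bigl(\|V\|\,\|V^{-1}\|\bigr)^{3}}\;\geq\;\frac{\varepsilon(\mathcal{H})}{c^{3}}.
\end{align*}
Third, combine this with $\varepsilon(\mathcal{H})\geq c_0$ to obtain $\varepsilon(E)\geq c_0/c^{3}$, so that the statement holds with $K=c_0$. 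Since $c\geq 1$, any exponent $\leq 3$ in the intermediate bound would already suffice, so there is a comfortable margin in this last step.

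The only point requiring any care is the bookkeeping of the exponent of $c$: one must check that conjugating the defining relation for $\varepsilon$ by $V$ and $V^{-1}$ costs at most $\bigl(\|V\|\,\|V^{-1}\|\bigr)^{3}$, which is precisely what is verified in \cite[Remark 5.2]{dLadlS}. There is no genuine analytic difficulty here beyond confirming that the spaces in $\mathcal{E}_c$ lie within the scope of that remark and that the Hilbert-space constant $c_0$ can be taken universal; once these two facts are in hand the proof is immediate.
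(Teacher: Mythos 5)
Your proposal matches the paper's treatment: the paper offers no independent proof of this proposition and states it directly as a consequence of \cite[Remark 5.2]{dLadlS}, which is exactly the source you invoke for the decisive estimate $\varepsilon(E)\geq \varepsilon(\mathcal{H})/\bigl(\|V\|\,\|V^{-1}\|\bigr)^{3}\geq K/c^{3}$. The only caveat is that your splitting into a universal Hilbert-space constant plus a cubic loss under isomorphism is your own (plausible) reconstruction of what that remark contains, so the correctness of the exponent-$3$ bookkeeping rests entirely on the cited remark rather than on anything verified in your argument -- which is also the situation in the paper itself.
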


\begin{cor}\label{Cor_dL-dlS}
Let $\eta\in(0,2)$. There is a constant $K'>0$ and a sequence $(u_n)$ of positive real numbers tending to $0$ such that the following holds: For all $n\in\N$ and $d\in(0,1)$ satisfying
\begin{align*}
d\geq\frac{1}{3}+\frac{\log\log n - \log(2-\eta)}{3\log n},
\end{align*}
with probability $p\geq 1-u_n$, a group $\Gamma$ in $\mathcal{M}(n,d)$ satisfies
\begin{align*}
c_{\operatorname{ub}}(\Gamma)\geq K'\left(\frac{(2n-1)^{3d}}{n}\right)^\frac{1}{6}.
\end{align*}
\end{cor}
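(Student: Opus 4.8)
The plan is to chain together the three results of this section. Proposition~\ref{Prop_dL-dlS} gives a lower bound $\varepsilon(E)\geq K/c^{3}$ that is uniform over the whole class $\mathcal{E}_c$; Theorem~\ref{Thm_dL-dlS} guarantees, with probability at least $1-u_n$, Property $(F_E)$ for every uniformly curved space $E$ whose invariant $\varepsilon(E)$ exceeds the threshold $\left(An/(2n-1)^{3d}\right)^{1/2}$; and Proposition~\ref{Prop_Guich_c} upgrades fixed-point behaviour on the spaces of $\mathcal{E}_c$ to Property (T)$_c$. Since every $E\in\mathcal{E}_c$ is uniformly curved (as recalled in Section~\ref{Sec_random}), the whole argument reduces to choosing $c$ as large as the two bounds on $\varepsilon$ permit and then tracking constants.

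Concretely, I would fix the $\eta\in(0,2)$ of the statement, let $A>0$ and $(u_n)$ be as in Theorem~\ref{Thm_dL-dlS}, let $K>0$ be the universal constant of Proposition~\ref{Prop_dL-dlS}, and set $K'=(K/\sqrt{A})^{1/3}$. Given $n$ and $d$ as in the hypothesis, put
\[
c=\max\left\{1,\ K'\left(\frac{(2n-1)^{3d}}{n}\right)^{1/6}\right\}.
\]
If the maximum is $1$, the asserted inequality holds trivially because $c_{\operatorname{ub}}(\Gamma)\geq 1$ always; so assume $c=K'\left((2n-1)^{3d}/n\right)^{1/6}>1$. Then a direct computation gives
\[
\frac{K}{c^{3}}=\frac{K}{(K')^{3}}\left(\frac{n}{(2n-1)^{3d}}\right)^{1/2}=\sqrt{A}\left(\frac{n}{(2n-1)^{3d}}\right)^{1/2}\geq\left(\frac{An}{(2n-1)^{3d}}\right)^{1/2},
\]
so by Proposition~\ref{Prop_dL-dlS} every $E\in\mathcal{E}_c$ satisfies $\varepsilon(E)\geq\left(An/(2n-1)^{3d}\right)^{1/2}$, i.e.\ falls within the scope of Theorem~\ref{Thm_dL-dlS}.

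It then remains to run the implications. By Theorem~\ref{Thm_dL-dlS}, with probability at least $1-u_n$ a group $\Gamma$ in $\mathcal{M}(n,d)$ has Property $(F_E)$ for all such $E$, in particular for every $E\in\mathcal{E}_c$. By the renorming correspondence of Lemma~\ref{Lem_corr_rep} (exactly as in the proof of Proposition~\ref{Prop_Guich_c}), Property $(F_E)$ for every $E\in\mathcal{E}_c$ is nothing but Property (FH)$_c$: any affine action on a Hilbert space whose linear part lies in $\mathcal{R}_c(\Gamma)$ becomes, after the isomorphism $V$ of~\eqref{normV}, an isometric affine action on a space of $\mathcal{E}_c$, hence has a fixed point. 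Since $\Gamma$ is finitely generated it is $\sigma$-compact, so Proposition~\ref{Prop_Guich_c} yields Property (T)$_c$, and therefore $c_{\operatorname{ub}}(\Gamma)\geq c=K'\left((2n-1)^{3d}/n\right)^{1/6}$, which is the claim.

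I do not expect a genuine obstacle here: the content is a bookkeeping exercise. The two points that need care are (i) combining the universal constant $K$ of Proposition~\ref{Prop_dL-dlS} with the constant $A$ of Theorem~\ref{Thm_dL-dlS} into the single constant $K'$ without losing uniformity in $n$, and (ii) noting that $c$ now depends on $(n,d)$, so the probabilistic conclusion must be invoked for each fixed pair $(n,d)$ applied to the one number $c=c(n,d)$ — which is harmless since Theorem~\ref{Thm_dL-dlS} is already phrased with $n$ and $d$ fixed. The degenerate case $c=1$ should be flagged explicitly so that the stated bound is read as vacuously true whenever its right-hand side drops below $1$, which only occurs for small $n$ and is in any case absorbed by $u_n\to 0$.
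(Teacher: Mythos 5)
Your proposal is correct and follows essentially the same route as the paper: the same chain Proposition~\ref{Prop_dL-dlS} $\to$ Theorem~\ref{Thm_dL-dlS} $\to$ Lemma~\ref{Lem_corr_rep}/Proposition~\ref{Prop_Guich_c}, with the same constant $K'=(K/\sqrt{A})^{1/3}$ and the same threshold computation $K/c^{3}\geq\bigl(An/(2n-1)^{3d}\bigr)^{1/2}$. Your only additions — flagging the degenerate case where the bound drops below $1$ and noting $\sigma$-compactness explicitly — are harmless refinements of the paper's argument, not a different proof.
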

\begin{proof}
Fix $\eta\in(0,2)$ and let $A>0$, $(u_n)$ be the constant and the sequence given by Theorem \ref{Thm_dL-dlS}. Let $\Gamma$ be a group in $\mathcal{M}(n,d)$, and $c\geq 1$. We want to show that $\Gamma$ satisfies Property (FH)${}_c$ with probability $p\geq 1-u_n$. By Lemma \ref{Lem_corr_rep}, this is equivalent to showing that it has Property $(F_E)$ for every $E$ in $\mathcal{E}_c$. By Theorem \ref{Thm_dL-dlS}, this happens if
\begin{align*}
\varepsilon(E)\geq\left(\frac{An}{(2n-1)^{3d}}\right)^\frac{1}{2}.
\end{align*}
In light of Proposition \ref{Prop_dL-dlS}, it is sufficient to have
\begin{align*}
\frac{K}{c^3}\geq\left(\frac{An}{(2n-1)^{3d}}\right)^\frac{1}{2}.
\end{align*}
Defining $K'=\left(\frac{K}{\sqrt{A}}\right)^\frac{1}{3}$, this is equivalent to
\begin{align*}
c\leq K'\left(\frac{(2n-1)^{3d}}{n}\right)^\frac{1}{6}.
\end{align*}
By Proposition \ref{Prop_Guich_c}, with probability $p\geq 1-u_n$, $\Gamma$ has Property (T)${}_c$ for every $c$ satisfying the inequality above. This shows that, with probability $p$,
\begin{align*}
c_{\operatorname{ub}}(\Gamma)\geq K'\left(\frac{(2n-1)^{3d}}{n}\right)^\frac{1}{6}.
\end{align*}
\end{proof}

This result allows us to see that the implications \eqref{impl_T_c} are strict.

\begin{cor}\label{Cor_str_imp}
The following hold:
\begin{itemize}
\item[(i)] For every $c>1$, there is a group without Property (T${}^*$) that satisfies Property (T)${}_c$.
\item[(ii)] For every $c>3$, there is a group without Property (T)${}_c$ that satisfies Property (T). 
\end{itemize}
\end{cor}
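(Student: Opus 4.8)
The plan is to read both examples off results already established in the paper: part (ii) from the rank-1 Lie groups of Theorem \ref{Thm_rank1}, and part (i) from the random groups of Corollary \ref{Cor_dL-dlS} together with the classical fact that an infinite weakly amenable group has no Property (T$^*$). For (ii) I would take $G=\operatorname{Sp}(2,1)$. Here $\boldsymbol{\Lambda}(G)=2\cdot 2-1=3$, so Theorem \ref{Thm_rank1} gives $1<c_{\operatorname{ub}}(G)\leq 3$. The inequality $c_{\operatorname{ub}}(G)>1$ (equivalently \cite{Kos}) says that $G$ has Property (T), while $c_{\operatorname{ub}}(G)\leq 3$ means, directly from the definition of $c_{\operatorname{ub}}$, that $G$ fails Property (T)${}_c$ for every $c>3$. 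If a countable example is preferred one may instead take a lattice in $\operatorname{Sp}(2,1)$ and invoke Theorem \ref{Thm_c_ub}(d). There is essentially no obstacle in (ii); the only point is that $\boldsymbol{\Lambda}(\operatorname{Sp}(2,1))=3$.

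For (i), fix $c>1$. I would first fix a density $d\in(\tfrac13,\tfrac12)$ and apply Corollary \ref{Cor_dL-dlS}: for all sufficiently large $n$ its hypothesis on $d$ is met, and with probability tending to $1$ a group $\Gamma$ in $\mathcal{M}(n,d)$ satisfies $c_{\operatorname{ub}}(\Gamma)\geq K'\big((2n-1)^{3d}/n\big)^{1/6}$, a quantity which tends to $\infty$ with $n$ because $3d>1$. Since at density $d<\tfrac12$ a random group in $\mathcal{M}(n,d)$ is infinite and hyperbolic with overwhelming probability, for $n$ large enough the intersection of all these events still has positive probability; hence there exists an infinite hyperbolic group $\Gamma\in\mathcal{M}(n,d)$ with $c_{\operatorname{ub}}(\Gamma)>c$, and so, by the definition of $c_{\operatorname{ub}}$ together with the monotonicity of Property (T)${}_c$ in $c$, this $\Gamma$ has Property (T)${}_c$.

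It then remains to check that $\Gamma$ does not have Property (T$^*$). For this I would use that hyperbolic groups are weakly amenable (a theorem of Ozawa), combined with the following standard argument: if $\Gamma$ had Property (T$^*$), the unique invariant mean $m$ on $M_0A(\Gamma)$ would be weak*-continuous, hence (by \cite[Lemma 5.11]{HaKndL}, the $M_0A$-version of Lemma \ref{Lem_app_m}) a limit in the predual of a sequence $(g_k)$ of finitely supported probability measures. Weak amenability furnishes finitely supported functions $u_n$ with $u_n\to 1$ pointwise and $\sup_n\|u_n\|_{M_0A(\Gamma)}<\infty$; pairing $m$ against $u_n$ and estimating through the $g_k$ gives $\langle u_n,m\rangle\to 1$, whereas $\langle u_n,m\rangle=m(u_n)=0$ for all $n$ because $u_n\in M_0A(\Gamma)\cap C_0(\Gamma)$ and $\Gamma$ is infinite (the invariant mean vanishes on $C_0$). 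This contradiction shows $\Gamma$ has no Property (T$^*$), which completes (i).

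The main obstacle is locating, for part (i), Property (T) groups which fail Property (T$^*$) yet satisfy (T)${}_c$ for a prescribed large $c$: the rank-1 groups of Theorem \ref{Thm_rank1} fail (T$^*$), having finite $c_{\operatorname{ub}}$, but the paper only proves $c_{\operatorname{ub}}>1$ for them, which is not enough. The random groups resolve this because Corollary \ref{Cor_dL-dlS} drives $c_{\operatorname{ub}}(\Gamma)$ above any prescribed bound while hyperbolicity, via weak amenability, still rules out (T$^*$). The only genuinely probabilistic point is that the three relevant events — the $c_{\operatorname{ub}}$ lower bound, infiniteness, and hyperbolicity — can be realized simultaneously, which is immediate since each holds with overwhelming probability as $n\to\infty$.
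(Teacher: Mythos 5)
Your proposal is correct and follows essentially the same route as the paper: part (ii) is exactly the paper's argument ($G=\operatorname{Sp}(2,1)$ with $1<c_{\operatorname{ub}}(G)\leq\boldsymbol{\Lambda}(G)=3$), and part (i) likewise combines Corollary \ref{Cor_dL-dlS} (random groups in $\mathcal{M}(n,d)$ with $d\in(\tfrac13,\tfrac12)$, which are infinite hyperbolic w.o.p. and have $c_{\operatorname{ub}}$ exceeding any prescribed $c$) with weak amenability of hyperbolic groups to exclude Property (T${}^*$). The only divergence is that where the paper passes from weak amenability to the AP and cites \cite[Proposition 5.5]{HaKndL}, you give a short self-contained contradiction argument (pairing the weak*-continuous mean, approximated in the predual by finitely supported probability densities, against a uniformly bounded sequence of finitely supported multipliers tending pointwise to $1$), which is a valid substitute.
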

\begin{proof}
By Corollary \ref{Cor_dL-dlS}, for every $c>1$, there is an infinite hyperbolic group $\Gamma$ with $c_{\operatorname{ub}}(\Gamma)>c$. On the other hand, hyperbolic groups are weakly amenable; see \cite{Oza2}. In particular, they satisfy the approximation property AP; see \cite[Theorem 1.12]{HaaKra}. Hence, by \cite[Proposition 5.5]{HaKndL}, they do not satisfy Property (T${}^*$). This proves the first statement. For the second one, simply consider the group $G=\operatorname{Sp}(2,1)$. By Theorem \ref{Thm_rank1}, we know that $1<c_{\operatorname{ub}}(G)\leq 3$, which implies the second statement.
\end{proof}

\section{{\bf Questions}}\label{Sec_ques}

We end this paper with a few questions related to Property (T)${}_c$ that we believe are interesting.

\subsection*{Question 1} Does every hyperbolic group $\Gamma$ satisfy $c_{\operatorname{ub}}(\Gamma)<\infty$?

By Theorems \ref{Thm_rank1} and \ref{Thm_c_ub}, we know that this is true for uniform lattices in $\operatorname{Sp}(n,1)$ and $F_{4,-20}$. Moreover, Lafforgue \cite{Laf} proved that hyperbolic groups do not have Strong Property (T). This was done by constructing a representation $\pi$ such that $s\mapsto\|\pi(s)\|$ has polynomial growth, and such that $\pi$ does not have non-trivial invariant vectors, but $s\mapsto\pi(s^{-1})^*$ does. This allows one to conclude that the Kazhdan projection cannot exist in this context. However, since this representation is not uniformly bounded, this argument does not apply to $\tilde{A}_c(\Gamma)$.

Another interesting remark is the fact that hyperbolic groups do not have Property (T${}^*$); see the proof of Corollary \ref{Cor_str_imp}. This raises another question: is Property (T${}^*$) equivalent to $c_{\operatorname{ub}}(\Gamma)=\infty$?

We also mention a related question attributed to Shalom (see \cite[Conjecture 35]{Now2} and \cite[Problem 14]{ObeR}) asking whether every hyperbolic group admits a proper uniformly Lipschitz affine action on a Hilbert space. Again, by Corollary \ref{Cor_rank1}, we know that this is true for uniform lattices in $\operatorname{Sp}(n,1)$ and $F_{4,-20}$, and it is a consequence of the fact that $c_{\operatorname{ub}}(\Gamma)<\infty$ for such groups.

\subsection*{Question 2} Can we use the family of representations constructed in \cite{CosMar} to show that $c_{\operatorname{ub}}(\Gamma)<\infty$, where $\Gamma=\operatorname{Mod}(\Sigma)$ is the mapping class group of a punctured surface?

Costantino and Martelli \cite{CosMar} constructed an analytic family of uniformly bounded representations $(\pi_z)_{z\in\D}$ of a mapping class group, defined by perturbations of the multicurve representation. This construction is inspired by the cocycle technique developed by Valette \cite{Val} for groups acting properly on trees, which provides a way of showing that these groups do not have Property (T) by a very similar argument as that of Theorem \ref{Thm_rank1}. It would be interesting to extend this argument to mapping class groups, perhaps for $c>1$ depending on the surface.

\subsection*{Question 3} Does Property (T)${}_c$ imply Property (FH)${}_c$?

Recall that the Delorme--Guichardet theorem says that, for a very large class of groups, Property (T) and Property (FH) are equivalent. In the context of the present work, it is natural to ask whether this can be extended to Properties (T)${}_c$ and (FH)${}_c$ for $c>1$. Moreover, we know from Proposition \ref{Prop_Guich_c} that one of the implication holds. However, the proof of Delorme's theorem uses positive definite functions in a crucial way; see \cite[Theorem 2.12.4]{BedlHVa}. These tools are no longer available in our more general context, and it is not clear how to adapt the arguments in the proof.

Let us mention that this question has been around for quite some time; see e.g. \cite[Problem 15]{ObeR}.

\bibliographystyle{plain} 

\bibliography{Bibliography}

\end{document}